\theoremstyle{plain}
\newtheorem{theorem}{Theorem}[section]
\newtheorem{corollary}[theorem]{Corollary}
\newtheorem{conj}[theorem]{Conjecture} 
\newtheorem{prop}[theorem]{Proposition}% reset theorem numbering for each chapter
\newtheorem{lemma}[theorem]{Lemma}
\theoremstyle{definition}
\newtheorem{definition}[theorem]{Definition}
\newtheorem{remark}[theorem]{Remark}
\newenvironment{claimproof}[1]{\par\noindent\underline{Proof:}\space#1}{\hfill $\blacksquare$}
\newcommand{\Z}{\mathbb{Z}}
\newcommand{\Q}{\mathbb{Q}}
\newcommand{\C}{\mathbb{C}}
\newcommand{\N}{\mathbb{N}}
\newcommand{\R}{\mathbb{R}}
\newcommand{\E}{\mathbb{E}}
\renewcommand*{\P}{\mathbb{P}}
\newcommand{\indic}{\mathbf{1}}
\newcommand{\sset}{\subset}
\newcommand{\lf}{\left}
\newcommand{\rg}{\right}
\newcommand{\mathand}{\;\text{and}\;}
\newcommand{\ga}{\gamma}
\newcommand{\Ga}{\Gamma}
\newcommand{\ep}{\epsilon}
\newcommand{\de}{\delta}
\newcommand{\be}{\beta}
\newcommand{\sig}{\sigma}
\newcommand{\la}{\lambda}
\newcommand{\al}{\alpha}
\newcommand{\Om}{\Omega}
\newcommand{\del}{\partial}
\newcommand{\Rd}{\mathbb{R}^4_\uparrow}
\newcommand{\cA}{\mathcal{A}}
\newcommand{\cC}{\mathcal{C}}
\newcommand{\cF}{\mathcal{F}}
\newcommand{\cI}{\mathcal{I}}
\newcommand{\cL}{\mathcal{L}}
\newcommand{\cS}{\mathcal{S}}
\newcommand{\cX}{\mathcal{X}}
\newcommand{\NI}{\operatorname{NI}}
\newcommand{\fA}{\mathfrak{A}}
\newcommand{\fD}{\mathfrak{D}}
\newcommand{\fE}{\mathfrak{E}}
\newcommand{\fG}{\mathfrak{G}}
\newcommand{\fg}{\mathfrak{g}}
\newcommand{\fL}{\mathfrak{L}}
\newcommand{\fX}{\mathfrak{X}}
\newcommand{\sC}{\mathscr{C}}
\newcommand{\sG}{\mathscr{G}}
\newcommand{\close}[1]{\mkern 1.5mu\overline{\mkern-1.5mu#1\mkern-1.5mu}\mkern 1.5mu}
\DeclareMathOperator*{\argmax}{arg\,max}
\newcommand{\eqd}{\stackrel{d}{=}}
\newcommand{\X}{\times}
\newcommand{\cvgdown}{\downarrow}
\newenvironment{claim}[1]{\par\noindent\underline{Claim:}\space#1}{}
\newenvironment{claim1}[1]{\par\noindent\underline{Claim 1:}\space#1}{}
\newenvironment{claim2}[1]{\par\noindent\underline{Claim 2:}\space#1}{}
\newcommand{\smin}{\setminus}
\newcommand{\bv}{\mathbf{v}}
\newcommand{\bw}{\mathbf{w}}
\newcommand{\bx}{\mathbf{x}}
\newcommand{\by}{\mathbf{y}}
\newcommand{\bz}{\mathbf{z}}
\newcommand{\br}{\mathbf{r}}
\newcommand{\bu}{\mathbf{u}}
\newcommand{\ba}{\mathbf{a}}
\newcommand{\bb}{\mathbf{b}}
\newcommand{\bc}{\mathbf{c}}
\newcommand{\II}[1]{\llbracket #1 \rrbracket}
\title{The $27$ geodesic networks in the directed landscape}
\author{Duncan Dauvergne}
\begin{document}
	\maketitle
	
\begin{abstract}
The directed landscape is a random directed metric on the plane that arises as the scaling limit of classical metric models in the KPZ universality class. Typical pairs of points in the directed landscape are connected by a unique geodesic. However, there are exceptional pairs of points connected by more complicated geodesic networks. We show that up to isomorphism there are exactly $27$ geodesic networks that appear in the directed landscape, and find Hausdorff dimensions in a scaling-adapted metric on $\mathbb R^4_\uparrow$ for the sets of endpoints of each of these networks. 
\end{abstract}
\begin{figure}[htb]
	\centering
	\begin{subfigure}[t]{1.6cm}
		\centering
		\includegraphics[height=1.9cm]{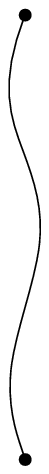}
	\end{subfigure}
	\begin{subfigure}[t]{1.6cm}
		\centering
		\includegraphics[height=1.9cm]{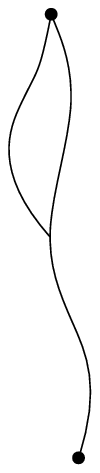}
	\end{subfigure}
	\begin{subfigure}[t]{1.6cm}
		\centering
		\includegraphics[height=1.9cm]{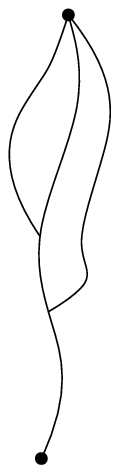}	
	\end{subfigure}
	\begin{subfigure}[t]{1.6cm}
		\centering
		\includegraphics[height=1.9cm]{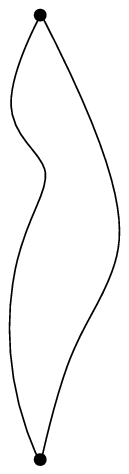}
	\end{subfigure}
	\begin{subfigure}[t]{1.6cm}
		\centering
		\includegraphics[height=1.9cm]{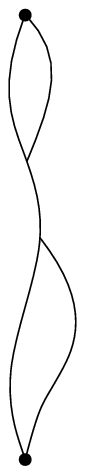}	
	\end{subfigure}
	\begin{subfigure}[t]{1.6cm}
		\centering
		\includegraphics[height=1.9cm]{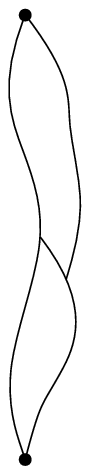}	
	\end{subfigure}	
	\begin{subfigure}[t]{1.6cm}
		\centering
		\includegraphics[height=1.9cm]{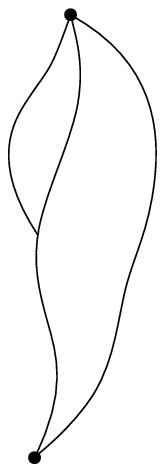}		
	\end{subfigure}
\\
\vspace{1em}
	\begin{subfigure}[t]{1.6cm}
		\centering
		\includegraphics[height=1.9cm]{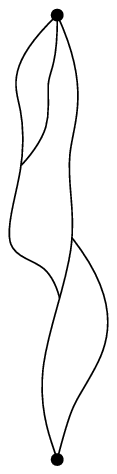}		
	\end{subfigure}
	\begin{subfigure}[t]{1.6cm}
		\centering
		\includegraphics[height=1.9cm]{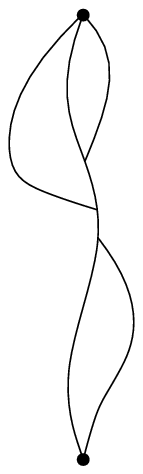}		
	\end{subfigure}
	\begin{subfigure}[t]{1.6cm}
		\centering
		\includegraphics[height=1.9cm]{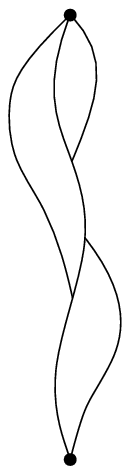}	
	\end{subfigure}
	\begin{subfigure}[t]{1.6cm}
		\centering
		\includegraphics[height=1.9cm]{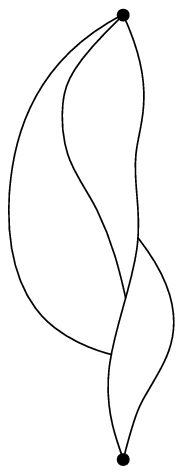}	
	\end{subfigure}
	\begin{subfigure}[t]{1.6cm}
		\centering
		\includegraphics[height=1.9cm]{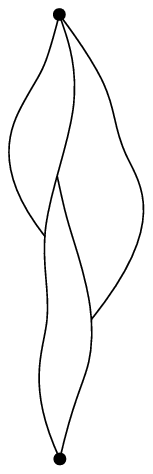}		
	\end{subfigure}
	\begin{subfigure}[t]{1.6cm}
		\centering
		\includegraphics[height=1.9cm]{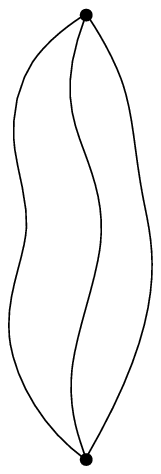}
	\end{subfigure}
	\begin{subfigure}[t]{1.6cm}
		\centering
		\includegraphics[height=1.9cm]{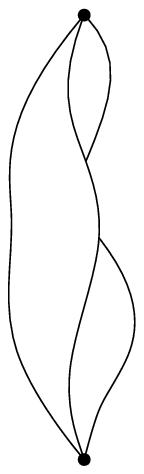}	
	\end{subfigure}
\\
\vspace{1em}
	\begin{subfigure}[t]{1.6cm}
		\centering
		\includegraphics[height=1.9cm]{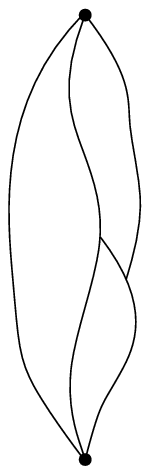}		
	\end{subfigure}
	\begin{subfigure}[t]{1.6cm}
		\centering
		\includegraphics[height=1.9cm]{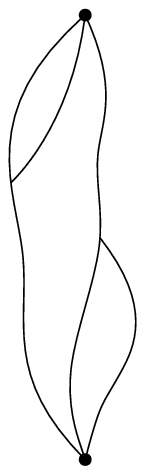}	
	\end{subfigure}
	\begin{subfigure}[t]{1.6cm}
		\centering
		\includegraphics[height=1.9cm]{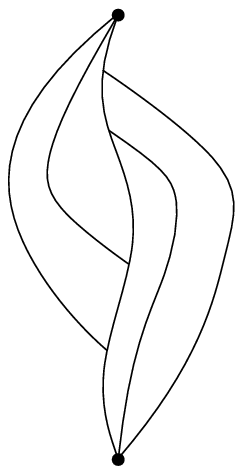}	
	\end{subfigure}
	\begin{subfigure}[t]{1.6cm}
		\centering
		\includegraphics[height=1.9cm]{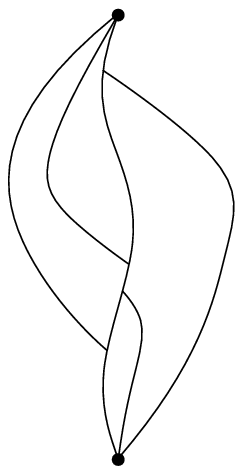}	
	\end{subfigure}
	\begin{subfigure}[t]{1.6cm}
		\centering
		\includegraphics[height=1.9cm]{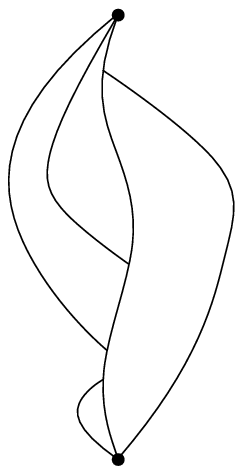}		
	\end{subfigure}
	\begin{subfigure}[t]{1.6cm}
		\centering
		\includegraphics[height=1.9cm]{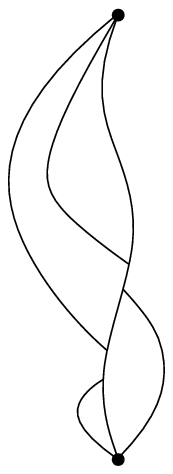}	
	\end{subfigure}
	\begin{subfigure}[t]{1.6cm}
		\centering
		\includegraphics[height=1.9cm]{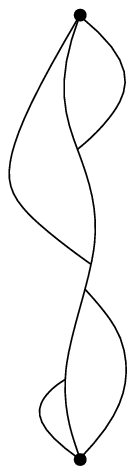}	
	\end{subfigure}	
\\
\vspace{1em}
	\begin{subfigure}[t]{1.6cm}
		\centering
		\includegraphics[height=1.9cm]{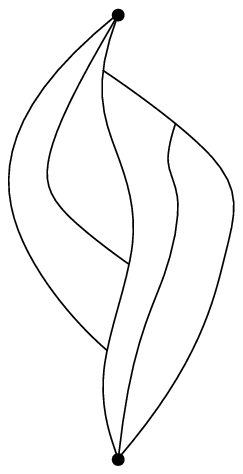}	
	\end{subfigure}
	\begin{subfigure}[t]{1.6cm}
		\centering
		\includegraphics[height=1.9cm]{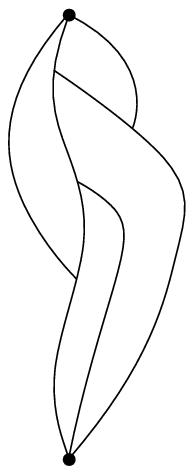}	
	\end{subfigure}
	\begin{subfigure}[t]{1.6cm}
		\centering
		\includegraphics[height=1.9cm]{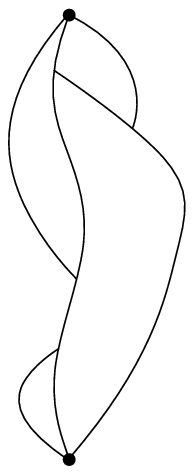}
	\end{subfigure}
	\centering
	\begin{subfigure}[t]{1.6cm}
		\centering
		\includegraphics[height=1.9cm]{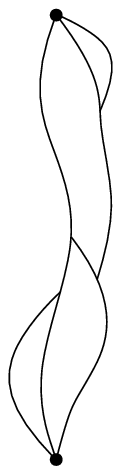}
	\end{subfigure}
	\begin{subfigure}[t]{1.6cm}
		\centering
		\includegraphics[height=1.9cm]{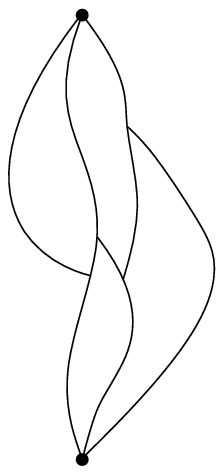}		
	\end{subfigure}
	\begin{subfigure}[t]{1.6cm}
		\centering
		\includegraphics[height=1.9cm]{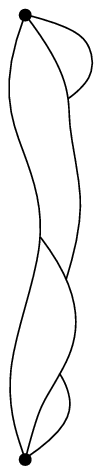}		
	\end{subfigure}
\end{figure}
\FloatBarrier
\newpage
\setcounter{tocdepth}{1}
\tableofcontents

\section{Introduction}

The Kardar-Parisi-Zhang (KPZ) universality class is a collection of one-dimensional random growth models and two-dimensional random metrics that are all expected to exhibit the same limiting behaviour under rescaling. The past thirty years has seen a period of intense research on this class thanks to the discovery of a handful of exactly solvable models, see Section \ref{S:background} for background.

The directed landscape $\cL$ was constructed in \cite{DOV} as the full scaling limit of one particular two-dimensional random metric (Brownian last passage percolation) in the KPZ class. More recently, $\cL$ has been shown to be the scaling limit of a handful of other exactly solvable random metrics \cite{dauvergne2021scaling} and of the KPZ equation \cite{wu2023kpz}, see also \cite{virag2020heat, quastel2020convergence}. It is conjectured to be the full scaling limit of all KPZ models. 

The directed landscape is a random continuous function from $$
\R^4_\uparrow = \{u = (p; q) = (x, s; y, t) \in \R^4 : s < t\}
$$
to $\R$. The value $\cL(p; q)$ is best thought of as a distance between two points $p$ and $q$ in the space-time plane. Here $x, y$ are spatial coordinates and $s, t$ are time coordinates. Unlike with an ordinary metric, $\cL$ is not symmetric, does not assign distances to every pair of points in the plane, and may take negative values. It also satisfies the triangle inequality backwards:
\begin{equation}
\label{E:triangle}
\cL(p; r) \ge \cL(p; q) + \cL(q; r) \qquad \mbox{ for all }(p; r), (p; q), (q; r) \in \Rd.
\end{equation}
Just as in true metric spaces, we can define path lengths in $\cL$, see \cite[Section 12]{DOV}. First, for a continuous function $\pi:[s, t]\mapsto \R$,  referred to as a path, let $\bar \pi(r) = (\pi(r), r)$. We define the \textbf{length} of $\pi$ by
\begin{equation}
\label{E:length}
\|\pi\|_\cL=\inf_{k\in \N}\inf_{s=t_0<t_1<\ldots<t_k=t}\sum_{i=1}^k\cL(\bar \pi(t_{i-1});\bar \pi(t_i))\,.
\end{equation}
By the triangle inequality \eqref{E:triangle}
, $\|\pi\|_\cL \le \cL(\bar \pi (s); \bar \pi (t))$ for any path $\pi$.
We say that $\pi$ is a {\bf geodesic} from $\bar \pi(s)$ to $\bar \pi(t)$ if
$\|\pi\|_\cL=\cL(\bar \pi (s); \bar \pi (t))\,.$ We write 
$$
\fg \pi = \{\bar \pi(r) : r \in [s, t]\} \sset \R^2
$$ for the image of $\pi$ in the space-time plane. By \cite{DOV}, Theorem $12.1$, for any $(p; q)\in \R^4_\uparrow$, almost surely there exists a unique geodesic $\pi$ from $p$ to $q$. However, there are exceptional points $(p; q) \in \R^4_\uparrow$ where multiple geodesics go from $p$ to $q$. The goal of this paper is to fully understand what happens at these points.

For every $(p; q) = (x, s; y, t)\in \Rd$ define the \textbf{geodesic network} from $p$ to $q$ by
$$
\Ga(u) = \bigcup \{\fg \pi : \pi \text{ is a geodesic from } p \text{ to } q\}.
$$
It turns out that any pair $(p; q) \in \Rd$ is connected by only finitely many geodesics. Moreover, from the definition of path length it is not difficult to check that any path $\tau:[s, t] \to \R$ with $\fg \tau \sset \Ga(p; q)$ is a geodesic from $p$ to $q$. Together these two points imply that $\Ga(p; q)$ has the topological structure of a finite directed graph. Indeed, for $(p; q) \in \Rd$ we can define the \textbf{network graph} $G(p; q)$ of its geodesic network with the following two rules:
\begin{itemize}
	\item The vertices $V$ of $G(p; q)$ consist of the points $p, q$ and all points $v = (z, r) \in \Ga(p; q)$ such that there exist two geodesics $\pi, \tau$ from $p$ to $q$ with $\pi(r) = \tau(r) = z$ but $\pi|_{[r-\ep, r + \ep]} \ne \tau|_{[r-\ep, r + \ep]}$ for all $\ep > 0$.
	\item For two vertices $v_1 = (z_1, r_1), v_2 = (z_2, r_2) \in V$ with $r_1 < r_2$, we add one copy of the directed edge $(v_1, v_2)$ to the edge set $E$ for every distinct path $\pi:[r_1, r_2] \to \R$ with $\fg \pi \sset \Ga(p; q)$ and $\fg \pi \cap V = \{v_1, v_2\}$.
\end{itemize}
See Figure \ref{fig:network} for an example of $\Ga(p; q), G(p; q)$. 
Now, for a finite directed graph $G$, let $G^T$ denote the transpose of $G$, given by reversing the direction of all the edges of $G$. Since the directed landscape has a time-reversal symmetry and is ergodic (see Section \ref{S:prelim}), a graph $G$ will appear as a geodesic network in $\cL$ if and only if $G^T$ appears. Because of this, we will only distinguish directed graphs $G$ \textbf{up to transpose isomorphism} $\sim$ (e.g. $G \sim H$ if either $G$ or $G^T$ is isomorphic to $H$ as a directed graph), see Figure \ref{fig:network} for an example. 
Our first main theorem classifies all the network graphs that appear in $\cL$.
\begin{figure}
	\centering
	\includegraphics[scale=0.9]{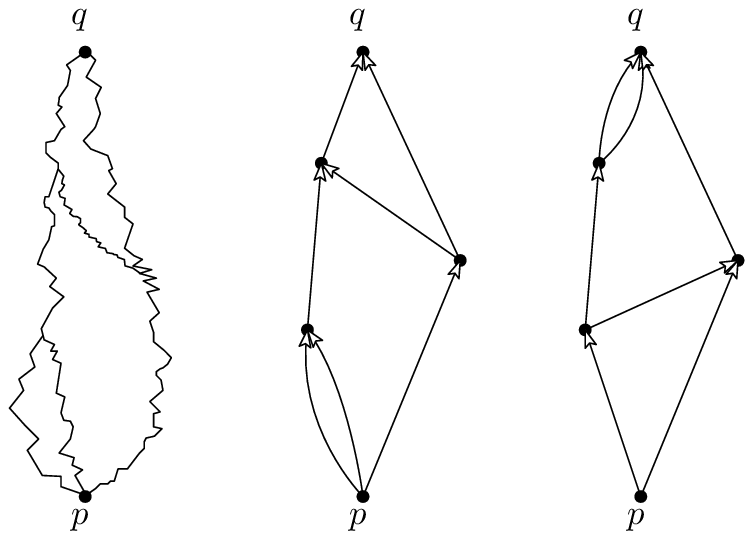}
	\caption{An example of the procedure for taking the geodesic network $\Ga(p; q)$ to the directed graph $G(p; q)$, and an example of two transpose-isomorphic networks.}
	\label{fig:network}
\end{figure}

\begin{theorem}
	\label{T:geodesic-networks}
	Almost surely, the following holds in the directed landscape $\cL$. A directed graph $G = (V, E)$ is the network graph of some point $(p;q) \in \Rd$ if and only if the following conditions hold:
	\begin{enumerate}[label=\arabic*.]
		\item $G$ is finite, planar, and loop-free.
		\item There is exactly one \textbf{source} vertex $p \in V$ (i.e. a vertex with no incoming edges) and exactly one \textbf{sink} vertex $q \in V$ (i.e. a vertex with no outgoing edges).
		\item The induced graph on $V \setminus \{p, q\}$ has no undirected cycles (i.e. the underlying undirected graph is a forest).
		\item Every vertex in $V \smin \{p, q\}$ has degree $3$.
		\item The vertices $p, q$ have degree $1, 2,$ or $3$.
	\end{enumerate}
	There are exactly $27$ graphs that satisfy these rules, enumerated in Section \ref{S:botany}. We call the set of these networks $\mathscr{G}$.
\end{theorem}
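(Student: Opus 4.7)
I would split the theorem into three subgoals: (i) every network graph $G(p;q)$ satisfies conditions 1--5 (necessity); (ii) exactly $27$ abstract graphs satisfy 1--5 up to transpose isomorphism (enumeration); (iii) each such abstract graph is realized as $G(p;q)$ for some $(p;q) \in \Rd$ (sufficiency).

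\textbf{Necessity.} Finiteness, loop-freeness, and the source/sink property follow directly from time-monotonicity of geodesics and the a.s.\ finite multiplicity of geodesics between any two points in $\cL$. Planarity and the forest condition both stem from the basic fact that any two geodesics from $p$ to $q$ are pointwise time-comparable (either $\pi(r) \le \tau(r)$ for all $r$ or the reverse, as a consequence of \eqref{E:triangle} applied to the concatenated paths): this gives the canonical planar embedding of $G(p;q)$ via the actual paths, and shows that an undirected cycle in the induced interior graph would enclose a planar region bounded by geodesics on both sides, from which interpolation would produce a continuum of geodesics, contradicting finiteness. The hard part of necessity is the degree constraints: that each interior vertex has degree exactly $3$ (so either $1$-in, $2$-out or $2$-in, $1$-out), and that $\deg(p), \deg(q) \le 3$. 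My approach would be a Hausdorff dimension computation in the scaling-adapted metric on $\Rd$: each forbidden configuration (degree-$4$ interior vertex, degree-$\ge 4$ endpoint, etc.) defines an exceptional subset of $\Rd \times \Rd$, and using sharp local estimates for multi-geodesic branching (building on prior work on coalescence and disjoint geodesics in $\cL$), one shows each such subset has strictly negative expected Hausdorff dimension, hence is a.s.\ empty.

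\textbf{Enumeration and realization.} Once conditions 1--5 are in place, the count is a finite combinatorial exercise: split by $(\deg(p), \deg(q)) \in \{1,2,3\}^2$ modulo transpose symmetry, and in each case enumerate planar directed forests of degree-$3$ interior vertices (each either a ``split'' or a ``merge'') connecting the source's outgoing edges to the sink's incoming edges. Summing the counts gives $27$, as tabulated in Section \ref{S:botany}. For realization, one exhibits each of the $27$ graphs: the simpler networks appear generically at known exceptional points (e.g.\ generic $2$- or $3$-geodesic points), while the richer networks with several interior vertices are constructed by perturbing known multi-geodesic configurations in the scaling-adapted metric to split off additional branch points. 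Matching lower bounds on the Hausdorff dimension of each stratum --- which simultaneously yield the dimension formulas of the paper's second main theorem --- confirm that each stratum is non-empty.

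\textbf{Main obstacle.} The core difficulty is the dimension analysis of exceptional branching points: ruling out $(2,2)$ interior vertices and endpoints of degree $\ge 4$ in the necessity direction, and constructing the exotic six-vertex networks with interleaved splits and merges in the realization direction. Both rely on a delicate multi-scale description of the local geodesic geometry near a branch point, ultimately grounded in the Airy sheet / Brownian last passage structure that underlies $\cL$, together with a Borel--Cantelli-style conversion of one-point probability estimates into almost-sure global statements over $\Rd$.
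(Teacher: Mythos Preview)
Your three-part decomposition (necessity, enumeration, realization) matches the paper's overall architecture, but two of your arguments in the necessity direction contain genuine errors, and the realization sketch misses the main mechanism.

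\textbf{Ordering and the forest condition.} Your claim that any two geodesics from $p$ to $q$ are pointwise comparable (``either $\pi(r)\le\tau(r)$ for all $r$ or the reverse'') is false in $\cL$; the triangle inequality \eqref{E:triangle} does not yield this. What the paper proves instead (Lemma~\ref{L:rightmost-geods}) is that the overlap $O(\pi,\tau)$ of any two geodesics is a closed interval, and that any two \emph{interior} geodesic points on a common geodesic are joined by a \emph{unique} geodesic (``no geodesic bubbles''). Both facts are obtained not by ordering but by rational overlap approximation: every geodesic is an overlap-limit of rational-endpoint geodesics, and rational geodesics are unique. The forest condition then follows immediately---an undirected interior cycle would give two distinct geodesics between interior points, contradicting no-bubbles---whereas your ``interpolation would produce a continuum of geodesics'' is not a valid mechanism in $\cL$ (there is no way to interpolate geodesics).

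\textbf{Degree constraints.} Your plan to rule out degree-$4$ interior vertices by a generic Hausdorff-dimension/Borel--Cantelli argument is on the right track, but the actual bound needed is delicate and does not fall out of a soft dimension count. The paper treats the two cases separately: an interior vertex with in-degree~$\ge 2$ and out-degree~$\ge 2$ (Lemma~\ref{L:no-X-s}) and one with out-degree~$\ge 3$ (Lemma~\ref{L:no-sceptres}). In each case one must show a single-box probability of order $\ep^{14}$, and this exponent comes from a specific nonintersecting-Brownian-bridge computation (Lemmas~\ref{L:no-X-Brownian}, \ref{L:no-sceptres-Brownian}) after passing through the Airy line ensemble via Theorem~\ref{T:resampling-candidate}. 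The degree bound $\deg(p),\deg(q)\le 3$ is handled separately by showing $\operatorname{Star}_4=\emptyset$ (Theorem~\ref{T:star-computation}), again via the line-ensemble route.

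\textbf{Realization.} ``Perturbing known multi-geodesic configurations'' does not describe a working construction. The paper's method is to \emph{plant} a prescribed interior forest on the strip $\R\times[0,1]$ (Lemma~\ref{L:existence-of-forests}), surround it with thin strips where the Airy sheet is forced to funnel geodesics through the forest's endpoints (Lemma~\ref{L:sheet-shenanigans}), and then use the weighted-star lower bound (Proposition~\ref{P:star-lower-bd}) to locate pairs $(p;q)$ whose star weights satisfy the linear constraints imposed by the forest (Lemma~\ref{L:face-to-linear-relations}). This cut-and-paste argument, together with an absolute-continuity trick (Lemma~\ref{L:resampling-part-a}) to decouple the forest weights from the stars, is what makes both directions of the dimension formula work simultaneously.
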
  
Now, for every $G \in \sG$, define
$$
N_\cL(G) = \{(p; q) \in \Rd: G(p; q) = G\}.
$$
As mentioned previously, for a typical point $(p; q) \in \Rd$ there is unique geodesic from $p$ to $q$. All other networks are rare. Our next theorem aims to classify exactly how rare these networks are. We focus on three properties:
\begin{itemize}[nosep]
	\item The size of $N_\cL(G)$.
	\item Whether $N_\cL(G)$ is dense.
	\item The Hausdorff dimension $\dim_{1:2:3}(N_\cL(G))$ in the 1:2:3-metric on $\Rd$
	given by
	$$
	d_{1:2:3}((x, s; y, t), (x', s'; y', t')) = |t-t'|^{1/3} + |s-s'|^{1/3} + |x- x'|^{1/2} + |y-y'|^{1/2}.
	$$ 
\end{itemize}
The 1:2:3-metric is set up to take into account the natural KPZ scaling associated with the directed landscape. Indeed, since $\cL$ is H\"older-$1/2^-$ in $x, y$ and H\"older-$1/3^-$ in $s, t$, defining Hausdorff dimensions under the $d_{1:2:3}$-metric is a good proxy for defining Hausdorff dimensions using the `metric' $\cL$. It does not make sense to define Hausdorff dimensions using $\cL$ directly since it is not a true metric (i.e. it may take on positive or negative values).
 We do not address the issue of the Euclidean dimensions of each of the sets $N_\cL(G)$ in this paper.

\begin{theorem}
	\label{T:Hausdorff-dimension}
	Almost surely, the following assertions hold. Let $G = (V, E) \in \sG$, let $p, q$ denote the unique source and sink vertices in $G$, and define
	$$
	d(G) := 12 - \frac{|V| + \deg^2(p) + \deg^2(q)}{2}.
	$$
	Then:
	\begin{enumerate}[label=\arabic*.]
		\item $
		\dim_{1:2:3}(N_\cL(G)) = d(G).
		$
		\item If $d(G) = 0$, then $N_\cL(G)$ is countably infinite.
		\item $N_\cL(G)$ is dense if there is an edge $e \in E$ such that $(V, E \smin \{e\})$ is disconnected as an undirected graph. Otherwise, $N_\cL(G)$ is nowhere dense.
	\end{enumerate}
\end{theorem}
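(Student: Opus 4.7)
The plan is to establish the Hausdorff dimension formula (part 1) as the main technical content, then derive parts 2 and 3 from it together with short structural arguments. The heuristic underlying $d(G) = 12 - (|V|+\deg^2(p)+\deg^2(q))/2$ is that each feature of $G$ contributes an independent codimension cost against the base dimension $\dim_{1:2:3}\Rd = 3+3+2+2 = 10$: an endpoint of degree $d$ (forcing $d$ disjoint geodesics to meet at a single space-time point) costs codimension $(d^2-1)/2$, while each degree-3 internal vertex (a coalescence point in the bulk) costs $1/2$. Using $|V|-2$ for the number of internal vertices and summing these costs recovers the formula, which frames the proof as a bookkeeping exercise around these two local codimension estimates.

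For the upper bound in part 1, I would use a covering argument. For each $G \in \sG$, decompose the event $\{G(p;q) = G\}$ into a conjunction of local events at the vertices of $G$, estimate the probability of each local event on an $\ep$-window using one-point multi-geodesic and coalescence tail estimates for $\cL$ (assembled from the KPZ one-point asymptotics, disjoint-geodesic estimates, and Airy-line-ensemble inputs cited in Section~\ref{S:prelim}), and multiply. A union bound over an $\ep$-net in $\Rd$ combined with a first-moment computation then yields the upper bound.

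For the lower bound, I would parametrize candidate realizations of $G$ by $d(G)$ free coordinates consistent with the 1:2:3 scaling (one time parameter per internal vertex plus endpoint coordinates absorbing the residual degrees of freedom), show that a positive $d(G)$-dimensional measure subfamily yields network \emph{exactly} $G$, and invoke a Frostman-type lemma. Producing one such realization almost surely will use absolute continuity (e.g.\ Cameron--Martin on the underlying Airy line ensemble, or scaling limits of near-resonant configurations in the prelimit models) to import explicit reference configurations exhibiting the required splits and coalescences. The \textbf{main obstacle} is the requirement that the network be exactly $G$ rather than strictly larger: one must simultaneously guarantee the prescribed coalescence and disjoint-geodesic structures and rule out \emph{all} accidental extra coincidences. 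This will require separation estimates bounding, at positive probability, the minimum gap between non-coalescing geodesics, to certify that a small perturbation of a reference configuration produces no unintended vertices.

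Parts 2 and 3 follow from part 1 combined with short structural arguments. For part 2, when $d(G) = 0$ the upper-bound argument forces $N_\cL(G) \cap K$ to be finite for every compact $K \sset \Rd$, so $N_\cL(G)$ is countable; existence of at least one realization combined with shift ergodicity of $\cL$ then upgrades this to countably infinite. For part 3, if $e \in E$ is a bridge, the lower-bound construction may be applied independently to the two sub-networks on either side of $e$ at essentially arbitrary locations, and shift/scaling invariance of $\cL$ produces dense realizations. Conversely, if $G$ has no bridge, every vertex lies on an undirected cycle, which imposes a nontrivial equality-of-lengths constraint between two distinct paths in the network; by continuity of $\cL$ this constraint fails on an open set near any generic point, yielding nowhere density via a Baire-category style argument.
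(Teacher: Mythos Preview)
Your codimension bookkeeping ($(d^2-1)/2$ per endpoint, $1/2$ per internal vertex) is arithmetically correct, but the mechanism you propose for realizing it --- decompose into local events at each vertex, estimate each on an $\ep$-window, multiply --- is not how the paper proceeds, and it is not clear it can be made to work. The events at different vertices are highly correlated (they involve overlapping geodesics), and there is no obvious way to factor the probability. The paper's key structural idea, which your proposal is missing, is the \emph{cut-and-paste} decomposition: slice the network $\Gamma(p;q)$ near times $s+\ep$ and $t-\ep$ into a forward $k$-star at $p$, a reverse $\ell$-star at $q$, and an \emph{interior forest} connecting them. The interior forest contributes only countably many possibilities (via coalescence/overlap approximation), so it can be fixed. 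The requirement that all paths through the network have equal length then becomes a set of $|F^*|$ linear constraints (one per bounded face) on the weight vector $(\cL(p;x_i,s+\ep))_i, (\cL(y_j,t-\ep;q))_j$ of the two stars. The internal vertices never appear as separate local events; their contribution is entirely encoded in the dimension of this affine constraint set via Euler's formula. The upper bound then reduces to a single estimate on weighted $k$- and $\ell$-star points (Propositions~\ref{P:star-lower-bd}, \ref{P:weighted-stars-upper-bound}), proven by translating the star condition into a Gelfand--Tsetlin pinching event for the Airy line ensemble and bounding that via Brownian bridge computations. The lower bound likewise pastes together a planted forest on an independent middle strip with stars of prescribed weight on the outer strips.

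Your part~2 argument has a genuine error: Hausdorff dimension zero does not imply that $N_\cL(G)\cap K$ is finite, or even countable. The paper needs a separate topological lemma (Lemma~\ref{L:upper-bd-3-stars}) bounding the number of $3$-star points with a prescribed weight relation by an absolute constant; this is proven by a planarity/geodesic-bubble contradiction argument, not by the dimension estimate. Your part~3 nowhere-dense argument is also incomplete: saying the equality-of-lengths constraint ``fails on an open set near any generic point'' does not show the closure of $N_\cL(G)$ has empty interior. The paper instead observes that if $G$ has no bridge then there are two edge-disjoint $p$-to-$q$ paths, hence two disjoint geodesics; overlap-compactness then forces any limit point to also support two disjoint geodesics, confining the closure to a set of $1{:}2{:}3$-dimension at most $7$.
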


It is not difficult to check that $0 \le d(G) \le 10 = \dim_{1:2:3}(\Rd)$ for all $G \in \sG$, see Lemma \ref{L:graph-theory}. 
Note that we have chosen not the retain the planar structure when classifying geodesic networks in Theorems \ref{T:geodesic-networks} and \ref{T:Hausdorff-dimension}. A version of these theorems that takes into account the planar structure of networks also follows from the methods in this paper.

\begin{remark}
	\label{R:brownian-map-i}
	The question of which geodesic networks are dense was studied in \cite{angel2017stability} for the Brownian map and in \cite{gwynne2021geodesic} for general Liouville quantum gravity for $\ga \in (0, 2)$. It turns out that the types of dense geodesic networks are exactly the same in the directed landscape and for these models. There are six such networks, enumerated in Figure \ref{fig:normal-networks}. These networks are uniquely determined by $\deg(p), \deg(q)$, and the fact that they can be disconnected by a single edge. In \cite{angel2017stability} and \cite{gwynne2021geodesic}, these networks are referred to as $(\deg(p), \deg(q))$-\textbf{normal networks}.
	The geometric similarities between the directed landscape and the Liouville quantum gravity also go beyond the classification of dense networks; we discuss this more in Section \ref{S:speculation}.
\end{remark}

We can pass from results about geodesic networks to the simpler question of counting points $(p; q) \in \Rd$ connected by exactly $k$ geodesics.

\begin{corollary}
	\label{C:Tpq}
	For every $(p; q) \in \Rd$, let $T(p; q)$ be the number of distinct geodesics from $p$ to $q$. Then almost surely, the following statements hold.
	\begin{enumerate}[label=\arabic*.]
		\item $T(p; q) \le 9$ for all $(p; q) \in \Rd$.
		\item We have 
		\begin{align*}
		&\dim_{1:2:3}T^{-1}(1) = 10, &&\dim_{1:2:3}T^{-1}(2) = 8, &&&\dim_{1:2:3}T^{-1}(3) = 6, \\
		&\dim_{1:2:3}T^{-1}(4) = 6, &&\dim_{1:2:3}T^{-1}(5) = 3, &&&\dim_{1:2:3}T^{-1}(6) = 3
		\end{align*}
		 and $T^{-1}(7), T^{-1}(8), T^{-1}(9)$ are all countable.
	\end{enumerate}
\end{corollary}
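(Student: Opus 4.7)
The plan is to deduce the corollary directly from Theorems \ref{T:geodesic-networks} and \ref{T:Hausdorff-dimension} by a tally over the $27$ networks in $\sG$.

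First I would observe that every geodesic from $p$ to $q$ traces a distinct directed source-to-sink path in $G(p;q)$, and conversely every such path is the image of a geodesic (this is immediate from the construction of $G(p;q)$, whose edges are maximal geodesic segments without branching). Writing $k(G)$ for the number of directed source-to-sink paths in $G \in \sG$, we obtain the finite disjoint union
$$
T^{-1}(k) = \bigsqcup_{G \in \sG,\, k(G) = k} N_\cL(G).
$$
Part 1 then amounts to verifying $\max_{G\in\sG} k(G) = 9$; this is done by inspection of the $27$ graphs listed in Section \ref{S:botany}, the maximum being attained by a network with $\deg(p)=\deg(q)=3$ (which necessarily has $d(G)=0$ in view of the formula from Theorem \ref{T:Hausdorff-dimension}).

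For Part 2, since the Hausdorff dimension is countably stable and $\sG$ is finite, Theorem \ref{T:Hausdorff-dimension} gives
$$
\dim_{1:2:3} T^{-1}(k) = \max\{ d(G) : G \in \sG,\ k(G) = k\}
$$
whenever the right-hand maximum is over a nonempty set, and the six listed dimensions fall out by direct enumeration. For $k \in \{7, 8, 9\}$, every contributing $G$ turns out to have $d(G)=0$: the constraints of Theorem \ref{T:geodesic-networks} (interior forest, degree $3$ at interior vertices, degree at most $3$ at $p, q$) force such graphs to satisfy $|V|=6$ and $\deg(p)=\deg(q)=3$. Consequently each $N_\cL(G)$ is countably infinite by item 2 of Theorem \ref{T:Hausdorff-dimension}, and $T^{-1}(k)$ is a finite union of countable sets.

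The only real work is the combinatorial bookkeeping: listing the pair $(k(G), d(G))$ for each of the $27$ graphs enumerated in Section \ref{S:botany} and grouping by $k(G)$. Once the classification and dimension theorems are available, this step is mechanical, but it is the sole nontrivial part of the argument; I expect no further obstacles.
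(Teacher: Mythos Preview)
Your proposal is correct and matches the paper's approach exactly: the paper's entire proof is the single sentence ``Corollary \ref{C:Tpq} follows from the main theorem and examining the figures below,'' and you have simply spelled out what that examination entails. Your decomposition $T^{-1}(k) = \bigsqcup_{G\in\sG,\,k(G)=k} N_\cL(G)$, together with countable stability and the tabulated pairs $(k(G),d(G))$ from Section \ref{S:botany}, is precisely the intended argument.
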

\begin{figure}
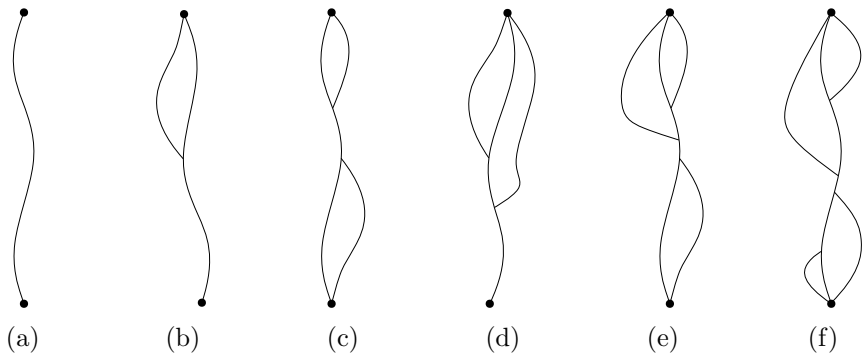

	\centering
	\begin{subfigure}[t]{2cm}
		\centering
		\includegraphics[height=4cm]{G11}
		\caption{}
	\end{subfigure}
	\begin{subfigure}[t]{2cm}
		\centering
		\includegraphics[height=4cm]{G21}
		\caption{}
	\end{subfigure}
	\begin{subfigure}[t]{2cm}
		\centering
		\includegraphics[height=4cm]{G22.4.1}
		\caption{}
	\end{subfigure}
	\begin{subfigure}[t]{2cm}
		\centering
		\includegraphics[height=4cm]{G31}
		\caption{}
	\end{subfigure}
	\begin{subfigure}[t]{2cm}
		\centering
		\includegraphics[height=4cm]{G32.5.b1}
		\caption{}
	\end{subfigure}
	\begin{subfigure}[t]{2cm}
		\centering
		\includegraphics[height=4cm]{G33.6.p1.5}
		\caption{}
	\end{subfigure}
	\caption{The six dense geodesic networks.}
	\label{fig:normal-networks}
\end{figure}

\subsection{Geometric ideas in the proof}
\label{S:geometric-ideas}

Here we outline the main geometric ideas that underlie the proofs of Theorems \ref{T:geodesic-networks} and \ref{T:Hausdorff-dimension}. We do not discuss the technical probabilistic and analytic methods that go into the proofs in order to avoid complicating the geometric picture.

\textbf{Coalescent geometry.} \qquad A key feature of the directed landscape and other models of continuum random geometry that distinguishes these objects from their Euclidean counterparts is geodesic coalescence: if two geodesics are close together, then they will typically coalesce in order to take advantage of certain favourable geodesic highways. Because of this, geodesics in the directed landscape are best thought of as driving routes in a city; geodesics between two points $p$ and $q$ will follow major geodesic routes away the endpoints $p, q$. The study of coalescence in the context of the KPZ universality class goes back at least to the work of Newman and coauthors on first passage percolation (e.g. see \cite{newman1995surface} and references therein) and is by this point very well understood in the directed landscape, e.g. see \cite{bates2019hausdorff, dauvergne2020three, busani2022stationary, bhatia2023duality}.

One useful perspective on coalescent geometry is the \textit{overlap topology} on geodesics. We say that a sequence of geodesics $\pi_n$ converges in overlap to $\pi:[s, t] \to \R$ if the endpoints of $\pi_n$ converge to the endpoints of $\pi$, and if the set 
$
O^o(\pi_n, \pi) := \{r \in (s, t) : \pi_n(r) = \pi(r)\}
$
is an interval whose endpoints converge to $s, t$.  
%For a set $U \sset \Rd$, let $\Ga(U)$ denote the space of geodesics with endpoints in $U$. If $U$ is compact, then the overlap topology makes $\Ga(U)$ a compact Polish space, see Proposition \ref{P:overlap-metric}. This implies that sequences of geodesics 
Many useful properties of coalescent geometry can be expressed in terms of overlap. While many results about coalescence are used throughout the paper, for the purposes of this proof sketch, we will only appeal to the following two consequences of coalescent geometry recorded in Lemma \ref{L:rightmost-geods}:
\begin{itemize}
	\item \textbf{Rational overlap approximation}. \quad $\Ga(\Rd \cap \Q^4)$ is dense in $\Ga(\Rd)$. In other words, any geodesic can be approximated in overlap by a sequence of geodesics with rational endpoints.
	\item \textbf{No geodesic bubbles}. \qquad 
	When combined with the fact that typical (i.e. rational) points in $\Rd$ are only connected by a single geodesic, rational overlap approximation implies that there are no geodesic bubbles, i.e. if $p, q$ are any points on the interior of a common geodesic then $p$ and $q$ are connected by a unique geodesic.
\end{itemize}
Just before this paper was posted, Bhatia \cite{bhatia2023duality} developed independent proofs of these two bullet points. His proof methods are quite different from ours.

\textbf{Geodesic stars.} \qquad Consider a collection of geodesics $\pi = (\pi_1, \dots \pi_k)$ defined on a common interval $[s, t]$. We call $\pi$ a \textbf{forwards $k$-geodesic star} from a point $p$ to $(\bx, t)$ where 
$$
\bx \in \R^k_< := \{\bx \in \R^k : x_1 < \dots < x_k\}
$$
if $\bar \pi_i(s) = p, \bar \pi_i(t) = (x_i, t)$ for all $i$, and if $\pi_i(r) \ne \pi_j(r)$ for all $i \ne j, r \in (s, t]$. Similarly, $\pi$ is a \textbf{reverse $k$-geodesic star} from $(\by, s)$ to $q$ if $\bar \pi_i(t) = q, \bar \pi_i(s) = (y_i, s)$ for all $i$ and if $\pi_i(r) \ne \pi_j(r)$ for all $i \ne j, r \in [s, t)$. We call $p$ a \textbf{$k$-star point} if it is the starting point of a forwards $k$-geodesic star or the ending point of a reverse geodesic $k$-star. 

The study of coalescent geometry in $\cL$ implies that for any $(p; q) \in \Rd$, if we look at the set of all geodesics from $p$ to $q$ and remove a small ball around $p$ and $q$, then we must be left with a typical geodesic forest. Therefore if $G(p; q)$ is an exceptional graph (i.e. not the trivial two-point graph) it is because either $p$ is a $k$-star point for some $k \ge 2$ or $q$ is an $\ell$-star point for some $\ell \ge 2$ (or both). Because of this, in order to classify geodesic networks it is logical to first classify star points. To this end, in Section \ref{S:weighted-stars} we prove the following theorem. 

\begin{theorem}
	\label{T:star-computation}
Let $\operatorname{Star}_k \sset \R^2$ be the set of all $k$-star points. Then almost surely we have
$$
\dim_{1:2:3}(\operatorname{Star}_1) = 5, \quad \dim_{1:2:3}(\operatorname{Star}_2) = 4, \quad \dim_{1:2:3}(\operatorname{Star}_3) = 2, \quad \operatorname{Star}_4 = \emptyset.
$$
Here $\dim_{1:2:3}$ is Hausdorff dimension with respect to the metric
$$
d_{1:2:3}((x, s), (y, t)) = |t-s|^{1/3} + |y-x|^{1/2}.
$$	 
\end{theorem}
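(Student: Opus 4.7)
My plan is to prove the four statements separately. The case $k=1$ is trivial: every $p \in \R^2$ is a $1$-star point (take any geodesic starting or ending at $p$, which exists by \cite{DOV}), so $\operatorname{Star}_1 = \R^2$, and a direct covering computation in the $d_{1:2:3}$ metric gives $\dim_{1:2:3}(\R^2) = 5$. For $k = 2, 3$ I aim to prove matching upper and lower bounds, and for $k=4$ to show the stronger statement that $\operatorname{Star}_4 = \emptyset$ almost surely.

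For the upper bounds and the emptiness claim, the natural approach is a first moment estimate on a covering. By spatial stationarity and the $1{:}2{:}3$ scaling symmetry of $\cL$, it suffices to bound, for fixed $p$,
\[
\P\big[B_{d_{1:2:3}}(p, \epsilon) \cap \operatorname{Star}_k \ne \emptyset \big] \le C \epsilon^{\alpha_k},
\]
with $\alpha_2 = 1$, $\alpha_3 = 3$, and some $\alpha_4 > 5$. Since a compact region in $\R^2$ is covered by $O(\epsilon^{-5})$ $d_{1:2:3}$-balls of radius $\epsilon$, a union bound combined with the standard Hausdorff measure argument then yields $\dim_{1:2:3}(\operatorname{Star}_k) \le 5 - \alpha_k$, and $\operatorname{Star}_4$ is almost surely empty once $\alpha_4 > 5$. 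The event in question can be reduced, via the no-geodesic-bubbles consequence of Lemma \ref{L:rightmost-geods}, to the existence of $k$ pairwise disjoint geodesics between two horizontal time slices separated by a macroscopic time distance, with starting and ending points within $O(\epsilon)$ of fixed locations. By the coupling of $\cL$ with the parabolic Airy line ensemble, this event is controlled by the probability that the top $k$ Airy lines approach each other within $O(\epsilon)$ at a fixed spatial location; one-point estimates on the non-intersecting Airy ensemble then yield the claimed $\alpha_k$.

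For the matching lower bounds on $\dim_{1:2:3}(\operatorname{Star}_2)$ and $\dim_{1:2:3}(\operatorname{Star}_3)$ I would use a mass-distribution / Frostman argument, exhibiting a random Borel measure $\mu_k$ supported on $\operatorname{Star}_k$ whose $(5-\alpha_k - \delta)$-energy is almost surely finite for every $\delta > 0$. For $k = 2$ the natural construction integrates over rational endpoints $q$ and records the endpoints of reverse $2$-geodesic stars ending at $q$; the density estimates needed to bound the energy come from the same Airy line ensemble inputs used for the upper bound, applied in reverse. For $k = 3$ I would build $\mu_3$ by intersecting two independent $2$-star constructions coming from forward and reverse cones, using the overlap topology to ensure the intersection points are true $3$-stars rather than spurious coincidences.

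The main obstacle I anticipate is the strict inequality $\alpha_4 > 5$ needed to rule out $4$-star points entirely. For $k = 2, 3$ the first moment bound sees the correct polynomial exponent, and a matching lower bound certifies its sharpness; but for $k = 4$ one must exclude even a set of Hausdorff measure zero, so one needs a genuine gain over the ambient dimension. I expect this to follow from iterating the small-deviation estimate for the top two lines of the parabolic Airy ensemble twice via the Brownian Gibbs property, to show that the event ``the top four Airy lines all lie within $\epsilon$ of each other at a fixed height'' has probability $o(\epsilon^5)$. Verifying this Brownian Gibbs iteration, while controlling the dependence on the side boundary data that arises after two resamplings, is the step I expect to require the most care.
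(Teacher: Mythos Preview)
Your overall architecture for the upper bounds (covering plus first moment, reduced to an Airy line ensemble estimate via Brownian Gibbs) matches the paper's, and the target exponents $\alpha_2=1$, $\alpha_3=3$, $\alpha_4=6$ are correct. However, the specific event you reduce to is wrong. The existence of a $k$-star from $p$ to $(\by,s)$ with $\by\in\R^k_<$ is \emph{not} equivalent to the top $k$ lines of the ensemble $\cL^{p,s}$ being close at a single location; rather (see Lemma~\ref{L:jump-lemma}) it is equivalent to the existence of a full Gelfand--Tsetlin pattern $\bz\in\operatorname{GT}_{k-1}(\by)$ of pinch points, with $\cL^{p,s}_i(z_{i,j})=\cL^{p,s}_{i+1}(z_{i,j})$ for each $(i,j)\in T_{k-1}$. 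These pinches occur at up to $\binom{k}{2}$ distinct locations, and the exponent count only comes out right after one both estimates the probability of such a pattern (Lemma~\ref{L:ep-0100}/Corollary~\ref{C:Brownian-weighted-star}) and pays the entropy cost of discretising over its locations. Your ``iterate the two-line estimate twice'' plan for $k=4$ is aiming at the wrong event and would not obviously survive this accounting; the paper instead gets $\operatorname{Star}_4=\emptyset$ cleanly from Proposition~\ref{P:weighted-stars-upper-bound} via a product trick with $\R\times(t+1,\infty)$.

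The more serious gap is your lower bound for $k=3$. Intersecting the set of forward $2$-star points with the set of reverse $2$-star points does not produce $3$-star points: a point $p$ can perfectly well support two disjoint forward geodesics and two disjoint backward geodesics without any three of these being mutually disjoint on a common side of $p$ (and the concatenations need not even be geodesics). The overlap topology does not rescue this. The paper's lower bound is completely different and not measure-theoretic: Lemma~\ref{L:lower-bd-points'} is a \emph{topological} existence statement showing that for every weight vector $(u_1,0,u_3)$ in the open set $S^-(\bx,t)$ there is a forward $3$-star with exactly those weight differences. One then pushes the $d_{1:2:3}$ dimension forward along the locally H\"older-$(1-\ep)$ map $p\mapsto L_3\circ W^-_{\bx,t}(p)$ using Lemma~\ref{L:Hausdorff-Holder}, which immediately gives $\dim_{1:2:3}(\operatorname{Star}_3)\ge 2$. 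The $k=2$ lower bound is handled the same way (Lemma~\ref{L:lower-bd-points'}.1 plus a time coordinate), with no Frostman energy needed.
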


\textbf{Cut and paste: from geodesic stars to geodesic networks.} \qquad To move from understanding geodesic stars to understanding geodesic networks, the basic idea is as follows. 
Fix a candidate network graph $G$ whose source and sink $p$ and $q$ have degrees $k$ and $\ell$, respectively, and suppose $(p; q) = (x, s; y, t) \in N_\cL(G)$. Let $\ep > 0$ be small enough so that no branching in $\Ga(p; q)$ occurs in the intervals $[s, s+\ep]$ and $[t-\ep, t]$. Cut the geodesic network $\Ga(p; q)$ into three pieces defined on the strips
$$
\R \X [s, s+\ep], \qquad \R \X [s+\ep, t - \ep], \qquad \R \X [t-\ep, t]
$$
and let $\bx \in \R^k_<, \by \in \R^\ell_<$ be such that $\{(x_1, s+ \ep), \dots, (x_k, s + \ep)\} = \Ga(p; q) \cap (\R \X \{s+\ep\})$ and $\{(y_1, t-\ep), \dots, (y_\ell, t - \ep)\} = \Ga(p; q) \cap (\R \X \{t-\ep\})$. This splitting of $\Ga(p; q)$ leaves us with:
\begin{itemize}[nosep]
	\item A forwards geodesic $k$-star $\pi = (\pi_1, \dots, \pi_k)$ from $p$ to $(\bx, s+\ep)$.
	\item An interior network $I$ consisting of all geodesics from $(x_i, s+ \ep)$ to $(y_j, t - \ep)$ whenever those two points lie on a common geodesic from $p$ to $q$. This network has a graph structure given by taking $G$, splitting its source vertex into $k$ degree-one sources and splitting its sink vertex into $\ell$ degree-one sinks. 
	\item A reverse geodesic $\ell$-star $\tau = (\tau_1, \dots, \tau_\ell)$ from $(\by, t - \ep)$ to $q$.
\end{itemize}

Therefore the problem of determining the dimension and size of $N_\cL(G)$ is essentially the problem of determining the dimension and size of the set of all compatible triples $\pi, I, \tau$. 

To think about the size of this set, we first consider the possibilities for $I$. By the rational overlap approximation discussed above there are only countably many possibilities for $I$ so for all intents and purposes we can focus on determining geodesics whose networks use a single fixed $I$. Moreover, $I$ is homeomorphic to a network obtained by shifting $x_i, y_j$ to nearby rational points. Because of this, we expect that $I$ is generic. Here this means that $I$ can only be a forest whose interior branch points all have degree $3$; heuristically, higher degree branch points should be `lower dimensional' and so will not appear in a generic $I$. The network $I$ is a forest since there are no geodesic bubbles.

On the other hand, the choice of $I$ does put a series of constraints on the lengths of the paths $\pi_i, \tau_j$. Indeed, for all paths through $G(p; q)$ to have the same length, we require that all of the lengths
$$
\cL(p; x_i, s + \ep) + \cL(x_i, s + \ep; t-\ep, y_j) + \cL(t-\ep, y_j; q)  = \|\pi_i\|_\cL + \cL(x_i, s + \ep; t-\ep, y_j) + \|\tau_j\|_\cL
$$
are equal whenever $(x_i, s + \ep)$ and $(y_j, t - \ep)$ are connected in $\Ga(p; q) \cap (\R \X [s+\ep, t - \ep])$. Letting $|F^*|$ denote the number of \textit{bounded faces} in $\Ga(p; q)$, this puts $|F^*|$ linearly independent linear constraints on the weight vector
\begin{equation}
\label{E:pi-tau-length}
(\cL(p; x_1, s + \ep), \dots, \cL(p; x_k, s + \ep), \cL(t-\ep, y_1; q), \dots, \cL(t-\ep, y_\ell; q)),
\end{equation}
see Figure \ref{fig:cut-and-paste} for an example.
 Heuristically, if the distribution of the length vector \eqref{E:pi-tau-length} for pairs of uniformly chosen geodesic stars is uniformly spread out in $\R^{k + \ell}$, then the set of pairs of geodesic stars that satisfy these linear constraints should have dimension
\begin{equation}
\label{E:123-Star-k}
\dim_{1:2:3}(\operatorname{Star}_k) + \dim_{1:2:3}(\operatorname{Star}_\ell) - |F^*|.
\end{equation}
In other words, the dimension should reduce by one for every linearly independent linear constraint. The fact that the dimension should decrease by exactly one is coming from the fact that $d_{1:2:3}$ is a proxy for the metric dimension, so the function mapping $(p; q)$ to the length vector in \eqref{E:pi-tau-length} is H\"older-$(1-\ep)$ for all $\ep > 0$.

By using Euler's formula for planar graphs, and the fact that all interior vertices of $G$ have degree $3$, we can rewrite \eqref{E:123-Star-k} as 
\begin{equation}
\label{E:dimension-fomula}
\dim_{1:2:3}(\operatorname{Star}_k) + \dim_{1:2:3}(\operatorname{Star}_\ell) + 2 - \frac{|V| + k + \ell}{2},
\end{equation}
which by Theorem \ref{T:star-computation} equals $d(G)$ in Theorem \ref{T:Hausdorff-dimension}.

\begin{figure}
	\centering
	\includegraphics[scale=1.5]{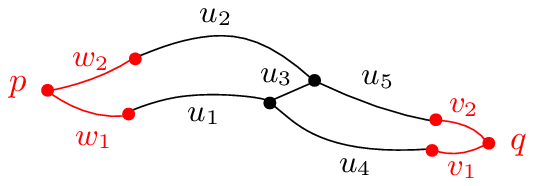}
	\caption{An example of a potential configuration of geodesics, with geodesic stars around the endpoints indicated in red, and lengths labelled by $w_i, u_i, v_i$. For all paths here to be geodesics, we require that $w_2 + u_2 = w_1 + u_1 + u_3$, and $u_3 + u_5 + v_2 = u_4 + v_1$. In other words, we have two linear restrictions on the weights of the stars at $p$ and $q$: one for each of the bounded faces of the candidate network.}
	\label{fig:cut-and-paste}
\end{figure}

\FloatBarrier

Making the above discussion rigorous is delicate. Indeed, we first need a more technical version of Theorem \ref{T:star-computation} that also takes into account the weight vector \eqref{E:pi-tau-length}. We prove this more technical statement (see Propositions \ref{P:star-lower-bd} and \ref{P:weighted-stars-upper-bound}) while showing Theorem \ref{T:star-computation}. Next, we need to understand the sizes of candidate triples $(\pi, I, \tau)$. There are many nuances here, but the mains ones involve proving sufficient independence of the weights of $\pi, I, \tau$, and giving methods to `cut' and `paste' that allow us to go back and forth between networks and triples $\pi, I, \tau$.

\subsection{The Brownian map and Liouville quantum gravity}
\label{S:speculation}

As mentioned in Remark \ref{R:brownian-map-i}, there are random surface models that exhibit geodesic geometry that is remarkably similar to that of the directed landscape, and many theorems for $\cL$ have parallels in these models. Geometric proof techniques for studying geodesics on random surfaces are also of use in studying the directed landscape. For example, we have taken inspiration from the topological ideas in \cite{gwynne2021geodesic} in proving bounds in Sections \ref{S:lower-bd}, \ref{S:upper-size} and the central role coalescence plays in our proofs is similar to the central role it plays in \cite{gall2010geodesics, angel2017stability, miller2020geodesics, gwynne2021geodesic}. In this section we discuss the similarities between the directed landscape and random surfaces with the goal of developing (conjectural) versions of Theorems \ref{T:geodesic-networks} and \ref{T:Hausdorff-dimension} for these models.

The most well-understood random surface model is the Brownian map. The Brownian map is a random metric space that arises as the scaling limit of uniformly chosen random planar maps \cite{le2013uniqueness, miermont2013brownian}, see \cite{le2019brownian} and references therein for more background.
The Brownian map is part of a family of random surfaces known as Liouville quantum gravity (LQG) indexed by a parameter $\ga$ (the Brownian map corresponds to $\ga = \sqrt{8/3}$).
Just as the Brownian map arises as the scaling limit of uniformly chosen chosen random planar maps, general LQG models are expected to arise as the scaling limit of random planar maps weighted by a partition function for a statistical mechanics model.

Roughly speaking, Liouville quantum gravity on a domain $U \sset \C$ indexed by $\ga \in (0, 2)$ is the random Riemannian manifold with Riemannian metric tensor $e^{\ga h(x + i y)} (d x^2 + dy^2)$, where $h$ is a Gaussian free field.  This is not well-defined since $h$ is not a function. However, it is possible to make sense of LQG rigorously using a regularization procedure, see \cite{berestycki2015introduction, gwynne2019random, ding2021introduction} and references therein for more background. 

 Geodesics in the Brownian map and in LQG for $\ga \in (0, 2)$ have been studied in great detail. Notable result include the classification of all geodesic networks starting at a fixed point in the Brownian map \cite{gall2010geodesics}; the classification of dense geodesic networks in the Brownian map \cite{angel2017stability} and in LQG for $\ga \in (0, 2)$ \cite{gwynne2021geodesic}; and the determination of the Hausdorff dimension of pairs of points in the Brownian map connected by $k$ geodesics for all $1 \le k \le 9$ (as in the directed landscape, no pairs are connected by more than $9$ geodesics) \cite{miller2020geodesics}. The paper \cite{miller2020geodesics} also goes much further, giving Hausdorff dimension upper bounds that are expected to be sharp and finding restrictions on the types of geodesic networks that appear in the Brownian map.

Our belief is that there are only a few qualitative mechanisms driving the geodesic geometry in these models: coalescence (see \cite{gall2010geodesics, gwynne2020confluence,  miller2020geodesics, gwynne2021geodesic} for results in the Brownian map and LQG), planarity, and rough independence of the metric in different spatial regions. Because of this, it is reasonable to expect that much of the geometric picture driving Theorems \ref{T:geodesic-networks} and \ref{T:Hausdorff-dimension} goes through for these models.

 Indeed, if a graph $G$ is a network graph in $\ga$-LQG then we expect that $G$ should satisfy points $1, 2, 3,$ and $4$ of Theorem \ref{T:geodesic-networks}. 
 Point $5$ should not necessarily hold since this point relates to the possible dimensions of star points, a quantitative feature which is not the same in the directed landscape and in these other models.
 The analogue of $\dim_{1:2:3}$ in these models is simply the Hausdorff dimension in the LQG metric, which we will denote by $\dim$. We expect that the formulas \eqref{E:123-Star-k}/\eqref{E:dimension-fomula} should also hold in these models with $\dim$ in place of $\dim_{1:2:3}$.
 
 For the special case of the Brownian map, the star sets are known to have dimensions
 \begin{equation}
 \label{E:LGMQ}
\dim(\operatorname{Star_k}) = 5 -k
 \end{equation}
 for $k = 1, 2, 3, 4, 5$. No geodesic $6$-star points exist and the question of whether geodesic $5$-star points exist is open. The upper bound here was proven in \cite{miller2020geodesics} and the lower bound in \cite{le2022geodesic}. This suggests the following formula for the Hausdorff dimension of points whose network graph is $G = (V, E)$:
\begin{equation}
\label{E:Brownian-map-dimension}
10 - k - \ell - |F^*| = 12 - \frac{3(k + \ell) + |V|}{2}.
\end{equation}
Formula \eqref{E:Brownian-map-dimension} agrees with the upper bound on the Hausdorff dimension obtained in \cite[Theorem 1.5]{miller2020geodesics}. Note that their formula is expressed in terms of a different graph invariant (splitting points). 

Formula \eqref{E:Brownian-map-dimension} is non-negative whenever $k, \ell \le 3$ and so we expect that each of the $27$ network graphs in the directed landscape also occur the Brownian map. We consider the cases when $k \vee \ell \ge 4$. Without loss of generality, assume $k \ge 4$. 
Formula \eqref{E:Brownian-map-dimension} is negative whenever $k + \ell \ge 8$, so no geodesic networks should exist with these parameters in the Brownian map. Next, if we think about the argument leading to \eqref{E:123-Star-k} carefully, we can observe if a bounded face of $G$ is incident to the source vertex $p$ but not the sink $q$, then the linear constraint from that face only involves the weight of the $k$-geodesic star at $p$. Therefore if $|F^*_p|$ is the number of bounded faces of $G$ incident to $p$ but not $q$, we should require $ \dim(\operatorname{Star_k}) - |F^*_p| \ge 0$ for the network to exist. Since $|F^*_p| \ge k - \ell$, by \eqref{E:LGMQ} a geodesic network $G$ will not exist if $k \ge 4, k - \ell \ge 2$. 

Therefore the only new geodesic networks in the Brownian map will have $k=4, \ell = 3$, and $|V| \le 3$. Up to transpose-isomorphism there is exactly one such network, exhibited in Figure \ref{fig:speculative-networks}(a). In summary, we expect that there are exactly $27 + 1 = 28$ network graphs that appear in the Brownian map: the $27$ network graphs that appear in the directed landscape, along with the single network graph exhibited in Figure \ref{fig:speculative-networks}(a).

\begin{figure}
	\centering
	\begin{subfigure}[t]{2cm}
		\centering
		\includegraphics[height=4cm]{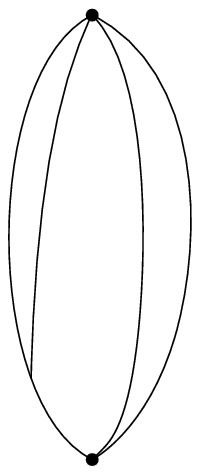}
		\caption{}
	\end{subfigure}
	\begin{subfigure}[t]{4cm}
		\centering
		\includegraphics[height=4cm]{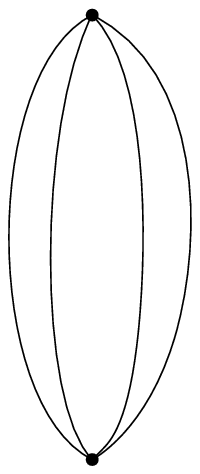}
		\caption{}
	\end{subfigure}
	\caption{The two geodesic networks that we expect to appear in certain models of Liouville quantum gravity, but do not appear in the directed landscape.}
	\label{fig:speculative-networks}
\end{figure}

For general $\ga$-LQG with $\ga \in (0, 2)$, the picture becomes hazier because star dimensions are not known. However, the set of points joined to the origin by at least three geodesics is known to be countable \cite{gwynne2021geodesic}. This fact and a coalescence heuristic strongly suggests that the set of points whose network is that of Figure \ref{fig:normal-networks}(f) is countable as well. By formula \eqref{E:dimension-fomula}, we can then conclude that $\dim (\operatorname{Star_3})$ should equal $2$, as is the case for the Brownian map and the directed landscape. Now, it is reasonable to expect that for $k \ge 4$ we have
 \begin{equation}
 \label{E:decreasing-assn}
\dim (\operatorname{Star_k}) < \dim (\operatorname{Star_3}) = 2 < \dim (\operatorname{Star_2}) < \dim (\operatorname{Star_1}).
 \end{equation}
 Under assumption \eqref{E:decreasing-assn}, formula \eqref{E:dimension-fomula} will be non-negative for all of the $27$ networks in $\cL$, suggesting that each of these networks appear in $\ga$-LQG. Next, as in the Brownian map case we do not expect to see networks with
 $
 \dim(\operatorname{Star_k}) - |F_p^*| \le \dim(\operatorname{Star_k}) - (k - \ell) < 0.
 $
 By \eqref{E:decreasing-assn}, this rules out network graphs $G$ with with $k \ge 4, \ell \le 2$. For a graph $G = (V, E)$ with $k \ge 4, \ell \ge 3$, under \eqref{E:decreasing-assn} and \eqref{E:dimension-fomula}, $G$ cannot be a geodesic network unless $|V| + k + \ell < 12$, leaving us with only possibilities when $k = 4, \ell = 3, |V| \le 4$ or $k = \ell = 4, |V| \le 3$. The two possible networks satisfying these constraints are listed in Figure \ref{fig:speculative-networks}. In summary, we expect the following.
\begin{conj}
Fix $\ga \in (0, 2)$. There are either $27, 28,$ or $29$ network graphs that appear in $\ga$-LQG: the $27$ networks that appear in the directed landscape, along with the network in Figure \ref{fig:speculative-networks}(a) if $\dim(\operatorname{Star}_4) \ge 1$ and the network in Figure \ref{fig:speculative-networks}(b) if $\dim(\operatorname{Star}_4) \ge 3/2$. Geodesic networks occur with dimensions given by formula \eqref{E:123-Star-k}/\eqref{E:dimension-fomula}.
\end{conj}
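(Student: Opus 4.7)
The plan is to mirror the directed-landscape strategy behind Theorems \ref{T:geodesic-networks} and \ref{T:Hausdorff-dimension} in the LQG setting, being careful to track where quantitative inputs about $\gamma$-LQG are needed. The three tasks are: (i) show every network graph in $\gamma$-LQG satisfies the structural conditions of Theorem \ref{T:geodesic-networks}, together with the star-dimension constraints $\dim(\mathrm{Star}_k)\ge |F_p^*|$ and the symmetric bound at $q$; (ii) show each candidate graph $G$ surviving those constraints actually occurs and its point set has dimension given by \eqref{E:dimension-fomula}; (iii) rule everything else out. Since the enumeration of graphs appearing is then purely combinatorial given the star dimensions, the whole thing reduces to porting the cut-and-paste framework of Section \ref{S:geometric-ideas} to LQG.

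For task (i), the required geometric inputs are already (conditionally) in the LQG literature: confluence of geodesics gives no geodesic bubbles and rational overlap approximation \cite{gwynne2020confluence, gwynne2021geodesic}, so $\Gamma(p;q)$ cut away from its endpoints is a generic planar forest with degree-$3$ interior branch points, and planarity comes for free from the surface structure. Together with finiteness of the number of geodesics between a typical pair, these reproduce conditions $1$--$4$ of Theorem \ref{T:geodesic-networks} verbatim. The star-dimension constraints at $p$ and $q$ enter through the same face-counting argument as in \eqref{E:123-Star-k}: each bounded face of $G$ contributes a linear constraint on the weight vector \eqref{E:pi-tau-length}, and faces incident only to $p$ constrain only the forward star.

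For task (ii), I would adapt the construction underlying Propositions \ref{P:star-lower-bd} and \ref{P:weighted-stars-upper-bound}: given candidate $G$, take a forward $k$-star $\pi$ at $p$, a reverse $\ell$-star $\tau$ at $q$, and a rational interior network $I$ with the correct combinatorics of $G$, and then impose the $|F^*|$ linear equations among the lengths in \eqref{E:pi-tau-length} coming from the bounded faces of $G$. Getting the dimension right requires two ingredients beyond the qualitative star results: a weighted star dimension lower bound tracking the joint law of the length vector, and approximate independence of the forward star, the interior network, and the reverse star. The latter should follow from near-independence of the GFF in disjoint annular regions combined with local absolute continuity, both standard LQG tools, provided they can be set up to interact cleanly with the metric. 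For the upper bound (task (iii)), a Marstrand-style projection argument together with the linear constraints rules out points realising $G$ whenever \eqref{E:dimension-fomula} is negative, and the monotonicity hypothesis \eqref{E:decreasing-assn} narrows the surviving graphs with $k\vee \ell\ge 4$ to precisely the two in Figure \ref{fig:speculative-networks}. The network in Figure \ref{fig:speculative-networks}(a) has $k=4,\ell=3,|V|=2$ so \eqref{E:dimension-fomula} is $\dim(\mathrm{Star}_4)-1$, and that in Figure \ref{fig:speculative-networks}(b) has $k=\ell=4,|V|=2$ so \eqref{E:dimension-fomula} is $2\dim(\mathrm{Star}_4)-3$; this is exactly where the $27/28/29$ trichotomy comes from.

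The hard part is the quantitative control of star dimensions for general $\gamma\in(0,2)$: the lower bound \eqref{E:LGMQ} is specific to the Brownian map \cite{le2022geodesic}, and extracting even the joint weight-vector lower bound needed for task (ii) appears to require a $\gamma$-LQG analogue of those coupling methods. A secondary but serious obstacle is proving enough independence between the two endpoint stars and the interior forest to apply the projection/Marstrand step with sharp dimension: confluence gives qualitative independence, but the quantitative version sensitive to the weight vector is much more delicate. Verifying \eqref{E:decreasing-assn} for all $\gamma$, or just establishing the specific thresholds $\dim(\mathrm{Star}_4)\ge 1$ and $\ge 3/2$, would then refine the trichotomy to a single number.
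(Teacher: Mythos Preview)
This statement is a \emph{conjecture}, not a theorem: the paper offers no proof, only the heuristic reasoning in Section \ref{S:speculation} leading up to it. There is therefore nothing to compare your proposal against. What you have written is not a proof but a research programme, and you say as much yourself in the final paragraph when you identify the missing ingredients (star dimensions for general $\gamma$, a quantitative independence between the two endpoint stars and the interior forest, and the monotonicity \eqref{E:decreasing-assn}). Those are precisely the reasons the statement is conjectural; none of them is currently known for general $\gamma$-LQG, and your outline does not supply them.

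One small correction: the network in Figure \ref{fig:speculative-networks}(a) has $|V|=3$, not $|V|=2$. With $k=4$ and $\ell=3$ a two-vertex graph is impossible since every edge would join $p$ to $q$ and force $\deg(p)=\deg(q)$. With one interior vertex the unique solution has edges $(p,q)$ twice, $(p,v)$ twice, $(v,q)$ once. Your dimension calculation $\dim(\operatorname{Star}_4)-1$ is nonetheless correct, since plugging $|V|=3$ into \eqref{E:dimension-fomula} with $\dim(\operatorname{Star}_3)=2$ gives $\dim(\operatorname{Star}_4)+2+2-(3+7)/2=\dim(\operatorname{Star}_4)-1$. The computation for Figure \ref{fig:speculative-networks}(b) is fine: there $|V|=2$ genuinely holds (four parallel edges from $p$ to $q$), giving $2\dim(\operatorname{Star}_4)+2-(2+8)/2=2\dim(\operatorname{Star}_4)-3$.
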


\subsection{Previous work}
\label{S:background}

Here we give a brief review of the literature on the KPZ universality class and the directed landscape, focusing on papers most closely related to the present work. For a gentle introduction to KPZ suitable for a newcomer to the area, see \cite{romik2015surprising} or the introductory articles \cite{corwin2016kardar, ganguly2021random}. Review articles and books that go into more depth include \cite{ferrari2010random, quastel2011introduction,weiss2017reflected, zygouras2018some}.

Most of the work on the KPZ universality class over the past twenty-five years has been devoted to studying exact formulas for integrable models. This work gives exact formulas for many marginals of the directed landscape.
For example, the Baik-Deift-Johansson theorem \cite{baik1999distribution} on the length of the longest increasing subsequence identified the one-point distribution $\cL(0,0; 0, 1)$ as a GUE Tracy-Widom random variable, see also \cite{johansson2000shape}. Pr\"ahofer and Spohn \cite{prahofer2002scale} proved convergence of the multi-layer PNG droplet to the parabolic Airy line ensemble $\fA = \{\fA_i : \R \to \R, i \in \N\}$. The function $\fA_1$ is the marginal $y \mapsto \mathcal L(0, 0; y, 1)$.
More recently, Johansson and Rahman \cite{johansson2019multi} and Liu \cite{liu2019multi} independently found formulas for the joint distribution of $\{\cL(p; q_i) : i \in \{1, \dots, k\}\}$, building on work from \cite{johansson2017two, johansson2018two,baik2017multi}. Matetski, Quastel, and Remenik \cite{matetski2016kpz} derived a formula for the distribution of $h_t(y) = \max_{x \in \R} g(x) + \cL(x,0; y, t)$ for a fixed function $g$. The function $h_t$ is the KPZ fixed point: it is the scaling limit for random growth models in the KPZ universality class.

To answer many questions about the KPZ universality class, it is useful to have probabilistic descriptions of the limit objects. In this direction, Corwin and Hammond \cite{CH} rigorously showed that the parabolic Airy line ensemble $\fA$ is a system of nonintersecting curves by showing that $\fA$ satisfies a certain Brownian Gibbs resampling property, see Section \ref{S:ALE-AS}. This property turns out to be extremely useful for understanding probabilistic questions about $\cL$. Indeed, \cite{CH} used it to quickly show that $\fA_1$ is locally Brownian. Moreover, it can be used to give strong regularity properties and two-point estimates for $\cL$ e.g. see \cite{hammond2016brownian, hammond2017modulus, calvert2019brownian, dauvergne2021bulk, dauvergne2023wiener}. The Airy sheet $\cS(x, y) = \cL(x, 0; y, 1)$ was built from $\fA$ in \cite{DOV} by taking the limit of a certain combinatorial isometry, with a probabilistic analysis of $\fA$ used as a key input. We require probabilistic analysis of $\fA$ based on results from \cite{dauvergne2021disjoint, dauvergne2023wiener} at various points in this paper, most crucially in the proof of the upper bound in Theorem \ref{T:star-computation}.

Since \cite{DOV} first appeared online, there has been a period of intense research on the structure of geodesics in $\cL$. The paper \cite{DOV} proved basic properties of geodesics: almost sure uniqueness and H\"older continuity. Bates, Ganguly, and Hammond \cite{bates2019hausdorff} showed that the Euclidean Hausdorff dimension of the set of pairs $x, y \in \R$ for which there are two disjoint geodesics from $(x, 0)$ to $(y, 1)$ equals $1/2$. Ganguly and Zhang \cite{ganguly2022fractal} showed that the Euclidean Hausdorff dimension of the set of points $p \in \R^2$ from which there is a geodesic $2$-star to a fixed pair of points $(x_1, 0), (x_2, 0)$ has Hausdorff dimension $5/3$. Bhatia \cite{bhatia2023duality} showed that the set of points $p \in \R^2$ that have $3$ distinct infinite geodesics in a fixed direction has Euclidean Hausdorff dimension $4/3$, and the set of points $p \in \R^2$ that have $2$ distinct infinite geodesics in a fixed direction is countable. See also \cite{rahman2021infinite, dauvergne2020three, seppalainen2021global, busani2022stationary, liu2022one, bhatia2022atypical} for more work on geodesics and \cite{gangulyfractal} for a general review of fractal geometry in the directed landscape.

\textbf{Organization of the paper.} \qquad Section \ref{S:prelim} is a preliminary section. Section \ref{S:coalescent} contains basics about coalescent geometry in $\cL$, and contains the weak estimate that every pair of points is connected by at most $121$ geodesics. Section \ref{S:weighted-stars} is devoted to understanding geodesic stars. Section \ref{S:at-most-27} allows us to conclude that there are at most $27$ geodesic networks in $\cL$; proof-wise, it is fairly similar to the upper bounds in Section \ref{S:weighted-stars}. Section \ref{S:networks-dimensions} makes rigorous the cut and paste heuristics leading to formula \eqref{E:123-Star-k}. This section completes the proof of Theorem \ref{T:geodesic-networks} and parts $1$ and $2$ of Theorem \ref{T:Hausdorff-dimension}. Section \ref{S:dense} proves Theorem \ref{T:Hausdorff-dimension}.3. The final Section \ref{S:botany} is a bookkeeping section where we enumerate the $27$ networks (with justification). It contains figures of the $27$ networks that may interest the reader. 

\textbf{Notational conventions.} \qquad All Brownian motions and Brownian bridges throughout the paper will have diffusion parameter $2$, as is the convention for KPZ limits. Large constants are typically denoted $c, c', c_1$ etc. and may change from line to line within proofs. We use a subscript $a, k$ etc. (i.e. $c_{a, k}$) if a constant depends on parameters $a, k$ etc. We write $\II{m, n}$ for the integer interval $\{m, \dots, n\}$ and write $\cC^k(X)$ for the space of continuous functions from a topological space $X$ to $\R^k$ with the topology of uniform-on-compact convergence. When $k=1$ we omit it from the notation. We write $\R^k_< = \{\bx \in \R^k: x_1 < \dots < x_k\}$ and similarly use the notation $\R^k_\le, \R^k_\ge$ etc. We often write $a^k := (a, \dots, a) \in \R^k$.

\textbf{Acknowledgements.} \qquad I would like to thank B\'alint Vir\'ag for many insightful discussions. Indeed, this paper arose out of an afternoon of conjectural enumeration from mid-2019 resulting in a set of $27$ scribbled diagrams we labelled `Botany of the directed landscape'. I would also like to thank Manan Bhatia, Ofer Busani, and Ewain Gwynne for helpful comments regarding a previous version of the paper.
\section{Preliminaries}
\label{S:prelim}
In this section we recall the relevant results and background material that we will need
in this paper. This section is rather lengthy, but for understanding all but the most technical inputs in the paper (Lemma \ref{L:sparse-space-1}, Section \ref{S:upper-bounds}, Section \ref{S:at-most-27}, and Section \ref{S:sheet-tools}), only Section \ref{S:landscape} is necessary.

\subsection{The directed landscape}
\label{S:landscape}

 We start with the characterization of the directed landscape $\cL$ from \cite{DOV}. The characterization is in terms of its two-dimensional marginal $\cS(x, y) := \cL(x, 0; y, 1)$ known as the Airy sheet. The Airy sheet itself has a more technical definition that we will give in Section \ref{S:ALE-AS}. In order to define the directed landscape and for the remainder of the paper we say that $\cS_s$ is an \textbf{Airy sheet of scale $s$} if 
$$
\cS_s(x, y) \eqd s\cS(x s^{-2}, y s^{-2})
$$
jointly over all $x, y \in \R$.

\begin{definition}
	\label{D:L-unique}
	The directed landscape $\cL:\Rd \to \R$ is the unique, in law, random function in $\cC(\Rd)$ satisfying
	\begin{enumerate}[label=\arabic*.]
		\item (Airy sheet marginals) For any $t\in \R$ and $s>0$ we have 
		$$
		\mathcal{\cL}(x, t; y,t+s^3) \eqd \cS_s(x, y) 
		$$
		jointly in all $x, y$.
		\item (Independent increments) For any disjoint time intervals $\{(t_i, s_i) : i \in \{1, \dots k\}\}$, the random functions
		$
		\cL(\cdot, t_i ; \cdot, s_i), i \in \{1, \dots, k \}
		$
		are independent.
		\item (Metric composition law) Almost surely, for any $r<s<t$ and $x, y \in \R$ we have that
		$$
		\cL(x,r;y,t)=\max_{z \in \mathbb R} [\cL(x,r;z,s)+\cL(z,s;y,t)].
		$$
	\end{enumerate}
\end{definition} 

The metric composition for $\cL$ implies the reverse triangle inequality \eqref{E:triangle}. The directed landscape also satisfies the well-known \textbf{quadrangle inequality}, which states that for all $x_1 < x_2$, $y_1 < y_2$, and $s < t$ we have
\begin{equation}
\label{E:quadrangle}
\cL(x_1, s; y_2, t) + \cL(x_2, s; y_1, t) \le \cL(x_1, s; y_1, t) + \cL(x_2, s; y_2, t)
\end{equation}
The quadrangle inequality follows from a straightforward path-crossing argument, e.g. see \cite[Lemma 9.1]{DOV}.
 Note that the reverse triangle inequality \eqref{E:triangle} is an equality at all points along a geodesic. If $\pi:[s, t] \to \R$ is a geodesic, then for any partition $s = r_0 < r_1 < \dots < r_k = t$ we have
\begin{equation}
\label{E:tri-ineq-is-equality}
\cL(\bar \pi(s); \bar \pi(t)) = \sum_{i=1}^k \cL(\bar \pi(r_{i-1}); \bar \pi(r_i)).
\end{equation}
The directed landscape has invariance properties which we use throughout the paper. 

\begin{lemma} [Lemma 10.2, \cite{DOV} and Proposition 1.23, \cite{dauvergne2021scaling}]
	\label{L:invariance} We have the following equalities in distribution as random functions in $\cC(\R^4_\uparrow)$. Here $r, c \in \R$, and $q > 0$.
	\begin{itemize}[nosep]
		\item[1.] Time stationarity.
		$$
		\displaystyle
		\cL(x, t ; y, t + s) \eqd \cL(x, t + r ; y, t + s + r).
		$$
		\item[2.] Spatial stationarity.
		$$
		\cL(x, t ; y, t + s) \eqd \cL(x + c, t; y + c, t + s).
		$$
		\item[3.] Flip symmetries.
		$$
		\cL(x, s; y, t) \eqd \cL(-x, s; -y, t) \eqd \cL(y, -t; x, -s). 
		$$
		\item[4.] Skew stationarity.
		$$
		\cL(x, t ; y, t + s) \eqd \cL(x + ct, t; y + ct + sc, t + s) + s^{-1}[(x - y)^2 - (x - y - sc)^2].
		$$
		\item[5.] $1:2:3$ rescaling.
		$$
		\cL(x, t ; y, t + s) \eqd  q \cL(q^{-2} x, q^{-3}t; q^{-2} y, q^{-3}(t + s)).
		$$
	\end{itemize}
\end{lemma}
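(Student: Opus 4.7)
The plan is to reduce each of the five invariances to the uniqueness-in-law characterization of $\cL$ stated in Definition \ref{D:L-unique}. For each transformation $T$ of $\Rd$ together with its associated value-correction, I would define the candidate process $\tilde\cL$ obtained by pulling $\cL$ through $T$ (plus correction), then verify that $\tilde\cL$ is almost surely in $\cC(\Rd)$ and satisfies the three axioms (Airy sheet marginals, independence across disjoint time strips, metric composition law). Uniqueness then gives $\tilde\cL \eqd \cL$, which is exactly the stated identity in law. Continuity and independence across disjoint strips are preserved under any of the five transformations because each acts separately on time and space and sends disjoint time intervals to disjoint time intervals; so the real content in each case is (i) the Airy sheet marginal check and (ii) the metric composition check.

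For parts 1, 2, and 3 the Airy sheet input is immediate: spatial stationarity of $\cS$ handles parts 1--2, and the two flip symmetries of $\cS$ (i.e. $\cS(x,y)\eqd\cS(-x,-y)$ and $\cS(x,y)\eqd\cS(y,x)$) handle part 3. Both are consequences of the symmetries of the parabolic Airy line ensemble $\fA$ out of which $\cS$ is built in \cite{DOV}. The composition check for the two flips reduces to a change of variables: for the time-reversal flip $(x,s;y,t)\mapsto (y,-t;x,-s)$, a geodesic from $p$ to $q$ passing through $(z,s)$ at an intermediate time corresponds after reversal to a path from the reversed endpoint of $q$ to the reversed endpoint of $p$ passing through $(z,-s)$, and the $\max$ over intermediate points is preserved.

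For part 5 (scaling), the Airy sheet marginal check is a straightforward substitution using the defining identity $\cS_s(x,y)\eqd s\cS(xs^{-2}, ys^{-2})$: if the RHS has time increment $q^{-3}\Delta t$ then it equals $q$ times an Airy sheet of scale $(q^{-3}\Delta t)^{1/3} = q^{-1}\Delta t^{1/3}$, and the prefactor $q$ exactly rescales this to scale $\Delta t^{1/3}$, matching the LHS. The composition check is a clean change of variable $z \mapsto q^{-2}z$ in the max.

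The main obstacle is part 4 (skew stationarity), because it requires the skew invariance of the Airy sheet itself, namely
\[
\cS(x, y) \eqd \cS(x, y+c) + (x-y)^2 - (x-y-c)^2
\]
jointly in $x,y$. This is not a consequence of the Brownian Gibbs property in a one-line way; it is the sheet-level manifestation of the Brownian drift/parabolic reweighting built into $\fA$, and in \cite{DOV} it is derived by tracking what a constant shear does to the RSK/combinatorial isometry that constructs $\cS$ from $\fA$. Assuming this input, the verification of composition is the clean calculation: with $\tilde\cL(a,r;b,t) = \cL(a+cr,r;b+ct,t) + (t-r)^{-1}[(a-b)^2 - (a-b-(t-r)c)^2]$, the quadratic corrections telescope under the change of variable $z \mapsto z+cs$ in the intermediate max, and independence of the corresponding strips gives the required joint identity. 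So operationally my proof would cite (or reprove) the skew invariance of $\cS$ and then package the five statements uniformly via the characterization theorem.
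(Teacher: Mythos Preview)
The paper does not give its own proof of this lemma: it is stated with attribution to \cite[Lemma 10.2]{DOV} and \cite[Proposition 1.23]{dauvergne2021scaling} and used as a black box. Your outline is correct and is essentially the argument in those references---each symmetry is verified by checking that the transformed process satisfies the three defining axioms of Definition~\ref{D:L-unique} and then invoking uniqueness in law; the only substantive input, as you identify, is the skew invariance of the Airy sheet for part 4, which is established in \cite{DOV}.
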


\begin{remark}
\label{R:ergodicity}
The directed landscape $\cL$ is strong mixing with respect to the time shift in Lemma \ref{L:invariance}.1 by the independence of time increments, it is strong mixing with respect to the spatial shift in Lemma \ref{L:invariance}.2 by \cite[Proposition 2.6]{dauvergne2022non}, and it is ergodic with respect to the $1:2:3$ rescaling in Lemma \ref{L:invariance}.5 by \cite[Proposition 2.5]{dauvergne2022non}. 
\end{remark}

We will also need a series of regularity results about $\cL$. The following proposition states that the directed landscape is H\"older $1/2^-$ in space and H\"older $1/3^-$ in time. This proposition is motivation for the definition of the metric $d_{1:2:3}$.

\begin{prop}[Proposition 10.5, \cite{DOV}]
	\label{P:mod-land-i}
	Let $\mathcal R(x,t;y,t+s)=\cL(x,t;y,t+s) + (x-y)^2/s$ and let $K\subset \R^4_\uparrow$ be a compact set. Then for all $u = (x, s; y, t), v = (x', s'; y' ,t') \in K$ we have
	$$
	|\mathcal R(u) - \mathcal R(v)| \le C \lf(\tau^{1/3}\log^{2/3}(\tau^{-1}+1) + \xi^{1/2}\log^{1/2}(\xi^{-1}+1) \rg),
	$$
	where
	$$
	\xi = ||(x, y) - (x', y')||_\infty \qquad \text{ and } \qquad \tau = ||(s,t) - (s', t')||_\infty.
	$$
	Here
	$C$ is a random constant depending on $K$ with $\E a^{C^{3/2}} < \infty$  for some $a > 1$.
\end{prop}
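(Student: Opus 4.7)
The plan is to obtain the modulus of continuity by combining precise tail bounds on one- and two-point increments of $\mathcal R$ with a Kolmogorov--Chentsov chaining argument on a dyadic grid in $K$.

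First I would use the invariance properties from Lemma \ref{L:invariance} to reduce the problem. By the $1{:}2{:}3$ rescaling and time/space stationarity, it suffices to prove the bound on a fixed compact box, say $K_0 = \{(x, s; y, t) \in \Rd : x, y \in [0, 1],\ s, t \in [0, 1]\}$, with the general $K$ recovered by rescaling (this introduces a constant depending on $K$). The parabolic correction in the definition of $\mathcal R$ exists precisely to make $\mathcal R$ increment-stationary modulo lower-order terms, so it is natural to work with $\mathcal R$ directly.

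Next I would establish tail bounds on two-point increments in the three ``elementary'' directions (moving $y$, $t$, or both $s$ and $t$). For a purely \emph{spatial} increment $\mathcal R(x, 0; y, 1) - \mathcal R(x, 0; y', 1)$, I would invoke the Airy sheet marginal: the function $y \mapsto \cS(x, y) + y^2$ differs from $\fA_1(y)$ only by an independent term not involving $y$, so by the Brownian-Gibbs analysis of $\fA$ (locally absolutely continuous to Brownian motion with $2$-diffusion, cf.\ the work of Corwin--Hammond cited as \cite{CH}), one obtains
\begin{equation*}
\P\bigl(|\mathcal R(x, 0; y, 1) - \mathcal R(x, 0; y', 1)| > \lambda |y - y'|^{1/2}\bigr) \le c_1 e^{-c_2 \lambda^{3/2}}
\end{equation*}
on $K_0$, and analogously for increments in $x$. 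For a purely \emph{temporal} increment, I would apply the metric composition law: writing
\begin{equation*}
\cL(x, 0; y, t+\tau) = \max_{z \in \R}\bigl[\cL(x, 0; z, t) + \cL(z, t; y, t+\tau)\bigr],
\end{equation*}
the maximizer concentrates (with exponential tails) in a window of size $\tau^{2/3}$ about $y$, while $\cL(z, t; z, t+\tau)$ itself has fluctuations of order $\tau^{1/3}$ with GUE Tracy--Widom tails of shape $e^{-c\lambda^{3/2}}$. Combining these yields
\begin{equation*}
\P\bigl(|\mathcal R(x, 0; y, t) - \mathcal R(x, 0; y, t+\tau)| > \lambda \tau^{1/3}\bigr) \le c_1 e^{-c_2 \lambda^{3/2}},
\end{equation*}
and similarly for increments in $s$.

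With these estimates in hand, I would apply a Kolmogorov-type chaining over the dyadic partition of $K_0$. Here one must chain separately in the two ``types'' of coordinates (time at rate $\tau^{1/3}$, space at rate $\xi^{1/2}$) because of the anisotropic scaling; this is exactly what produces the two additive terms $\tau^{1/3}\log^{2/3}(\tau^{-1}+1)$ and $\xi^{1/2}\log^{1/2}(\xi^{-1}+1)$ in the bound. The $\log$ exponents $2/3$ and $1/2$ are the inverses of the exponents $3/2$ and $2$ that arise in the Gaussian-like tails above, and the $\E a^{C^{3/2}} < \infty$ integrability of the random constant $C$ reflects the $e^{-c\lambda^{3/2}}$ shape of the temporal tail (which dominates).

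The main obstacle is the temporal increment bound: making the localization of the maximizer in the composition law quantitative to the scale $\tau^{2/3}$, with stretched-exponential error, and showing that the $\cL(z, t; z, t+\tau)$ fluctuations are uniformly controlled in $z$, requires nontrivial input from the Brownian Gibbs structure of $\fA$. The spatial bounds are comparatively routine once the parabolic Airy line ensemble is in hand, and the final chaining step is standard.
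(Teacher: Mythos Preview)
This statement is not proved in the paper: it is quoted verbatim as Proposition 10.5 of \cite{DOV} and used as a black box throughout. There is therefore no ``paper's own proof'' to compare against; the paper simply cites the result.

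That said, your outlined approach---increment tail bounds plus anisotropic Kolmogorov--Chentsov chaining---is indeed the route taken in \cite{DOV}. One small inconsistency: you write the spatial-increment tail as $e^{-c_2\lambda^{3/2}}$ but then correctly note that the $\log^{1/2}$ factor in the spatial modulus comes from a Gaussian tail of shape $e^{-c\lambda^2}$. The spatial increments of $\mathcal R$ are locally Brownian (via the Brownian Gibbs property of $\fA$), so the tail exponent there should be $2$, not $3/2$; the $3/2$ exponent is the temporal one, and it is the weaker of the two, which is why $\E a^{C^{3/2}} < \infty$ governs the integrability of $C$. Also, your claim that $y\mapsto \cS(x,y)+y^2$ differs from $\fA_1(y)$ by a $y$-independent term is only literally true at $x=0$; for general $x$ one needs the stationarity of the Airy process or a direct two-point estimate on $\cS$, though this does not affect the overall strategy.
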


The next theorem should be thought of as a \emph{landscape shape theorem}: at first order, $\cL(x, t; y, t + s)$ is the parabola $-\tfrac{(x - y)^2}{s}$.
\begin{theorem}[Corollary 10.7, \cite{DOV}]
	\label{T:landscape-shape}
	There exists a random constant $C$ satisfying
	$$
	\P(C > m) \le ce^{-dm^{3/2}}
	$$
	for universal constants $c,d > 0$ and all $m > 0$, such that for all $u = (x, t; y, t + s) \in \R^4_\uparrow$, we have
	\begin{equation}
	\label{E:tail-1}
	\left|\cL(x, t; y, t + s) + \frac{(x - y)^2}{s} \right|\le C s^{1/3} \log^{4/3}\lf(\frac{2(\|u\|_2 + 2)^{3/2}}{s}\rg)\log^{2/3}(\|u\|_2 + 2).
	\end{equation}
\end{theorem}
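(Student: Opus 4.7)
The plan is to combine one-point Tracy–Widom tail estimates with a chaining argument and $1:2:3$ scaling invariance to upgrade a pointwise tail bound to a uniform tail bound with the appropriate logarithmic corrections.

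First, I would recall the one-point distributional identity: by the Airy sheet marginals (Definition \ref{D:L-unique}) and the scaling $\cS_s(x,y) \eqd s \cS(xs^{-2}, ys^{-2})$, the shape-corrected value $\mathcal R(x,t;y,t+s) = \cL(x,t;y,t+s) + (x-y)^2/s$ is distributed as $s^{1/3}$ times a GUE Tracy–Widom random variable. Its tails are stretched-exponential of order $\exp(-cm^{3/2})$ on both sides (the lower tail is of order $\exp(-cm^3)$, which is more than enough). This gives
\[
\P\bigl(|\mathcal R(u)| \ge m s^{1/3}\bigr) \le c e^{-d m^{3/2}}
\]
at a single point. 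Second, I would use $1:2:3$ scaling (Lemma \ref{L:invariance}.5) together with spatial and temporal stationarity (Lemma \ref{L:invariance}.1, 2) to reduce the problem to proving a uniform supremum bound for $\mathcal R$ on a single unit-sized region, say $\{u = (x,0; y, 1) : \|(x,y)\|_\infty \le 1\}$. The scaling identity $\cL(x,t;y,t+s) \eqd q\cL(q^{-2}x, q^{-3}t; q^{-2}y, q^{-3}(t+s))$ with $q = s^{1/3}$ moves the time-width to $1$ and expands the spatial variables to order $s^{-2/3}\|u\|_2$; the prefactor $s^{1/3}$ in the conclusion comes directly from this reduction.

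Third, on the compact unit region I would run a Kolmogorov–Chentsov-style chaining argument using Proposition \ref{P:mod-land-i} as the modulus of continuity. Take a sequence of dyadic nets at scale $2^{-n}$ in space-time, which has $O(2^{4n})$ vertices in the unit region. A union bound of the one-point tail bound over the net requires the threshold to exceed $c(\log 2^{4n})^{2/3} \sim c n^{2/3}$ to beat the $e^{-d m^{3/2}}$ tail. Meanwhile, interpolation errors between adjacent dyadic scales are controlled by Proposition \ref{P:mod-land-i}, which contributes an $n^{2/3}$-level summand, matching the union bound. Summing over $n$ shows that $\sup_{\|u\|_\infty \le 1, s \in [1/2,1]} |\mathcal R(u)|$ is dominated by a random constant $C$ with $\P(C > m) \le c e^{-d m^{3/2}}$.

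Fourth, to extract the explicit logarithmic factors in the statement, I would carefully combine the compact-region bound with the scaling and a union bound over translates. The factor $\log^{2/3}(\|u\|_2 + 2)$ comes from a union bound over $\sim (\|u\|_2 + 2)^4$ translated unit boxes needed to cover a region containing $u$, and the extra $\log^{4/3}((\|u\|_2 + 2)^{3/2}/s)$ comes from the dyadic range of scales $s$ we must sweep through: the ratio $(\|u\|_2+2)^{3/2}/s$ is the natural KPZ-scaling count of $s$-sized time windows fitting into the macroscopic region, and squaring the $\log^{2/3}$ tail exponent from the chaining against the scaling union bound gives the $\log^{4/3}$. The main obstacle is a clean bookkeeping of these two logarithmic contributions so that the $4/3$ and $2/3$ exponents come out correctly; this requires being careful that each union bound over $N$ items costs exactly $(\log N)^{2/3}$ in the threshold (matching $3/2$-Gaussian tails), and that Proposition \ref{P:mod-land-i}'s own $\log^{2/3}, \log^{1/2}$ modulus corrections are subsumed once $C$ is replaced by a larger random constant with the same tail order.
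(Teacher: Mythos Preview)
The paper does not prove this statement: Theorem \ref{T:landscape-shape} is quoted verbatim as Corollary 10.7 of \cite{DOV} and no proof is given here. So there is no proof in the present paper to compare your proposal against.

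That said, your outline is the standard route and is essentially how the result is obtained in \cite{DOV}: one-point Tracy--Widom tails, reduction by $1{:}2{:}3$ rescaling to a unit box, and a chaining/union bound to pass from pointwise to uniform control, with the two logarithmic factors arising respectively from the dyadic sweep over the scale $s$ and from the union bound over spatial translates. One caution: you invoke Proposition \ref{P:mod-land-i} inside the chaining, but in \cite{DOV} the logical order is that both Proposition 10.5 and Corollary 10.7 are derived from the same underlying two-point increment tail bounds (Lemma 10.4 there), not one from the other. Using Proposition \ref{P:mod-land-i} as a black box is fine provided you are not relying on anything downstream of the shape theorem, but a cleaner self-contained argument feeds the two-point tail estimate directly into Kolmogorov--Chentsov rather than quoting the modulus-of-continuity statement. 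Your bookkeeping of the $\log^{4/3}$ and $\log^{2/3}$ factors is sketched rather than carried out; the exponents are correct, but getting them to fall out cleanly does require tracking separately the number of scales (contributing $\log^{4/3}$ via the $3/2$-tail) and the number of boxes at each scale (contributing $\log^{2/3}$).
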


We will frequently use the following corollary of Theorem \ref{T:landscape-shape} that follows the fact the triangle inequality is an equality along geodesics, \eqref{E:tri-ineq-is-equality}.

\begin{corollary}
	\label{C:geo-containment}
For every $a \in \R$, there exists a random compact set $K \sset \R^2$ such that if $\pi$ is a geodesic from $p \in [-a, a]^2$ to $q \in [-a, a]^2$ then $\fg \pi \sset K$. Moreover, all geodesics from $p \in [-a, a]^2$ to $q \in [-a, a]^2$ are H\"older-$1/2$ continuous with a common (random) H\"older constant $C_K$.
\end{corollary}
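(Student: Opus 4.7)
The plan is to derive both assertions directly from Theorem \ref{T:landscape-shape} (the shape theorem) combined with equation \eqref{E:tri-ineq-is-equality}, which says the reverse triangle inequality is saturated along any geodesic. The key observation is that the single random constant $C$ in Theorem \ref{T:landscape-shape} controls every four-tuple simultaneously, so once the spatial range of any geodesic is controlled, all further estimates become uniform.

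For the containment statement, fix a geodesic $\pi$ from $p=(x,s)\in[-a,a]^2$ to $q=(y,t)\in[-a,a]^2$ and a time $r\in(s,t)$. By \eqref{E:tri-ineq-is-equality} we have $\cL(p;q) = \cL(p;\pi(r),r) + \cL(\pi(r),r;q)$. Applying the upper bound of Theorem \ref{T:landscape-shape} to each term on the right and the lower bound to the left, then rearranging, produces the parabolic estimate
\[
\frac{(x-\pi(r))^2}{r-s} + \frac{(\pi(r)-y)^2}{t-r} - \frac{(x-y)^2}{t-s} \;\le\; E(\pi(r),r),
\]
where the random error $E(z,r)$ depends on $|z|$ only polylogarithmically, with a prefactor given by the single random constant $C$ of Theorem \ref{T:landscape-shape}. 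Completing the square shows that the left-hand side equals $(\pi(r)-z^*)^2\,(t-s)/[(r-s)(t-r)]$, where $z^*\in[-a,a]$ is the linear interpolant between $(x,s)$ and $(y,t)$ at time $r$. Since $(r-s)(t-r)/(t-s)\le a/2$, this becomes the self-bounding inequality $|\pi(r)|\le a+\sqrt{(a/2)\,E(\pi(r),r)}$, which by the slow (polylog) growth of $E$ in $|\pi(r)|$ yields a random upper bound $M=M(a,C)$ uniform in $\pi,p,q,r$. Taking $K:=[-M,M]\times[-a,a]$ completes the first half.

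For the H\"older bound I would run a three-piece version of the same argument on times $s\le r_1<r_2\le t$. Writing $\cL(p;q) = \cL(p;\bar\pi(r_1)) + \cL(\bar\pi(r_1);\bar\pi(r_2)) + \cL(\bar\pi(r_2);q)$ via \eqref{E:tri-ineq-is-equality}, applying the shape theorem to each summand, and rearranging, I obtain a quadratic inequality
\[
\frac{a_1^2}{r_1-s}+\frac{(a_1-a_2)^2}{r_2-r_1}+\frac{a_2^2}{t-r_2}\;\le\;C',
\]
where $a_i=\pi(r_i)-z_i^*$ are deviations from the linear interpolant and $C'$ is a random constant controlled only through $K$ (the polylog factor being bounded by $M$). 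Extracting the middle term gives $|a_1-a_2|\le\sqrt{C'(r_2-r_1)}$. Applying the same estimate to the sub-geodesic $\pi|_{[r_1,r_2]}$ whose endpoints already lie in $K$ turns the linear-interpolant contribution into a term of the same $(r_2-r_1)^{1/2}$ order, producing $|\pi(r_2)-\pi(r_1)|\le C_K\,(r_2-r_1)^{1/2}$ with $C_K$ depending only on $K$.

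The main obstacle is the bookkeeping needed to absorb the polylog errors of Theorem \ref{T:landscape-shape} into a single random constant. This is resolved by the fact that $C$ in Theorem \ref{T:landscape-shape} is global: any finite combination of landscape values taken at points of a geodesic already known to lie in $K$ is controlled simultaneously by $C$, so the resulting bounds are automatically uniform over all $\pi, p, q$.
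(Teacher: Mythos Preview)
Your approach is exactly what the paper indicates: it gives no detailed proof, only the remark that the corollary ``follows [from Theorem~\ref{T:landscape-shape} and] the fact the triangle inequality is an equality along geodesics, \eqref{E:tri-ineq-is-equality}.'' Your containment argument is a correct and clean execution of this: the parabolic identity
\[
\frac{(x-\pi(r))^2}{r-s}+\frac{(\pi(r)-y)^2}{t-r}-\frac{(x-y)^2}{t-s}=\frac{t-s}{(r-s)(t-r)}\,(\pi(r)-z^*)^2
\]
together with the polylog error is precisely the right computation, and the self-bounding step is handled properly.

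For the H\"older part your three-piece setup and the quadratic-form identity yielding $|a_1-a_2|\le \sqrt{C'(r_2-r_1)}$ are correct. The only loose end is the final sentence: ``applying the same estimate to the sub-geodesic $\pi|_{[r_1,r_2]}$'' does not actually bound the linear-interpolant increment $|z_1^*-z_2^*|$, since on the sub-geodesic the interpolant is trivially exact at the endpoints. The direct fix is simply
\[
|z_1^*-z_2^*|=\frac{|x-y|}{t-s}(r_2-r_1)\le \frac{|x-y|}{(t-s)^{1/2}}(r_2-r_1)^{1/2},
\]
using $r_2-r_1\le t-s$. This makes the H\"older constant depend on $|x-y|/(t-s)^{1/2}$, which is bounded as soon as one works on a compact subset of $\Rd$ (i.e.\ with $t-s$ bounded below); this is how the corollary is actually used everywhere in the paper (e.g.\ in Proposition~\ref{P:overlap-metric}), so the gap is cosmetic rather than conceptual. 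As literally stated---with no lower bound on $t-s$---the uniform H\"older-$1/2$ claim is in fact too strong (take $p=(0,0),q=(1,\epsilon)$ and let $\epsilon\to 0$), but that is an imprecision in the statement, not in your argument.
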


Note that geodesics are typically much smoother than H\"older-$1/2$. Indeed, the geodesic between two fixed points is H\"older-$2/3^-$ almost surely (see Proposition 12.3, \cite{DOV}). The reason for Corollary \ref{C:geo-containment} is simply to have a crude equicontinuity bound that applies to all geodesics at once.

To prove coalescence and tightness statements about geodesics we will use almost sure statements about the geodesic structure in $\cL$.

\begin{lemma}
	\label{L:regularity}
	There exists a set $\Om$ of probability one where the following hold for $\cL$:
	\begin{enumerate}[nosep, label=\arabic*.]
		\item (Theorem 12.1,\cite{DOV}). For every rational $(p; q) \in \Rd \cap \Q^4$, there is a unique $\cL$-geodesic from $p$ to $q$.
		\item The bound \eqref{E:tail-1} holds for a finite constant $C$.
		\item (Lemma 13.2, \cite{DOV}). For every $u = (p;q ) = (x, s; y, t)\in \Rd$, there exist rightmost and leftmost $\cL$-geodesics $\ga^R_u$ and $\ga^L_u$ from $p$ to $q$. That is, there exist geodesics $\ga^R(u), \ga^L(u)$ from $p$ to $q$ such that for any other geodesic $\ga'$ from $p$ to $q$, we have $\ga^L(u) \le \ga' \le \ga^R(u)$. 
For fixed $s < t$, the functions $(x, y) \mapsto \ga^L(x, s;y,t)$ and $(x, y) \mapsto \ga^R(x, s;y,t)$ are monotone increasing in $x$ and $y$.
		\item (Lemma 2.7, \cite{bates2019hausdorff}) For any real numbers $s < t, x_1 < x_2, y_1 < y_2$, and any geodesics $\ga_i$ from $(x_i, s)$ to $(y_i, t)$, $i = 1, 2$, we have $\ga_1(r) \le \ga_2(r)$ for all $r \in [s, t]$.
	\end{enumerate}
\end{lemma}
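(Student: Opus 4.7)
The plan is to take $\Om = \bigcap_{i=1}^4 \Om_i$, where each $\Om_i$ is a probability-one event on which property $i$ holds; then $\P(\Om) = 1$ by countable intersection of null sets. Property 1 itself follows by another countable intersection: for each fixed $(p;q) \in \Rd \cap \Q^4$, Theorem 12.1 of \cite{DOV} provides an almost-sure event of geodesic uniqueness, and there are only countably many rational endpoint pairs. Property 2 is the event $\{C < \infty\}$, which has probability one by the super-exponential tail bound on $C$ in Theorem \ref{T:landscape-shape}. Properties 3 and 4 are handed to us directly by Lemma 13.2 of \cite{DOV} and Lemma 2.7 of \cite{bates2019hausdorff}.

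What needs to be checked is that the events $\Om_3, \Om_4$ supplied by those lemmas really are single probability-one events, since they assert statements for \emph{all} real endpoints rather than just rational ones. In both cases the cited proofs reduce to the rational case via approximation: for property 3 one chooses rational sequences $p_n, q_n$ converging to $p, q$ from prescribed sides, invokes the equicontinuity provided by Corollary \ref{C:geo-containment} to extract a uniformly convergent subsequence of rightmost (or leftmost) rational geodesics, and verifies that the uniform limit is still a geodesic via continuity of $\cL$ and the definition of length in \eqref{E:length}. The monotonicity clause of property 3, and the ordering in property 4, are first proved at the rational level using the quadrangle inequality \eqref{E:quadrangle} together with a path-crossing swap argument, and then extended to arbitrary real endpoints by taking monotone limits.

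The main potential obstacle, were one to reprove these statements from scratch, is the extremality of the constructed $\ga^R(u), \ga^L(u)$ against \emph{arbitrary} (not merely rational-limit) geodesics between the same endpoints. This is handled by showing that any geodesic $\ga'$ from $p$ to $q$ must lie weakly to the left of each rational approximant $\ga^R(p_n; q_n)$ whose endpoints sit strictly to the right of $p, q$: this comparison is given by the rational case of property 4, and passing to the limit yields $\ga' \le \ga^R(u)$. Since all of this is already established in the cited references, our task reduces to assembling the four almost-sure events into a common one; there is no essentially new probabilistic content to extract.
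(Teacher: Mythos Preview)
Your proposal is correct. The paper itself does not give a proof of this lemma; each item is stated with a citation to the literature (Theorem 12.1 and Lemma 13.2 of \cite{DOV}, Lemma 2.7 of \cite{bates2019hausdorff}, and Theorem \ref{T:landscape-shape} for item 2), and the lemma simply packages these into a single almost-sure event. Your assembly via $\Om = \bigcap_i \Om_i$ and your sketch of the rational-approximation arguments behind items 3 and 4 accurately reflect how those cited results are proved, so there is nothing to compare against.
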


When working with paths and geodesics throughout the paper, we will frequently appeal to the following basic facts, which fall out immediately from the continuity of $\cL$ and the definition of path length.

\begin{enumerate}[label=\textbf{F.\arabic*},ref=F.\arabic*]
	\item \label{F:usc-paths} \textit{Path length is upper semi-continuous.} \qquad More precisely, if $\pi_n:[s_n, t_n] \to \R$ is a sequence of paths with $s_n \to s, t_n \to t$ for some $s < t$, and $\pi_n(r) \to \pi(r)$ for all $r \in (s, t)$, $\pi_n(s_n) \to \pi(s), \pi(t_n) \to \pi(t)$ then
	$$
	\limsup_{n \to \infty} \|\pi_n\|_\cL \le \|\pi\|_\cL.
	$$
	Note that the right-hand side can be defined by \eqref{E:length} even if $\pi$ is not a path (i.e. a \textit{continuous} function). However, if $\|\pi\|_\cL \ne - \infty$, then $\pi$ is forced to be continuous by Theorem \ref{T:landscape-shape}.  
	\item \label{F:limits-geo} \textit{Limits of geodesics are geodesics.} \qquad In the context of point $1$ above, if all the $\pi_n$ are geodesics, then 
	$$
	\lim_{n \to \infty} \|\pi_n\|_\cL = \lim_{n \to \infty} \cL(\bar \pi_n(s_n); \bar \pi_n(t_n)) = \cL(\bar \pi(s); \bar \pi(t)).
	$$
	Therefore by upper semi-continuity of path length, $\pi$ is also a geodesic.
	\item \label{F:concats} \textit{Concatenations add path length.} \qquad For paths $\pi:[s, r] \to \R, \tau:[r, t] \to \R$ with $\pi(r) = \tau(r)$, let $\pi \oplus \tau:[s, t] \to \R$ be equal to $\pi$ on $[s,r]$ and $\tau$ on $[s, t]$. Then
	$$
	\|\pi \oplus \tau\|_\cL = \|\pi\|_\cL + \|\tau\|_\cL.
	$$
	\item \label{F:switching}  \textit{Switching preserves geodesics.} \qquad If $\pi:[s, t] \to \R, \tau:[s', t'] \to \R$ are geodesics with $\pi(r) = \tau(r), \pi(r') = \tau(r')$ for some $r < r' \in [s, t] \cap [s', t']$, then $\pi|_{[s, r]} \oplus \tau|_{[r, r']} \oplus \pi|_{[r', t]}$ is a geodesic.
\end{enumerate}

\subsection{The Airy line ensemble}
\label{S:ALE-AS}
The Airy sheet is built from another important object, the parabolic Airy line ensemble. 
The \textbf{parabolic Airy line ensemble} is random continuous function $\fA:\N \X \R \to \R$, where $(i, x) \mapsto \fA_i(x)$. It is ordered so that almost surely
\begin{equation}
\label{E:A-ordering}
\text{for all $i, x$ we have $\fA_i(x) > \fA_{i+1}(x)$.}
\end{equation}
The parabolic Airy line ensemble is defined uniquely via a determinantal formula that we will not require in this paper. The term parabolic comes from the fact that the Airy line ensemble $\cA:\N \X \R\to \R$ given by $\cA_i(x) = \fA_i(x) + x^2$ is stationary in $x$. We also have the flip symmetry $\fA \eqd \tilde \fA$, where $\tilde\fA_i(x): =\fA_i(-x)$. We will explain shortly how the Airy sheet arises from the Airy line ensemble, but for now, the uninitiated reader can motivate their interest in $\fA$ by noting that $\fA_1(x) \eqd \cS(0, x) = \cL(0,0; x, 1)$, where the equality in distribution is as functions from $\R \to \R$.

The ensemble $\fA$ arises as the scaling limit at the edge of a sequence of $n$ non-intersecting Brownian motions as $n \to \infty$.
Because of this, it is best thought of as a system of countably many non-intersecting Brownian motions. This notion is made rigorous by a probabilistic resampling property called the \textbf{Brownian Gibbs property}, which we now introduce.

\begin{lemma}[The Brownian Gibbs property, \cite{CH}]
	\label{L:BG-property}
Let $\cF_{k, a, b}$ denote the $\sig$-algebra generated by $\fA$ outside of the set $\II{1, k} \X [a, b]$. Then conditional on $\cF_{k, a, b}$, the law of $\fA|_{\II{1, k} \X [a, b]}$ is given by a $k$-tuple of Brownian bridges $B = (B_1, \dots, B_k)$  with $B_i(a) = \fA_i(a), B_i(b) = \fA_i(b)$ for all $i$, drawn independently of each other and of $\fA|_{(\II{1, k} \X [a, b])^c}$ and conditioned on the event
$$
B_1(x) > B_2(x) > \dots > B_k(x) > \fA_{k+1}(x)
$$
for all $x \in [a, b]$. This event has positive probability by \eqref{E:A-ordering}.
\end{lemma}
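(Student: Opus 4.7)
The plan is to prove Lemma \ref{L:BG-property} by exhibiting $\fA$ as a scaling limit of a finite ensemble that satisfies an exact Brownian Gibbs property by construction, and then passing the property through the limit. A natural choice of pre-limit is an ensemble $\fA^N$ of $N$ non-intersecting Brownian bridges on $[-T_N, T_N]$ (equivalently, Dyson Brownian motion viewed as a path ensemble), scaled and centered by a parabola so that, by standard determinantal asymptotics, the correlation kernels of $\fA^N$ converge to the extended Airy kernel and hence the finite-dimensional distributions converge to those of $\fA$.

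First, I would verify that $\fA^N$ satisfies an exact analogue of the Brownian Gibbs property. This follows from the strong Markov property of Brownian motion together with the Karlin--McGregor description of non-intersecting Brownian bridges: conditional on $\fA^N_i(a), \fA^N_i(b)$ for $i \le k$ and on the full trajectories of all curves with index $\ge k+1$, the top $k$ curves on $[a,b]$ are distributed as independent Brownian bridges with the prescribed endpoints, reweighted by the indicator that they do not cross each other or $\fA^N_{k+1}$. The affine scaling used to extract $\fA$ maps Brownian bridges to Brownian bridges of diffusion parameter $2$, so the Gibbs structure survives in the pre-limit.

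Second, I would upgrade finite-dimensional convergence to convergence in $\cC(\N \times \R)$ with the topology of uniform-on-compact convergence. Tightness is the delicate step and can be bootstrapped from the pre-limiting Gibbs property itself: conditional on its endpoints and on the lines below, the restriction of a top curve to a compact interval is stochastically dominated above by an unconstrained Brownian bridge with the same endpoints, via a monotone coupling showing that raising a bridge pointwise only increases the non-intersection probability with the lower curves, and dominated from below by a similar bridge conditioned to stay above $\fA^N_{k+1}$. Combined with uniform tail control on the endpoints furnished by determinantal rigidity, this yields uniform modulus-of-continuity estimates and hence tightness.

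Finally, to transfer the Gibbs property to $\fA$, fix $k, a, b$, write $R = \II{1, k} \times [a, b]$, and for bounded continuous $F, G$ use the pre-limiting property as
$$
\E\bigl[F(\fA^N|_R)\, G(\fA^N|_{R^c})\bigr] = \E\bigl[\Phi_N(\fA^N|_{R^c})\, G(\fA^N|_{R^c})\bigr],
$$
where $\Phi_N$ denotes integration of $F$ against the conditioned Brownian bridge law specified by the Gibbs kernel, with inputs the boundary data and $\fA^N_{k+1}|_{[a,b]}$. Because the non-intersection event has strictly positive probability in the limit by the strict ordering \eqref{E:A-ordering}, $\Phi_N$ depends continuously on its inputs; passing $N \to \infty$ on both sides and identifying the limiting conditional law via a monotone class argument yields the claimed Gibbs property for $\fA$. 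The hard part throughout is the tightness step: converting determinantal finite-dimensional control into uniform path regularity via the monotone coupling is the essential technical input of \cite{CH}.
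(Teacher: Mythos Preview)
The paper does not prove this lemma; it is quoted verbatim as a result of Corwin--Hammond \cite{CH} and used as a black box throughout. Your sketch is a faithful outline of the Corwin--Hammond argument itself: exhibit $\fA$ as a scaling limit of $N$ non-intersecting Brownian bridges, observe the exact Gibbs property at the pre-limit level via Karlin--McGregor, bootstrap tightness from the Gibbs property through monotone couplings, and pass the conditional description through the limit using continuity of the Gibbs kernel (which relies on the strict ordering \eqref{E:A-ordering} to ensure the conditioning event has positive probability). So your proposal is correct and matches the cited source, though there is nothing in the present paper to compare it against.
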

The Brownian Gibbs property implies that the Airy lines $\fA_i$ are locally absolutely continuous with respect to Brownian motions. However, it does not say much about exactly how this absolute continuity occurs because of the difficulty in resampling non-intersecting Brownian bridges that avoid a lower boundary. Quantitative versions of absolute continuity were first investigated in \cite{hammond2016brownian} and further studied in \cite{calvert2019brownian, dauvergne2023wiener}. 
 For our purposes, we require a result from \cite{dauvergne2023wiener} that relates the Airy line ensemble to a collection of linearly shifted Brownian bridges.
 
 \begin{theorem}[part of Theorem 1.8, \cite{dauvergne2023wiener}]
 	\label{T:resampling-candidate}
 	Fix $t \ge 1, k \in \mathbb N$, and let $S = \II{1, k} \X [0, t]$. There is a random continuous function $\fL = \fL^{t, k}: S \to \R$ such that:
 	\begin{enumerate}[label=\arabic*.]
 		\item For any Borel set $A \sset \cC(S)$ we have
 		$$
 		\P(\fA|_S \in A) \le e^{c_k t^3} \P(\fL \in A).
 		$$
 		\item We have $\fL = L + B$, where $L = (L_1, \dots, L_k)$ is a $k$-tuple of affine functions, and $B$ is a $k$-tuple of independent Brownian bridges with $B_i(0) = B_i(t) = 0$. Crucially, $B$ and $L$ are independent.
 		\item (Lemma 4.2 and Corollary 4.4, \cite{dauvergne2023wiener}) There exists $d_{k, t} > 0$ such that for all $m > 0, r \in \{0, t\}, i \in \II{1, k}$: 
 		$$
 		\P(|L_i(r)| > m) \le 2e^{-d_{k, t} m^{3/2}}.
 		$$
 	\end{enumerate}
\end{theorem}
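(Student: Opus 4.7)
The plan is to produce $\fL$ via the Brownian Gibbs property of Lemma \ref{L:BG-property}, exploiting that the conditional law of $(\fA_1,\dots,\fA_k)|_{[0,t]}$ is that of $k$ non-intersecting Brownian bridges pinned to the random endpoint values $\fA_i(0), \fA_i(t)$ and conditioned to lie above the random barrier $\fA_{k+1}$. First I would write any such $k$-tuple of bridges as $\tilde B_i = L_i + B_i$, where $L_i$ is the deterministic affine interpolation of $(\fA_i(0), \fA_i(t))$ and $B_i$ is a standard Brownian bridge from $0$ to $0$ with $B_1,\dots,B_k$ mutually independent and jointly independent of $(L, \fA_{k+1})$. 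Define $\fL = L+B$ \emph{without} the conditioning; then $(\fA_1,\dots,\fA_k)|_{[0,t]}$ has the law of $\fL$ conditioned on the event
$$
E = \{\fL_1 > \fL_2 > \dots > \fL_k \text{ on } [0,t], \; \fL_k > \fA_{k+1} \text{ on } [0,t]\}.
$$
For any Borel set $A \subset \cC(S)$ this gives
$$
\P(\fA|_S \in A) = \frac{\P(\fL \in A, E)}{\P(E)} \le \frac{\P(\fL \in A)}{\P(E)},
$$
so point 1 reduces to the bound $\P(E) \ge e^{-c_k t^3}$. The independence of $L$ and $B$ demanded in point 2 is built into the construction.

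The tail bound in point 3 is then essentially immediate: since $L_i(0) = \fA_i(0)$ and $L_i(t) = \fA_i(t)$ are marginals of the parabolic Airy line ensemble, the required $e^{-d_{k,t}m^{3/2}}$ decay follows from standard Tracy--Widom type upper tails for $\fA_i$ at a fixed point, with the parabolic shift $-t^2$ in the mean of $\fA_i(t)$ absorbed into the constant $d_{k,t}$ (which is allowed to depend on $t$).

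The hard part is the lower bound on $\P(E)$. Heuristically, the affine function $L_i$ is roughly linear on $[0,t]$ connecting a point of order $1$ to a point near $-t^2$, while the lower barrier $\fA_{k+1}$ has the parabolic shape $-x^2$ plus Airy fluctuations; the gap $\fA_{k+1}(x) - L_i(x)$ is therefore roughly $x(t-x)$, which peaks at order $t^2$ near $x = t/2$. For $\fL_k > \fA_{k+1}$ to hold the Brownian bridge $B_k$ must clear this parabolic-sized barrier, and by standard Gaussian Brownian-bridge tail estimates this already has probability $\asymp \exp(-ct^3)$. Concretely, I would restrict to a high-probability event $\Omega_0$ on which (i) the endpoints $\fA_i(0), \fA_i(t)$ are within a constant of their typical values and the gaps $\fA_i(0)-\fA_{i+1}(0)$, $\fA_i(t)-\fA_{i+1}(t)$ are bounded below by a constant $\ge \delta_k > 0$ (using Tracy--Widom tails for Airy marginals), and (ii) $\sup_{x\in[0,t]} |\fA_{k+1}(x) + x^2|$ is bounded by $Ct^{1/3}\log^C(t+2)$ (using the Airy shape theorem). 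On $\Omega_0$, I would estimate $\P(E \mid \cF_{k,0,t})$ by constructing explicit nested tubes $T_1 > \dots > T_k$ of width $O(1)$ around profiles of the form $L_i(x) + \alpha_i x(t-x)/t^2 \cdot t^2$ (with $\alpha_k$ chosen so $T_k$ sits uniformly above $\fA_{k+1}$), and lower-bounding the probability that $B_i \in T_i - L_i$ for each $i$ by the explicit Brownian-bridge density. Taking the product over independent $B_i$ yields $\P(E\mid \cF_{k,0,t}) \ge e^{-c_k t^3}$ on $\Omega_0$, and hence the unconditional bound.

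The main obstacle, and the only genuinely nontrivial step, is the tube construction: one must ensure that the tubes are nested, that $T_k$ dominates the fluctuating random barrier $\fA_{k+1}$ (which requires the event $\Omega_0$), and that the cost of pushing each $B_i$ into its tube is at most $e^{c t^3}$. The dominant contribution comes from $B_k$, which must be displaced by $\Theta(t^2)$ against its natural scale $\sqrt t$ on an interval of length $t$; everything else contributes at most polynomial factors in $t$ (from non-intersection, which is the classical Karlin--McGregor $t^{-c_k}$ regime) and absolute constants, all of which are harmlessly absorbed into $e^{c_k t^3}$.
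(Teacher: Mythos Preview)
This theorem is quoted from \cite{dauvergne2023wiener} and is not proved in the present paper; it is used as a black box. So there is no in-paper proof to compare against, and I can only assess the internal correctness of your sketch.

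There is a genuine gap in your argument for point 1. The displayed identity
\[
\P(\fA|_S \in A) \;=\; \frac{\P(\fL \in A,\, E)}{\P(E)}
\]
is not correct. The Brownian Gibbs property (Lemma \ref{L:BG-property}) only gives this identity \emph{conditionally} on $\cF_{k,0,t}$: one has $\P(\fA|_S \in A \mid \cF_{k,0,t}) = \P_B(\fL \in A, E)/\P_B(E)$, where $\P_B$ integrates over $B$ with $L$ and $\fA_{k+1}$ frozen. Taking expectations over $\cF_{k,0,t}$ gives $\E_F[\P_B(\fL\in A,E)/\P_B(E)]$, which is \emph{not} $\E_F[\P_B(\fL\in A,E)]/\E_F[\P_B(E)]$; the ratio does not commute with the outer expectation because $\P_B(E)$ depends on the random barrier $\fA_{k+1}$ and on the endpoint gaps. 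To reach $\P(\fA|_S\in A)\le e^{c_k t^3}\P(\fL\in A)$ by your route you would need the \emph{almost sure} bound $\P_B(E\mid\cF_{k,0,t})\ge e^{-c_k t^3}$, and this is false: on frames where $\fA_{k+1}$ is atypically high or the endpoint spacings $\fA_i(r)-\fA_{i+1}(r)$ are atypically small, $\P_B(E\mid\cF_{k,0,t})$ is arbitrarily small. Restricting to your high-probability event $\Omega_0\in\cF_{k,0,t}$ only yields $\P(\fA|_S\in A)\le e^{c_k t^3}\P(\fL\in A)+\P(\Omega_0^c)$, and the additive error $\P(\Omega_0^c)>0$ destroys the inequality for sets $A$ of small $\fL$-probability. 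The construction in \cite{dauvergne2023wiener} has to handle these bad frames by a mechanism beyond simply dropping the conditioning; your points 2 and 3 are fine as stated, but point 1 does not follow from the sketch.
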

Note that Theorem \ref{T:resampling-candidate} is stated on the interval $[0, t]$, but by the shift invariance of $\cA(t) = \fA(t)+t^2$, also holds on any interval $[a, a + t]$. The difference $\fL_i(0) - \fL_i(t)$ also satisfies strong bounds ensuring that locally, each line $\fA_i$ is not just absolutely continuous with respect to Brownian motion, but has a uniformly bounded Radon-Nikodym derivative. We will just need this for the line $\fA_1$.

\begin{prop}[Corollary 1.3, \cite{dauvergne2023wiener}]
	\label{P:bounded-above}
	Let $B:\R \to \R$ be a two sided Brownian motion. Then there exists a constant $c > 0$ such that for any $t \ge 1$ and any Borel set $A \sset \cC([-t, t])$ we have
	$$
	\P(\fA_1|_{[-t, t]} - \fA_1(0) \in A) \le e^{c t^3} \P(B \in A).
	$$
\end{prop}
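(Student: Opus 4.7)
My plan is to reduce the two-sided statement to a pair of one-sided comparisons on $[0, t]$ and $[-t, 0]$, each of which follows from Theorem \ref{T:resampling-candidate} together with a direct Gaussian density calculation. On $[0, t]$, Theorem \ref{T:resampling-candidate} (with $k = 1$) produces a coupling under which $\fA_1|_{[0, t]}$ is dominated by $\fL = L_1 + B_1$ up to a factor $e^{c_1 t^3}$, where $L_1$ is an affine function, $B_1$ is a Brownian bridge pinned at $0$ on both ends, and $L_1$ and $B_1$ are independent. Subtracting the value at $0$, $\fA_1|_{[0, t]} - \fA_1(0)$ is dominated by $(L_1 - L_1(0)) + B_1$, a random line through the origin of slope $s := (L_1(t) - L_1(0))/t$ plus an independent Brownian bridge. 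A standard Brownian motion $W$ on $[0, t]$ with $W(0) = 0$ has the parallel decomposition $W(x) = (W(t)/t)\,x + \tilde B(x)$ with $\tilde B$ an independent bridge and slope $W(t)/t \sim N(0, 2/t)$. The bridge parts have the same law, so the one-sided comparison reduces to bounding the Radon-Nikodym derivative of the law of $s$ with respect to $N(0, 2/t)$ by a constant times $e^{c t^3}$.

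The main obstacle is this density bound on the slope. Theorem \ref{T:resampling-candidate}.3 gives only tail bounds on $L_1(0)$ and $L_1(t)$, which are insufficient for a pointwise density comparison. To supply the density bound I would return to the construction of $\fL$ in \cite{dauvergne2023wiener}, where $L_1$ arises from an explicit Brownian Gibbs resampling step; the joint conditional density of $(L_1(0), L_1(t))$ can be written as an integral involving Gaussian densities and non-intersection indicators. The parabolic shape $\fA_1(x) \sim -x^2$ dictates that this density concentrates around the offset $(\,-t^2, -t^2)$ while $N(0, 2t)^{\otimes 2}$ suppresses that offset by a factor $e^{-t^3/2}$, so a direct comparison of the two densities yields a pointwise ratio of order $e^{c' t^3}$. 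Pushing this forward via the affine map to the slope yields the required bound on the law of $s$.

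To upgrade the one-sided estimate to the two-sided one, I would first use the flip symmetry $\fA_i(x) \eqd \fA_i(-x)$ to transfer the bound to $[-t, 0]$, and then glue the two halves via the Brownian Gibbs property. Conditioning on $\cF_{1, -t, t}$ together with $\fA_1(0)$, the strong Markov property of bridges makes $\fA_1|_{[-t, 0]}$ and $\fA_1|_{[0, t]}$ conditionally independent, each a bridge with prescribed endpoints conditioned to avoid $\fA_2$. A two-sided Brownian motion splits analogously into independent halves given $B(0) = 0$, so the product of the two one-sided bounds yields the joint comparison; the endpoint factors combine into a single constant absorbed into $c$.
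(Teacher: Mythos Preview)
This proposition is not proved in the paper --- it is quoted as Corollary~1.3 of \cite{dauvergne2023wiener} --- so there is no in-paper argument to compare against. Assessing your sketch on its own: you correctly locate the crux in a pointwise density bound on the slope of $L_1$ against the appropriate Gaussian, and you are right that the tail estimate in Theorem~\ref{T:resampling-candidate}.3 does not supply it. The sentence immediately preceding Proposition~\ref{P:bounded-above} (``The difference $\fL_i(0) - \fL_i(t)$ also satisfies strong bounds\ldots'') signals exactly this missing input, which is what the cited corollary packages; deferring to \cite{dauvergne2023wiener} for it is unavoidable from within this paper.

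Your gluing step, however, does not work as written. Marginal bounds $\P(\fA_1|_{[0,t]} - \fA_1(0) \in A_+) \le C\,\P(W^+ \in A_+)$ and the mirror bound on $[-t,0]$, together with conditional independence of the two halves given $\cF_{1,-t,t}$ and $\fA_1(0)$, do \emph{not} yield a joint bound: take $X_1 = X_2 = Z$ for a fair coin $Z$ and $Y_1, Y_2$ independent fair coins --- marginals agree and $X_1 \perp X_2 \mid Z$, yet $\P((X_1,X_2)=(0,0)) = 2\,\P((Y_1,Y_2)=(0,0))$. You would need \emph{conditional} one-sided bounds uniform in the conditioning, which Theorem~\ref{T:resampling-candidate} does not provide. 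The clean fix is to drop the gluing and apply Theorem~\ref{T:resampling-candidate} with $k=1$ directly on $[-t,t]$ (valid by the shift-invariance remark after that theorem). Writing $\fL - \fL(0) = s\,x + (B_1 - B_1(0))$, the conditional law given the endpoint values at $\pm t$ is two independent bridges through $0$ at time $0$, matching the two-sided Brownian motion exactly; in the endpoint law the sum $(\fL-\fL(0))(-t) + (\fL-\fL(0))(t) = -2B_1(0) \sim N(0,4t)$ already matches and is independent of the difference $2st$. Everything then collapses to a single density bound on $s$ against $N(0, 1/t)$ --- again the input you must import from \cite{dauvergne2023wiener}.
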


\subsection{The Airy sheet and an isometry}
\label{S:Airy-sheet}
We can define the Airy sheet using last passage across the Airy line ensemble, as follows. 
First, consider a continuous function $f \in \cC(I \X \R)$ where $I$ is an integer interval, and a non-increasing cadlag function $\pi:[x, y] \to \II{m, n} \sset I$, henceforth a \textbf{path} from $(x, n)$ to $(y, m)$. Define the length of $\pi$ with respect to the environment $f$ by
\begin{equation}
\label{E:pi-length}
\|\pi\|_f = \sum_{i = m}^n f_i(\pi_i) - f_i(\pi_{i+1}).
\end{equation}
Here $\pi_i = \inf \{t \in [x, y] : \pi(t) < i\}$ is the time when $\pi_i$ jumps off of line $i$, and if this set is empty we set $\pi_i = y$. We write
$$
f[(x, n) \to (y, m)] = \sup_\pi \|\pi\|_f,
$$
where the supremum is over all paths from $(x, n)$ to $(y, m)$. Last passage percolation is best thought of as a planar directed metric and certain last passage models have a beautiful solvable structure. Indeed, last passage percolation across Brownian motions was the first model shown to converge to the directed landscape, see \cite{DOV}. We can now define the Airy sheet.

\begin{definition}
	\label{D:Airy-sheet}
	Let $\fA$ be a parabolic Airy line ensemble, and let $\tilde \fA(x) = \fA(-x)$. For $(x, y, z) \in (0, \infty) \X \R^2$ define
	\begin{equation}
	\label{E:S'-form}
	\begin{split}
	\cS'(x, y, z) &= \lim_{k \to \infty} \fA[(-\sqrt{k/(2x)}, k) \to (y, 1)]- \fA[(-\sqrt{k/(2x)}, k) \to (z, 1)],\\
	\cS'(-x, y, z) &= \lim_{k \to \infty} \tilde \fA[(-\sqrt{k/(2x)}, k) \to (-y, 1)]- \tilde \fA[(-\sqrt{k/(2x)}, k) \to (-z, 1)]
	\end{split}
	\end{equation}
	and let $\cS(0, y) = \fA_1(y)$.
	For $(x, y) \in \Q \setminus \{0\} \X \Q$, define
	\begin{equation}
	\label{E:Sxy-lim}
	\cS(x, y) = \E\cA_1(0)+\lim_{n \to \infty} \frac{1}{2n} \sum_{z=-n}^{0}\cS'(x, y, z) - (z - x)^2,
	\end{equation}
	The {\bf Airy sheet} defined from $\fA$ is the unique continuous extension of $\cS$ to $\mathbb R^2$. 
\end{definition}

\begin{figure}
	\centering
	\includegraphics[width=6cm]{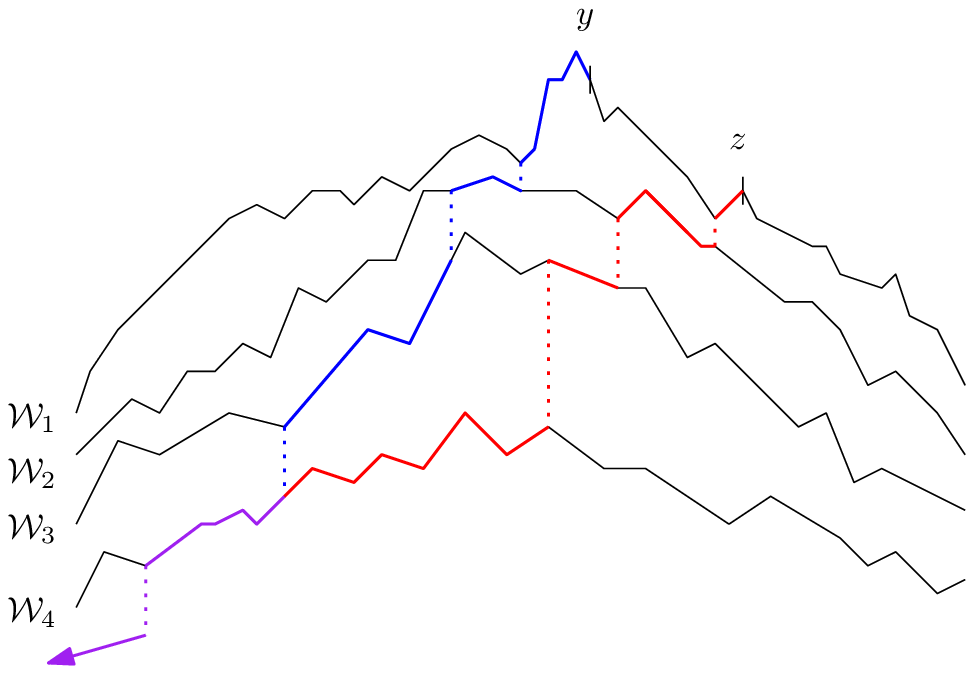}
	\caption{An illustration of $\cS(x, y, z)$ in the definition of the Airy sheet. This quantity is a limit of differences of last passage values from $(-\sqrt{k/(2x)}, k)$ in $\fA$. As we take $k$ to infinity, the portions of the last passage paths illustrated in red and blue above stabilize, and only the tail (illustrated in purple) changes for large $k$. This implies that the limit in \eqref{E:S'-form} exists and allows us to define $\cS(x, y) - \cS(x, z) :=\cS(x, y, z)$. Equation \eqref{E:Sxy-lim} extracts single Airy sheet values from these differences by ergodicity. Note that if we increase $x$, the tail will move from left to right.}
	\label{fig:Airy-sheet}
\end{figure}

See Figure \ref{fig:Airy-sheet} for an illustration of Definition \ref{D:Airy-sheet}.
It is not at all clear that the limits in Definition \ref{D:Airy-sheet} exist, or that the resulting function has a continuous extension to $\R^2$. However, a.s. these properties do hold and so the construction above is well-defined.
 This is shown for $x > \Q^+$ in \cite{DOV}, see also \cite[Section 1.11]{dauvergne2021scaling} for the extension to all $x \in \Q \smin \{0\}$. In this coupling, a.s.\ we have $\cS(0, x) = \fA_1(x)$ for all $x \in \R$ and for any $x > 0$, $y, z \in \R$ we have
\begin{equation}
\label{E:statement-of-x}
\cS'(x, y, z) = \cS(x, y) - \cS(x, z), \qquad \cS'(-x, y, z) = \cS(-x, y) - \cS(-x, z),
\end{equation}
see Definition 8.1 and Remark 8.1 in \cite{DOV}. 
While Definition \ref{D:Airy-sheet} is somewhat involved, one should think of it as capturing the idea that 
\begin{equation}
\label{E:sSWxinfty}
`` \cL(x, 0; y, 1) = \cS(x,y) = \fA[(x, \infty) \to (y, 1)]".
\end{equation}
In other words, there is a \textit{partial isometry} between $\cL$ and $\fA$. The more complex Definition \ref{D:Airy-sheet} is required to rigorously make sense of the right-hand side of \eqref{E:sSWxinfty}. The exact expression $(-\sqrt{k/(2x)}, k)$ for the asymptotic direction representing $(x, \infty)$ is not particularly intuitive and follows from calculations. See \cite{DOV}, \cite{dauvergne2021scaling} for more background. 

We can also use Definition \ref{D:Airy-sheet} to define the notion of paths and geodesics from $(x, \infty)$ to $(y, 1)$ across $\fA$, and lengths in $\fA$ of these paths.
For $x \in [0, \infty), y\in \R,$ and $n \in \N$, a nonincreasing cadlag function $\pi:(-\infty, y] \to \{n, n+1, \dots\}$ is a \textbf{parabolic path} from $(x, \infty)$ to $(y, n)$ if
\begin{equation}
\label{E:parabolic-path}
\lim_{z \to -\infty} \frac{\pi(z)}{2z^2} = - x.
\end{equation}
This definition guarantees that for $x > 0$, if $z_k$ is the largest time when $\pi(z) \le k$, then $z_k = - \sqrt{k/(2x + o(1))}$, matching the asymptotic direction from \eqref{E:S'-form}. 

For a parabolic Airy line ensemble $\fA$ with corresponding Airy sheet $\cS$ as in Definition \ref{D:Airy-sheet}, and any parabolic path starting at $x \ge 0$, define its \textbf{path length} by
\begin{equation}
\label{E:piSW}
\|\pi\|_\fA = \cS(x,y) + \lim_{z \to -\infty} \lf( \|\pi|_{[z,y]}\|_\fA - \fA[(z, \pi(z)) \to (y, 1)]\rg). 
\end{equation}
Note that we use $(y, 1)$ on the right hand side even if the parabolic path ends at a line $n \ne 1$. 
The limit above always exists and is nonpositive, see \cite[Lemma 4.1]{dauvergne2021disjoint}. Moreover, by \eqref{E:statement-of-x}, the formula \eqref{E:piSW} also holds if $\cS(x,y)$ is replaced by $\cS(x, w)$ and $\fA[(z, \pi(z)) \to (y, 1)]$ is replaced by $\fA[(z, \pi(z)) \to (w, 1)]$ for any $w \in \R$.

 The idea of definition \eqref{E:piSW} is that the best possible path length for $\pi$ should be the Airy sheet value $\cS(x, y)$. This accounts for the first term in \eqref{E:piSW}. For fixed $z$, the second term in \eqref{E:piSW} measures the discrepancy between the length of $\pi$ and the optimal length on the compact interval $[z, y]$; taking $z \to -\infty$ then gives the discrepancy between $\|\pi\|_\fA$ and $\cS(x,y)$. Now, for $(y, n) \in \R \X \N$ we define the \textbf{parabolic last passage value}
\begin{equation}
\label{E:fA-x-infty}
\fA[(x, \infty) \to (y, n)] = \sup_\pi \|\pi\|_\fA,
\end{equation}
where the supremum is over all parabolic paths from $(x, \infty)$ to $(y, n)$. We call a path a geodesic if it achieves this supremum. The next lemma gives a few basics about paths and geodesics in $\fA$.
\begin{lemma}
	\label{L:basics-para-path}
	Let $\fA, \cS$ be coupled as in Definition \ref{D:Airy-sheet}. Then we have the following.
\begin{enumerate}[nosep, label=(\roman*)]
	\item (Lemma 4.1(ii), \cite{dauvergne2021disjoint}) $\cS(x, y) = \fA[(x, \infty) \to (y, 1)]$ for all $x \ge 0, y \in \R$. This makes sense of the isometry \eqref{E:sSWxinfty}.
	\item (Lemma 4.1(i, ii, v), \cite{dauvergne2021disjoint}) Almost surely, for every $(x, y) \in [0, \infty) \X \R$, there exists a geodesic $\pi$ in $\fA$ from $(x, \infty)$ to $(y, 1)$. Moreover, almost surely for every $(x, y) \in [0, \infty) \X \R$, there is a geodesic $\pi_R\{x,y\}$ from $x$ to $y$ satisfying $\pi(t) \le \pi_R\{x,y\}(t)$ for any geodesic $\pi$ from $x$ to $y$ and all $t \in (-\infty, y]$. We call $\pi_R\{x,y\}$ the \textbf{rightmost geodesic} from $(x, \infty)$ to $(y, 1)$. We have the monotonicity $\pi_R\{x,y\}(t) \le \pi_R\{x',y'\}(t)$ whenever $x \le x', y \le y'$ and $t \le y$.
	\item (Lemma 3.9, \cite{dauvergne2021last}) For any $x \in [0, \infty), y < z \in \R$ and any geodesic $\pi$ from $(x, \infty)$ to $(z,1)$, we have
	\begin{align*}
\fA[(x, \infty) \to (z, 1)] &= \max_{i \in \N} \fA[(x, \infty) \to (y, i)] + \fA[(y, i) \to (z, 1)] \\
&=\fA[(x, \infty) \to (y, \pi(y))] + \fA[(y, \pi(y)) \to (z, 1)].
	\end{align*}
	Here the last passage values starting at $(x, \infty)$ are parabolic last passage values, and the others are standard last passage values. 
\end{enumerate}
\end{lemma}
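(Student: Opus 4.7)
The plan is to address the three parts in order (i), (ii), (iii), since (i) provides the identification of $\cS$ with parabolic last passage values that the remaining parts rely on. For (i), my approach would be to unpack the definition \eqref{E:piSW} of $\|\pi\|_\fA$: the correction $\lim_{z \to -\infty}(\|\pi|_{[z,y]}\|_\fA - \fA[(z, \pi(z)) \to (y,1)])$ is always $\le 0$, since on each compact sub-interval $[z,y]$ the restriction $\pi|_{[z,y]}$ is a candidate for the last passage problem on the right; this gives $\|\pi\|_\fA \le \cS(x,y)$. To obtain equality I would extract a parabolic geodesic as a subsequential limit of the optimal paths $\sig_k$ attaining $\fA[(-\sqrt{k/(2x)}, k) \to (y,1)]$ in \eqref{E:S'-form}, using the parabolic decay of $\fA_i + t^2$ (available via the Brownian Gibbs property of Lemma \ref{L:BG-property} and Theorem \ref{T:resampling-candidate}) to establish tightness of $\sig_k$ in the cadlag topology and to verify that the limit obeys the asymptotic direction \eqref{E:parabolic-path}.

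For (ii), existence of a geodesic then follows from the same compactness argument applied directly to a maximizing sequence of parabolic paths, together with upper semi-continuity of $\|\cdot\|_\fA$. For the rightmost geodesic $\pi_R\{x,y\}$ I would define it as the pointwise supremum of all geodesics from $(x,\infty)$ to $(y,1)$ and argue it is itself a geodesic via a switching argument in the spirit of Lemma \ref{L:regularity}.4: two geodesics that cross can be spliced at the crossing to produce new geodesics, so the pointwise maximum of any finite family of geodesics is still a geodesic, and the general case follows by a monotone limit. The same splicing argument applied to rightmost geodesics with ordered endpoints $(x,y) \le (x',y')$ forces the monotonicity $\pi_R\{x,y\}(t) \le \pi_R\{x',y'\}(t)$.

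For (iii), the identity is a version of the standard geodesic-splitting principle. Given a geodesic $\pi$ from $(x,\infty)$ to $(z,1)$, setting $i = \pi(y)$, the decomposition $\pi|_{(-\infty,y]} \oplus \pi|_{[y,z]}$ writes $\pi$ as the concatenation of a parabolic path to $(y,i)$ and a standard path from $(y,i)$ to $(z,1)$; both pieces must be optimal, else $\pi$ could be improved, yielding the second equality and the lower bound in the $\max$. The upper bound follows because concatenating any parabolic path to $(y,i)$ with any standard path from $(y,i)$ to $(z,1)$ produces a parabolic path to $(z,1)$ of total length equal to the sum (using additivity of the correction in \eqref{E:piSW}, since for $k$ large enough the parabolic path is unchanged on $[z_k, y]$). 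The main obstacle is the construction in (i)/(ii): verifying that the subsequential limit of $\sig_k$ really is a parabolic path with the correct asymptotic direction, and that the limit of its lengths equals $\cS(x,y)$, rests on quantitative tail estimates for $\fA$-geodesics from \cite{dauvergne2021disjoint}; once (i) is secured the remaining pieces are routine.
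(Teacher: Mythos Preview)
The paper does not prove this lemma at all: each of the three parts is stated with an explicit citation (Lemma 4.1 of \cite{dauvergne2021disjoint} for (i) and (ii), Lemma 3.9 of \cite{dauvergne2021last} for (iii)), and the paper simply imports these as background facts without argument. So there is no ``paper's own proof'' to compare against beyond the bare references.

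Your sketch is a reasonable outline of how the proofs in the cited papers go, and you have correctly identified the main technical burden: the compactness/tightness argument in (i)--(ii) needed to extract a parabolic geodesic with the correct asymptotic direction from the finite-level optimizers $\sigma_k$. This is precisely the content that \cite{dauvergne2021disjoint} supplies via quantitative estimates on the Airy line ensemble, and you are right that once (i) is in hand the splicing arguments for (ii) and the concatenation argument for (iii) are routine. One small point: in (iii) you should be slightly careful that the additivity of the correction term in \eqref{E:piSW} across the split at $y$ is not completely automatic, since the definition involves a limit and a reference last passage value to $(y,1)$ rather than to $(y,i)$; this is handled in \cite{dauvergne2021last} but deserves a line of justification. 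Otherwise your plan matches the structure of the original arguments.
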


\subsection{The extended landscape}
\label{S:extended-L}

We extend the definition of $\cL$ to measure optimal lengths of disjoint paths. Here and throughout the paper, we say that two paths $\pi, \tau:[s, t] \to \R$ are \textbf{disjoint} if $\pi(r) \ne \tau(r)$ for all $r \in (s, t)$. 

\begin{definition}
	\label{D:ext-land}
	Let $\fX_\uparrow$ be the space of all points $(\bx, s; \by, t)$, where $s < t \in \R$ and $\bx, \by$ lie in the same space $\R^k_\le = \{ \bx \in \R^k: x_1 \le \dots \le x_k \}$ for some $k \in \N$. For $\bu = (\bx, s; \by, t) \in \fX_\uparrow$, define
	\begin{equation}
	\label{E:Lextend}
	\cL(\bu) = \sup_{\pi_1, \dots, \pi_k} \sum_{i=1}^k \|\pi_i\|_\cL.
	\end{equation}
	Here and throughout we use the convention that $k$ is such that $\bx, \by \in \R^k_\le$. The supremum is over all $k$-tuples of paths $\pi = (\pi_1, \dots, \pi_k)$ where each $\pi_i$ is a path from $(x_i, s)$ to $(y_i, t)$, and the paths $\pi_i$ are disjoint. We call such a collection $\pi$ a \textbf{disjoint $k$-tuple} from $(\bx, s)$ to $(\by, t)$. We call the extension of $\cL$ from $\fX_\uparrow \to \R \cup \{-\infty\}$ the \textbf{extended landscape}.
	
	Geometrically, the space $\fX_\uparrow = \bigcup_{k=1}^\infty \fX^k_\uparrow$ where
	$
\fX^k_\uparrow = \{(\bx, s; \by, t) : s < t, \bx, \by \in \R^k_\le\}
	$
	inherits the Euclidean metric and topology.
	With this setup, the extended landscape is continuous on $\fX_\uparrow$ by \cite[Theorem 1.4/1.6]{dauvergne2021disjoint}. In fact, as with the standard landscape we have a modulus of continuity bound, see the comment immediately after \cite[Lemma 6.5]{dauvergne2021disjoint}.
	\begin{lemma}
		\label{L:extended-modulus}
Fix $k \in \N, \ep > 0$ and a compact subset $K \sset \fX^k_\uparrow$. Then there exists a random $C > 0$ such that for all $\bu = (\bx, s; \by, t), \bu' = (\bx', s'; \by', t') \in K$ we have
$$
|\cL(\bu) - \cL(\bu')|\le C\lf(\|\bx - \bx'\|^{1/2-\ep}_2 + \|\by - \by'\|^{1/2-\ep}_2 + |s -s'|^{1/3-\ep} + |t-t'|^{1/3-\ep}\rg).
$$
	\end{lemma}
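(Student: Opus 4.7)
The plan is a cut-and-paste argument reducing the bound to the single-point modulus Proposition \ref{P:mod-land-i}. By the triangle inequality it suffices to bound $|\cL(\bu) - \cL(\bu')|$ separately for perturbations that only move the left endpoint $(\bx,s) \mapsto (\bx',s')$ and those that only move the right endpoint; these cases are symmetric, so I focus on the first. Fix $\bu = (\bx, s; \by, t)$ and $\bu' = (\bx', s'; \by, t)$ in a compact $K \sset \fX^k_\uparrow$, and assume $s \le s'$. The idea is to take a near-optimal disjoint $k$-tuple for $\bu$, throw away its initial piece on a short time window, and reattach it to the new starting points $(\bx', s')$ via short connecting paths.

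More precisely, let $\delta > 0$ be a small parameter and set $r = s' + \delta$. Let $\pi = (\pi_1, \ldots, \pi_k)$ be an $\eta$-optimal disjoint $k$-tuple for $\cL(\bu)$ and write $\bz = (\pi_1(r), \ldots, \pi_k(r))$. Since the $\pi_i$ are disjoint on $(s,t)$, the vector $\bz$ is strictly ordered, and restricting each $\pi_i$ to $[r,t]$ yields a disjoint $k$-tuple witnessing $\cL(\bz, r; \by, t) \ge \sum_i \|\pi_i\|_\cL - \sum_i \cL(x_i, s; z_i, r)$. Combined with the inequality $\cL(\bu') \ge \cL(\bx', s'; \bz, r) + \cL(\bz, r; \by, t)$ (which uses that concatenating any disjoint $k$-tuple from $(\bx',s')$ to $(\bz,r)$ with $\pi|_{[r,t]}$ gives a valid disjoint tuple), and letting $\eta \to 0$, one obtains
$$
\cL(\bu) - \cL(\bu') \le \sum_{i=1}^{k} \cL(x_i, s; z_i, r) - \cL(\bx', s'; \bz, r).
$$
Each term on the right is a two-point or extended-landscape length on a short interval of width $\le \delta + (s'-s)$. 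Applying Proposition \ref{P:mod-land-i} to the parabolically corrected function $\cR$ (using that $\bz$ lies in a random compact set by Corollary \ref{C:geo-containment}, which also gives $|z_i - x_i| = O(|r-s|^{1/2})$ so the parabolic correction is harmless) bounds each summand by a constant multiple of $\delta^{1/3-\ep} + \|\bx - \bx'\|_\infty^{1/2 - \ep}$. Choosing $\delta = \|\bx - \bx'\|_\infty^{3/2} + (s'-s)$ and running the dual argument starting from a near-optimiser for $\bu'$ yields the stated bound.

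The main obstacle is disjointness: I need to know that disjoint $k$-tuples from $(\bx', s')$ to $(\bz, r)$ exist, and that their concatenation with $\pi|_{[r,t]}$ remains disjoint on $(s', t)$. Existence is automatic because the $z_i$ are strictly ordered, so one can take, say, the rightmost geodesics, which respect the spatial ordering by Lemma \ref{L:regularity}.4. Strict ordering of $\bz$ itself requires a bit of care: if two $\pi_i$'s happen to touch at time $r$, one simply perturbs $r$ slightly, using that the closed set of interior times at which any two $\pi_j$'s coincide has empty interior by the disjointness of $\pi$. A secondary technical point is uniformity of the random constant $C$ on $K$, which comes from the uniform tail bound in Theorem \ref{T:landscape-shape} together with a covering argument applied to $K$. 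The $\ep$-losses in the exponents originate from the $\log$-factors in Proposition \ref{P:mod-land-i}.
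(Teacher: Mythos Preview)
The paper does not give its own proof here: the lemma is simply attributed to the comment after \cite[Lemma 6.5]{dauvergne2021disjoint}. Your attempt at a self-contained argument has a real gap.

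The displayed inequality
\[
\cL(\bu)-\cL(\bu')\;\le\;\sum_{i=1}^{k}\cL(x_i,s;z_i,r)-\cL(\bx',s';\bz,r)
\]
is fine, but the next step is not. You claim that Proposition~\ref{P:mod-land-i} ``bounds each summand'' by $\de^{1/3-\ep}+\|\bx-\bx'\|_\infty^{1/2-\ep}$, yet that proposition is a modulus for the \emph{single-point} landscape only; it says nothing about the multi-path value $\cL(\bx',s';\bz,r)$, which you need to bound from \emph{below}. Writing $\cL(\bx',s';\bz,r)=\sum_i\cL(x'_i,s';z_i,r)-D$ with $D\ge 0$ the cost of the disjointness constraint, it is precisely $D$ that must be shown small on short intervals, and nothing in your sketch controls it. Nor can you bound the terms by absolute size: even granting $|z_i-x_i|=O((r-s)^{1/2})$, the parabola $(x_i-z_i)^2/(r-s)$ is $O(1)$, not $o(1)$, so each $\cL(x_i,s;z_i,r)$ is order one rather than order $\de^{1/3-\ep}$. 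A secondary issue is that Corollary~\ref{C:geo-containment} is stated for geodesics, not for components of disjoint optimizers, so the H\"older control you invoke on $\pi_i$ (hence on $\bz$) is not available from the tools you cite. Controlling both the disjointness cost and the regularity of multi-path optimizers is exactly what the machinery of \cite{dauvergne2021disjoint} supplies, and is not recoverable from the $k=1$ inputs in Section~\ref{S:landscape}.
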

	 Moreover, by \cite[Theorem 1.7]{dauvergne2021disjoint}, almost surely the supremum in \eqref{E:Lextend} is attained by a disjoint $k$-tuple for all $\bu \in \fX_\uparrow$.  We call a disjoint $k$-tuple that attains this supremum a \textbf{disjoint optimizer}. 
	Defining $\fX = \bigcup_{k=1}^\infty \R^k_\le \X \R^k_\le$, we call the function $\cS:\fX \to \R$ given by
	$$
	\cS(\bx, \by) := \cL(\bx, 0; \by, 1)
	$$
	the \textbf{extended Airy sheet}.
\end{definition}

We can give a similar definition for last passage across the Airy line ensemble. We say parabolic paths $\pi_i:(-\infty, y_i] \to \N, i = 1, 2$ with $y_1 < y_2$ are disjoint if $\pi_1(z) < \pi_2(z)$ for $z < y_1$.
For $(\bx, \by) \in \fX$ with $x_1 \ge 0$, let
$$
\fA[(\bx, \infty) \to (\by, 1)] = \sup_\pi \sum_{i=1}^k \|\pi\|_\fA,
$$
where the supremum is over all $k$-tuple of disjoint parabolic paths $\pi_i$ from $(x_i, \infty)$. Again, this supremum is always attained, see \cite[Proposition 5.8]{dauvergne2021disjoint}. 

Remarkably, the isometry in Lemma \ref{L:basics-para-path}(i) extends to $\fX$, and the parabolic Airy line ensemble can actually be defined from the extended landscape. For this definition and throughout the paper, we write $x^k = (x, \dots, x) \in \R^k$.

\begin{theorem}
	\label{T:iso-multiples}
	Let $\cS, \cL$ be as in Definition \ref{D:ext-land}, and define $\fA:\N \X \R \to \R$ so that $\sum_{i=1}^k \fA_i(x) = \cS(0^k, x^k)$. Then:
	\begin{enumerate}[label=\arabic*.]
		\item (Equation (9), \cite{dauvergne2021disjoint}) $\fA$ is a parabolic Airy line ensemble.
		\item (Theorem 1.3, \cite{dauvergne2021disjoint}) $\fA, \cS$ are coupled as in Definition \ref{D:Airy-sheet}. Moreover, almost surely, $\fA[(\bx, \infty) \to (\by, 1)] = \cS(\bx, \by)$ for all $(\bx, \by) \in \fX$ with $x_1 \ge 0$.
	\end{enumerate}
\end{theorem}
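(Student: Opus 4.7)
The plan is to derive the full statement from a multi-path extension of the one-dimensional isometry in Lemma \ref{L:basics-para-path}(i). Once the isometry $\fA[(\bx,\infty) \to (\by, 1)] = \cS(\bx, \by)$ is established on $\{(\bx, \by) \in \fX : x_1 \ge 0\}$, setting $\bx = 0^k, \by = y^k$ yields
\[
\cS(0^k, y^k) = \fA[(0^k, \infty) \to (y^k, 1)] = \sum_{i=1}^k \fA_i(y),
\]
where the last equality reflects the standard fact that disjoint parabolic $k$-tuples from $(0,\infty)$ (with multiplicity $k$) to the column $(y,1),\ldots,(y,k)$ have total length $\sum_{i=1}^k \fA_i(y)$, which follows from Lemma \ref{L:basics-para-path}(iii) and a telescoping argument on the lines that each path successively occupies. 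This proves Part~1 and simultaneously identifies the $\fA$ built from $\cS$ with the parabolic Airy line ensemble entering Definition \ref{D:Airy-sheet}.

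To prove the multi-path isometry, I would first handle rational $(\bx, \by)$ with $x_1 > 0$ and distinct coordinates. Using Definition \ref{D:Airy-sheet}, both $\cS(x_i, y_j)$ and parabolic last passage arise as limits of differences of Brownian last passage values along columns in the asymptotic direction $(-\sqrt{k/(2x_i)}, k)$; at the prelimit scale, the $k$-path identity between disjoint point-to-point last passage values and their analogs for parabolic paths is a direct consequence of RSK / Lindström--Gessel--Viennot applied to the underlying non-crossing Brownian paths. The key task is to show that the supremum in \eqref{E:Lextend} commutes with the prelimit, which would use Lemma \ref{L:extended-modulus} to get tightness of disjoint optimizers along the approximation and upper semicontinuity of path length (\ref{F:usc-paths}) applied uniformly over $k$-tuples to pass to the limit. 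After obtaining the isometry at rational points with distinct coordinates and $x_1 > 0$, joint continuity of both sides extends it to all of $\fX \cap \{x_1 > 0\}$; the extension to $x_1 = 0$ is then a further limit argument exploiting that parabolic paths starting at $(x, \infty)$ converge as $x \downarrow 0$ to parabolic paths at $(0, \infty)$, since the prescribed asymptotic $\pi(z)/(2z^2) \to -x$ in \eqref{E:parabolic-path} degenerates continuously.

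The main obstacle is the mismatch between the two notions of last passage being equated: the left-hand side is a supremum over disjoint continuum paths in $\cL$ whose optimizers are only known to exist via compactness, while the right-hand side demands parabolic paths with a rigidly prescribed asymptotic direction at $-\infty$. Verifying that an optimal $k$-tuple for $\cS$ can be converted into a disjoint tuple of parabolic paths in $\fA$ of the same total length (and vice versa) requires tracking the effective starting direction of landscape geodesics all the way into their asymptotic regime, and then matching this with the combinatorial prelimit identity after taking the asymptotic-direction limit in Definition \ref{D:Airy-sheet}. This delicate compatibility, worked out in \cite{dauvergne2021disjoint}, is the essential content of the theorem; any independent derivation would have to combine the above discretization, the combinatorial identity, and careful limit passage of both the disjoint-path structure and its asymptotic direction.
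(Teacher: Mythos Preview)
The paper does not prove Theorem~\ref{T:iso-multiples}; both parts are quoted directly from \cite{dauvergne2021disjoint} (Equation~(9) and Theorem~1.3 there), so there is no in-paper argument to compare your proposal against. What you have written is a plausible outline of the strategy used in that reference, and your final paragraph correctly identifies the crux: matching disjoint continuum optimizers in $\cL$ with disjoint parabolic $k$-tuples in $\fA$ having the prescribed asymptotic directions, via a prelimit combinatorial identity and a careful passage to the limit.

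That said, your first paragraph is logically tangled. In the theorem, $\fA$ is \emph{defined} by $\sum_{i=1}^k \fA_i(y) = \cS(0^k, y^k)$, so the displayed identity $\cS(0^k, y^k) = \sum_{i=1}^k \fA_i(y)$ is the definition, not a consequence of the isometry. What Part~1 actually asks is that this $\fA$ is a parabolic Airy line ensemble; your argument only shows this if you are tacitly working with a \emph{second} object $\fA'$ (the Airy line ensemble already known to generate $\cS$ via Definition~\ref{D:Airy-sheet}) and then identifying $\fA' = \fA$. If that is the intended logic, you should say so explicitly and separate the two objects. Also, the claim that $\fA[(0^k,\infty)\to(y^k,1)] = \sum_{i=1}^k \fA_i(y)$ follows from Lemma~\ref{L:basics-para-path}(iii) plus ``a telescoping argument'' understates the work: this is the content of Proposition~\ref{P:high-paths-B} (Proposition~5.9 of \cite{dauvergne2021disjoint}), whose proof that some disjoint optimizer from $(0^k,\infty)$ stays on the top $k$ lines is not a triviality.
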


Hopefully the relevance of the extended landscape to the study of geodesic networks is clear. Indeed, interesting geodesic networks arise when certain points are connected by disjoint geodesics in $\cL$, which is equivalent to an extended landscape value equalling a sum of usual landscape values.

We end this section by recording a simpler form for last passage across $\fA$ starting at $(0^k, \infty)$. For this definition, for $\by \in \R^k_\le$ we define the disjoint last passage value 
$$
\fA[(z, \II{1, k}) \to (\by, 1)]
$$
to be the maximum weight $\sum_{i=1}^k \|\pi_i\|_\fA$ of a $k$-tuple of paths $\pi_i$ from $(z, i)$ to $(y_i, 1)$ that satisfy $\pi_i > \pi_{i+1}$ on $(z, y_i)$.

\begin{prop}[Proposition 5.9, \cite{dauvergne2021disjoint}]
	\label{P:high-paths-B}
	Almost surely the following holds. For any $k \in \N$ and $\by \in \R^k_\le$ there is a disjoint optimizer $\pi$ in $\fA$ from $(0^k, \infty)$ to $(\by, 1)$ that only uses the top $k$ lines. In particular, for any $z < y_1$ we have
	\begin{equation}
	\label{E:0ky}
	\fA[(0^k, \infty) \to (\by, 1)] = \sum_{i=1}^k \fA_i(z) + \fA[(z, \II{1, k}) \to (\by, 1)].
	\end{equation}
\end{prop}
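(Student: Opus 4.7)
The plan is to prove the displayed identity by matching upper and lower bounds, with the lower-bound construction yielding the desired optimizer using only the top $k$ lines.

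For the $\ge$ direction, I would exhibit an optimizer by concatenation. Theorem \ref{T:iso-multiples}(2) gives $\sum_{i=1}^k \fA_i(z) = \cS(0^k, z^k) = \fA[(0^k, \infty) \to (z^k, 1)]$, and a strict-ordering argument at $z$ (disjointness forces $k$ distinct levels just before $z$, and the unique way to end all paths at line $1$ at the common time $z$ while maintaining this order is to have them arrive from lines $1, 2, \dots, k$) shows that this value is achieved by a disjoint $k$-tuple $\pi = (\pi_1, \dots, \pi_k)$ of parabolic paths from $(0^k,\infty)$ with $\pi_i(w) \in \{1, \dots, k\}$ for all $w \le z$ and $\pi_i(z^-) = i$. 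Concatenating each $\pi_i$ with the $i$-th component of a standard disjoint LPP optimizer $\sigma$ for $\fA[(z, \II{1, k}) \to (\by, 1)]$ yields a disjoint $k$-tuple from $(0^k, \infty)$ to $(\by, 1)$. This concatenation uses only the top $k$ lines throughout: on $(-\infty, z]$ by the construction of $\pi$, and on $[z, y_k]$ because each $\sigma_i$ starts at level $\le k$ and paths are non-increasing. Additivity of length under concatenation (fact \ref{F:concats}) gives total length $\sum_{i=1}^k \fA_i(z) + \fA[(z, \II{1, k}) \to (\by, 1)]$, establishing the lower bound and the existence of an optimizer of the desired form.

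For the $\le$ direction, take any disjoint parabolic $k$-tuple $\tau$ from $(0^k, \infty)$ to $(\by, 1)$, set $\bm = (\tau_1(z), \dots, \tau_k(z)) \in \N^k_<$ (disjointness forces $m_i \ge i$), and split each $\tau_i$ at $z$. Additivity gives $\sum_i \|\tau_i\|_\fA \le \fA[(0^k, \infty) \to (z, \bm)] + \fA[(z, \bm) \to (\by, 1)]$, where the two suprema are over disjoint tuples with those endpoint constraints. The core of the argument is then the exchange inequality
\[
\fA[(0^k, \infty) \to (z, \bm)] + \fA[(z, \bm) \to (\by, 1)] \le \sum_{i=1}^k \fA_i(z) + \fA[(z, \II{1, k}) \to (\by, 1)]
\]
for all $\bm \in \N^k_<$ with $m_i \ge i$. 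I would prove this by induction on $\sum(m_i - i)$, decrementing a single out-of-place slot $m_{i_0} > i_0$ to $m_{i_0}-1$: this is admissible (cascading downward to maintain strict ordering), lowers the starting level in the standard LPP piece (which can only increase it since non-increasing paths from a lower starting level have strictly more flexibility), and changes the parabolic piece by a controlled amount governed by the interlacing $\fA_{m_{i_0}-1} > \fA_{m_{i_0}}$. The base case $\bm = (1, \dots, k)$ is handled by the $\ge$ computation, which identifies the parabolic piece with $\sum_{i=1}^k \fA_i(z)$.

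The main obstacle is the inductive step of the exchange inequality: one must show that the gain in the standard LPP piece upon lowering $m_{i_0}$ dominates the loss in the parabolic piece $\fA[(0^k, \infty) \to (z, \bm)]$, which is not itself a standard extended-landscape quantity because the endpoint lies at distinct lines rather than distinct spatial positions. The cleanest route is to couple the disjoint optimizers on either side of $z$ through the single-slot swap, using the non-increasing structure of paths together with the Airy interlacing to verify that the swap is both admissible and length-improving. Once this exchange is in hand, the identity follows, and the constructive lower bound supplies the top-$k$-lines optimizer demanded by the first assertion of the proposition.
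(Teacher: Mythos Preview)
The paper does not prove this proposition; it is imported verbatim as Proposition~5.9 of \cite{dauvergne2021disjoint} and used as a black box. There is therefore no proof in the paper to compare your proposal against.

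Evaluated on its own terms, your proposal has two genuine gaps.

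\textbf{Lower bound.} Your ``strict-ordering argument'' asserts that any disjoint optimizer for $\fA[(0^k,\infty)\to(z^k,1)]$ satisfies $\pi_i(w)\in\{1,\dots,k\}$ for all $w\le z$ and $\pi_i(z^-)=i$. Disjointness only gives $\pi_1(w)<\pi_2(w)<\dots<\pi_k(w)$, hence $\pi_i(w)\ge i$; it gives no upper bound. The claim that the optimizer stays on the top $k$ lines is exactly the statement of the proposition applied with $\by=z^k$, so this step is circular. A non-circular route would be to \emph{define} $\tau_i\equiv i$ on $(-\infty,z]$ and concatenate with $\sigma_i$, but then you must compute $\sum_i\|\tau_i\|_\fA$ directly from the limiting definition \eqref{E:piSW}, and showing that the limit produces $\sum_i\fA_i(z)$ rather than some deficit is itself nontrivial (it is essentially equivalent to Proposition~\ref{P:high-paths-B} with $\by=z^k$).

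\textbf{Upper bound.} Your monotonicity claim has the wrong sign. Decrementing $m_{i_0}$ to $m_{i_0}-1$ in the standard piece $\fA[(z,\bm)\to(\by,1)]$ \emph{reduces} the set of admissible non-increasing paths (a path starting on line $m_{i_0}-1$ cannot visit line $m_{i_0}$), so that piece can only \emph{decrease}; take $k=1$ and compare $\fA[(z,2)\to(y,1)]\ge\fA[(z,1)\to(y,1)]$ via the jump-immediately path. Any gain in the inductive step must therefore come entirely from the parabolic piece $\fA[(0^k,\infty)\to(z,\bm)]$, and you give no argument that its increase dominates the loss in the standard piece. Since, as you note, this exchange inequality is the crux and you do not carry it out, the upper bound is not established.
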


\section{Coalescent planar geometry}
\label{S:coalescent}

As discussed in Section \ref{S:geometric-ideas}, geodesics in the directed landscape typically coalesce. The main goal of this section is to establish results related to this coalescence. Along the way we will show that the network graph $G(p; q)$ of any pair $(p; q) \in \Rd$ satisfies point 1-3 of Theorem \ref{T:geodesic-networks}; this follows from Proposition \ref{P:sparse-space-2}(ii) together with Lemma \ref{L:rightmost-geods}.3.
 We start the following lemma, which bounds the number locations geodesics can have at a fixed time. 

\begin{lemma}
	\label{L:sparse-space-1}
The following claims hold almost surely for $\cL$.
	\begin{enumerate}[label=(\roman*)]
		\item Fix rational times $s < r < t$ and any interval $[a, b]$. There exists a random finite set of points $X = \{x_1, \dots, x_N\} \sset \R$ such that if $\pi:[s, t] \to \R$ is any geodesic with $\pi(s), \pi(t) \in [a, b]$, then
$\pi(r) \in X$. 
		\item For every point $u = (p; q) = (x, s; y, t) \in \Rd$ and $r \in (s, t)$ there is a random set of points $X = \{x_1, \dots, x_{11}\} \sset \R$ such that if $\pi$ is a geodesic from $p$ to $q$ then
		$\pi(r) \in X$.
	\end{enumerate}
\end{lemma}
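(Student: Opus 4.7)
My plan for part (i) exploits independence of $\cL$ on disjoint time intervals together with monotonicity of geodesics. Fix rational $s < r < t$ and let
\[
Y = \{\pi(r) : \pi \text{ a geodesic with } \pi(s), \pi(t) \in [a, b]\}.
\]
By the crossing monotonicity in Lemma \ref{L:regularity}.4, every element of $Y$ lies between $\ga^L_{a,s;a,t}(r)$ and $\ga^R_{b,s;b,t}(r)$, so $Y$ is a.s.\ bounded. The metric composition law (Definition \ref{D:L-unique}.3) shows that each $z \in Y$ is an argmax of
\[
z' \mapsto \cL(x, s; z', r) + \cL(z', r; y, t)
\]
for some $(x, y) \in [a, b]^2$, and the two summands are independent by Definition \ref{D:L-unique}.2. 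The key task is upgrading ``bounded'' to ``finite'': I would show that the rightmost-geodesic map $(x, y) \mapsto \ga^R_{x,s;y,t}(r)$ is a monotone step function on the compact square $[a, b]^2$ taking only finitely many distinct values. Monotonicity in each variable reduces this to bounding the number of jumps, and I would obtain such a bound from the Brownian-like regularity of $\cL$ at a fixed time slice --- either via the isometry with the parabolic Airy line ensemble (Theorem \ref{T:iso-multiples}) combined with Proposition \ref{P:bounded-above}, or directly via the Brownian-Gibbs property of $\fA$ (Lemma \ref{L:BG-property}).

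For part (ii), I sandwich $r$ between rationals. Choose rational $s', t'$ with $s < s' < r < t' < t$. By Corollary \ref{C:geo-containment} there is a random compact set containing the image of every geodesic from $p$ to $q$, so at times $s'$ and $t'$ these geodesics take values in a random bounded interval $[a, b]$. Any restriction of a geodesic to a subinterval is again a geodesic (combine F.3 with the equality case of the reverse triangle inequality \eqref{E:triangle}), so applying part (i) at times $s' < r < t'$ and interval $[a, b]$ produces a random finite set $X$ containing $\pi(r)$ for every geodesic $\pi$ from $p$ to $q$. The sharp cardinality bound $|X| \le 11$ would come from a planar refinement: order the distinct transit values at time $r$ by Lemma \ref{L:regularity}.4 and apply an elementary non-crossing argument bounding the number of disjoint geodesic branches that can emanate from each of the single points $p$ and $q$.

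The main obstacle is the finiteness claim in part (i). Boundedness of $Y$ is an immediate consequence of the deterministic monotonicity results of Lemma \ref{L:regularity}, but controlling the number of jumps of the rightmost-geodesic step function genuinely requires the probabilistic regularity of $\cL$ at fixed time slices; without it, the argmax set could in principle accumulate as the endpoints vary continuously over $[a,b]^2$.
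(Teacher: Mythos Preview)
Your outline has two genuine gaps, one in each part.

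\textbf{Part (i).} The claim that monotonicity in each variable of $(x,y)\mapsto\ga^R_{x,s;y,t}(r)$ ``reduces this to bounding the number of jumps'' does not hold: a function monotone in each of two variables on $[a,b]^2$ can take a continuum of values, so monotonicity alone does not make it a step function. You need a separate mechanism to show finiteness. The paper's route is specific and not captured by your sketch: it writes $\cS_1(x,y)=\cL(x,0;y,1)$ and $\cS_2(z,y)=\cL(y,1;z,2)$ via the Airy line ensemble isometry (Lemma~\ref{L:basics-para-path}(i,iii)), uses the rightmost parabolic geodesic to show that for $x,z$ in a compact set only finitely many lines $i,j\le M$ are ever visited, and then observes that each ``channel'' function $H_{i,j}(y)=g_i[\fA](y)+g_j[\fA'](y)$ is absolutely continuous with respect to a Brownian motion and hence has a unique maximizer. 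The finiteness of $Y$ is the finiteness of the set $\{(i,j):i,j\le M\}$, not a jump count.

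\textbf{Part (ii).} Your reduction is circular: you apply part~(i) at the triple $s'<r<t'$, but $r$ is not rational, and part~(i) is only asserted (and only proved) for fixed rational $r$. The paper instead first argues by continuity of geodesics that if $\#T(u,r)\ge 12$ for some $r$ then this persists on an open interval, hence at some rational $r$. Then, for fixed rational $r$, it does \emph{not} use part~(i) at all: it runs a discretization argument on $u$ over a grid of $\sim\ep^{-10}$ points, and uses Proposition~\ref{P:bounded-above} together with a Brownian near-maximum estimate (Lemma~\ref{L:Brownian-max-lemma}) to bound the probability of $12$ well-separated near-maxima of $Y_{u,r}$ by $c\ep^{11}$. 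The constant $11$ arises from this dimension count ($11>10$), not from any planar or non-crossing topological argument; your proposed ``elementary non-crossing argument bounding the number of disjoint geodesic branches'' cannot give a finite bound here, since at this stage of the paper nothing rules out, say, $4$-star points.
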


The proof of Lemma \ref{L:sparse-space-1} requires analysis of the Airy line ensemble. 

\begin{proof}[Proof of Lemma \ref{L:sparse-space-1}(i)]
By rescaling (Lemma \ref{L:invariance}), it is enough to prove the lemma when $s = 0, r = 1,$ and $t \ge 2$. Define $\cS_1(x, y) = \cL(x, 0; y, 1)$, $\cS_2(z, y) = \cL(y, 1, z, 2)$. These are both Airy sheets by the symmetries in Lemma \ref{L:invariance}. Let 
$$
F_{x, z}(y) = \cS_1(x, y) + \cS_2(z, y).
$$
By Corollary \ref{C:geo-containment}, any geodesic $\pi:[0, t] \to \R$ with $\pi(s), \pi(t) \in [a, b]$ satisfies $\pi(0), \pi(1), \pi(2) \in [-C, C]$ for some random constant $C < \infty$. Therefore since $\pi(1)$ is a maximizer of the function $F_{\pi(0), \pi(2)}$ on the interval $[-C, C]$, it suffices to show that for every $n \in \N$, almost surely there are at most finitely many $y \in [-n, n]$ that maximize $F_{x, z}$ for some $x, z \in [-n, n]$. By shift invariance of $\cS$ (Lemma \ref{L:invariance}.2) rather than working on the interval $[-n, n]$ we can work on the interval $[0, 2n]$.

Let $\fA, \fA'$ be Airy line ensembles coupled to $\cS_1, \cS_2$ as in Definition \ref{D:Airy-sheet}.
By Lemma \ref{L:basics-para-path}(i, iii), for $x, z \in [0, 2n], y \in \R$ we can write
\begin{align}
\nonumber
F_{x, z}(y) &= \fA[(x, \infty) \to (y, 1)] + \fA'[(z, \infty) \to (y, 1)] \\
\label{E:max-ijN}
&= \max_{i, j \in \N} \fA[(x, \infty) \to (-1,i)] + \fA[(-1, i) \to (y, 1)] \\
&\qquad + \fA'[(z, \infty) \to (-1,j)] + \fA'[(-1, j) \to (y, 1)],
\end{align}
Moreover, if we let $\pi_R, \pi_R'$ be the rightmost geodesics in $\fA, \fA'$ from $(2n, \infty)$ to $(2n, 1)$, then by the monotonicity in Lemma \ref{L:basics-para-path}(ii) and the second equality in Lemma \ref{L:basics-para-path}(iii), 
letting $M = \max(\pi_R(-1), \pi'_R(-1)),$ we can take the maximum in \eqref{E:max-ijN} over $i, j \in \II{1, M}$. Therefore if $y \in [0, 2n]$ maximizes $F_{x, z}$ for some $x, z \in [0, 2n]$, then $y$ must maximize the function
$$
H_{i, j}[\fA, \fA'](y) := g_i[\fA](y) + g_j[\fA'](y), \qquad \text{where} \quad g_\ell[A](y) = A[(-1, \ell) \to (y, 1)].
$$
for some $i, j \in \II{1, M}$ where $H_{i, j}[\fA, \fA']:[-1, 2n+1] \to \R$. To finish the proof, it is enough to show that almost surely $H_{i, j}[\fA, \fA']$ has at most one maximizer in $[0, 2n]$ for all fixed $i, j$. Now,
$$
\fA|_{\II{1, i} \X [-1, 2n+1]} - \fA(0), \qquad \fA'|_{\II{1, j} \X [-1, 2n+1]} - \fA'(0)
$$
are jointly absolutely continuous with respect to the law of $i + j$ independent Brownian motions $B = (B_1, \dots, B_i), B' = (B_1', \dots, B_j')$ by Lemma \ref{L:BG-property}. Therefore it suffices to show that $H_{i, j}[B, B']$ has at most one maximizer in $[0, 2n]$. For this, we note that the law of $g_i[B]$ restricted to $[0, 2n]$ is absolutely continuous with respect to the law of a Brownian motion $W$ with $W(-1) = 0$ for every $i \in \N$. This is well-known: for example, it follows from the fact that the map $g_i[B]$ is the top line $A_1$ in an ensemble of $i$ nonintersecting Brownian motions $A_1 > \dots > A_i$ started from the initial condition $A(-1) = 0$, see \cite{o2003path}. Therefore $g_i[B] + g_j[B']$ has a unique maximizer on $[0, 2n]$ almost surely, as desired. 
\end{proof}

To prove Lemma \ref{L:sparse-space-1}.(ii), we first need a lemma about Brownian motion.

\begin{lemma}
	\label{L:Brownian-max-lemma}
	Let $I$ be a real interval, let $f:I \to \R$, and for $\ep, m > 0$ let $E_{m, \ep}(f)$ be the maximum size of the largest set of points $\{z_1, \dots, z_k\} \sset I$ with $|z_i - z_j| > m$ for all $i \ne j$ and 
	$$
	f(z_i) \ge \max_{z \in I} f(z) - \ep
	$$
	for all $i$. Then if $B:[-1, 1] \to \R$ is a Brownian motion we have
	$$
	\P(E_{m, \ep}(B) \ge k) \le c_{m, k} \ep^{k-1}.
	$$
\end{lemma}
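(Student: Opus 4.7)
The plan is to reduce the event $\{E_{m,\ep}(B) \ge k\}$ to a near-coincidence event for the maxima of $B$ on a fixed, finite family of disjoint subintervals of $[-1,1]$, and then to bound the joint distribution of those maxima using the independence of Brownian increments across disjoint time intervals. The case $k=1$ is immediate since $\ep^{0}=1$, so assume $k \ge 2$.

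First I would pass to a finite union. Place equally spaced grid points $t_0 < \dots < t_N$ on $[-1,1]$ with spacing $h := m/(3k)$, so $N = O_{m,k}(1)$. If $E_{m,\ep}(B) \ge k$ is witnessed by $z_1 < \dots < z_k$ with consecutive gaps greater than $m$, then the grid intervals $I_{j_i} := [t_{j_i}, t_{j_i+1}] \ni z_i$ satisfy $j_{i+1} - j_i > m/h - 1 = 3k - 1$; since the indices are integers this forces $j_{i+1} - j_i \ge 3k$, so the $I_{j_i}$ are pairwise disjoint with mutual gap at least $\de := m/2$. Writing $M_J := \max_J B$ and using $B(z_i) \ge \max_{[-1,1]} B - \ep \ge M_{I_{j_l}} - \ep$ for every $l$, the witnessing event forces $|M_{I_{j_i}} - M_{I_{j_l}}| \le \ep$ for all $i, l$. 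A union bound over the $\binom{N}{k}=O_{m,k}(1)$ possible index tuples reduces the lemma to showing that, for any tuple of disjoint intervals $I_1, \dots, I_k \sset [-1,1]$ with mutual gap at least $\de$,
\[
\P\bigl(|M_{I_i} - M_{I_1}| \le \ep \text{ for all } 2 \le i \le k\bigr) \le C_{\de, k}\,\ep^{k-1}.
\]

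The heart of the proof is a joint density bound for $(M_{I_2} - M_{I_1}, \dots, M_{I_k} - M_{I_1})$. Write $I_i = [a_i, b_i]$ and decompose the Brownian motion using the centred within-interval maxima $X_i := \max_{[a_i, b_i]}(B - B(a_i))$, the within-interval displacements $D_i := B(b_i) - B(a_i)$, and the gap increments $G_i := B(a_{i+1}) - B(b_i)$. Since the $I_i$ and the interstitial gaps partition $[a_1, b_k]$ into disjoint pieces, independence of increments of Brownian motion makes $\{(X_i, D_i)\}_{i=1}^k$ and $\{G_i\}_{i=1}^{k-1}$ jointly independent, and each $G_i$ is centred Gaussian of variance $2(a_{i+1}-b_i) \ge 2\de$. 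A telescoping calculation yields
\[
M_{I_i} - M_{I_{i-1}} = G_{i-1} + D_{i-1} + X_i - X_{i-1}, \qquad i = 2, \dots, k.
\]
Condition on $\cH := \sig(B|_{I_1 \cup \dots \cup I_k})$. Every $X_i, D_i$ is $\cH$-measurable, and the $G_i$ remain independent Gaussians, so under $\cH$ the vector $(M_{I_i} - M_{I_{i-1}})_{i=2}^k$ is an affine shift of $(G_1, \dots, G_{k-1})$ and has conditional joint density bounded almost surely by $(4\pi\de)^{-(k-1)/2}$. Averaging over $\cH$ preserves this bound, and the lower-triangular linear change of variables to the differences with $M_{I_1}$ has unit Jacobian, so $(M_{I_i} - M_{I_1})_{i=2}^k$ admits a density on $\R^{k-1}$ bounded by the same constant. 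Integrating over $[-\ep, \ep]^{k-1}$ and summing over the $O_{m,k}(1)$ index tuples yields the lemma.

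The main conceptual obstacle is choosing what to condition on: conditioning only on the endpoint values $(B(a_i), B(b_i))_{i=1}^k$ is insufficient, because the density of the maximum of a Brownian bridge is not uniformly bounded in its endpoint displacement $|B(b_i) - B(a_i)|$ --- a direct computation shows the peak conditional density grows like $e^{T^2/2}$ where $T$ is the standardised displacement, so its expectation diverges and a naive density bound cannot give the sharp exponent $k-1$. Conditioning instead on all of $\cH$ absorbs every within-interval degree of freedom, leaving the randomness in the consecutive differences $M_{I_i} - M_{I_{i-1}}$ to be carried entirely by the independent Gaussian gap increments $G_{i-1}$, whose densities are bounded uniformly in terms of $\de$ alone.
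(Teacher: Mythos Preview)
Your overall strategy---cover $[-1,1]$ by $O_{m,k}(1)$ subintervals, reduce $\{E_{m,\ep}(B)\ge k\}$ to a near-coincidence of maxima on $k$ well-separated subintervals, and take a union bound---is exactly the paper's approach. The paper simply asserts that the vector $(M_{i_1},\dots,M_{i_k})$ of maxima on non-adjacent subintervals has a Lebesgue density on $\R^k$ bounded by a constant depending only on $m$; your gap-increment argument is a more explicit justification of (the relevant consequence of) this fact.

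There is, however, a slip in your conditioning step. With $\cH:=\sig(B|_{I_1\cup\dots\cup I_k})$ the gap increments $G_i=B(a_{i+1})-B(b_i)$ are $\cH$-measurable, since $a_{i+1}\in I_{i+1}$ and $b_i\in I_i$; they do \emph{not} remain random after conditioning on $\cH$, so the conditional density claim fails as written. The fix is immediate and preserves your argument verbatim: condition instead on
\[
\cH':=\sig\bigl((B(t)-B(a_i))_{t\in I_i}:\ i=1,\dots,k\bigr),
\]
the sigma-algebra of within-interval \emph{increments}. Then each $X_i,D_i$ is $\cH'$-measurable while the $G_i$ are independent Gaussians independent of $\cH'$, and the rest of your computation (bounded conditional density for the consecutive differences, unit-Jacobian change to differences with $M_{I_1}$, integration over $[-\ep,\ep]^{k-1}$) goes through. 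Equivalently, since you already established that $\{(X_i,D_i)\}_{i=1}^k$ and $\{G_j\}_{j=1}^{k-1}$ are jointly independent, you can simply condition on $\sig(\{(X_i,D_i)\}_i)$.
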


\begin{proof}
	For $i \in \{-2m, \dots, 2m-1\}$, let $M_i = \max_{x \in [i/(2m), (i+1)/2m]} B(x)$. Let $\cI$ be the set of $k$-tuples of indices $\mathbf i = (i_1, \dots, i_k) \in \{-2m, \dots, 2m-1\}^k$ such that $|i_j - i_{j'}|	\ge 2$ for all $j \ne j'$. If $E_{m, \ep}(B) \ge k$ then there exists $\mathbf i \in \cI$ such that $|M_{i_j} - M_{i_{j'}}| \le \ep$ for all $j \ne j$. 
	
	To complete the proof it just remains to take a union bound over $\cI$ and observe that for any $\mathbf i \in \cI$ the vector $(M_{i_1}, \dots, M_{i_k})$ has a Lebesgue density on $\R^k$ bounded above by a constant depending only on $m$.
\end{proof}

\begin{proof}[Proof of Lemma \ref{L:sparse-space-1}(ii)]
Setting some notation, for $u = (x, s; y, t) \in \Rd$ and $r \in (s, t)$ define
$$
Y_{u, r}(z) = \cL(x, s; z, r) + \cL(z, r; y, t),
$$	
and let $T(u, r) = \argmax Y_{u, r}(z)$. The set $T(u, r)$ is the set of all spatial locations along geodesics from $(x, s)$ to $(y, t)$ at time $r$. Our aim is to show that $\# T(u, r) \le 11$ for all $u, r$. First, observe that by continuity of geodesics in $\cL$, if $\# T(u, r') \ge 12$ for some point $(u, r')$, then $\# T(u, r) \ge 12$ for all $r$ in an open interval containing $r'$; in particular, this holds at some rational $r$.
	
	Next, fix $r$ rational and $\ga > 0$ and let $K_\ga$ be the set of all $u = (x, s; y,t; r) \in \Rd \X \R$ where $|x|, |s|, |y|, |t| \le \ga^{-1}$ and $s + \ga < r < t - \ga$. By Theorem \ref{T:landscape-shape}, for every $\ga > 0$ there is a random $D > 0$ such that for all $(u, r) \in K_\ga$, we have $T(u,r) = T_D(u, r)$ where $T_d(u, r) := T(u, r)\cap [-d, d]$.
	Therefore it is enough to show that for every fixed $\ga > 0$ and $d > 0$, almost surely $T_d(u, r) \le 11$ for all $(u, r) \in K_\ga$. 
	For every $\ep > 0$, let 
	$$
	K_{\ga, \ep} = K_\ga \cap [\Z/(\ep^2 \log \ep)] \times [\Z/(\ep^3 \log \ep)] \times [\Z/(\ep^2 \log \ep)] \times [\Z/(\ep^3 \log \ep)].
	$$
	By the modulus of continuity for $\cL$ (Proposition \ref{P:mod-land-i}), if $T_d(u, r) \ge 12$ for some $u \in K_\ga$ then using the notation of Lemma \ref{L:Brownian-max-lemma}, there exists an $m \in \N$ such that for all small enough $\ep > 0$ we have
	\begin{equation}
	\label{E:Kgaep}
	\max_{u \in K_{\ga, \ep}} E_{1/m, \ep} (Y_{u, r}|_{[-d, d]}) \ge 12.
	\end{equation}
	Therefore it is enough to show that the probability of \eqref{E:Kgaep} goes to $0$ with $\ep$ for every fixed $\ga, d, m$. We do this with a union bound over $K_{\ga, \ep}$, using that $\# K_{\ga, \ep} \le c_\ga \ep^{-10} \log^8(\ep^{-1})$. By Proposition \ref{P:bounded-above} and Lemma \ref{L:Brownian-max-lemma}, letting $B, B'$ denote two independent two-sided Brownian motions defined on $[-d, d]$, for all $(u, r) \in K_\ga$ we have
	$$
	\P(E_{1/m, \ep} (Y_{u, r}|_{[-d, d]}) \ge 12) \le c_{d, \ga} \P(E_{1/m, \ep} (B + B') \ge 12) \le c_{m, d,\ga} \ep^{11}.
	$$
	The fact that the first bound is uniform over all choices of $(u, r) \in K_{\ga, \ep}$ follows from the symmetries in Lemma \ref{L:invariance} and the fact that we are working on a compact set. Taking a union bound yields the result.
\end{proof}

\subsection{Coalescence via Lemma \ref{L:sparse-space-1}}

Given Lemma \ref{L:sparse-space-1}, all coalescence results in this section are based on essentially topological arguments. Throughout this section we work on the almost sure event $\Om$ defined in Lemma \ref{L:regularity}, and on the almost sure set where Lemma \ref{L:sparse-space-1} holds. All results will be deterministic while on this set.

To state coalescence results in this section, define the \textbf{overlap} $O(\pi, \tau)$ of two geodesics $\pi:[s, t] \to \R, \tau:[s',t'] \to \R$ as the closure of the set
$$
\{r \in (s, t) \cap (s', t') : \pi(r) = \tau(r)\}.
$$
Note that $O(\pi, \tau)$ can contain endpoints of the open interval $(s, t) \cap (s', t')$ if $\pi$ and $\tau$ overlap on a dense set near these endpoints.
Our first result on coalescence in the following lemma.

\begin{lemma}
	\label{L:rightmost-geods}
	Let $(p; q) = (x, s; y, t)\in \Rd$, and let $\ga$ be any geodesic from $p$ to $q$. Then:
	\begin{enumerate}[nosep, label=\arabic*.]
		\item There exists a sequence of rational points $(p_n, q_n) \in (x_n, s_n; y_n, t_n) \in \Rd$ with $p_n \to p$ and $q_n \to q$ such that letting $\ga_n$ be the unique geodesic from $p_n$ to $q_n$, then $O(\ga_n, \ga)$ is a closed interval whose endpoints converge to $s$ and $t$, respectively.
		\item For any other geodesic $\pi$, the overlap $O(\ga, \pi)$ is a (possibly empty) closed interval.
		\item For any $[s', t'] \sset (s, t)$, $\ga|_{[s', t']}$ is the unique geodesic from $\bar \ga(s')$ to $\bar \ga(t')$.
	\end{enumerate}
\end{lemma}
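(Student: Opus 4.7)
The natural order is to establish a weakened form of part 1 first (existence of rational approximations whose agreement set with $\ga$ contains a growing interval), then derive parts 3 and 2 in turn, and finally upgrade to the full form of part 1 using part 2. For the weak form, I would pick rational times $s_n \downarrow s$, $t_n \uparrow t$ and rational spatial coordinates $x_n, y_n$ with $|x_n - \ga(s_n)| \to 0$, $|y_n - \ga(t_n)| \to 0$, so that $p_n := (x_n, s_n) \to p$ and $q_n := (y_n, t_n) \to q$. The unique geodesic $\ga_n$ from $p_n$ to $q_n$ exists by Lemma \ref{L:regularity}.1, and the family $\{\ga_n\}$ is uniformly equicontinuous on any compact $[a, b] \subset (s, t)$ by Corollary \ref{C:geo-containment}. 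The central step is to show that, for a suitable choice of $x_n, y_n$, one has $\ga_n = \ga$ on $[a, b]$ for all large $n$. At each rational time $r \in (s, t)$, Lemma \ref{L:sparse-space-1}(i) confines $\ga_n(r)$ to a finite set; combined with \ref{F:limits-geo} and a selection argument that uses the monotonicity of leftmost/rightmost geodesics (Lemma \ref{L:regularity}.3) together with planarity (Lemma \ref{L:regularity}.4) to aim at $\ga$ specifically rather than some other geodesic from $p$ to $q$, this forces $\ga_n(r) = \ga(r)$ at each rational $r$ for $n$ large. Upgrading pointwise agreement at rational times to agreement on the whole interval $[a, b]$ then uses a switching argument via \ref{F:switching} together with the uniqueness of $\ga_n$: if $\ga_n$ agreed with $\ga$ at two rational times but disagreed in between, switching would produce a second geodesic between $p_n$ and $q_n$.

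Given the weak form of part 1, part 3 follows quickly. Suppose toward contradiction that $[s', t'] \subset (s, t)$ and $\pi \ne \ga|_{[s', t']}$ is another geodesic from $\bar\ga(s')$ to $\bar\ga(t')$. Fix $n$ large enough that $\ga_n = \ga$ on $[s', t']$, so in particular $\ga_n(s') = \pi(s')$ and $\ga_n(t') = \pi(t')$. By \ref{F:switching} applied to $\ga_n$ and $\pi$ at the times $s', t'$, the concatenation $\ga_n|_{[s_n, s']} \oplus \pi \oplus \ga_n|_{[t', t_n]}$ is a geodesic from $p_n$ to $q_n$. But $\ga_n$ is the unique such geodesic (Lemma \ref{L:regularity}.1), so this concatenation equals $\ga_n$, forcing $\pi = \ga|_{[s', t']}$, a contradiction. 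Part 2 then follows from part 3: given $r_1 < r_3$ with $\ga(r_i) = \pi(r_i)$ and $r_1, r_3 \in (s, t) \cap (s', t')$, the concatenation $\tau := \ga|_{[s, r_1]} \oplus \pi|_{[r_1, r_3]} \oplus \ga|_{[r_3, t]}$ is a geodesic from $p$ to $q$ by \ref{F:switching}. Choose any $a \in (s, r_1)$ and $b \in (r_3, t)$; then $\tau(a) = \ga(a)$ and $\tau(b) = \ga(b)$, so $\tau|_{[a, b]}$ is a geodesic from $(\ga(a), a)$ to $(\ga(b), b)$. Since $[a, b] \subset (s, t)$, part 3 applied to $\ga$ identifies $\ga|_{[a, b]}$ as the unique such geodesic, forcing $\tau|_{[a, b]} = \ga|_{[a, b]}$. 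In particular $\pi = \ga$ throughout $[r_1, r_3]$, proving the agreement set is convex, so $O(\ga, \pi)$ is a closed interval.

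The full form of part 1 is now immediate: the overlap $O(\ga_n, \ga)$ is a closed interval by part 2 and contains $[a_n, b_n]$ with $a_n \to s$, $b_n \to t$ by the weak form, and its endpoints are trapped in the common time domain $[s_n, t_n]$, so they must converge to $s$ and $t$. The main obstacle in the whole argument is the weak form of part 1, specifically the selection of rational perturbations that forces $\ga_n$ to track the given geodesic $\ga$ rather than a competing geodesic between $p$ and $q$; the deductions of parts 3, 2, and the upgrade to full part 1 are then essentially formal consequences of switching and uniqueness at rational endpoints.
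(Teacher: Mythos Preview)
Your overall architecture matches the paper's: establish a version of part~1 first, then deduce parts~3 and~2 by switching against unique rational geodesics, then upgrade to the full part~1. Your deductions of parts~3 and~2 from the weak form of~1 are correct and essentially identical to the paper's.

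The genuine gap is exactly where you flag it: the ``selection argument'' in the weak form of part~1 is not an argument. Monotonicity of leftmost/rightmost geodesics lets you approximate the \emph{leftmost} or \emph{rightmost} geodesic from $p$ to $q$ by rational geodesics (approach from the left or from the right, respectively), but not an arbitrary $\ga$ sitting strictly between them. If $\ga$ is neither extremal nor locally extremal, there is no mechanism for aiming rational endpoints $(p_n, q_n)$ so that the unique $\ga_n$ tracks $\ga$ rather than some neighboring geodesic from $p$ to $q$; Lemma~\ref{L:sparse-space-1}(i) tells you $\ga_n(r)$ lands in a finite set, but says nothing about \emph{which} element of that set.

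The paper's fix is a non-obvious trick. Define $f(r) = \#\{\pi(r) : \pi \text{ a geodesic from } p \text{ to } q\}$; by Lemma~\ref{L:sparse-space-1}(ii) this is bounded by $11$, and being lower semicontinuous it is locally constant near each of its (necessarily dense) rational local maxima $M$. One first proves parts~1--3 only for the restrictions $\ga|_{[s', t']}$ with $s', t' \in M$. The point is that since $f$ is constant on small intervals $[s', b_{s'}]$ and $[a_{t'}, t']$, \emph{all} geodesics from $\bar\ga(s')$ to $\bar\ga(t')$ coincide there, so in particular $\ga|_{[s', t']}$ agrees with the rightmost such geodesic near its endpoints. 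Now approximating the rightmost geodesic by rational geodesics from the right (via the monotonicity in Lemma~\ref{L:regularity}.3 and Lemma~\ref{L:sparse-space-1}(i)) does converge to $\ga|_{[s', t']}$ in overlap, and your remaining arguments go through for this restriction. Passing from $\ga|_{[s', t']}$ back to $\ga$ is a diagonalization over $(s', t') \to (s, t)$.
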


Shortly before this paper was first posted, Bhatia \cite[Theorem 1]{bhatia2023duality} developed a different proof of this result. When $\ga$ is a rightmost geodesic, Lemma \ref{L:rightmost-geods}.1 also follows from the proof of \cite[Corollary 3.5]{dauvergne2020three}.

\begin{proof}
	First, it is enough to show that there is a set of pairs $S := \{(s', t') \in \Q^2 \cap (s, t) : s' < t'\}$ such that $(s, t)$ is in the closure of $S$ and such that the three points of the lemma hold when $\ga$ is replaced by $\ga|_{[s', t']}$ for $(s', t') \in S$. This is clear for part $3$. The rational approximation in part $1$ follows by a diagonalization argument from the existence of rational approximations for all $\ga|_{[s', t']}$, and finally if $O(\ga, \pi)$ is not a closed interval, then $O(\ga|_{[s', t']}, \pi)$ will not be a closed interval for some $(s', t')$ close to $(s, t)$.
	
	To define the set $S$, first define  $f:[s, t] \to \N$ by
	$$
	f(r) = \# \{\pi(r) : \pi \text{ is a geodesic from $p$ to $q$}\}.
	$$
	By Lemma \ref{L:sparse-space-1}(ii), $f(r) \le 11$ for all $r \in (s, t)$. 
	The boundedness of $f$ implies that $f$ has a dense set of local maxima. Noting also that $f$ is lower semi-continuous, $f$ must be constant in a region around each of its local maxima, so $f$ has a dense set of rational local maxima $M \sset \Q$. Define
	$$
	S := \{(s', t') \in M^2 \cap (s, t) : s' < t'\},
	$$
	and for every $s \in M$, let $[a_s, b_s]$ be a closed interval containing $s$ in its interior on which $f$ is constant. 
	
	\textbf{Proof of 1 for $\ga|_{[s', t']}, (s', t') \in S$.} \qquad Since $f$ is constant on $[s', b_{s'}], [a_{t'}, t']$, all geodesics from $\bar \ga(s')$ to $\bar \ga(t')$ must coincide on $[s', b_{s'}], [a_{t'}, t']$. In particular, the geodesic $\ga$ coincides with the rightmost geodesic $\pi$ from $\bar \ga(s')$ to $\bar \ga(t')$ on $[s', b_{s'}], [a_{t'}, t']$.
	
		Let $x' = \ga(s'), y' = \ga(t')$, let $x_n, y_n$ be rational sequences with $x_n \cvgdown x', y_n \cvgdown y'$, and let $\ga_n$ be the geodesics from $(x_n, s')$ to $(y_n, t')$. By the monotone ordering on rightmost geodesics (Lemma \ref{L:regularity}.3) and the fact that pointwise limits of geodesics are geodesics (Fact \ref{F:limits-geo}), $\ga_n \cvgdown \pi'$ pointwise, where $\pi'$ is a geodesic from $\bar \ga(s')$ to $\bar \ga(t')$. Since $\ga_n(r) \ge \pi(r)$ for all $n \in \N, r \in [s', t']$ and $\pi$ is the rightmost geodesic we have $\pi' = \pi$.
		
	Therefore for any rational $\ep > 0$, either there are infinitely many values in the set $\{\ga_n(s' + \ep), \ga_n(t' - \ep)\}$ or else $\ga_n(s' + \ep) = \pi(s'+\ep), \ga_n(t'+\ep) = \pi(t' + \ep)$ for all small enough $\ep$. The former possibility cannot occur by Lemma \ref{L:sparse-space-1}(i). The latter implies that $\{s' + \ep, t' - \ep\} \sset O(\ga_n, \pi)$ for large enough $n$. Since $[s', b_{s'}] \cup [a_{t'}, t'] \sset O(\ga|_{[s', t']}, \pi)$ we have $\{s' + \ep, t' - \ep\} \sset O(\ga_n, \ga|_{[s', t']})$ for small enough $\ep > 0$ and large enough $n$. Finally, $O(\ga_n, \ga)$ is connected since otherwise we could contradict the uniqueness of the rational geodesics $\ga_n$ by appealing to geodesic switching (Fact \ref{F:switching}). Therefore $O(\ga_n, \ga|_{[s', t']})$ is an interval converging to $[s', t']$.
	
	\textbf{Proof of 2, 3 for $\ga|_{[s', t']}, (s', t') \in S$.} \qquad We use statement $1$ for $\ga|_{[s', t']}$ with approximating sequence $\ga_n$. Since $\ga_n$ is the unique geodesic from $p_n$ to $q_n$, the set $O(\ga_n, \pi)$ is connected for any geodesic $\pi$ by appealing to geodesic switching.
	Therefore statement $2$ follows from $1$, since if $O(\ga, \pi)$ is disconnected, then 
	$
	O(\ga_n, \pi) 
	$
	must be disconnected for large enough $n$. Statement $3$ also follows from $1$ since otherwise we could contradict uniqueness of rational geodesics by again using a geodesic switching argument.
\end{proof}

We can use Lemma \ref{L:rightmost-geods} to upgrade Lemma \ref{L:sparse-space-1}. To state the upgrade, let $K_1 = [x_1, y_1] \X [s_1, t_1], K_2 = [x_2, y_2] \X [s_2, t_2]$ with $t_1 < s_2$ be two compact boxes, let $I = [r, r'] \sset [t_1, s_2]$, define
$$
H(K_1, K_2, I) = \# \{\pi|_I : \pi \text{ is a geodesic from some $p \in K_1$ to $q \in K_2$}\}.
$$
In words, $H(K_1, K_2, I)$ counts the number of paths defined on the interval $I$ that are segments of geodesics from $K_1$ to $K_2$.

\begin{prop}
	\label{P:sparse-space-2}
	The following claims hold almost surely for $\cL$.
	\begin{enumerate}[label=(\roman*)]
		\item For any compact sets $K_1 = [x_1, y_1] \X [s_1, t_1], K_2 = [x_2, y_2] \X [s_2, t_2]$ with $t_1 < s_2$ and $I = [r, r'] \sset (t_1, s_2)$, we have $H(K_1, K_2, I) < \infty$. Note that this no longer holds if $I$ contains either $t_1$ or $s_2$.
		\item For every point $(p; q) = (x, s; y, t) \in \Rd$, we have $H(p, q, [s, t]) \le 121$. That is, there are at most $121$ geodesics between any pair of points in $\Rd$.
	\end{enumerate}
\end{prop}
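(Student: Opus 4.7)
For (i), the plan is to bracket the interval $I$ by rational times so that Lemma \ref{L:sparse-space-1}(i) becomes applicable, and then use the interior uniqueness in Lemma \ref{L:rightmost-geods}.3 to recover the geodesic segments from finitely many position pairs. Concretely, since $t_1 < r \le r' < s_2$, fix rational times $s_* \in (t_1, r)$, $t_* \in (r', s_2)$, and $r_1 \in (s_*, r)$, $r_2 \in (r', t_*)$. For any geodesic $\pi$ from $p \in K_1$ to $q \in K_2$, its restriction to $[s_*, t_*]$ is itself a geodesic whose endpoints $\pi(s_*), \pi(t_*)$ lie in a random interval $[-a, a]$ by Corollary \ref{C:geo-containment}. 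Applying Lemma \ref{L:sparse-space-1}(i) at $r_1$ and at $r_2$ with this $[-a, a]$ forces $(\pi(r_1), \pi(r_2))$ to take only finitely many values. By Lemma \ref{L:rightmost-geods}.3, $\pi|_{[r_1, r_2]}$ is the unique geodesic between $(\pi(r_1), r_1)$ and $(\pi(r_2), r_2)$, so the further restriction $\pi|_{[r, r']}$ is determined by this pair. Finiteness of $H(K_1, K_2, I)$ follows.

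For (ii), the bound $121 = 11 \cdot 11$ suggests decomposing each geodesic according to its branching behaviour near $p$ and near $q$. Define $\pi \sim_p \pi'$ iff $\pi$ and $\pi'$ agree on some initial interval $[s, s + \ep]$ with $\ep > 0$, and analogously $\sim_q$ near $q$. I will show both relations have at most 11 equivalence classes, and that each geodesic is determined by its pair of classes. For the class count, suppose 12 distinct $\sim_p$-classes existed, with representatives $\pi_1, \dots, \pi_{12}$. For each pair $i < j$, since $\pi_i \not\sim_p \pi_j$, Lemma \ref{L:rightmost-geods}.2 (which says the overlap $O(\pi_i, \pi_j)$ is a closed interval in $[s, t]$) will yield $\de_{ij} > 0$ with $\pi_i(r) \ne \pi_j(r)$ for all $r \in (s, s + \de_{ij}]$. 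Setting $\de$ to be the minimum of the $\de_{ij}$ over the $\binom{12}{2}$ pairs, at any $r \in (s, s + \de]$ the twelve values $\pi_i(r)$ are all distinct, contradicting Lemma \ref{L:sparse-space-1}(ii). The symmetric argument caps the $\sim_q$-classes at 11. For injectivity of the class-pair assignment, if $\pi, \pi'$ share both classes then their overlap contains some $[s, a]$ and some $[b, t]$, and being a closed interval in $[s, t]$ by Lemma \ref{L:rightmost-geods}.2 it must equal $[s, t]$, so $\pi = \pi'$. Thus the number of geodesics is at most $11 \cdot 11 = 121$.

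The step I expect to require the most care is the deduction of the punctured-neighbourhood separation $\pi_i \ne \pi_j$ on $(s, s + \de_{ij}]$ from the mere fact that $\pi_i \not\sim_p \pi_j$. The concern is that $\pi_i$ and $\pi_j$ might agree on a sequence $r_n \cvgdown s$ without sharing any initial interval — a succession of "bubbles" accumulating at $s$. But this is ruled out by Lemma \ref{L:rightmost-geods}.2: each $r_n$ would lie in $O(\pi_i, \pi_j)$, placing $s$ in its closure as well, and since $O$ is required to be a closed interval in $[s, t]$, it must then contain $[s, r_1]$, forcing $\pi_i = \pi_j$ on this initial interval and hence $\pi_i \sim_p \pi_j$, a contradiction. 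Once this topological point is handled, the remainder of the argument is bookkeeping, with all probabilistic content already shouldered by Lemmas \ref{L:sparse-space-1} and \ref{L:rightmost-geods}.
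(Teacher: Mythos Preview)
Your proof is correct. Part (i) is essentially identical to the paper's argument (you are in fact slightly more careful than the paper in making the rational bracketing times $s_*, t_*$ explicit before invoking Lemma \ref{L:sparse-space-1}(i)).

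For part (ii) you take a mildly different route. The paper simply reuses the product inequality \eqref{E:HK1} established in part (i): with $K_1=\{p\}$, $K_2=\{q\}$ and $I=[s+\ep,t-\ep]$, Lemma \ref{L:sparse-space-1}(ii) gives each factor $\le 11$, hence $H(p,q,[s+\ep,t-\ep])\le 121$, and then one lets $\ep\to 0$. Your argument instead works directly with germ equivalence classes $\sim_p,\sim_q$ and shows that Lemma \ref{L:rightmost-geods}.2 forces each to have at most $11$ classes and the pair of classes to determine the geodesic. Both proofs rest on exactly the same two inputs (the $11$-point bound and the interval structure of overlaps), so the difference is organizational: the paper's version is shorter because it recycles (i) verbatim, while yours makes the combinatorial structure behind $121=11\cdot 11$ more explicit and avoids the limiting step. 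Your handling of the ``accumulating bubbles'' concern via the closedness of $O(\pi_i,\pi_j)$ is correct; note that since geodesics are continuous, points in the overlap are genuine agreement points, so $s\in O(\pi_i,\pi_j)$ together with $O$ being a nondegenerate interval indeed forces $\pi_i\sim_p\pi_j$.
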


\begin{proof}
By Corollary \ref{C:geo-containment}, there exists a random compact interval $[-A, A]$ and rational times $r_1 \in (t_1, r), r_2 \in (r', s_2)$ such that any geodesic from $K_1$ to $K_2$ lives in the interval $[-A, A]$ at times $r_1, r_2$. Therefore by Lemma \ref{L:sparse-space-1}(i), we have
\begin{equation}
\label{E:H-finite}
H(K_1, K_2, \{r_1\}) < \infty, \qquad H(K_1, K_2, \{r_2\}) < \infty.
\end{equation}
Next, by Lemma \ref{L:rightmost-geods}.3, if $p$ is an interior geodesic point at time $r_1$ and $q$ is an interior geodesic point at time $r_2$, then there is a unique geodesic from $p$ to $q$. Therefore
\begin{equation}
\label{E:HK1}
H(K_1, K_2, I) \le H(K_1, K_2, [r_1, r_2]) \le H(K_1, K_2, \{r_1\}) H(K_1, K_2, \{r_2\}),
\end{equation}
which is finite by \eqref{E:H-finite}.

For part (ii), using \eqref{E:HK1} with $K_1 = \{p\}, K_2 = \{q\}, I = [s +\ep, t - \ep]$ along with Lemma \ref{L:sparse-space-1}(ii) gives that
$
H(p, q, [s +\ep, t - \ep]) \le 121
$
for all $\ep > 0$. Now, $\lim_{\ep \to 0^+} H(p, q, [s +\ep, t - \ep]) = H(p, q, [s, t])$, yielding part (ii).
\end{proof}

\subsection{The overlap metric}

We can use overlap to define a tractable metric on the space of geodesics.
Let $\ga, \pi$ be two geodesics with domains $I, I'$, and let $\la$ denote Lebesgue measure on $\R$. Define the \textbf{overlap distance} between $\ga$ and $\pi$ as
$$
d_o(\ga, \pi) = \la(I) + \la(I') - 2 \la (O(\pi, \ga)),
$$
and write $\to_o$ for convergence in this metric.
For this next proposition we let $\Ga(S)$ denote the set of geodesics with endpoints in a set $S \sset \Rd$.
\begin{prop}
	\label{P:overlap-metric}
	The overlap distance $d_o$ is a metric on the space of all geodesics. Under this metric:
	\begin{enumerate}[nosep, label=\arabic*.]
		\item The countable set $\Ga(\Q^4_\uparrow)$ is dense in $\Ga(\Rd)$.
		\item For any compact set $K \sset \Rd$, the overlap metric makes $\Ga(K)$ a compact Polish space and $\ga_n \to_o \ga$ in $\Ga(K)$ if and only if $\fg \ga_n \to \fg \ga$ in the Hausdorff topology.
	\end{enumerate}
\end{prop}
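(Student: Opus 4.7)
The plan is to verify the metric axioms, deduce part~1 from Lemma~\ref{L:rightmost-geods}.1, and then show that the overlap metric and the Hausdorff topology on graphs induce the same convergence on $\Ga(K)$; compactness and Polishness will follow from this equivalence. For the metric axioms, symmetry is immediate. Non-degeneracy uses that $\la(O(\pi, \ga)) \le \min(\la(I_\pi), \la(I_\ga))$, so $d_o(\pi, \ga) = 0$ forces $I_\pi = I_\ga$ together with $\pi = \ga$ on a dense subset, hence $\pi = \ga$ everywhere by continuity. For the triangle inequality I would use $O(\pi, \ga) \cap O(\ga, \tau) \sset O(\pi, \tau)$ (up to the measure-zero closures) and inclusion--exclusion inside $I_\ga$ to get $\la(O(\pi, \tau)) \ge \la(O(\pi, \ga)) + \la(O(\ga, \tau)) - \la(I_\ga)$, which rearranges to the triangle inequality for $d_o$. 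Part~1 is then an immediate translation of Lemma~\ref{L:rightmost-geods}.1: the rational-endpoint approximants $\ga_n$ produced there satisfy $\la(I_{\ga_n}), \la(O(\ga_n, \ga)) \to \la(I_\ga)$.

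The overlap-to-Hausdorff direction of part~2 is the easier half. Assuming $d_o(\ga_n, \ga) \to 0$, we get $\la(I_{\ga_n}), \la(O(\ga_n, \ga)) \to \la(I_\ga)$; combined with Lemma~\ref{L:rightmost-geods}.2 (overlaps are closed intervals), this forces $[s_n, t_n] \to [s, t]$ and gives $\ga_n = \ga$ on a sub-interval $[u_n, v_n]$ with $u_n \to s$, $v_n \to t$. The uniform H\"older-$1/2$ bound from Corollary~\ref{C:geo-containment} then upgrades this to uniform convergence $\ga_n \to \ga$ on $[s, t]$ and to convergence of the endpoints, which together yield Hausdorff convergence of graphs.

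The Hausdorff-to-overlap direction is what I expect to be the main obstacle: a priori the $\ga_n$ could shadow $\ga$ at a tiny but nonzero distance throughout $(s, t)$ without ever coinciding with it, so uniform convergence of functions alone does not produce overlap. My plan is to finiteize the possibilities for the restriction $\ga_n|_{[a, b]}$ and then let uniform proximity force equality. Fix $\ep > 0$, set $[a, b] = [s + \ep, t - \ep]$, and note that $[s_n, t_n] \to [s, t]$ follows from Hausdorff convergence via continuity of the time-projection. Because the endpoints of $\ga_n$ lie in a fixed compact set and their times converge, for large $n$ these endpoints are contained in compact boxes $K_1 \sset \R \X [s - \ep, s + \ep/2]$ and $K_2 \sset \R \X [t - \ep/2, t + \ep]$. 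Proposition~\ref{P:sparse-space-2}(i) applied to $K_1, K_2, [a, b]$ makes the family $\{\pi|_{[a, b]} : \pi \text{ is a geodesic from } K_1 \text{ to } K_2\}$ finite, hence separated in the supremum norm. On the other hand, Hausdorff convergence of graphs together with the uniform H\"older bound (each point of $\fg \ga$ is a Hausdorff limit of points on $\fg \ga_n$, and H\"older lets one shift the time parameter back to a common grid) yields $\ga_n \to \ga$ uniformly on $[a, b]$. Uniform convergence into a finite, separated family forces $\ga_n|_{[a, b]} = \ga|_{[a, b]}$ for large $n$, so $O(\ga_n, \ga) \supset [a, b]$ and $\la(O(\ga_n, \ga)) \ge t - s - 2 \ep$; letting $\ep \cvgdown 0$ gives $d_o(\ga_n, \ga) \to 0$.

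With the equivalence in hand, compactness of $\Ga(K)$ reduces to the Hausdorff topology: by Corollary~\ref{C:geo-containment} the graphs $\fg \ga_n$ all lie in a common compact subset of $\R^2$, so Blaschke's selection theorem yields a subsequence with $\fg \ga_{n_k}$ converging in Hausdorff. The uniform H\"older bound and Arzel\`a--Ascoli identify the limit as the graph of a continuous $\ga:[s, t] \to \R$, Fact~\ref{F:limits-geo} shows $\ga$ is a geodesic, and the equivalence of topologies just established gives $\ga_{n_k} \to_o \ga$. A sequentially compact metric space is compact, and every compact metric space is automatically separable and complete, hence Polish.
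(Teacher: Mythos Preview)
Your proposal is correct and uses the same ingredients as the paper: the containment $O(\pi,\ga)\cap O(\ga,\tau)\subset O(\pi,\tau)$ for the triangle inequality, Lemma~\ref{L:rightmost-geods}.1 for density, Proposition~\ref{P:sparse-space-2}(i) for the Hausdorff-to-overlap direction, and Corollary~\ref{C:geo-containment} with Fact~\ref{F:limits-geo} for compactness.

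The only organizational difference is in the overlap-to-Hausdorff implication. You argue it directly: the overlap interval $[u_n,v_n]$ exhausts $[s,t]$, and the common H\"older bound controls the remaining fringes. The paper instead first establishes Hausdorff compactness of $\Ga(K)$, then observes that any Hausdorff-subsequential limit $\pi$ of $(\ga_n)$ must satisfy $\ga_n\to_o\pi$ by the already-proved Hausdorff-to-overlap direction, hence $\pi=\ga$, and compactness upgrades subsequential convergence to full convergence. The paper's route is a one-line ``uniqueness of limit plus compactness'' trick that avoids the H\"older estimate near the endpoints; your direct argument is slightly longer but self-contained and does not need one implication to bootstrap the other.
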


\begin{proof}
	It is easy to check that $d_o$ defines a metric. Indeed, for geodesics $\pi, \ga, \tau$ where $\ga$ has domain $I$, the set containment $O(\pi, \ga) \cap O(\ga, \tau) \sset O(\pi, \tau)$ implies that
	$$
	\la(O(\pi, \ga)) + \la(O(\ga, \tau)) \le \la(O(\pi, \tau)) + \la(I),
	$$
	which yields the triangle inequality. The density of $\Ga(\Q^4_\uparrow)$ in $\Ga(\Rd)$ follows from Lemma \ref{L:rightmost-geods}.1. 
	
	For the second point, first observe that by Corollary \ref{C:geo-containment} and the fact that pointwise limits of geodesics are geodesics (Fact \ref{F:limits-geo}), the space $\Ga(K)$ is both complete and compact in the topology of Hausdorff convergence of the graphs $\fg \ga$. Therefore we just need to verify that the Hausdorff and overlap metrics induce the same topology on $\Ga(K)$. If $\fg \ga_n \to \fg \ga$, then we must have that $\ga_n \to_o \ga$ as otherwise we contradict Proposition \ref{P:sparse-space-2}(i). On the other hand, if $\ga_n \to_o \ga$ then the only potential limit point for $\fg \ga_n$ in the space $\{\fg \pi : \pi \in \Ga(K)\}$ is $\fg \ga$. Therefore by compactness of the graph space $\{\fg \pi : \pi \in \Ga(K)\}$, we have $\fg \ga_n \to \fg \ga$.
\end{proof}

We end the section with one more overlap approximation lemma.

\begin{lemma}
\label{L:overlap-at-endpoints}
For any $(p; q) = (x, s; y, t)\in \Rd$ and any geodesic $\ga$ from $p$ to $q$ we can find a sequence of rational points $q_n \to q$ and a sequence of geodesics $\pi_n$ from $p$ to $q_n$ such that the overlap $O(\pi_n, \ga) = [s, r_n]$ for a sequence $r_n \to t$ as $n \to \infty$.
\end{lemma}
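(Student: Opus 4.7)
The plan is to realize each $\pi_n$ as the concatenation of $\ga|_{[s, r_n]}$ with a short geodesic extension to a rational point near $q$. Fix a rational sequence $r_n \in (s, t) \cap \Q$ with $r_n \cvgup t$, set $v_n := \bar \ga(r_n)$, and define the forward shadow of $v_n$:
\[
F_n := \{q' \in \R \X (r_n, \infty) : \cL(p; q') = \cL(p; v_n) + \cL(v_n; q')\}.
\]
By the reverse triangle inequality, $F_n$ is the closed set of $q'$ such that some geodesic from $p$ to $q'$ passes through $v_n$; in particular $q \in F_n$.

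Given rational $q_n \in F_n$ with $q_n \to q$ (whose existence is the main content, addressed below), let $\tau_n$ be any geodesic from $v_n$ to $q_n$ and define $\pi_n := \ga|_{[s, r_n]} \oplus \tau_n$. By Fact \ref{F:concats} and the defining identity of $F_n$, $\|\pi_n\|_\cL = \cL(p; v_n) + \cL(v_n; q_n) = \cL(p; q_n)$, so $\pi_n$ is a geodesic from $p$ to $q_n$ with $\pi_n = \ga$ on $[s, r_n]$. Therefore $[s, r_n] \sset O(\pi_n, \ga)$; applying the switching-plus-uniqueness argument used in the proof of Lemma \ref{L:rightmost-geods}.2 (invoking Lemma \ref{L:rightmost-geods}.3 on compact sub-intervals of $(s, t)$) shows that $O(\pi_n, \ga)$ is a closed interval, so $O(\pi_n, \ga) = [s, r_n']$ with $r_n' \ge r_n$, and $r_n' \to t$.

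The main obstacle is producing rational $q_n \in F_n$ that converge to $q$; for this I would show $F_n$ has non-empty two-dimensional interior accumulating at $q$. For each rational $t' \in (r_n, t)$, consider the map $y' \mapsto z^\ast(y', t')$ returning the $r_n$-position of the geodesic from $p$ to $(y', t')$. By Lemma \ref{L:sparse-space-1}(i), this takes only finitely many values on any bounded interval of $y'$; combined with the continuity of $\cL$ and the fact that limits of geodesics are geodesics (Fact \ref{F:limits-geo}), its level sets are unions of closed intervals. Since $\ga$ itself passes from $p$ through $v_n$ to $\bar \ga(t')$, one realized value is $\ga(r_n)$, attained at $y' = \ga(t')$, so the level set $\{y' : z^\ast(y', t') = \ga(r_n)\} \sset F_n$ contains a non-degenerate closed interval meeting $\ga(t')$. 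Choosing rational $y_n \in \Q$ in this interval close to $\ga(t_n)$ together with rational $t_n \in (r_n, t) \cap \Q$ tending to $t$ then yields rational $q_n = (y_n, t_n) \in F_n$ with $q_n \to q$, closing the argument.
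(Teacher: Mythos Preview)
Your overall shape is natural --- fix $v_n = \bar\ga(r_n)$, show the forward shadow $F_n$ contains rational points near $q$, and concatenate --- but the key step has a genuine gap. The map $y' \mapsto z^*(y',t')$ is not well-defined: there can be several geodesics from $p$ to $(y',t')$, and nothing in the paper forces uniqueness when one endpoint is $p$ itself (Lemma~\ref{L:rightmost-geods}.3 only gives uniqueness on intervals $[s',t'] \sset (s,t)$, not on $[s,t']$). If you pick, say, the rightmost geodesic to define $z^*$, then you lose the claim ``one realized value is $\ga(r_n)$, attained at $y' = \ga(t')$'': the rightmost geodesic from $p$ to $\bar\ga(t')$ need not pass through $v_n$ at all, so $\ga(t')$ need not lie in your level set. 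Conversely, if you try to work with the full set $\{y' : \text{some geodesic hits } v_n\}$, then finiteness of the position set plus monotonicity of $z^*_L, z^*_R$ only gives you $z^*_L(\ga(t')) \le \ga(r_n) \le z^*_R(\ga(t'))$, which does not produce a non-degenerate interval in $F_n$. (A minor point: Lemma~\ref{L:sparse-space-1}(i) requires all three times rational, and your start time $s$ need not be; Proposition~\ref{P:sparse-space-2}(i) is the right citation.)

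The paper's proof avoids this difficulty by not insisting that $q_n \in F_n$. Instead, it uses the finiteness of geodesics from $p$ to $q$ to choose $r$ close to $t$ with the property that \emph{every} geodesic from $p$ to $q$ passing through $\bar\ga(r)$ agrees with $\ga$ on $[r-\de,r]$. Then for \emph{arbitrary} rational $q_n \to \bar\ga(r)$, overlap compactness (Proposition~\ref{P:overlap-metric}) forces geodesics $\sig_n$ from $p$ to $q_n$ to eventually satisfy $\sig_n(r-\de) = \ga(r-\de)$, regardless of which geodesic from $p$ to $\bar\ga(r)$ they converge to. The concatenation $\ga|_{[s,r-\de]} \oplus \sig_n|_{[r-\de,t_n]}$ then works. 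The point is that the ``safe'' splice happens at $r-\de$, not at $r$ itself, precisely to absorb the possible non-uniqueness you ran into.
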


\begin{proof}
Since there are only finitely many geodesics from $p$ to $q$ (Proposition \ref{P:sparse-space-2}(ii)) for every $\ep > 0$ we can find a point $r \in (t-\ep, t)$ and $\de > 0$ such that every geodesic $\sig$ from $p$ to $q$ with $\sig(r) = \ga(r)$ has $[r-\de, r] \sset O(\sig, \ga)$. If we can prove the claim in lemma with $\ga$ replaced by $\ga|_{[s, r]}$, then the claim for $\ga$ follows by taking $\ep \to 0$ and applying a diagonalization argument.

Consider any sequence of rational points $q_n = (x_n, t_n) \to \bar \ga(r)$ and let $\sig_n$ be a sequence of geodesics from $p$ to $q_n$. By Proposition \ref{P:overlap-metric}, after possibly passing to a subsequence we may assume that $\sig_n \to_o \sig$ for some geodesic $\sig$ to $p$ to $\bar \ga_n$. Therefore $\sig_n(r-\de) = \sig(r-\de) = \ga(r-\de)$  for all large enough $n$, so the concatenation $\pi_n := \ga|_{[s, r-\de]} \oplus \sig_n|_{[r-\de, t_n]}$ is a geodesic from $p$ to $q_n$. The sequence $\pi_n$ satisfies the claim in the lemma.
\end{proof}

\section{Geodesic stars}
\label{S:weighted-stars}

As discussed in Section \ref{S:geometric-ideas} one of the main components in the proof of Theorems \ref{T:geodesic-networks} and \ref{T:Hausdorff-dimension} is a careful study of weighted geodesic stars. This is the subject of the present section. Along the way, we will prove Theorem \ref{T:star-computation}.
First, for $\bx \in \R^k_<, s \in \R$, and $p \in \R \X (-\infty, s), q \in \R \X (s, \infty)$ define the \textbf{weight} vectors
$$
W^-_{\bx, s}(p) = (\cL(p; x_1, s), \dots, \cL(p; x_k, s)), \qquad W^+_{\bx, s}(q) = (\cL(x_1, s; q), \dots, \cL(x_k, s; q)).
$$
Next, for $W \sset \R^k$ we let
$$
\operatorname{Star}(\bx, s; W)
$$
be the set of points $p \in \R \X (-\infty, s)$ such that $W^-_{\bx, s}(p) \in W$ and such that there is a forwards $k$-geodesic star from $p$ to $(\bx, s)$. We similarly let $\operatorname{Star}_R(\bx, s; W)$ be the set of points $q \in \R \X (s, \infty)$ such that $W^+_{\bx, s}(q)\in W$
 and such that 
 there is a reverse $k$-geodesic star from $(\bx, t)$ to $q$.
For $k, \ell \in \N$, $\bx \in \R^k_<, \by \in \R^\ell_<$, $s < t$, and $W \sset \R^{k} \X \R^{\ell}$ we define
$$
\operatorname{Star}(\bx, s; \by, t; W) = \bigcup_{(w_1, w_2) \in W} \operatorname{Star}(\bx, s; \{w_1\}) \X \operatorname{Star}_R(\by, y; \{w_2\}).
$$
The goal of the section is to bound the sizes of the sets $\operatorname{Star}(\bx, s; \by, t; W)$ when $W \sset \R^k \X \R^\ell$ is an affine set satisfying $W = W + (a^k, b^\ell)$ for every $a, b \in \R$. These are the sets that will appear when we apply cut and paste operations to geodesic networks. We call a set $W$ satisfying $W = W + (a^k, b^\ell)$ for every $a, b \in \R$ a \textbf{slide-invariant set}.

\subsection{The lower bound}
\label{S:lower-bd}

The lower bound is easier so we start there.
The lower bound on the sizes of the sets $\operatorname{Star}(\bx, s; \by, t; W)$ takes a strong form, in that we will get an almost sure lower bound for all quintuples $(\bx, s; \by, t; W)$ simultaneously.

First, for $\bx \in \R^3_>, t \in \R$ define the sets
\begin{align*}
S^-(\bx, t) := \{\bw \in \R^3 : \cL(0, t-1; x_i, t) - w_i > \cL(0, t-1; x_2, t) - w_2, i = 1, 3\}, \\
S^+(\bx, t) := \{\bw \in \R^3 : \cL(x_i, t; 0, t +1) - w_i > \cL(x_2, t; 0, t + 1) - w_2, i = 1, 3\}.
\end{align*}
Also, for $\bx \in \R^k$ for $k = 1, 2$ let $S^\pm(\bx, t) = \R^k$.

\begin{prop}
	\label{P:star-lower-bd}
	Almost surely, the following assertion holds for every $k, \ell \in \{1, 2, 3\}, \bx \in \R^k_<, \by \in \R^\ell_<$, $s < t$, and $n \in \N$.
	
	There exist compact sets $K_1 = K_1(\bx, \by, n) \sset \R\X (-\infty, s), K_2 = K_2(\bx, \by, n) \sset \R\X (t, \infty)$ such that the following holds. Consider any slide-invariant affine set $W \sset \R^{k + \ell}$ with
	\begin{equation}
	\label{E:intersection-condition-star}
	W' =	W \cap [S^-(\bx, s) \X S^+(\by, t)] \cap (-n, n)^{k + \ell} \ne \emptyset,
	\end{equation}
	and let $g(W)$ be the smallest slide-invariant subset of $\R^{k+\ell}$ containing $W'$. Then the set $\operatorname{Star}(\bx, s; \by, t; g(W)) \cap (K_1 \X K_2)$ is non-empty and
	\begin{align*}
\dim_{1:2:3}[\operatorname{Star}(\bx, s; \by, t; g(W)) \cap (K_1 \X K_2)] \ge \dim W + 10 - \frac{k(k+1)}{2} - \frac{\ell(\ell+ 1)}{2}.
	\end{align*}
Here $\dim W$ is the linear dimension.
\end{prop}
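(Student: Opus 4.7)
The plan is to construct a random probability measure $\mu$ supported on $\operatorname{Star}(\bx, s; \by, t; g(W)) \cap (K_1 \X K_2)$ whose $\al$-energy in the $d_{1:2:3}$-metric is finite for every $\al$ strictly below the target dimension, and then conclude by the mass distribution principle. Using the independence of time increments (Definition \ref{D:L-unique}.2), I would split the construction into two independent one-sided pieces: a measure $\mu^-$ on forward $k$-star points to $(\bx, s)$ living in $K_1$, and a measure $\mu^+$ on reverse $\ell$-star points from $(\by, t)$ living in $K_2$. I would then set $\mu$ to be the conditioning of $\mu^- \otimes \mu^+$ on the linear event $(W^-_{\bx, s}(p), W^+_{\by, t}(q)) \in W$. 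Because $W$ is affine and slide-invariant it contains the $2$-dimensional subspace $\{(a^k, b^\ell) : a, b \in \R\}$, and conditioning by the remainder of $W$ cuts codimension $k + \ell - \dim W$ from the product; this is what produces the $\dim W$ term in the final formula.

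For the one-sided construction I would first produce a seed $k$-star point $p_0$ to $(\bx, s)$ whose weight vector $W^-_{\bx, s}(p_0)$ is slide-equivalent to a chosen element of $W' \cap \R^k$. Existence uses the Airy-sheet / last-passage isometry (Theorem \ref{T:iso-multiples}): a $k$-geodesic star at $(\bx, s)$ corresponds to a configuration in the parabolic Airy line ensemble in which the $k$ disjoint parabolic geodesics into $(\bx, s)$ reach a common starting height, and the condition $\bw^- \in S^-(\bx, s)$ prevents the middle arm in the $k = 3$ case from being absorbed by an outer arm under perturbation. Coalescent geometry (Lemma \ref{L:rightmost-geods}, Proposition \ref{P:sparse-space-2}) then shows that the $k$-star structure is stable under small $d_{1:2:3}$-perturbations of $p_0$, and I would scan $p$ over a family of admissible perturbations of dimension exactly $5 - k(k+1)/2 + \dim(W \cap \R^k)$, taking $\mu^-$ to be the pushforward of a smooth measure on this family.

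The main technical obstacle is proving transversality: that the composed map
$$
p \;\longmapsto\; W^-_{\bx, s}(p) \bmod (a^k : a \in \R)
$$
is a submersion near $p_0$ onto a neighbourhood in the slide-quotient of $\R^k$, and pulls smooth reference measures back to measures with uniformly bounded density. This is where the local Brownian structure of $\cL$ is essential. To establish it I would invoke Proposition \ref{P:bounded-above} together with the resampling bound of Theorem \ref{T:resampling-candidate} to compare the top $k$ Airy lines near $(\bx, s)$ with $k$ independent affinely shifted Brownian bridges; after this comparison, the weight differences $\cL(p; x_i, s) - \cL(p; x_j, s)$ inherit densities bounded on compact sets, which makes conditioning on the affine constraint $W$ legitimate. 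Combining transversality on each side with the independence of $\mu^-$ and $\mu^+$ yields that the joint weight density is bounded on compact subsets of the slide-quotient of $W$, and the product measure restricted to $W$ therefore satisfies the Frostman energy bound with exponent $\dim W + 10 - k(k+1)/2 - \ell(\ell+1)/2 - \ep$ for every $\ep > 0$, completing the proof via the mass distribution principle.
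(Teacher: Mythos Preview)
Your proposal has a genuine gap at the step ``Coalescent geometry then shows that the $k$-star structure is stable under small $d_{1:2:3}$-perturbations of $p_0$.'' This is false for $k\ge 2$: if the $k$-star property were open, $\operatorname{Star}_k$ would have full $d_{1:2:3}$-dimension $5$, contradicting Theorem~\ref{T:star-computation}. Coalescence (Lemma~\ref{L:rightmost-geods}, Proposition~\ref{P:sparse-space-2}) tells you that geodesics from a perturbed $p$ merge with the original arms away from the endpoints, but near $p$ they will typically coalesce into a \emph{single} arm, destroying the star. Without stability you have no family to push a reference measure onto, and the rest of the plan (transversality of the weight map, conditioning on $W$, Frostman energy) never gets off the ground. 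The submersion/density language is also problematic: $p\mapsto W^-_{\bx,s}(p)$ is only H\"older, not differentiable, and Proposition~\ref{P:bounded-above}/Theorem~\ref{T:resampling-candidate} compare the Airy lines to Brownian bridges but do not by themselves give a density for the weight vector of a star conditioned to exist.

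The paper goes in the opposite direction and avoids measure construction entirely. First, Lemma~\ref{L:lower-bd-points'} uses a purely topological (planarity plus coalescence) argument to show that for \emph{every} admissible weight-difference vector there exists a $k$-star point with that weight: for $k=2$ one looks at level sets of the monotone function $F(y)=\cL(y,s;x_1,t)-\cL(y,s;x_2,t)$, and for $k=3$ one analyzes the three regions $R^\bu_i$ in the half-plane and finds a point where all three meet. Then the proof of Proposition~\ref{P:star-lower-bd} defines, for each pair $(k,\ell)$, an explicit map $f_{k,\ell}:K_1\times K_2\to\R^4$ whose components are time coordinates and weight differences $L_k\circ W^-_{\bx,s}$, $L_\ell\circ W^+_{\by,t}$; this map is locally H\"older-$(1-\ep)$ in $d_{1:2:3}$ by the landscape modulus of continuity. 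Lemma~\ref{L:lower-bd-points'} guarantees that the image of the star set under $f_{k,\ell}$ contains a product $B_{k,\ell}\times L_{k,\ell}(g(W))$ of explicitly computable $e_{k,\ell}$-dimension, and Lemma~\ref{L:Hausdorff-Holder} (H\"older maps cannot increase dimension) gives the lower bound. No energy estimate, no density, no stability is needed.
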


The main part of the proof of Proposition \ref{P:star-lower-bd} is the following lemma.

\begin{lemma}\label{L:lower-bd-points'}
	Almost surely, the following statements hold.
	\begin{enumerate}[label=\arabic*.]
		\item Let $\bx \in \R^2_<$ and $s < t \in \R$, and define the function $F = F[\bx, s, t]:\R\to \R$ by
		$$
		F(y) = \cL(y, s; x_1, t) - \cL(y, s; x_2, t).
		$$
		Then $F$ is a continuous, non-increasing function with $\operatorname{Range}(F) = \R$. In particular, the set $F^{-1}(u)$ is a nonempty closed interval for all $u \in \R$. Letting $a < b$ denote the endpoints of this interval, there are forward geodesic stars from $(a, s)$ to $(\bx, t)$ and from $(b, s)$ to $(\bx, t)$.  
		\item 	Let $\bx \in \R^3_<, t \in \R$ and recall the definition of $S^-(\bx, t)$ above. 
	Then for any $\bu = (u_1, 0, u_3) \in S^-(\bx, t)$ there exists a point $p \in \R \X (t-1, t)$ for which there is a forwards geodesic star $\pi = (\pi_1, \pi_2, \pi_3)$ from $p$ to $(\bx, t)$ with
		$$
		(\|\pi_1\|_\cL - \|\pi_2\|_\cL, \|\pi_3\|_\cL - \|\pi_2\|_\cL) = (u_1, u_3).
		$$
	\end{enumerate}
\end{lemma}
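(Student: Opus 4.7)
\emph{Part 1.} Continuity of $F$ will follow immediately from continuity of $\cL$ (Proposition \ref{P:mod-land-i}), and I will obtain monotonicity from the quadrangle inequality \eqref{E:quadrangle} applied with start points $y < y'$ and end points $x_1 < x_2$. For the range, the landscape shape theorem (Theorem \ref{T:landscape-shape}) yields $F(y) = (x_2 - x_1)(x_1 + x_2 - 2y)/(t-s) + o(y)$ as $|y| \to \infty$, which diverges to $\pm \infty$, so continuity gives $\operatorname{Range}(F) = \R$ and each level set is a closed interval. For the star at $a = \inf F^{-1}(u)$ (the argument at $b$ being symmetric), my plan is to take $\pi_1$ the rightmost geodesic from $(a, s)$ to $(x_1, t)$ and $\pi_2$ the leftmost from $(a, s)$ to $(x_2, t)$, both supplied by Lemma \ref{L:regularity}.3, and use Lemma \ref{L:regularity}.4 for $\pi_1 \le \pi_2$. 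To upgrade to strict inequality on $(s, t]$ (which is the star condition), I will suppose that $\pi_1(r_0) = \pi_2(r_0) = z^*$ at some $r_0 \in (s, t)$; geodesic switching (Fact \ref{F:switching}) combined with the rightmost/leftmost extremality will force $\pi_1 = \pi_2$ on a full initial segment $[s, r^*]$, branching at $(z^*, r^*)$. Introducing the triangle-inequality excesses $\phi_i(y) := \cL(y, s; x_i, t) - \cL(y, s; z^*, r^*) - \cL(z^*, r^*; x_i, t) \ge 0$, in this failure scenario I have $\phi_i(a) = 0$ and the identity $F(y) - F(a) = \phi_1(y) - \phi_2(y)$. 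The infimum property $F(y) > F(a) = u$ for $y < a$, combined with the monotonicity of rightmost/leftmost geodesics in the starting point (Lemma \ref{L:regularity}.3--4) and a case analysis of whether geodesics from $(y, s)$ to $x_i$ can pass through $(z^*, r^*)$, will then produce the desired contradiction.

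\emph{Part 2.} With $\bu = (u_1, 0, u_3) \in S^-(\bx, t)$, I plan to apply Part 1 time-slice-by-time-slice: for each $r \in (t-1, t)$, Part 1 for the pair $(x_1, x_2)$ targeting $u_1$ produces a closed interval $[a_1(r), b_1(r)] = \{y : F_1(y, r) = u_1\}$ with 2-stars at its endpoints, and similarly Part 1 for $(x_2, x_3)$ targeting $u_3$ gives $[a_2(r), b_2(r)]$. All four endpoints depend continuously on $r$. As $r \uparrow t$, the parabolic asymptotics from Theorem \ref{T:landscape-shape} collapse $[a_1, b_1]$ onto $(x_1+x_2)/2$ and $[a_2, b_2]$ onto $(x_2+x_3)/2$, placing the two intervals in strict left-right order. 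The condition $\bu \in S^-(\bx, t)$ unpacks to $F_1(0, t-1) > u_1$ and $F_2(0, t-1) > u_3$, which I will use to arrange the opposite relative order of the intervals at the other end $r = t-1$, so that a continuity-in-$r$ (intermediate value) argument produces $r^* \in (t-1, t)$ and $y^* = a_1(r^*) = b_2(r^*)$. At $(y^*, r^*)$, Part 1 then supplies simultaneously a 2-star to $(x_1, x_2)$ with weight gap $u_1$ and a 2-star to $(x_2, x_3)$ with weight gap $u_3$; taking the outermost geodesics to $x_1$ (rightmost) and $x_3$ (leftmost) together with any intermediate geodesic to $x_2$, the ordering in Lemma \ref{L:regularity}.4 and the strict disjointness on $(r^*, t]$ from each 2-star combine to give a forward 3-geodesic star. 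The weight differences equal $(u_1, u_3)$ by construction, and confinement to the compact sets $K_1, K_2$ will follow from Corollary \ref{C:geo-containment}.

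\emph{Main obstacle.} The hardest step is the strictness claim in Part 1 — ruling out a shared initial segment of $\pi_1$ and $\pi_2$ at $a = \inf F^{-1}(u)$. The interplay between geodesic switching, rightmost/leftmost extremality, and the infimum property must be deployed carefully, and the excess identity $F(y) - F(a) = \phi_1(y) - \phi_2(y)$ requires disentangling $\phi_1$ and $\phi_2$ even though both are non-negative and depend on the same intermediate point; I expect rational overlap approximation via Lemma \ref{L:rightmost-geods}.1 (using uniqueness of geodesics for rational endpoints) to be useful here. The secondary obstacle is in Part 2: carefully translating $\bu \in S^-(\bx, t)$ into the interval-swap at $r = t-1$ needed to run the intermediate value argument may require tracking $F_1(\cdot, t-1)$ and $F_2(\cdot, t-1)$ as functions of $y$ more closely than the pointwise conditions at $y = 0$ alone would suggest.
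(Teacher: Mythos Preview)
Your Part~1 plan is close to the paper's and should work with care. The $\phi_i$ identity is correct but by itself does not yield a contradiction: knowing $\phi_1(y) > \phi_2(y) \ge 0$ for $y < a$ is compatible with both being strictly positive. The actual mechanism is exactly the overlap approximation you flag at the end --- one shows (via Lemma~\ref{L:rightmost-geods}.1 or directly via monotone convergence of leftmost geodesics) that for $y$ slightly left of $a$, some geodesic from $(y,s)$ to $(x_1,t)$ still passes through the shared point $(z^*,r^*)$, forcing $\phi_1(y) = 0$ and hence $\phi_2(y) < 0$, a contradiction. The paper does essentially this, taking both $\pi_1,\pi_2$ leftmost and approximating by leftmost geodesics from $(a-1/n,s)$.

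Your Part~2 plan has genuine gaps. First, the claim that $a_1(r), b_2(r)$ depend continuously on $r$ is unjustified: since $F_1(\cdot,r)$ can have plateaus, the endpoints of its level sets are only semicontinuous in $r$ (lower for $a_1$, upper for $b_1$), which is enough to break a naive IVT. Second, even if a connectedness argument locates $(y^*,r^*)$ on both level sets, it will in general be an endpoint of only one of the two intervals, so Part~1 supplies only one of the two 2-stars. Third --- and this is the crux --- even granting two 2-stars $(\sigma_1,\sigma_2)$ to $(x_1,x_2)$ and $(\tau_2,\tau_3)$ to $(x_2,x_3)$ at the same base point, ``taking outermost geodesics'' does not produce a 3-star: the $x_2$-geodesics $\sigma_2$ and $\tau_2$ may differ, and there is no a~priori reason the rightmost-to-$x_1$ is disjoint from $\tau_2$, nor that $\sigma_2$ is disjoint from the leftmost-to-$x_3$.

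The paper's route is quite different. It partitions $\R\times(-\infty,t)$ into closed regions $R^{\bu}_i$ according to which weighted distance $\cL(p;x_i,t)-u_i$ is maximal, shows $R^{\bu}_2$ is bounded in $\R\times[t-1,t)$ (this is where $\bu\in S^-(\bx,t)$ enters), and deduces the set $P$ of triple-equality points is nonempty. It then takes $p\in P$ with \emph{maximal} time coordinate and argues maximality forces a 3-star: if all three leftmost geodesics meet, the meeting point lies in $P$, contradicting maximality; if two share an initial segment $[s,r']$, one applies Part~1 at each time $r\in(s,r']$ to produce endpoints $b_r$, pins $b_r$ strictly between $\pi_1(r)$ and $\pi_3(r)$ via another appeal to maximality, and passes to the limit $r\downarrow s$ using overlap compactness to extract a disjoint pair $\tilde\pi_1,\tilde\pi_2$. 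A further case split handles whether $\tilde\pi_2$ is disjoint from the rightmost geodesic to $x_3$. Merging 2-stars into a 3-star is the hard part here, and your sketch does not engage with it.
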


The proof of Lemma \ref{L:lower-bd-points'} is essentially topological, using only planarity and properties of the coalescent geometry for $\cL$. We take inspiration from previous proofs.
A version of Lemma \ref{L:lower-bd-points'}.1 where the almost sure claim is only for fixed $s, t$ in $\R$ was shown as part of \cite[Theorem 1.9]{bates2019hausdorff} using related ideas. Moreover, similar topological ideas for both parts $1$ and $2$ have been used in \cite{gwynne2021geodesic} to prove the existence of certain geodesic networks in Liouville quantum gravity.

\begin{proof}
	\textbf{Proof of 1.} \qquad The fact that $F$ is a non-increasing function of $y$ follows from the quadrangle inequality \eqref{E:quadrangle}. The asymptotics in Theorem \ref{T:landscape-shape} guarantee that $F(y) \to -\infty$ as $y \to -\infty$ and $F(y) \to \infty$ as $y \to \infty$ so $\operatorname{Range}(f) = \R$. Fixing $u \in \R$ and letting $F^{-1}(u) = [a, b]$, it just remains to check that there are $2$-geodesic stars from $(a, s)$ and $(b, s)$ to $(\bx, t)$. 
	
	Suppose first that there is not a geodesic star from $(a, s)$ to $(\bx, t)$. Then the leftmost geodesics $\pi_1, \pi_2$ from $(a, s)$ to $(x_1, t), (x_2, t)$ satisfy $\pi_1(r) = \pi_2(r)$ for some $r \in (s, t)$. Since $\pi_1, \pi_2$ are leftmost we necessarily have $\pi_1 = \pi_2$ on $[s, r]$. Next, let $\pi^n$ be the sequence of leftmost geodesics from $(a-1/n, s)$ to $(x_1, t)$. The sequence $\pi^n$ is tight in the overlap metric (Proposition \ref{P:overlap-metric}) and any subsequential limit must be a geodesic from $(a, s)$ to $(x_1, t)$. The ordering on leftmost geodesics (Lemma \ref{L:regularity}.3) then implies that $\pi^n \to_o \pi$.
	Therefore for some $n$ we have $\pi^n(r) = \pi_1(r) = \pi_2(r)$ and $\pi^n|_{[r, t]} = \pi_1|_{[r, t]}$, and so 
	\begin{align*}
	\cL(a - 1/n, s; x_1, t) - \cL(a - 1/n, s; x_2, t) &\le \|\pi^n\|_\cL - \|\pi^n|_{[s, r]} \oplus \pi_2|_{[r, t]}\|_\cL \\
	&= \|\pi_1|_{[r, t]}\| - \|\pi_2|_{[r, t]}\| \\
	&= \cL(a, s; x_1, t) - \cL(a, s; x_2, t) = u.
	\end{align*} 
	Since $F$ is non-increasing, this must be an equality, contradicting that $F^{-1}(c) = [a, b]$. A similar argument shows that there is a geodesic star from $(b, s)$ to $(\bx, t)$. 
	
	\textbf{Proof of 2.} \qquad  Fix $\bu = (u_1, 0, u_3) \in S^-(\bx, t)$, and set $u_2 = 0$. Define three regions $R^\bu_1, R^\bu_2, R^\bu_3 \sset \R \X (-\infty, t)$ by 
	$$
	R^\bu_i = \{p \in \R \X (-\infty, t) : \cL(p; x_i, t) - u_i \ge \cL(p; x_j, t) - u_j, j \in \II{1, 3} \smin \{i\}\}.
	$$
	We can make a few observations about the sets $R^\bu_i$. 
	\begin{enumerate}[label=\arabic*.]
		\item By Theorem \ref{T:landscape-shape}, each $R^\bu_i$ contains the intersection of an open ball around the point $(x_i, t)$ with $\R \X (-\infty, t)$ and the set $R_2^\bu \cap (\R \X [t-1, t))$ is bounded.
		\item By continuity of $\cL$, each of the sets $R^\bu_i$ is closed in $ \R \X (-\infty, t)$.	
		\item By the quadrangle inequality \eqref{E:quadrangle} 
		and the definition of $S^-(\bx, t)$ we have that $R_2^\bu \cap (\R  \X \{t-1 \}) = \emptyset$.
	\end{enumerate}
	We claim that in this case the set of $P$ of points $p \in \R \X (t-1, t)$ where
	\begin{equation}
	\label{E:want-discrepancy-point}
	\cL(p; x_1, t) - u_1 = \cL(p; x_2, t) =  \cL(p; x_3, t) - u_3
	\end{equation}
	is nonempty. The three points above guarantee that $R_2^\bu \cap (\R \X [t-1, t))$ is a closed, bounded, nonempty subset which is disjoint from $\R  \X \{t-1 \}$. Therefore there is a minimal time $s_0 \in (t-1, t)$ for which
	$
	R_2^\bu \cap (\R  \X \{s_0\}) \ne \emptyset.
	$
	By the quadrangle inequality \eqref{E:quadrangle} and continuity of $\cL$, there are points $y_1 \le y_2 \in \R$ such that
	$$
	[y_1, y_2]  \X \{s_0\} = R_2^\bu \cap (\R \X \{s_0\}), \quad y_1 \in R_1^\bu, \quad y_2 \in R_3^\bu.
	$$
	Moreover, since $\R \X [t-1, s_0)$ does not intersect $R_2^\bu$, and $R^\bu_1 \cup R^\bu_3$ is a closed set containing the complement of $R_2^\bu$, we have
	$$
	[y_1, y_2]  \X \{s\} \sset R_1^\bu \cup R_3^\bu.
	$$
	Using this, along with the fact that $y_1 \in R_1^\bu, y_2 \in \R_3^\bu$ and $R^\bu_1$ and $R^\bu_3$ are closed yields a point $(y, s_0) \in  R_1^\bu \cap R_3^\bu$ with $y \in [y_1, y_2]$. This point is in $P$.
	
	Now, $P$ is closed since $\cL$ is continuous, and by point $1$ above it is separated from the line $\R\X \{t\}$. Therefore there is a point $p = (y, s)$ in $P$ with maximal time coordinate. If there is a geodesic star from $p$ to $(\bx, t)$, then by \eqref{E:want-discrepancy-point}, $p$ satisfies the conditions of part $2$. We will show that the maximality of $s$ guarantees the existence of such a geodesic star.
	
	Let $(\pi_1, \pi_2, \pi_3)$ be the three leftmost geodesics from $p$ to $(x_i, t), i = 1, 2, 3$. We have $\pi_1 \le \pi_2 \le \pi_3$ by Lemma \ref{L:regularity}.3 and the overlaps $O(\pi_i, \pi_j)$ are always intervals of the form $[s, r']$ for some $r' \ge s$ since otherwise we could perform a geodesic switching argument (Fact \ref{F:switching}) to contradict that one of the $\pi_i$ is leftmost.
	
	If there is an $r \in (s, t)$ with $\pi_1(r) = \pi_2(r) = \pi_3(r)$, then $(\pi_1(r), r) \in P$ contradicting the maximality of $s$. Therefore either $\pi_1$ and $\pi_2$ are disjoint or $\pi_2$ and $\pi_3$ are disjoint. Without loss of generality, assume $\pi_2$ and $\pi_3$ are disjoint, and that $O(\pi_1, \pi_2) = [s, r']$ with $s < r'$.  
	Now for every $r \in (s, r']$, define
	$$
	[a_r, b_r] = \{y' \in \R : \cL(y', r; x_1, t) - u_1 = \cL(y', r; x_2, t)\}.
	$$
	The fact that the right-hand side above is an interval follows from part $1$, and $[a_r, b_r]$ always contains $\pi_1(r)$.
	\begin{claim}
		$b_r \in [\pi_1(r), \pi_3(r))$. In particular, $b_r \to y$ as $r \cvgdown s$. 
	\end{claim}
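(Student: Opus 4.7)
The lower bound $b_r \ge \pi_1(r)$ is immediate from the overlap $\pi_1|_{[s, r']} = \pi_2|_{[s, r']}$: restricting both geodesics to $[r, t]$ for $r \in (s, r']$ gives $F(\pi_1(r)) = \|\pi_1\|_\cL - \|\pi_2\|_\cL = u_1$, placing $\pi_1(r) \in [a_r, b_r] = F^{-1}(u_1)$. All the content of the claim therefore lies in the strict upper bound $b_r < \pi_3(r)$; once this is in hand, $\pi_1(r) \le b_r < \pi_3(r)$ combined with $\pi_1(r), \pi_3(r) \to y$ as $r \cvgdown s$ (continuity of the geodesics at their common starting endpoint $p$) immediately yields $b_r \to y$ by squeezing.

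I will prove $b_r < \pi_3(r)$ by contradiction: assume $b_r \ge \pi_3(r)$ and produce a point of $P$ at time $r > s$, contradicting the maximality of $s$. Introduce the auxiliary function
$$
G(y') = \cL(y', r; x_3, t) - \cL(y', r; x_2, t),
$$
which is non-decreasing in $y'$ by the quadrangle inequality \eqref{E:quadrangle} (the opposite sign to the monotonicity of $F$, reflecting that $x_3 > x_2$ rather than $x_1 < x_2$). The key estimate to establish is the sandwich
$$
G(\pi_2(r)) \le u_3 \le G(\pi_3(r)).
$$
For $G(\pi_3(r)) \ge u_3$, use that $\pi_3|_{[s,r]}$ is a geodesic from $p$ to $(\pi_3(r), r)$, so $\cL(\pi_3(r), r; x_3, t) = \|\pi_3\|_\cL - \|\pi_3|_{[s,r]}\|_\cL$, while the reverse triangle inequality \eqref{E:triangle} applied to $\|\pi_2\|_\cL = \cL(p; x_2, t)$ through the intermediate point $(\pi_3(r), r)$ yields $\cL(\pi_3(r), r; x_2, t) \le \|\pi_2\|_\cL - \|\pi_3|_{[s,r]}\|_\cL$; subtracting and invoking $\|\pi_3\|_\cL - \|\pi_2\|_\cL = u_3$ (which holds since $p \in P$) gives $G(\pi_3(r)) \ge u_3$. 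The inequality $G(\pi_2(r)) \le u_3$ is symmetric, with the roles of $\pi_2$ and $\pi_3$ swapped in the reverse-triangle step. Monotonicity of $G$ together with the intermediate value theorem then supplies some $y^* \in [\pi_2(r), \pi_3(r)]$ with $G(y^*) = u_3$.

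To promote $(y^*, r)$ to a point of $P$ I still need $F(y^*) = u_1$; but under the standing assumption $b_r \ge \pi_3(r)$, the non-increasingness of $F$ combined with $F(\pi_1(r)) = F(b_r) = u_1$ forces $F \equiv u_1$ on the sandwiched interval $[\pi_1(r), \pi_3(r)]$, which contains $y^*$ because $\pi_2(r) = \pi_1(r)$ for $r \in (s, r']$. Hence $(y^*, r) \in P$ with $r > s$, contradicting the maximality of $s$ and establishing $b_r < \pi_3(r)$. The main fiddly step is the reverse-triangle-plus-restriction bookkeeping that yields the sandwich on $G$; the remaining ingredients are monotonicity, IVT, and the very property that defined $p$ as a $P$-point of maximal time coordinate.
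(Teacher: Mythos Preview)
Your argument is correct and takes a genuinely different route from the paper's. Both proofs share the lower bound $\pi_1(r)\in[a_r,b_r]$ and both extract the strict upper bound by producing, under the hypothesis $b_r\ge\pi_3(r)$, a point of $P$ at time $r>s$ contradicting the maximality of $s$. The difference lies in how that point is manufactured. The paper introduces $d_r=\min\{x:(x,r)\in R_3^{\bu}\}$, the leftmost point of the region $R_3^{\bu}$ at height $r$, and runs a three-case argument (using leftmost-geodesic monotonicity and the triangle inequality) to pin $d_r$ into $(\pi_1(r),\pi_3(r)]\setminus[a_r,b_r]$, whence $b_r<d_r\le\pi_3(r)$. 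You instead introduce the companion difference $G(y')=\cL(y',r;x_3,t)-\cL(y',r;x_2,t)$, establish the sandwich $G(\pi_2(r))\le u_3\le G(\pi_3(r))$ directly from the geodesic identity for $\pi_2,\pi_3$ plus the reverse triangle inequality, and then pull a point with $G=u_3$ out of $[\pi_2(r),\pi_3(r)]$ by the intermediate value theorem; the assumption $b_r\ge\pi_3(r)$ forces $F\equiv u_1$ on that whole interval, so the IVT point lands in $P$. Your argument is a bit more elementary in that it avoids the geometry of the regions $R_i^{\bu}$ altogether and proceeds purely through the monotone scalar functions $F,G$; the paper's version, on the other hand, actually locates a separating point $d_r$ strictly between $b_r$ and $\pi_3(r)$, which is marginally more information (though not used later).
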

	\begin{claimproof}
		Set$$
		d_r = \min \{x \in \R : (x, r) \in R^\bu_3\}.
		$$
		At the point $d_r$, we have that 
		$$
		(\cL(d_r, r; x_1, t) - u_1) \vee \cL(d_r, r; x_2, t) =  \cL(d_r, r; x_3, t) - u_3.
		$$
		Therefore if $d_r \in [a_r, b_r]$ then the point $(d_r, r) \in P$ which contradicts the maximality of $s$. On the other hand, if $d_r \le \pi_1(r)$, then
		\begin{equation*}
		\begin{split}
		\cL(p; \bar \pi_1(r)) + \cL(\bar \pi_1(r), x_3, t) &\ge \cL(p; \bar \pi_1(r)) + \cL(\bar \pi_1(r), x_2, t) + u_2 \\
		&=\cL(p; x_2, t) + u_2 \\
		&=\cL(p; x_3, t).
		\end{split}
		\end{equation*}
		Here the inequality uses that $d_r \le \pi_1(r)$, the first equality uses that $\pi_2$ is a geodesic from $p$ to $(x_2, s)$ going through the point $\bar \pi_1(r)$, and the second equality uses that $p \in P$. This calculation implies that there is a geodesic from $p$ to $(x_3, s)$ going through the point $\bar \pi_1(r)$. Since $\pi_3(r) > \pi_1(r)$, this contradicts that $\pi_3$ is the leftmost geodesic from $p$ to $(x_3, s)$. Hence $d_r > \pi_1(r)$. 
		
		Next, if $\pi_3(r) < d_r$, then by similar reasoning we have
		\begin{equation*}
		\begin{split}
		\cL(p; \bar \pi_3(r)) &+ (\cL(\bar \pi_3(r); x_1, t) - u_1) \vee \cL(\bar \pi_3(r); x_2, t) \\
		&> \cL(p; \bar \pi_3(r)) + \cL(\bar \pi_3(r); x_3, t) - u_3 \\
		&=\cL(p; x_3, s) - u_3 \\
		&=(\cL(p; x_1, t) - u_1) \vee \cL(p; x_2, t),
		\end{split}
		\end{equation*}
		which contradicts the triangle inequality for $\cL$. Therefore $d_r \in (\pi_1(r), \pi_3(r)] \cap [a_r, b_r]^c$. Since $\pi_1(r) \in [a_r, b_r]$ this implies $b_r < d_r \le \pi_3(r)$, completing the proof of the claim.
	\end{claimproof}
	
	Now, by part $1$ of the lemma, at the point $b_r$, there are disjoint geodesics $\pi^{1, r}, \pi^{2, r}$ from $(b_r, r)$ to $(x_1, t)$ and $(x_2, t)$. Since $(b_r, r) \to p$, these geodesic sequences are precompact in the overlap topology with limits that are geodesics from $p$ to $(x_1, t)$ and $(x_2, t)$ (Proposition \ref{P:overlap-metric}). Therefore since disjointness is preserved by overlap convergence there exist disjoint geodesics $\tilde \pi_1, \tilde \pi_2$ from $p$ to $(x_1, t), (x_2, t)$. 
	
	If $\tilde \pi_2$ is disjoint from the rightmost geodesic $\tau_3$ from $p$ to $(x_3, t)$, then $(\tilde \pi_1, \tilde \pi_2, \tau_3)$ gives a desired geodesic star. If not, then letting $\tau_2$ be the rightmost geodesic from $p$ to $(x_2, t)$, there exists an interval $[s, r''] \sset [s, r']$ along which $\tau_2 = \tau_3$. For every $r \in (s, r'']$, define
	$$
	[a_r', b_r'] = \{y' \in \R :  \cL(y', r; x_2, t) = \cL(y', r; x_3, t) - u_3 \},
	$$
	and note that $\pi_3(r) \in [a_r', b_r']$.
	Now, if $a_r' \le b_r$ for any $r \in (s, r'']$, then since $b_r < \pi_3(r)$, we have that $b_r \in [a_r', b_r']$ and so $(b_r, r)$ is necessarily in $P$. This contradicts the maximality of $s$. Therefore we may assume $b_r < a_r'$ for all $r \in (s, r'']$. 
	
	Again using part $1$ of the lemma, the leftmost geodesic from $(a_r', r)$ to $(x_2, t)$ is disjoint the rightmost geodesic from $(a_r', r)$ to $(x_3, t)$. Therefore since $b_r < a_r'$, the rightmost geodesic from $(b_r, r)$ to $(x_2, t)$ is disjoint from $\tau_3$; this uses the monotonicity in Lemma \ref{L:regularity}.4.
	Therefore $\pi^{2, r}, \pi^{1, r}$ are disjoint from $\tau_3$ for all $r$, and since overlap convergence preserves disjointness, so are $\tilde \pi_1, \tilde \pi_2$. This is a contradiction. 
\end{proof}

From Lemma \ref{L:lower-bd-points'} and the modulus of continuity for $\cL$ we can lower bound the Hausdorff dimension of the sets $\operatorname{Star}(\bx, s; \by, t; W)$ and prove Proposition \ref{P:star-lower-bd}.
We require a simple lemma relating Hausdorff dimension and H\"older continuous maps. This lemma is well-known, but we could not locate a reference for the exact generality we need here, so include a short proof. Here we let $\dim_d$ denote Hausdorff dimension with respect to a metric $d$.

\begin{lemma}
	\label{L:Hausdorff-Holder}
	Let $(X, d_1), (Y, d_2)$ be two metric spaces, each of which can be written as a countable union of compact sets. Let $f:X \to Y$ be a function and suppose that $f$ is locally H\"older-$\al$ continuous (i.e. H\"older-$\al$ continuous on every compact subset of $X$). Then for any set $U \sset X$,
	\begin{equation}
	\label{E:fU-d1}
	\dim_{d_2} (f(U)) \le \al^{-1}\dim_{d_1}(U).
	\end{equation}
\end{lemma}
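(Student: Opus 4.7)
The plan is to reduce to the case where $U$ lies in a single compact set, and then apply the standard computation showing that a globally H\"older-$\al$ map scales Hausdorff dimension by at most $1/\al$.

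First, write $X = \bigcup_{n \in \N} K_n$ as a countable union of compact sets. Then $U = \bigcup_n (U \cap K_n)$ and $f(U) = \bigcup_n f(U \cap K_n)$. Hausdorff dimension is countably stable: for any sequence of sets $A_n$ in a metric space, $\dim(\bigcup_n A_n) = \sup_n \dim(A_n)$. So it suffices to prove \eqref{E:fU-d1} under the additional assumption that $U$ is contained in some fixed compact set $K \sset X$. But then the local H\"older hypothesis gives a single constant $C$ with $d_2(f(x), f(y)) \le C\, d_1(x, y)^\al$ for all $x, y \in K$.

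Next, I will prove the standard inequality: if $g: (X, d_1) \to (Y, d_2)$ is globally H\"older-$\al$ with constant $C$ and $A \sset X$ has Hausdorff dimension $\dim_{d_1}(A) = s$, then $\dim_{d_2}(g(A)) \le s/\al$. The argument is to fix any $t > s$ and show that the $(t/\al)$-dimensional Hausdorff measure of $g(A)$ is zero. Given $\eps > 0$ and $\de > 0$, choose a cover $\{E_i\}$ of $A$ with $\operatorname{diam}(E_i) \le \de$ and $\sum_i \operatorname{diam}(E_i)^t < \eps$, which exists because $\cH^t(A) = 0$. Then $\{g(E_i \cap A)\}$ covers $g(A)$, each $g(E_i \cap A)$ has diameter at most $C\,\operatorname{diam}(E_i)^\al \le C\de^\al$, and
\begin{equation*}
\sum_i \operatorname{diam}(g(E_i \cap A))^{t/\al} \le C^{t/\al} \sum_i \operatorname{diam}(E_i)^t < C^{t/\al} \eps.
\end{equation*}
Letting $\de \to 0$ and then $\eps \to 0$ gives $\cH^{t/\al}(g(A)) = 0$, so $\dim_{d_2}(g(A)) \le t/\al$. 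Since $t > s$ was arbitrary, $\dim_{d_2}(g(A)) \le s/\al$.

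Applying this to $g = f|_K$ and $A = U$ (together with the reduction above) completes the proof. There is no real obstacle here: the only subtle point is the initial $\sigma$-compactness reduction, which is needed because a function which is only \emph{locally} H\"older need not be \emph{globally} H\"older, but countable stability of Hausdorff dimension erases this distinction.
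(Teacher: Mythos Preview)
Your proof is correct and follows essentially the same approach as the paper: first reduce to $U$ contained in a single compact $K_n$ via $\sigma$-compactness and countable stability of Hausdorff dimension, then push covers through the H\"older map to bound $\dim_{d_2}(f(U\cap K_n))$ by $\al^{-1}\dim_{d_1}(U\cap K_n)$. Your version is slightly cleaner in that you explicitly intersect the covering sets with $A$ before applying $g$, ensuring the H\"older constant on $K$ legitimately controls the image diameters.
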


\begin{proof}
	Let $X = \bigcup_{n=1}^\infty K_n$, where the $K_n$ are compact. By countable stability of Hausdorff dimension, it suffices to prove \eqref{E:fU-d1} for $U \cap K_n$ for all $n$. Now suppose that $\dim_{d_1}(U \cap K_n) < \beta < \infty$. Then there exists $m > 0$ such that for all $\ep > 0$, there exists a cover $V_i, i \in \N$ of $U \cap K_n$ with $\operatorname{diam}(V_i) < \ep$ and $\sum_i \operatorname{diam}(V_i)^\be < m$. Then $f(V_i), i \in \N$ is a cover for $f(U \cap K_n)$, and letting $c_n$ be the H\"older-$\al$ constant for $f$ on $K_n$, we have that
	$$
	\operatorname{diam}(f(V_i)) \le c_n \operatorname{diam}(V_i)^\al \le c_n \ep^{\al}, \quad \sum_i \operatorname{diam}(f(V_i))^{\be/\al} \le c_n^{\be/\al} \sum_i \operatorname{diam}(V_i)^\be < c_n^{\be/\al} m
	$$
	and so $\dim_{d_2} (f(U \cap K_n)) \le \be/\al$.
\end{proof}

\begin{proof}[Proof of Proposition \ref{P:star-lower-bd}] 
	We prove Proposition \ref{P:star-lower-bd} by finding a locally H\"older continuous map relating the star-sets and the set $W$, and then applying Lemma \ref{L:Hausdorff-Holder}. Let $(\bx, s)  \in \R^k \X \R, (\by, t) \in \R^\ell \X \R$. The definition of the map depends on the choice of $k, \ell$, and by time-reversal symmetry of $\cL$ (Lemma \ref{L:invariance}.3) we may assume $k \le \ell$. First, define $L_3:\R^3 \to \R^2, L_2:\R^2 \to \R$ by
	$$
	L_3(\bw) = (w_1 - w_2, w_3 - w_2), \qquad L_2(\bw) = w_1 - w_2. 
	$$
	Next we define the compact set $K_1$. If $k = 1$, set $K_1 = [-1, 1] \X [s-1, s-1/2]$. If $k = 2$, then by Theorem \ref{T:landscape-shape} there is a random constant $C > 0$ such that
	\begin{equation}
	\label{E:px1}
	|\cL(p; x_1, s) - \cL(p; x_2, s)| > 2n \qquad \text{ for all } p \in [-C, C]^c \X  [s-1, s).
	\end{equation}
	In this case we set $K_1 =  [-C, C] \X [s-1, s-1/2]$. Finally, when $k = 3$ we keep the definition of $C$ above. We can also find a random $\ep > 0$ such that whenever $p \in [-C, C] \X  [s-\ep, s)$ we have
	\begin{equation}
	\label{E:px2x3}
|\cL(p; x_1, s) - \cL(p; x_2, s)| \vee |\cL(p; x_3, s) - \cL(p; x_2, s)| > 2n.
	\end{equation}
	In this final case, set $K_2 = [-C, C] \X [s-1, s-\ep].$ We can define $K_2$ analogously.
	
	Finally, define maps $f_{k, \ell}:K_1 \X K_2 \to \R^4$ for the six choices of $k \le \ell$ as follows. For each of these maps, we equip the domain with the $d_{1:2:3}$ metric and the range with a metric $e_{k, \ell}$ defined below.
	\begin{itemize}[nosep]
		\item $f_{1, 1}$ is the identity and $e_{1, 1} = d_{1:2:3}$.
		\item $f_{1, 2}(z, r; z', r') = (z, r, r', L_2 \circ W^+_{\by, t}(z', r'))$ and 
		$$
		e_{1, 2}(\bu, \bv) = |u_1 - v_1|^{1/2} + |u_2-v_2|^{1/3} + |u_3 - v_3|^{1/3} + |u_4 - v_4|.
		$$
		\item $f_{1, 3}(z, r; z', r') = (z, r, L_3 \circ W^+_{\by, t}(z', r'))$ and 
		$$
		e_{1, 3}(\bu, \bv) = |u_1 - v_1|^{1/2} + |u_2-v_2|^{1/3} + |u_3 - v_3| + |u_4 - v_4|.
		$$
		\item $f_{2, 2}(z, r; z', r') = (r, r', L_2 \circ W^-_{\bx, s}(z, r), L_2 \circ W^+_{\by, t}(z', r'))$ and
		$$
		e_{2, 2}(\bu, \bv) = |u_1 - v_1|^{1/3} + |u_2 - v_2|^{1/3} + |u_3 - v_3| + |u_4 - v_4|.
		$$
		\item $f_{2, 3}(z, r; z', r') = (r, L_2 \circ W^-_{\bx, s}(z, r); L_3 \circ W^+_{\by, t}(z', r'))$ and
		$$
		e_{2, 3}(\bu, \bv) = |u_1 - v_1|^{1/3} + |u_2 - v_2| + |u_3 - v_3| + |u_4 - v_4|.
		$$
		\item $f_{3, 3}(z, r; z', r') = (L_3 \circ W^-_{\bx, s}(z, r), L_3 \circ W^+_{\by, t}(z', r'))$ and let $e_{3, 3}$ be the usual Euclidean metric.
	\end{itemize}
	By the modulus of continuity for $\cL$ (Proposition \ref{P:mod-land-i}) all of the maps $f_{k, \ell}$ are  locally H\"older-$(1-\ep)$ continuous for all $\ep > 0$ with the prescribed codomain metrics. Now, let $L_{k, \ell} := (L_k, L_\ell):\R^{k + \ell}\to \R^{k-1} \X \R^{\ell - 1}$. We claim that
	\begin{equation}
	\label{E:f-image}
	B_{k, \ell} \X L_{k, \ell} (g(W)) \sset f_{k, \ell}(\operatorname{Star}(\bx, s; \by, t; g(W)))
	\end{equation}
	where $B_{k, \ell}$ is a $6-k-\ell$ dimensional product of positive-length intervals.
	It suffices to check \eqref{E:f-image} when $W = L_{k, \ell}^{-1}(\ba, \bb)$ for some $\ba \in \R^{k-1}, \bb \in \R^{\ell-1}$, since every other slide-invariant set $W$ is a disjoint union of sets of this form. We may assume $g(W) \ne \emptyset$ since otherwise the claim is trivial. In this case $L_{k, \ell}(g(W)) = (\ba, \bb)$,
	\begin{align*}
	\operatorname{Star}(\bx, s; \by, t; W) = \operatorname{Star}(\bx, s; L_k^{-1}(\ba)) \X \operatorname{Star}_R(\by, t; L_\ell^{-1}(\bb)),
	\end{align*}
	and $L_k^{-1}(\ba) \cap (-n, n)^k \ne \emptyset, L^{-1}_\ell(\bb) \cap (-n, n)^\ell \ne \emptyset$.
	
	Now, if $k = 1$, then $L_k^{-1}(\ba) = \R$ and so $\operatorname{Star}(\bx, s; L_k^{-1}(\ba)) \cap K_1 = K_1$ since there are geodesics between any pair of points. If $k = 2$, then by Lemma \ref{L:lower-bd-points'}.1, $\operatorname{Star}(\bx, s; L_k^{-1}(\ba))$ contains a point of the form $(y, r)$ for all $r \in \R$. Moreover, since $L_2^{-1}(\ba) \cap (-n, n)^2 \ne \emptyset$, if $(y, r) \in \operatorname{Star}(\bx, s; L_k^{-1}(\ba))$ for $r \in [s-1, s- 1/2]$ then by \eqref{E:px1} we must have $y \in [-C, C]$. Therefore $\operatorname{Star}(\bx, s; L_k^{-1}(\ba))$ contains a point in $K_1 \cap (\R \X \{r\})$ for all $r \in [s-1, s- 1/2]$. If $k = 3$, then by Lemma \ref{L:lower-bd-points'}.2 and the fact that $g(W) \ne \emptyset$, the set $\operatorname{Star}(\bx, s; L_k^{-1}(\ba))$ contains a point $p \in \R \X (s-1, s)$. Moreover, since $L_3^{-1}(\ba) \cap (-n, n)^3 \ne \emptyset$, by \eqref{E:px1} and \eqref{E:px2x3} we must have $p \in K_1$. 
This analysis, along with similar reasoning applied to  $\operatorname{Star}_R(\by, t; L_\ell^{-1}(\bb)) \cap K_2$, yields \eqref{E:f-image} for $W = L_k^{-1}(\ba) \X L^{-1}_\ell(\bb)$.

	The fact that the left-hand side of \eqref{E:f-image} is non-empty implies that $\operatorname{Star}(\bx, s; \by, t; g(W))$ is non-empty. Moreover, Lemma \ref{L:Hausdorff-Holder} and the fact that $f_{k, \ell}$ is locally H\"older-$(1-\ep)$ continuous for all $\ep > 0$ implies that
	$$ 
	\dim_{1:2:3} \operatorname{Star}(\bx, s; \by, t; g(W)) \ge \dim_{e_{k, \ell}}(B_{k, \ell} \X L_{k, \ell} (g(W))).
	$$
	It is straightforward to calculate the Hausdorff dimension of the right-hand side above. Indeed, since $W$ is a slide-invariant affine set, $L_{k, \ell}(g(W))$ is a non-empty intersection of a $\dim W - 2$ dimensional affine set and an open set. The set $B_{k, \ell}$ is simply a product of positive-length intervals. Going through the cases for each $k, \ell$, we get that 
	$$
	\dim_{e_{k, \ell}}(B_{k, \ell} \X L_{k, \ell} (g(W))) \ge 12 - \frac{k(k+1)}{2} - \frac{\ell(\ell+1)}{2} + \dim W - 2,
	$$
	yielding the bound in the proposition.
\end{proof}

\subsection{Upper bounds on dimension}
\label{S:upper-bounds}

The upper bound on the dimension of $\operatorname{Star}(\bx, s; \by, t; W)$ is more difficult, and leans heavily on an analysis of a last passage problem in the Airy line ensemble arising via Proposition \ref{P:high-paths-B}. The upper bound we achieve is not as strong as the lower bound: it only holds almost surely for a fixed quintuple $(\bx, s; \by, t; W)$, rather than for all quintuples simultaneously.

\begin{prop}
	\label{P:weighted-stars-upper-bound}
	Let $s < t$, let $\ell, k \in \{1, 2, 3, 4\}$, let $\bx \in \R^k_<, \by \in \R^\ell_<$, and let $W \sset \R^{k+\ell}$ be any affine set. Then setting
	$$
	d(k, \ell, W) = \dim W + 10 - \frac{k(k+1)}{2} - \frac{\ell(\ell+ 1)}{2}
	$$
	almost surely we have 
	$$
	\dim_{1:2:3}[\operatorname{Star}(\bx, s; \by, t; W)] \le d(k, \ell, W) \vee 0.
	$$
	Moreover if $d(k, \ell, W) < 0$ then the set $\operatorname{Star}(\bx, s; \by, t; W)$ is almost surely empty.
\end{prop}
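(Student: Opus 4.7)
By the invariances of $\cL$ in Lemma \ref{L:invariance}, I would first reduce to $s = 0$, $t = 1$ and to estimating $\operatorname{Star}(\bx, 0; \by, 1; W) \cap K$ for a fixed compact $K \sset \Rd$; the full result then follows by exhausting $\Rd$ by countably many compacts. Crucially, by the independence of disjoint time-increments in Definition \ref{D:L-unique}, the event that $p$ is a forward $k$-star point to $(\bx, 0)$ and the event that $q$ is a reverse $\ell$-star point from $(\by, 1)$ are independent, as are the weight vectors $W^-_{\bx, 0}(p)$ and $W^+_{\by, 1}(q)$; the set $W$ couples these through a purely Euclidean affine constraint on $\R^{k+\ell}$.

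\textbf{Step 2 (Single-sided star estimate, the technical heart).} The crux is the following quantitative bound. For any affine $V \sset \R^k$, any $p_0 \in \R \X (-\infty, 0)$, any $\bw_0 \in V$, and any small $\ep > 0$,
\[
\P\bigl(\exists\, p \in B_{1:2:3}(p_0, \ep) \cap \operatorname{Star}(\bx, 0; V \cap B(\bw_0, \ep))\bigr) \le c\,\ep^{k(k-1)/2 + k - \dim V}\,\log^{M}(\ep^{-1}).
\]
I would prove this by transferring the star event into the parabolic Airy line ensemble $\fA$ via Theorem \ref{T:iso-multiples}: after appropriate rescaling around $p$, the $k$-star condition becomes the equality $\cS(0^k, \bx) = \sum_{i=1}^k \cS(0, x_i)$, and via Proposition \ref{P:high-paths-B} the left side decomposes as $\sum_{i=1}^k \fA_i(z) + \fA[(z, \II{1, k}) \to (\bx, 1)]$ for $z < x_1$, while the right side uses only $\fA_1$. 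Theorem \ref{T:resampling-candidate} then lets me compare the top $k$ Airy lines on a compact window with $k$ independent Brownian bridges plus independent affine shifts, at the cost of an $e^{ct^3}$ Radon--Nikodym factor. Under this comparison the star-plus-weight event becomes a Gaussian small-ball problem: the factor $\ep^{k(k-1)/2}$ arises from forcing $k$ non-crossing Brownian paths to emerge from a common spatial point (the usual non-intersection exponent of a Karlin--McGregor type), and the factor $\ep^{k - \dim V}$ from pinning the weight vector (a deterministic linear functional of the underlying Brownian data) to an $\ep$-neighbourhood of $V$. Uniformity over the $\ep$-ball of $p$'s is handled by a chaining argument using the modulus of continuity in Proposition \ref{P:mod-land-i}.

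\textbf{Step 3 (Two-sided estimate and covering).} By the flip symmetry in Lemma \ref{L:invariance}, the analogous bound holds for $q$ on the reverse side. Since the two halves are independent and $W$ is an affine coupling of the weights, a straightforward first-moment decomposition gives
\[
\P\bigl(B_{1:2:3}((p_0, q_0), \ep) \cap \operatorname{Star}(\bx, 0; \by, 1; W) \ne \emptyset\bigr) \le c\,\ep^{\be}\,\log^{M}(\ep^{-1}), \quad \be = \tfrac{k(k+1)}{2} + \tfrac{\ell(\ell+1)}{2} - \dim W.
\]
A compact subset of $\Rd$ is covered by $O(\ep^{-10})$ balls of $d_{1:2:3}$-radius $\ep$, so the expected number of boxes meeting $\operatorname{Star}(\bx, 0; \by, 1; W)$ is $O(\ep^{\be - 10}\log^{M}(\ep^{-1}))$. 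Taking $\ep \to 0$ along dyadic scales and applying the standard covering definition of Hausdorff dimension yields $\dim_{1:2:3}[\operatorname{Star}(\bx, 0; \by, 1; W)] \le 10 - \be = d(k, \ell, W)$. When $d(k, \ell, W) < 0$ the expected count decays to zero as $\ep \to 0$, so Borel--Cantelli combined with closedness of the star set (a consequence of overlap compactness, Proposition \ref{P:overlap-metric}, together with Fact \ref{F:limits-geo}) gives almost sure emptiness.

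\textbf{Main obstacle.} The substantive difficulty is Step 2: obtaining the sharp exponent $k(k-1)/2 + k - \dim V$ requires a faithful transfer of the star-plus-weight event to a Gaussian small-ball event on $k$ Brownian bridges, with the error terms from the parabolic last-passage decomposition in Proposition \ref{P:high-paths-B} and the tails of the affine shifts from Theorem \ref{T:resampling-candidate} absorbed into polylogarithmic corrections. The hardest cases are $k = 4$ (where the exponent $6$ forces emptiness and demands that all four top lines conspire nontrivially) and low-dimensional $V$ (where the weights are pinned very tightly and the small-ball estimates must be uniform over $\bw_0$). The rest of the argument is standard first-moment/covering bookkeeping once this estimate is in hand.
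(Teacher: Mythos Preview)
Your architecture---transfer the star event to the Airy line ensemble via Theorem~\ref{T:iso-multiples} and Proposition~\ref{P:high-paths-B}, compare with Brownian bridges through Theorem~\ref{T:resampling-candidate}, then run a first-moment covering---matches the paper's. But Step~2 has a genuine gap at exactly the point you flag as the main obstacle: you have not identified a concrete Brownian event whose probability is $\lesssim \ep^{k(k+1)/2}$.

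Your claim that the factor $\ep^{k(k-1)/2}$ comes from ``forcing $k$ non-crossing Brownian paths to emerge from a common spatial point'' is not the right mechanism. The disjoint paths in the landscape optimizer are not the objects that become Brownian under Theorem~\ref{T:resampling-candidate}; the lines $\cL^{p,s}_1,\dots,\cL^{p,s}_k$ are, and these do \emph{not} emerge from a common point. What the paper actually proves (Lemma~\ref{L:jump-lemma}) is that the identity $\cL(p^k;\by,s)=\sum_i \cL(p;y_i,s)$ is equivalent to the existence of a $(k-1)$-level Gelfand--Tsetlin pattern $\bz\in\operatorname{GT}_{k-1}(\by)$ at which consecutive lines touch: $\cL^{p,s}_i(z_{i,j})=\cL^{p,s}_{i+1}(z_{i,j})$ for all $(i,j)\in T_{k-1}$. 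This is the missing translation: the star event becomes $k(k-1)/2$ near-touches between ordered Brownian bridges, plus $k$ value constraints from the weights. The Brownian estimate (Lemma~\ref{L:ep-0100} and Corollary~\ref{C:Brownian-weighted-star}, using Karlin--McGregor for the non-intersection cost in Lemma~\ref{L:BB-nonint}) then gives probability $\lesssim \de^{\,k(k+1)/2 + 2\#\bz}$, and the extra $2\#\bz$ in the exponent is exactly what cancels the entropy $\ep^{-2\#\bz}$ from discretizing the unknown Gelfand--Tsetlin pattern. Without this characterization you have no handle on the event, and no explanation for why the exponent is sharp uniformly in $k\le 4$.

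Two smaller points. First, uniformity over the $\ep$-box of $p$'s requires the modulus of continuity for the \emph{extended} landscape (Lemma~\ref{L:extended-modulus}), not just Proposition~\ref{P:mod-land-i}, since the touching conditions involve $\cL^{p,s}_i$ for $i\ge 2$. Second, your appeal to closedness for the emptiness claim is both unnecessary and unjustified: once the expected number of $\ep$-boxes meeting the set is $O(\ep^{-d(k,\ell,W)})$ with $d(k,\ell,W)<0$, Markov's inequality gives $\P(\text{set}\ne\emptyset)\to 0$ directly along $\ep\to 0$, with no topological input.
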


Before proceeding with the proof of Proposition \ref{P:weighted-stars-upper-bound} we show how when combined with the lower bound in Proposition \ref{P:star-lower-bd}, it implies Theorem \ref{T:star-computation}.

\begin{proof}[Proof of Theorem \ref{T:star-computation}]
The $k = 1$ case is immediate since $\operatorname{Star}_1 = \R^2$ and $\dim_{1:2:3}(\R^2) = 5$. Let $k \in \{2, 3, 4\}$. Let $\operatorname{Star}_{k, 1}$ be the set of all points $p$ that start a forwards $k$-geodesic star. By time-reversal symmetry of $\cL$, it suffices to prove the theorem with $\operatorname{Star}_{k, 1}$ in place of the larger set $\operatorname{Star}_k$.

First, by Lemma \ref{L:overlap-at-endpoints}, if $p \in \operatorname{Star}(\bx, t, \R^k)$ then $p \in \operatorname{Star}(\bx', t', \R^k)$ for some rational $\bx', t'$ close to $\bx, t$. Therefore
$$
\operatorname{Star}_{k, 1} = \bigcup_{(\bx, t) \in \Q^k_< \X \Q} \operatorname{Star}(\bx, t, \R^k).
$$
Now, almost surely for all $(\bx, t) \in \Q^k_< \X \Q$ we have
$
\operatorname{Star}(\bx, t; 0, t + 1; \R^{k+1}) = \operatorname{Star}(\bx, t, \R^k) \X \R \X (t+1, \infty).
$
If $k = 4$, then Proposition \ref{P:weighted-stars-upper-bound} implies that almost surely 
$$
\dim_{1:2:3}[\operatorname{Star}(\bx, t, \R^k) \X \R \X (t+1, \infty)] \le 4.
$$ This can only hold if $\operatorname{Star}(\bx, t, \R^k)$ is empty, and so almost surely $\operatorname{Star}_{4, 1}$ is also empty. 
If $k = 2, 3$, then Proposition \ref{P:star-lower-bd} and Proposition \ref{P:weighted-stars-upper-bound} together imply that 
$$
\dim_{1:2:3}[\operatorname{Star}(\bx, t, \R^k) \X \R \X (t+1, \infty)] = 10 - \frac{k(k-1)}{2}.
$$
Now, standard results about the Hausdorff dimension of Cartesian products imply that the Hausdorff dimension of a Cartesian product $A \X B$ equals the sum of the Hausdorff dimensions when the Hausdorff dimension of $B$ coincides with its upper packing dimension (e.g. see \cite[Corollary 8.11]{mattila1999geometry} for the result in Euclidean space; the proof goes through verbatim for dimensions defined with the metric $d_{1:2:3}$). Therefore
\begin{align*}
\dim_{1:2:3}[\operatorname{Star}(\bx, t, \R^k) \X \R \X (t+1, \infty)] &= \dim_{1:2:3}[\operatorname{Star}(\bx, t, \R^k)] + \dim_{1:2:3}[\R \X (t+1, \infty)] \\
&= \dim_{1:2:3}[\operatorname{Star}(\bx, t, \R^k)] + 5,
\end{align*}
which implies that $\dim_{1:2:3}(\operatorname{Star}_{2, 1}) = 4, \dim_{1:2:3}(\operatorname{Star}_{3, 1}) = 2$.
\end{proof}

The proof of Proposition \ref{P:weighted-stars-upper-bound} is based on an application of Theorem \ref{T:iso-multiples} and Proposition \ref{P:high-paths-B}. These results are connected to the study of geodesic stars by the following lemma. 

For this lemma and throughout this section, for $k \in \N$ let $T_k = \{(i, j) \in \N^2: i + j \le k + 1 \}$. We say that $\bw \in \R^{T_k}$ is a $k$-level \textbf{Gelfand-Tsetlin pattern} if it satisfies the interlacing inequalities 
$$
w_{i, j} \le w_{i + 1, j} \le w_{i, j+1}
$$
for all $i + j \le k$, and let $\operatorname{GT}_k$ be the space of $k$-level Gelfand-Tsetlin patterns. For $\bx \in \R^{k+1}_\le$ we let $\operatorname{GT}_k(\bx)$ be the set of all $\bw \in \operatorname{GT}_k$ such that $(\bw, \bx) \in \operatorname{GT}_{k+1}$ where $(\bw, \bx)_{i, j} = w_{i - 1, j}$ for $i \ge 2$ and $(\bw, \bx)_{1, j} = x_j$.

\begin{lemma}
	\label{L:jump-lemma}
	For every point $p = (x, t) \in \R^2$ and $s > t$ define a sequence of continuous functions $\cL^{p, s}_1, \cL^{p, s}_2\dots,$ from $\R \to \R$ by the formula
	$$
	\sum_{i=1}^k \cL^{p, s}_i(y) = \cL(p^k; y^k, s).
	$$ 
	Then almost surely, for all $k \in \{2, 3, 4\}, \by \in \R^k_<, s \in \R$, and $p = (x, t) \in \R \X (-\infty, s)$, the following two statements are equivalent:
	\begin{enumerate}[label=\Alph*.]
		\item There is a $k$-geodesic star from $p$ to $(\by, s)$ and $W^-_{\by, s}(p) = \bw \in \R^k$.
		\item We have $\cL^{p, s}_1(y_i) = w_i$ for all $i \in \II{1, k}$ and there exists $\bz \in \operatorname{GT}_{k-1}(\by)$ with $\cL^{p, s}_i(z_{i, j}) = \cL^{p, s}_{i+1}(z_{i, j})$ for all $(i, j) \in T_k$.
	\end{enumerate}
\end{lemma}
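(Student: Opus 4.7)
The plan is to translate the $k$-star condition A into the single landscape identity
\[
\cL(p^k;\by,s) \;=\; \sum_{i=1}^k \cL^{p,s}_1(y_i),
\]
and then to unpack this identity via the multi-path isometry with the parabolic Airy line ensemble. The equivalence of A with the identity (plus $w_i = \cL^{p,s}_1(y_i)$) is the easy direction: if a $k$-geodesic star exists, its $k$ geodesics form a disjoint tuple of total length $\sum_i \cL(p;y_i,s)$, witnessing $\ge$ in the identity, while $\le$ is trivial; conversely, any disjoint optimizer attaining the sum (optimizers exist a.s.\ by Definition~\ref{D:ext-land}) must consist of individual geodesics and so is a $k$-geodesic star.

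To connect the identity to B, I would pass to the Airy picture. By Theorem~\ref{T:iso-multiples} combined with the translation and $1{:}2{:}3$ rescaling invariance of $\cL$ (Lemma~\ref{L:invariance}), the functions $\cL^{p,s}_i$ are the lines of a parabolic Airy line ensemble $\fA$, and $\cL(p^k;\by,s) = \fA[(0^k,\infty) \to (\by,1)]$ after suitable rescaling of $\by$. Proposition~\ref{P:high-paths-B} decomposes this, for $z < y_1$, as $\sum_{i=1}^k \fA_i(z) + \fA[(z,\II{1,k}) \to (\by,1)]$, with a disjoint optimizer using only the top $k$ Airy lines. Any such optimizer is a family of nonincreasing cadlag paths $\pi_j$, where $\pi_j$ starts on line $j$ at time $z$ and terminates on line $1$ at time $y_j$; it is parametrised by its $\binom{k}{2}$ jump times $\tau_{i,j}$ (for $1 \le i < j \le k$), the times at which path $j$ descends from line $i+1$ to line $i$. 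The disjointness constraints on the $\pi_j$'s translate, after a combinatorial check, into exactly the Gelfand-Tsetlin interlacing of $\{\tau_{i,j}\}$ with top row $\by$, giving a bijection with admissible arrays $\bz \in \operatorname{GT}_{k-1}(\by)$.

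A direct telescoping of the length formula gives
\[
\sum_j \|\pi_j\|_\fA \;=\; \sum_j \fA_1(y_j) - \sum_j \fA_j(z) - \!\!\sum_{1 \le i < j \le k}\!\! \bigl[\fA_i(\tau_{i,j}) - \fA_{i+1}(\tau_{i,j})\bigr],
\]
and combining with the Proposition~\ref{P:high-paths-B} decomposition yields
\[
\sum_i \cL^{p,s}_1(y_i) - \cL(p^k;\by,s) \;=\; \inf \sum_{1 \le i < j \le k}\bigl[\fA_i(\tau_{i,j}) - \fA_{i+1}(\tau_{i,j})\bigr],
\]
where the infimum is over GT-admissible arrays. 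Each bracket is nonnegative by the pointwise ordering of Airy lines, so the landscape identity holds if and only if every bracket vanishes for some admissible array -- equivalently, by the bijection above, there exists $\bz \in \operatorname{GT}_{k-1}(\by)$ with $\cL^{p,s}_i(z_{i,j}) = \cL^{p,s}_{i+1}(z_{i,j})$ at each entry, which is condition B (up to the indexing convention of the lemma).

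The main obstacle is the combinatorial identification of the disjointness constraints with the Gelfand-Tsetlin interlacing, together with bookkeeping for paths that may skip levels (allowed, but not forced, by Proposition~\ref{P:high-paths-B}); one must verify that the set of admissible jump arrays is neither larger nor smaller than $\operatorname{GT}_{k-1}(\by)$ and that the level-skipping choices never increase the optimal weight beyond what the level-by-level description captures. Almost-sure uniformity in $(k,\by,s,p)$ then follows from combining the a.s.\ events of Theorem~\ref{T:iso-multiples}, Proposition~\ref{P:high-paths-B}, and Definition~\ref{D:ext-land} into a single full-measure event on which the entire analysis is deterministic.
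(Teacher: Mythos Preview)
Your approach is essentially the paper's: both reduce A to the identity $\cL(p^k;\by,s)=\sum_i \cL^{p,s}_1(y_i)$, then translate this through the Airy-line-ensemble/last-passage representation (Theorem~\ref{T:iso-multiples}, Proposition~\ref{P:high-paths-B}) into the pinching condition B. Your telescoping computation and the jump-time $\leftrightarrow$ Gelfand--Tsetlin bijection are in fact a fleshed-out version of the step the paper dispatches with ``see Figure~\ref{fig:pinches}'', so on that point you give more than the paper does.

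There is, however, a real gap in your final sentence. The almost-sure events in Theorem~\ref{T:iso-multiples} and Proposition~\ref{P:high-paths-B} hold for a \emph{single} Airy line ensemble; for each fixed $(p,s)$ the process $\cL^{p,s}$ is (after rescaling) such an ensemble, but there are uncountably many choices of $(p,s)$, so you cannot simply intersect these full-measure events to get uniformity. The paper handles this carefully: it isolates two continuous scalar quantities (the left-hand sides of \eqref{E:cLxk} and \eqref{E:GT-pattern}), shows they agree for each fixed rational $(p,s,\by)$ via the Airy isometry, and then extends to all $(p,s,\by)$ by continuity of the extended landscape. Your variational formula
\[
\sum_i \cL^{p,s}_1(y_i) - \cL(p^k;\by,s) \;=\; \inf_{\bz \in \operatorname{GT}_{k-1}(\by)} \sum_{(i,j)}\bigl[\cL^{p,s}_i(z_{i,j}) - \cL^{p,s}_{i+1}(z_{i,j})\bigr]
\]
can be rescued by the same device---both sides are continuous in $(p,s,\by)$ once you know $\cL^{p,s}_i \ge \cL^{p,s}_{i+1}$ for all $(p,s)$, which itself needs the continuity argument---but this step must be made explicit rather than claimed as a union of a.s.\ events.
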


\begin{proof}
First, if there is a $k$-geodesic star $\pi$ from $p = (x, t)$ to $(\by, s)$ with $w(\pi) = \bw \in \R^k$, then 
\begin{equation}
\label{E:cLxk}
\cL(x^k, t; \by, s) = \sum_{i=1}^k \cL(x, t; y_i, s).
\end{equation}
Moreover, as discussed after Definition \ref{D:ext-land}, almost surely the extended landscape value $\cL(x^k, t; \by, s)$ is attained by a disjoint $k$-tuple for all $p = (x, t), k, \by, s$. Therefore almost surely for all choices of $x, k, t, \by, s$, statement $A$ holds if and only if both \eqref{E:cLxk} holds and 
$
\cL^{p, s}_1(y_i) = \cL(p; y_i, s) = w_i
$
for all $i \in \II{1, k}$.

On the other hand, almost surely we have $\cL^{p, s}_1 \ge \cL^{p, s}_2 \ge \dots$ for all $p, s$. Indeed, this holds for a fixed $p, s$ since in this case $\cL^{p, s}$ is a rescaled parabolic Airy line ensemble (Theorem \ref{T:iso-multiples}.1). The extension to all $p, s$ follows by continuity. Therefore almost surely for all $p, s, k, \by$, there exists $\bz \in \operatorname{GT}_{k-1}(\by)$ with $\cL^{p, s}_i(z_{i, j}) = \cL^{p, s}_{i+1}(z_{i, j})$ for all $(i, j) \in T_{k-1}$ if and only if
\begin{equation}
\label{E:GT-pattern}
\cL^{p, s}[((y_1-1)^k, k) \to (\by, 1)] + \sum_{i=1}^k \cL_i^{p, s}(y_1-1) = \sum_{i=1}^k \cL^{p, s}_1(y_i),
\end{equation}
see Figure \ref{fig:pinches}. Therefore to prove the lemma it suffices to show that almost surely, for all $k, \by, s, p$ the statements \eqref{E:GT-pattern} and \eqref{E:cLxk} are equivalent. The right-hand sides of \eqref{E:cLxk} and \eqref{E:GT-pattern} are equal by definition. Also, for a fixed choice of $k, \by, s, p$ the left-hand sides are equal almost surely by Theorem \ref{T:iso-multiples}.2 and Proposition \ref{P:high-paths-B}. This extends to all rational $k, \by, s, p$ by countable additivity, and then almost surely to all $k, \by, s, p$ since the left-hand sides of both \eqref{E:cLxk} and \eqref{E:GT-pattern} are continuous by continuity of the extended landscape. Therefore almost surely, for all $k, \by, s, p$ the statements \eqref{E:GT-pattern} and \eqref{E:cLxk} are equivalent.
\end{proof}

\begin{figure}
\centering
\includegraphics[scale=0.8]{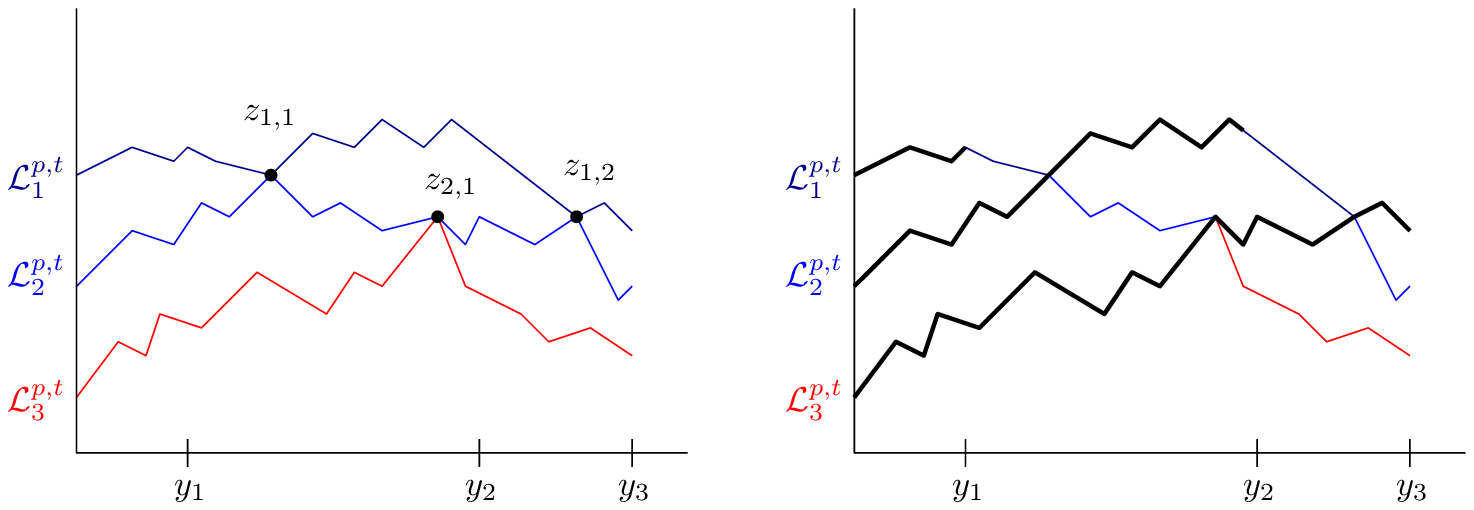}
\caption{An example of an ensemble $\cL^{p, s}$ where \eqref{E:GT-pattern} holds with $k = 3$. The left-hand side illustrates the Gelfand-Tsetlin pattern $\bz$ and the right-hand side illustrates the disjoint optimizer for $\cL^{p, s}[((y_1-1)^k, k) \to (\by, 1)]$.}
\label{fig:pinches}
\end{figure}

To upper bound the dimension of $\operatorname{Star}(\bx, s; \by, t; W)$, by Lemma \ref{L:jump-lemma} and a discretization argument it will be enough to upper bound the probability that at a fixed $p$, the claim in Lemma \ref{L:jump-lemma}.B approximately holds. For fixed $p, s$, Theorem \ref{T:iso-multiples}.1 gives that $\cL^{p, s}$ is a rescaled parabolic Airy line ensemble. Moreover, Theorem \ref{T:resampling-candidate} implies that on any compact set, $\cL^{p, s}$ has bounded Radon-Nikodym derivative with respect to a collection of independent Brownian bridges, suggesting we first try to analyze the probability that Lemma \ref{L:jump-lemma}.B holds for such bridges. This is the goal of Lemma \ref{L:ep-0100}, Lemma \ref{L:BB-nonint}, and Corollary \ref{C:Brownian-weighted-star}. Note that all Brownian bridges  in this section variance $2$.

For this next suite of lemmas, we say that $B = (B_1, \dots, B_k) \in \sC^k([a, b])$ is a $k$-tuple of independent Brownian bridges from $(a, \bx)$ to $(b, \by)$ if each $B_i:[a, b] \to \R$ is an independent Brownian bridge with $B_i(a) = x_i, B_i(b) = y_i$. We write $\P_{a, b}(\bx, \by; \cdot)$ for the law of $B$.  We define the set $\NI \sset \cC^k([a, b])$ to be the set of all functions $f$ with $f_1 > f_2 > \dots f_k$.

Throughout the remainder of this section, for a Gelfand-Tsetlin pattern $\bw \in \operatorname{GT}_k$ we let $\# \bw$ be the number of distinct values in $\bw$ and set $$
\Theta(\bw) = \min \{|w_{i, j} - w_{i', j'}| : w_{i, j} \ne w_{i', j'}, (i, j), (i', j) \in T_k\}.
$$ 

\begin{lemma}
	\label{L:ep-0100}
	Fix $k \in \{2, 3, 4\}, m \ge 1, \ep \in (0, 1/100)$. Let $B$ be a $k$-tuple of Brownian bridges from $(0, \bx)$ to $(1, \by)$ for vectors $\bx, \by \in [-m, m]^k_>$, and let $\bw$ be a $(k-1)$-level Gelfand-Tsetlin pattern satisfying $\Theta(\bw) \ge \ep$ and with $w_{i, j} \in (\ep, 1- \ep)$ for all $(i, j) \in T_{k-1}$. 
	
	For $\de > 0$, let $V_{\bx, \by}(\bw, \de)$ be the event where
	$$
	B \in \NI, \quad  |B_i(w_{i, j}) - B_{i+1}(w_{i, j})| \le \de \quad \text{ for all } (i, j) \in T_k.
	$$
	Then
	$$
	\P(V_{\bx, \by}(\bw, \de)) \le e^{c_\ep m}  \de^{k(k-1)/2 + 2 \# \bw}.
	$$ 
\end{lemma}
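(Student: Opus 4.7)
The plan is to combine the Karlin-McGregor reflection formula with a small-cluster Taylor expansion of the heat kernel. First, list the distinct values among $\{w_{i,j}\}$ as $t_1 < \cdots < t_s$ with $s = \#\bw$, and set $t_0 := 0$, $t_{s+1} := 1$. For each $j \in \{1, \dots, s\}$, let $I_j \sset \{1, \dots, k-1\}$ denote the set of gap indices $i$ for which $w_{i, j'} = t_j$ for some $j'$; the indices in $I_j$ partition $\{1, \dots, k\}$ into \emph{clusters}, maximal runs of consecutive particles joined by constrained gaps, with sizes $r_1^{(j)}, \dots, r_{C_j}^{(j)}$. Conditioning on the values $\bz^{(j)} := B(t_j)$ at $j = 1, \dots, s$ (with $\bz^{(0)} := \bx$, $\bz^{(s+1)} := \by$) and applying Karlin-McGregor yields
\[
\P(V_{\bx, \by}(\bw, \de)) = \int_{\cR} \frac{\prod_{j=0}^s \det\bigl[p_{t_{j+1} - t_j}(z^{(j)}_a, z^{(j+1)}_b)\bigr]_{a, b}}{\prod_{i=1}^k p_1(x_i, y_i)} \prod_{j=1}^s d\bz^{(j)},
\]
where $p_t(x, y) = (4\pi t)^{-1/2} e^{-(x-y)^2/(4t)}$ is the variance-$2$ heat kernel and $\cR$ is the region in which each $\bz^{(j)}$ is strictly decreasing and satisfies $z^{(j)}_i - z^{(j)}_{i+1} \in (0, \de]$ for $i \in I_j$. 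For $\bx, \by \in [-m, m]^k$ the denominator is bounded below by a positive constant times $e^{-k m^2}$, which together with the $m$-dependent Gaussian integrals appearing below accounts for the $e^{c_\ep m}$ prefactor (allowing mild slack in the exponent).

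Next I extract powers of $\de$ from each determinant. When $\bz^{(j)}$ has a cluster of $r$ consecutive coordinates with all consecutive gaps $\le \de$, these coordinates appear as the column index of $\det[p_{t_j - t_{j-1}}(z^{(j-1)}_a, z^{(j)}_b)]$ and as the row index of $\det[p_{t_{j+1} - t_j}(z^{(j)}_a, z^{(j+1)}_b)]$. On each side, performing iterated adjacent-column (respectively adjacent-row) subtractions -- a divided-difference reduction -- replaces the cluster columns (rows) by successive $y$-derivatives ($x$-derivatives) of the heat kernel at the cluster centre, at the cost of a scalar prefactor bounded by the Vandermonde $\prod_{\alpha<\beta \text{ in the cluster}} |z^{(j)}_\alpha - z^{(j)}_\beta| \le C \de^{r(r-1)/2}$. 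The reduced matrix entries remain uniformly bounded on the compact region of relevance, so each cluster of size $r$ at $t_j$ contributes a total factor of $\de^{r(r-1)}$ across the two adjacent determinants. Integrating the constrained gaps gives a further $\prod_j \de^{|I_j|}$, while the integration over the free coordinates at each $\bz^{(j)}$ converges absolutely against the Gaussian factors (picking up an $m$-dependent constant). The total exponent of $\de$ is therefore at least
\[
E \;=\; \sum_{j=1}^s \Bigl(|I_j| + 2 \sum_{l=1}^{C_j} \binom{r_l^{(j)}}{2}\Bigr).
\]

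It remains to check $E \ge \tfrac{k(k-1)}{2} + 2\#\bw$. In the generic case when all $w_{i,j}$ are distinct, each constrained cluster has size exactly $2$, $\sum_j |I_j| = |T_{k-1}| = \tfrac{k(k-1)}{2}$, each per-time summand equals $3$, and the inequality is an equality. Any coincidence among the $w_{i,j}$'s either reduces $\sum|I_j|$ (a within-row repeat $w_{i,j} = w_{i,j+1}$) or enlarges some $r_l^{(j)}$ (a cross-row coincidence $w_{i,j} = w_{i',j'}$ with $|i-i'| = 1$); the GT interlacing forces $\binom{r_l^{(j)}}{2}$ in the latter case to grow at least as fast as the deficit created in the former, preserving the inequality. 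The main obstacle is controlling the divided-difference extraction when two consecutive distinct times $t_j, t_{j+1}$ are very close: then $p_{t_{j+1}-t_j}$ concentrates and the entries of the row/column-reduced matrix need not be bounded on the relevant region. A case split based on whether $t_{j+1} - t_j$ exceeds a small threshold handles both regimes: in the narrow-interval case the extraction is performed on only one side of $t_j$ and a Hadamard-type bound replaces the problematic determinant, while the wider adjacent interval on the opposite side still supplies enough room for the full divided-difference expansion to recover the required Vandermonde factor.
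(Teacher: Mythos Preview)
Your approach is essentially the same as the paper's: condition on the bridge values at the distinct times in $\bw$, use Karlin--McGregor to factor the non-intersection probability as a product of determinants over the subintervals, extract Vandermonde factors from the clustered coordinates, and integrate. The paper packages the Vandermonde extraction into a separate preliminary lemma (bounding $\P_{0,t}(\bx,\by;\NI) \le \de^{E_\de(\bx)+E_\de(\by)}e^{cmk^4/t}$ via a midpoint split), which is slightly cleaner than your direct divided-difference manipulation, but the mechanism is identical.

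Two corrections are needed. First, your concern about narrow intervals is a phantom: the hypothesis $\Theta(\bw) \ge \ep$ says precisely that distinct values of $\bw$ differ by at least $\ep$, and together with $w_{i,j} \in (\ep, 1-\ep)$ this forces \emph{every} subinterval $[t_j, t_{j+1}]$ to have length at least $\ep$. Hence the heat-kernel derivatives appearing after your divided-difference reduction are uniformly bounded by an $\ep$-dependent constant, and the entire final paragraph of your proposal (the case split on interval width) is unnecessary. Re-read the hypotheses.

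Second, your argument for the combinatorial inequality $E \ge k(k-1)/2 + 2\#\bw$ is an intuition, not a proof. The paper simply checks by casework---since $k \le 4$ there are only a handful of GT patterns to consider---that $3\sum_j n_j \ge k(k-1)/2 + 2\#\bw$ where $n_j = |I_j|$; since your exponent satisfies $E \ge 3\sum_j n_j$ (using $\binom{r}{2} \ge r-1$), this also suffices for you. The observation you gesture at is correct and is what makes the casework succeed: a within-row coincidence $w_{i,j} = w_{i,j+1}$ forces, via the interlacing $w_{i,j} \le w_{i+1,j} \le w_{i,j+1}$, the cross-row coincidence $w_{i+1,j} = w_{i,j}$ as well, so pure within-row merges never occur in isolation. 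But you should either carry out the casework or turn this into a clean inductive argument rather than assert it.
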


The main step in the proof of Lemma \ref{L:ep-0100} is the following basic bound on the probability that a collection of Brownian bridges $B$ is nonintersecting (i.e. $B \in \NI$).

\begin{lemma}
	\label{L:BB-nonint}
	Fix $t \in (0, 1), k \in \{2, 3, \dots\}, m \ge 1$, and let $\bx, \by \in [-m, m]^k_>$. Fix $\de \in (0, 1)$, and let $G_\de(\bx)$ be the graph on $\II{1,k}$ where $(i, j)$ is an edge exactly when $|x_i - x_j| < \de$. Define
	$$
	E_\de(\bx) = \# \{(i, j) \in \II{1, k}^2: \text{$i < j$ and $i, j$ are in the same component of } G_\de(\bx)\}.
	$$
	Similarly define $G_\de(\by), E_\de(\by),$. Then
	$$
	\P_{0, t}(\bx, \by; \NI) \le \de^{E_\de(\bx) + E_\de(\by)}\exp(c m k^4/t),
	$$
	where $c > 0$ is an absolute constant.
\end{lemma}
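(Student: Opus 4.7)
The plan is to apply the Karlin--McGregor (Lindström--Gessel--Viennot) determinantal identity for non-intersecting Brownian bridges. Writing $p_t(x,y) = (4\pi t)^{-1/2} e^{-(x-y)^2/(4t)}$ for the variance-$2$ Brownian transition density, this gives
\[
\P_{0,t}(\bx, \by; \NI) \;=\; \frac{\det[p_t(x_i, y_j)]_{i,j=1}^{k}}{\prod_{i=1}^k p_t(x_i, y_i)}.
\]
Pulling the factor $e^{-x_i^2/(4t)}$ out of row $i$ and $e^{-y_j^2/(4t)}$ out of column $j$ in the numerator cancels them against the denominator, leaving the clean form
\[
\P_{0,t}(\bx, \by; \NI) \;=\; \exp\!\Bigl(-\tfrac{1}{2t}\textstyle\sum_{i} x_i y_i\Bigr) \cdot D(\bx, \by), \qquad D(\bx, \by) := \det[e^{x_i y_j/(2t)}]_{i,j=1}^k.
\]
The heart of the proof will be a bound on $D$ that exposes the Vandermonde products $V(\bx) := \prod_{i<j}(x_j - x_i)$ and $V(\by)$.

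Next I would bound the quotient $D(\bx, \by)/(V(\bx)V(\by))$. The key observation is that $D$ is antisymmetric separately in $\bx$ and in $\by$, so as an analytic function it is divisible by $V(\bx)V(\by)$. To quantify the quotient, I would expand $e^{xy/(2t)} = \sum_{n \ge 0}(xy)^n/((2t)^n n!)$ and apply the Cauchy--Binet identity to obtain
\[
D(\bx, \by) \;=\; \sum_{0 \le n_1 < \cdots < n_k} \frac{\det[x_i^{n_\ell}]\, \det[y_j^{n_\ell}]}{(2t)^{\sum_\ell n_\ell}\, \prod_\ell n_\ell!},
\]
then factor each generalized Vandermonde as $\det[x_i^{n_\ell}] = V(\bx) s_\lambda(\bx)$ for the Schur polynomial indexed by the partition $\lambda_\ell = n_{k+1-\ell} - (k-\ell)$. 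Coarse Schur-polynomial bounds on $[-m, m]^k$ together with the factorial decay $1/\prod n_\ell!$ make the series in $\lambda$ absolutely convergent, and combining with the prefactor $e^{-\sum x_iy_i/(2t)}$ and using $m \ge 1, t < 1$ yields a bound of the form
\[
\P_{0,t}(\bx, \by; \NI) \;\le\; |V(\bx)|\,|V(\by)|\,(2t)^{-\binom{k}{2}} \exp(c'mk^3/t)
\]
for an absolute constant $c'$.

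The final step converts the Vandermonde factors into powers of $\de$. Any pair $(i, j)$ lying in the same connected component of $G_\de(\bx)$ satisfies $|x_j - x_i| \le (k-1)\de$, since a component of size $s \le k$ has diameter at most $(s-1)\de$; all other pairs satisfy $|x_j - x_i| \le 2m$. Hence
\[
|V(\bx)| \le ((k-1)\de)^{E_\de(\bx)} (2m)^{\binom{k}{2} - E_\de(\bx)},
\]
and analogously for $\by$. Multiplying these estimates together produces the desired $\de^{E_\de(\bx) + E_\de(\by)}$ factor, while the remaining polynomial-in-$(m, k, t^{-1})$ prefactors absorb into $\exp(cmk^4/t)$ via the elementary inequality $\alpha \le e^\alpha$. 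The hard part is the quantitative estimate on $D/(V(\bx)V(\by))$ in the second step; if the Schur-polynomial bookkeeping becomes cumbersome, a cleaner alternative is the bivariate divided-difference identity $\det[f(x_i, y_j)] = V(\bx)V(\by)\det[f[x_1,\ldots,x_i;\,y_1,\ldots,y_j]]$, whose $(i, j)$-entry equals $\partial_x^{i-1}\partial_y^{j-1}f(\xi,\eta)/((i-1)!(j-1)!)$ at some intermediate point. Applied to $f(x,y) = e^{xy/(2t)}$, each such entry is an explicit polynomial of bidegree $\le k$ times $f$, and Hadamard's inequality on the resulting matrix closes the argument with no combinatorics.
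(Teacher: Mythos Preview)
Your overall strategy---applying Karlin--McGregor directly on $[0,t]$ and extracting both Vandermondes at once---is different from the paper's and is a natural idea, but the bound $\exp(c'mk^3/t)$ does not follow from either method you describe. Term-by-term Schur bounds $|s_\lambda(\bx)| \le (km)^{|\lambda|}$ summed against $1/\prod n_\ell!$ give a factor $\exp(k^3m^2/(2t))$, and the divided-difference route is no better: each entry $\partial_x^{i-1}\partial_y^{j-1}e^{xy/(2t)}$ evaluated at a point of $[-m,m]^2$ already carries $e^{m^2/(2t)}$, and Hadamard then gives $e^{km^2/(2t)}$. The prefactor $e^{-\sum x_iy_i/(2t)}$ can itself be as large as $e^{km^2/(2t)}$ (take $x_i \approx m$, $y_i \approx -m$) and so provides no cancellation under these crude estimates. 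The linear-in-$m$ exponent is not cosmetic: in Lemma~\ref{L:ep-0100} the role of $m$ is played by $\|\ba\|_\infty$, which is then integrated against a Gaussian density in $\ba$, and an $\exp(c\|\ba\|_\infty^2/\ep)$ factor would make that integral diverge.

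The paper gets linear $m$ by splitting at time $t/2$ and integrating over the midpoint $\bz$. Each half then has only one clustered endpoint; on the $\bx$-side one factors $\P_{0,t/2}(\bx,\bz;\NI) \le \prod_H \P_{0,t/2}(\bx^H,\bz^H;\NI)$ over components $H$ of $G_\de(\bx)$ and, crucially, uses monotonicity of the non-intersection probability in the starting gaps to replace $\bx^H$ by the $\de$-spaced vector $\de\iota^H$ \emph{before} invoking Karlin--McGregor. The determinant is then a Vandermonde in $e^{\de\tilde z_j^H/t}$, yielding $\de^{E_\de(\bx)}\exp(ck^2(z_1-z_k)/t)$---linear in $z$---and the Gaussian integral over $\bz$ produces $e^{c'mk^4/t}$. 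If you want to salvage the direct approach, the clean fix is the Harish-Chandra--Itzykson--Zuber identity together with the trace rearrangement inequality $\mathrm{Tr}(XU\tilde YU^*) \le \sum_i x_iy_i/(2t)$: these give the sharp $\P_{0,t}(\bx,\by;\NI) \le |V(\bx)||V(\by)|\big/\big((2t)^{\binom{k}{2}}\prod_{j=1}^{k-1}j!\big)$ with no $m$ in the constant at all, and your final Vandermonde-to-$\de$ step then yields the lemma with room to spare.
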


\begin{proof}
	First, we have
	\begin{align}
	\label{E:integrated-middle}
	\P_{0, t}(\bx, \by; \NI) &= \int_{\R^k_>} \P_{0, t/2}(\bx, \bz; \NI) \P_{t/2, t}(\bz, \by; \NI) \rho(\bz) d \bz,
	\end{align}
	where 
	$$
	\rho(\bz) = \frac{1}{(\pi t)^{k/2}} \exp \lf(-\frac{\|\bz - (\by + \bx)/2\|^2}{t} \rg)
	$$ 
	is the Gaussian density. Next, we estimate $\P_{0, t/2}(\bx, \bz; \NI)$ using the Karlin-McGregor formula. Consider a component $H$ of $G_\de(\bx)$ and note that $H$ is an integer interval $H = \II{a_H, b_H}$. Let $\bx^H = (x_{a_H}, \dots, x_{b_H})$. We can write
	$$
	\P_{0, t/2}(\bx, \bz; \NI) \le \prod_H \P_{0, t/2}(\bx^H, \bz^H; \NI).
	$$
	where the product is over components of $G_\de(\bx)$. Next, let $\iota^H = (|H| - 1, |H| - 2, \dots, 0) \in \R^{|H|}$ and let $\tilde \bz^H = (z_{a_H} - z_{b_H},z_{a_H + 1} - z_{b_H}, \dots, 0) \in \R^{|H|}$. Since the spacing between the coordinates of $\de \iota^H$ is greater than the spacing for $\bx^H$ by the definition of $G_\de(\bx)$, and Brownian bridge law commutes with linear shifts, we have
	$$
	\P_{0, t/2}(\bx^H, \bz^H; \NI) \le \P_{0, t/2}(\de \iota^H, \bz^H; \NI) = \P_{0, t/2}(\de \iota^H, \tilde \bz^H; \NI).
	$$
	We focus on bounding the latter probability. By the Karlin-McGregor formula (see p. 48 in \cite{hammond2016brownian} for the form used below) we have
	\begin{align*}
	\P_{0, t/2}(\de \iota_H, \tilde \bz^H; \NI) &= \exp \Big(- t^{-1}\sum_{i=1}^{|H|} \de \iota^H_i \tilde z^H_i\Big) \det\Big(e^{\de \iota^H_i \tilde z^H_j /t}\Big)_{1 \le i, j \le |H|} \\
	&\le \prod_{1 \le i < j \le |H|} (e^{\de \tilde z^H_j/t} - e^{\de \tilde z^H_i/t}) \\
	&\le \de^{|H| (|H| - 1)/2} \exp \lf(\frac{c |H| (|H| - 1)\tilde z^H_1}{t} \rg)
	\end{align*}
	for some constant $c > 0$.
	In the first inequality, we have used that the determinant is a Vandermonde by the form of $\iota_H$, and that $\tilde z_i \ge 0$ for all $i$ to bound the first term by $1$. Now, using that $\sum_H |H| (|H| - 1)/2 = E_\de(\bx)$ and that $\tilde z^H_1 \le z_1 - z_k$ for any $H$, we get that 
	$$
	\P_{0, t/2}(\bx, \bz; \NI) \le \de^{E_\de(\bx)} \exp \lf( \frac{ck^2(z_1 - z_k)}{t}  \rg).
	$$
	A similar bound on $\P_{t/2, t}(\bz, \by; \NI)$ implies that \eqref{E:integrated-middle} is bounded above by
	$$
	\de^{E_\de(\bx) + E_\de(\bx)} \int_{\R^k_> } e^{\frac{2ck^2|z_1 - z_k|}{t}} \rho(\bz) dz.
	$$
	The integral above converges, and since $\bx, \by \in [-m, m]^k_>$, it is bounded above by $\exp(c' m k^4/t)$ for some $c' > 0$, giving the desired result.
\end{proof}

\begin{proof}[Proof of Lemma \ref{L:ep-0100}]
	To simplify notation, we write $\ell = \# \bw$ throughout.
	First, let $\bv \in \R^\ell_<$ denote the unique vector whose coordinates are equal to the entries of $\bw$ and let $v_0 = 0, v_{\ell+1} = 1$. 
	Letting $\ba^0 := \bx,  \ba^{\ell+1} :=\by,$ we can write
	\begin{align}
	\label{E:P01-NI}
	\P(V_{\bx, \by}(\bw, \de)) = \int_{\Delta} \prod_{i=0}^\ell \P_{v_i, v_{i+1}}(\ba^i, \ba^{i+1}; \NI) \rho(\ba^1, \dots, \ba^\ell) d \ba^1 \cdots d \ba^\ell.  
	\end{align}
	where
	$$
	\rho(\ba^1, \dots, \ba^\ell) = \sqrt{4\pi}  \exp \lf( \frac{\|\ba^0 - \ba^{\ell+1}\|^2}{4}\rg)\prod_{i=1}^k \frac{1}{\sqrt{4 \pi (v_{i+1}- v_i)}} \exp \lf( - \frac{\|\ba^i - \ba^{i+1}\|^2}{4[v_i- v_{i+1}]}\rg)
	$$
	is the Gaussian density.
	Here the region $\Delta = \prod_{i=1}^\ell \Delta_i$, and $\Delta_i$ is the set of points $\ba^i \in \R^k_>$ where $a^i_{i'} - a^i_{i' + 1} \le \de$ whenever $v_i = w_{i', j'}$ for some $(i',j') \in T_{k-1}$. Next we use Lemma \ref{L:BB-nonint} to estimate $\P_{v_i, v_{i+1}}(\ba^i, \ba^{i+1}; \NI)$. First, 
	for every $i \in \II{1, \ell}$, let 
	$$
	n_i = \# \{i' : \text{ $v_i = w_{i', j'}$ for some $j'$ with $(i', j') \in T_{k-1}$}\}.
	$$
	Also let $n_0 = n_{\ell+1} = 0$.
	Observe that if $\ba^i \in \Delta_i$, then with notation as in Lemma \ref{L:BB-nonint} we have that $E_\de(\ba^i) \ge n_i$, and so by Lemma \ref{L:BB-nonint} we have
	\begin{align*}
	\P_{v_i, v_{i+1}}(\ba^i, \ba^{i+1}; \NI) \le \de^{n_i + n_{i+1}} \exp (c k^4 \|\ba\|_\infty/(v_{i+1} - v_i)) \le \de^{n_i + n_{i+1}} \exp (c_\ep \|\ba\|_\infty).
	\end{align*}
	Here $\|\ba\|_\infty = \|(\ba^0, \dots, \ba^{\ell+1})\|_\infty$, and the second inequality follows since $k \in \{2, 3, 4\}$ and $v_i - v_{i+1} \ge \ep$. Plugging this back into \eqref{E:P01-NI} we get that
	\begin{align}
	\label{E:P01xyx}
	\P(V_{\bx, \by}(\bw, \de)) \le \de^{2 \sum_{i=1}^\ell n_i} \int_\Delta \exp ((\ell + 1)c_\ep \|\ba\|_\infty) \rho(\ba^1, \dots, \ba^\ell) d \ba^1 \cdots d \ba^\ell
	\end{align}
	Now, 
	\begin{align*}
	\int_\Delta \exp &((\ell + 1)c_\ep \|\ba\|_\infty) \rho(\ba^1, \dots, \ba^\ell) d \ba^1 \cdots d \ba^\ell \\
	&\le e^{c_\ep m} \int_{\Delta} \prod_{i=1}^\ell \exp \lf( -\frac{\|\ba^i\|^2}{c_{\ep}}\rg) d \ba^1 \cdots d \ba^\ell  \\
	&= e^{c_\ep m} \prod_{i=1}^\ell  \int_{\Delta_i} \exp \lf( -\frac{\|\ba^i\|^2}{c_{\ep}}\rg) d \ba^i \le e^{c_\ep m} \de^{\sum_{i=1}^k n_i},
	\end{align*}
	where in the final line we have used the definition of $\Delta_i$. Together with \eqref{E:P01xyx} we get that 
	$$
	\P(V_{\bx, \by}(\bw, \de)) \le e^{c_\ep m} \de^{3 \sum_{i=1}^\ell n_i}.
	$$
	To complete the proof, we just need to prove the inequality $k(k-1)/2 + 2 \ell \le 3 \sum_{i=1}^\ell n_i$ for any choice of the Gelfand-Tsetlin pattern $\bw$. This can be checked by routine casework since we have $k \in \II{2, 3, 4}$ so $\bw$ has at most $k(k-1)/2$ points. Note that this is an equality if all the $w_i$ are distinct.
\end{proof}

 For this next corollary, we also build in a statement about trivial Gelfand-Tsetlin patterns. For this, we let $T_0 = \emptyset, \operatorname{GT}_0 = \{\emptyset\}$ be the trivial space containing only the empty set. We set $\Theta(\emptyset) = \infty, \# \emptyset = 0$.  

\begin{corollary}
	\label{C:Brownian-weighted-star}
	Fix $k \in \{1, 2, 3, 4\}, m \ge 1, \ep \in (0, 1/100)$. Consider the following data:
	\begin{itemize}
		\item A $k$-tuple $B$ of Brownian bridges from $(0, \bx)$ to $(1, \by)$ for vectors $\bx, \by \in [-m, m]^k_>$.
		\item A vector $\bz \in (\ep, 1- \ep)^k_<$ such that $z_i - z_{i-1} > \ep$ for all $i \in \II{2, k}$, and a vector $\al \in \R^k$.
		\item A $(k-1)$-level Gelfand-Tsetlin pattern $\bw \in\operatorname{GT}_{k-1}(\bz)$ with $\Theta(\bw) \ge \ep$.
	\end{itemize} 
	For $\de \in (0, 1)$, let $U_{\bx, \by}(\bw, \bz, \al, \de)$ be the event where
	$$
	|B_1(z_j) - \al_j| \le \de, \forall j \in \II{1, k}; \qquad B \in \NI; \qquad |B_i(w_{i, j}) - B_{i+1}(w_{i, j})| \le \de, \forall (i, j) \in T_{k-1}.
	$$
	Then
	$$
	\P(U_{\bx, \by}(\bw, \bz, \al, \de)) \le e^{c_\ep m}  \de^{k(k+1)/2 + 2 \# \bw}
	$$ 
	where $c_{\ep} > 0$ is an $\ep$-dependent constant.
\end{corollary}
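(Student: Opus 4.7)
The plan is to mimic the proof of Lemma \ref{L:ep-0100}, treating the $k$ additional pointwise constraints $|B_1(z_j) - \al_j| \le \de$ as auxiliary ``pinch-like'' constraints at the locations $z_j$. Each such constraint fixes one Brownian-bridge coordinate to within $\de$, so integration against the Gaussian transition density will contribute one extra factor of $\de$ per constraint, for a total of $\de^k$ beyond what Lemma \ref{L:ep-0100} already supplies. This is exactly what lifts the exponent from $k(k-1)/2 + 2\#\bw$ to $k(k+1)/2 + 2\#\bw$.

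Concretely, I would sort the union of the distinct pinch values $\{v_j\}$ (as in the proof of Lemma \ref{L:ep-0100}) with the $z$-locations $\{z_j\}$ into a single sequence $0 = v_0^* < v_1^* < \cdots < v_{M+1}^* = 1$ with $M \le \#\bw + k$, each interior $v_i^*$ tagged as a pinch node, a $z$-node, or both. Decomposing $\P(U_{\bx, \by}(\bw, \bz, \al, \de))$ via Chapman--Kolmogorov exactly as in \eqref{E:P01-NI} expresses it as an integral over the bridge values $\ba^i = B(v_i^*) \in \R^k_>$ against the joint Gaussian transition density, subject to a constraint domain $\Delta^*$ encoding both the pinch constraints and the $|a_1^i - \al_{j(i)}| \le \de$ conditions at the $z$-nodes. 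Applying Lemma \ref{L:BB-nonint} to each sub-interval probability $\P_{v_i^*, v_{i+1}^*}(\ba^i, \ba^{i+1}; \NI)$ extracts a factor $\de^{n_i^*}$ per bridge-pair pinch at each endpoint (where $n_i^*$ counts the number of pinches at $v_i^*$), yielding $\de^{2\sum n_i^*}$ in total after adjacent intervals share the endpoint. Integrating $\Delta^*$ against the Gaussian density then contributes a further $\de^{\sum n_i^* + k}$, with pinches giving $\de^{n_i^*}$ each and $z$-constraints giving $\de$ each. Combining, $\P(U) \le e^{c_\ep m} \de^{3\sum n_i^* + k}$, and the inequality $k(k-1)/2 + 2\#\bw \le 3\sum n_i^*$ established at the end of the proof of Lemma \ref{L:ep-0100} yields the desired exponent.

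The main obstacle is that a $z_j$ may coincide with or lie close to a pinch location $v_{j'}$, since the hypotheses only force separation within $\bw$ and within $\bz$ individually. Exact coincidence simply merges the constraints at a single node: the merged node contributes $\de^{n_i^* + 1}$ from the density integration, which is precisely what the bookkeeping calls for, so the argument goes through unchanged. The harder close-but-distinct case creates a short sub-interval $[v_i^*, v_{i+1}^*]$ on which the Karlin--McGregor factor $e^{c m k^4 / (v_{i+1}^* - v_i^*)}$ from Lemma \ref{L:BB-nonint} degenerates; I would handle this by a separate short-interval estimate, exploiting that the Brownian-bridge increment across such a tiny gap is itself concentrated on a scale much smaller than $\de$, so the close pair of constraints can be effectively merged into a single pinch at negligible cost. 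Uniformity of the bound in $\al \in \R^k$ is automatic: since the $z_j$ are $\ep$-separated in $[\ep, 1-\ep]$, the covariance matrix of $(B_1(z_1), \ldots, B_1(z_k))$ has determinant bounded below by an $\ep$-dependent constant, so the joint Gaussian density is uniformly bounded on $\R^k$ and the $k$ constraint volumes factor out with only the $e^{c_\ep m}$ mean-dependent prefactor.
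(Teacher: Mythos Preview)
Your route is different from the paper's and is sound in outline, but the short-interval obstacle you raise is not dispatched. The hypothesis $\bw \in \operatorname{GT}_{k-1}(\bz)$ only forces interlacing between $\bw$ and $\bz$, so a pinch location $w_{i,j}$ can lie arbitrarily close to (or coincide with) some $z_{j'}$; and your proposed fix (``the Brownian-bridge increment across such a tiny gap is concentrated on a scale much smaller than $\de$'') presumes the gap $\eta$ satisfies $\sqrt{\eta} \ll \de$, which nothing guarantees. For intermediate $\eta$ the Karlin--McGregor constant $e^{cmk^4/\eta}$ from Lemma \ref{L:BB-nonint} genuinely blows up while the merge heuristic does not yet apply, so a careful interpolation argument would be needed. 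This is fillable but is real work, not a throwaway remark.

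The paper avoids the issue entirely via an orthogonal decomposition. Write $W_i = B_i - B_{i+1}$ for $i \in \II{1,k-1}$ and $W_k = B_1 + \cdots + B_k$. The event $V_{\bx,\by}(\bw,\de)$ from Lemma \ref{L:ep-0100} (that is, $B \in \NI$ together with the pinch constraints) is $\sigma(W_1,\ldots,W_{k-1})$-measurable, while $W_k$ is an independent Brownian bridge of variance $2k$. Since $kB_1 = W_k + (k-1)W_1 + \cdots + W_{k-1}$, the constraints $|B_1(z_j) - \al_j| \le \de$ become, conditional on $\cF = \sigma(W_1,\ldots,W_{k-1})$, constraints of the form $|W_k(z_j) + \be_j| \le k\de$ with $\be_j$ $\cF$-measurable. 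The $\ep$-separation of the $z_j$ alone (no interaction with $\bw$) bounds this conditional probability by $c_\ep \de^k$, and multiplying by the $\P(V)$ bound from Lemma \ref{L:ep-0100} gives the result. This decouples the $\bz$-constraints from the $\bw$-constraints completely, so the relative positions of the two never enter.
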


\begin{proof}
In the case $k = 1$, the set $U_{\bx, \by}(\bw, \bz, \al, \de)$ is simply the set where $|B_1(z_1) - \al_1| \le \de$, which has probability at most $c_\ep \de$ by an elementary computation, yielding the corollary. For the remainder of the corollary, we assume $k \in \{2, 3, 4\}$.

	Let $U^*_{\bx, \by}(\bz, \al, \de)$ be the event where 
	$
	|B_1(z_j) - \al_j| \le \de$ for all $j \in \II{1, k}.
	$
	With notation as in Lemma \ref{L:ep-0100}, we can write
	$$
	U_{\bx, \by}(\bw, \bz, \al, \de) = U^*_{\bx, \by}(\bz, \al, \de) \cap V_{\bx, \by}(\bw, \de).
	$$
	Therefore by that lemma, to prove the corollary it suffices to show that
	$$
	\P(U^*_{\bx, \by}(\bz, \al, \de) \mid V_{\bx, \by}(\bw, \de)) \le c_{\ep} \de^k.
	$$
	The event $V_{\bx, \by}(\bw, \de)$ is measurable with respect to the $\sig$-algebra $\cF$ generated by $W_1 := B_1 - B_2, W_2 := B_2 - B_3, \dots, W_{k-1} := B_{k-1} - B_k$. On the other hand, the sum $W_k = B_1 + \dots + B_k$ is a Brownian bridge of variance $2k$, independent of $\cF$, and we can write $U^*_{\bx, \by}(\bz, \al, \de)$ in terms of the bridges $W_i, i \in \II{1, k}$ as 
	$$
	|W_k(z_j) + \be_j| \le k\de \quad \text{ for all } j \in \II{1, k},
	$$
	where $\be_j = (k-1) W_1(z_j) + (k-2) W_2(z_j) + \dots + W_{k-1}(z_j)  - k\al_j.$ Therefore
	\begin{align*}
	\P(U^*_{\bx, \by}(\bz, \al, \de) \mid V_{\bx, \by}(\bw, \de)) &\le \sup \P(U^*_{\bx, \by}(\bz, \al, \de) \mid \cF) \\
	&\le \sup_{\be \in \R^k}(|W_k(z_j) + \be_j| \le k\de \quad \text{ for all } j \in \II{1, k}) \le c_\ep \de^k,
	\end{align*}
	where the final bound follows since $\ep < z_1$, $1- \ep < z_k$, and $z_i - z_{i-1} > \ep$ for all $i \in \II{2, k}$. This gives the result. 
\end{proof}

\begin{proof}[Proof of Proposition \ref{P:weighted-stars-upper-bound}]
\textbf{Step 1: Reduction to pinches in the Airy line ensemble.} \qquad For $p = (z, r)$ and $t < r < s$ we define random functions $\cL^{p, s}, \hat \cL^{p, t} \in \cC(\N \X \R)$ as follows: write
	$$
	\sum_{i=1}^k \cL^{p, s}_i(x) = \cL(p^k; x^k, s) \qquad \sum_{i=1}^k \hat \cL^{p, t}_i(x) = \cL(x^k, t; p^k). 
	$$
	By Lemma \ref{L:jump-lemma} and time-reversal symmetry of $\cL$, almost surely $\operatorname{Star}(\bx, s; \by, t; W)$ equals the set of all $(p; q)$ for which there exists $(\ba, \bb) \in W$ such that the following claims hold:
	\begin{enumerate}[nosep, label=\arabic*.]
		\item $\cL^{p, s}_1(x_i) = a_i$ for all $i \in \II{1, k}$ and there exists $\bz \in \operatorname{GT}_{k-1}(\bx)$ with $\cL^{p, s}_i(z_{i, j}) = \cL^{p, s}_{i+1}(z_{i, j})$ for all $(i, j) \in T_{k-1}$.
		\item 
		$\hat \cL^{q, t}_1(y_i) = b_i$ for all $i \in \II{1, \ell}$ and there exists $\bw \in \operatorname{GT}_{k-1}(\by)$ with $\hat\cL^{q, t}_i(w_{i, j}) = \hat \cL^{q, t}_{i+1}(w_{i, j})$ for all $(i, j) \in T_{\ell-1}$.
	\end{enumerate}
	We can write 
	$$
	\operatorname{Star}(\bx, s; \by, t; W) = \bigcup_{n = 1}^\infty \bigcup_{m_1 = 0}^{k(k-1)/2} \bigcup_{m_2 = 0}^{\ell(\ell - 1)/2} \operatorname{Star}_{n, m_1, m_2}(\bx, s;\by, t; W)
	$$
	where $(p; q) \in \operatorname{Star}_{n, m_1, m_2}(\bx, s; \by, t; W)$ if
	\begin{itemize}[nosep]
		\item $p \in K_n := [-n, n] \X [s-n, s - 1/n]$ and $q \in K_n' := [-n, n] \X [t + 1/n, t + n]$.
		\item Points $1$ and $2$ above hold for some $\bz, \bw$ with $\Theta(\bz) > 1/n, \# \bz = m_1, \Theta(\bw) > 1/n, \#\bw = m_2$. 
		\item $(\cL^{p, s}_1(x_1), \dots, \cL^{p, s}_1(x_k), \hat \cL_1^{q, t}(y_1), \dots, \hat \cL_1^{q, t}(y_\ell)) \in W \cap [-n, n]^k$.
	\end{itemize}  
	By countable stability of Hausdorff dimension, it is enough to prove the proposition for
	\begin{equation}
	\label{E:Star-nmw}
	\dim_{1:2:3}[\operatorname{Star}_{n, m_1, m_2}(\bx, s; \by, t; W)]
	\end{equation}
	for all fixed $n, m_1, m_2$.
	
	\textbf{Step 2: Discretization.} \qquad
	For a point $u = (p; q) = (x', s'; y', t') \in \R^2$, define the box 
	$$
	\Delta_\ep(u) = [x' - \ep^2, x' + \ep^2] \X [s' - \ep^3, s' + \ep^3] \X [y' - \ep^2, y' + \ep^2] \X [t' - \ep^3, t' + \ep^3],
	$$
	and for a set $S$, let $\Delta_\ep(S) = \bigcup_{u \in S} \Delta_\ep(u)$.
	Consider the grid 
	$
	\fG_{n, \ep} := (\ep^3 \Z \X \ep^2 \Z)^2 \cap (K_{n+1} \X K_{n+1}')
	$
	and set
	$$
	S_{n, \ep} := \{(p; q) \in \fG_{n, \ep} : \Delta_\ep(p; q) \cap \operatorname{Star}_{n, m_1, m_2}(\bx, s; \by, t; W) \ne \emptyset\}.
	$$
	To prove the proposition it is enough to show that
	\begin{equation}
	\label{E:q-G-ep}
	|S_{n, \ep}| = o(\ep^{-d(k, \ell, W) - \eta})
	\end{equation}
	for every $\eta > 0$, almost surely as $\ep \cvgdown 0$.
	
	We require one more discretization step before analyzing the probability of \eqref{E:q-G-ep}. Let $\fD_{n, \ep}$ be the set of all $(\bz, \bw) \in \operatorname{GT}_{k-1}(\bx) \X \operatorname{GT}_{k-1}(\by)$ such that $z_{i, j}, w_{i', j'} \in \ep^2 \Z$ for all choices of $(i, j) \in T_{k-1}, (i', j') \in T_{\ell -1}$.
	Also, for all $\ep > 0$ we can cover $W \cap [-n, n]^k$ by $c_n \ep^{- \dim W}$ balls of radius $\ep$, where $c_n > 0$ is an $n$-dependent constant. Let $\fE_{n,\ep} \sset [-n - 1, n + 1]^3$ be the set of centres of these balls. 
	
	Now, for $\de > 0$ and $(\bz, \bw, \ba) \in \fD_{n, \ep} \X \fE_{n, \ep}$ let $\cX_{n, \ep, \de}(\bz, \bw, \ba, \cL^{p, s}, \hat \cL^{\hat q, - t})$ be the event where
	\begin{align}
	\label{E:Lpt-approx}
	|\cL^{p, s}_i(z_{i, j}) - \cL^{p, s}_{i+1}(z_{i, j})| \le \ep^{1 - \de}, \qquad |\hat \cL^{q, t}_i(w_{i', j'}) - \hat \cL^{q, t}_{i+1}(w_{i', j'})| \le \ep^{1 - \de}
	\end{align}
	for all $(i, j) \in T_{k-1}, (i', j') \in T_{\ell - 1}$, and
	$$
	\|(\cL^{p, s}_1(x_1), \dots, \cL^{p, s}_1(x_k), \hat \cL_1^{q, t}(y_1), \dots, \hat \cL_1^{q, t}(y_\ell)) - \ba\|_2 \le \ep^{1 - \de}.
	$$
	By the modulus of continuity for the extended landscape $\cL$ (Lemma \ref{L:extended-modulus}), letting
	$$
	S^*_{n, \ep, \de} := \{(p; q) \in \fG_{n, \ep} : \bigcup_{(\bz, \bw, \ba) \in \fD_{n, \ep} \X \fE_{n, \ep}} \cX_{n, \ep, \de}(\bz, \bw, \ba, \cL^{p, s}, \hat \cL^{q, - t}) \text{ holds}\}
	$$ we have that
	\begin{equation}
	\label{E:discretized-final}
	\lim_{\ep \to 0}\P(S_{n, \ep} \sset S^*_{n, \ep, \de}) = 1
	\end{equation}
	for every fixed $\de > 0$. Hence to prove \eqref{E:q-G-ep} it suffices to control $|S^*_{n, \ep, \de}|$. 
	
	\textbf{Step 3: Bounding $\P(\cX_{n, \ep, \de}(\bz, \bw, \ba, \cL^{p, s}, \hat \cL^{\hat q, - t}))$.} \qquad
	By Theorem \ref{T:iso-multiples}.1, Theorem \ref{T:resampling-candidate}, and the symmetries of $\cL$ (Lemma \ref{L:invariance}), for any fixed $(p; q) \in \fG_{n, \ep}$ the law $\mu_{p, q}$ of the process
	$$
	(\cL^{p, s}|_{\II{1, k} \X [-x_1 - 1, x_k + 1]}, \hat \cL^{q, t}|_{\II{1, k} \X [-y_1 - 1, y_\ell + 1]})
	$$
	is absolutely continuous with respect to the law $\nu_{p, q}$ of $(B, B')$, where
	\begin{itemize}[nosep]
		\item $B$ is a $k$-tuple of independent Brownian bridges from $(x_1 - 1, \bu)$ to $(x_k + 1, \bu')$ where $\bu =\bu(p, s), \bu' = \bu'(p, s)$ are random endpoints, independent of $B$.
		\item $B'$ is an $\ell$-tuple of independent Brownian bridges from $(x_1 - 1, \bv)$ to $(x_k + 1, \bv')$ where $\bv =\bv(q, t), \bv' = \bv'(q, t)$ are random endpoints, independent of $B'$.
		\item $B$ and $B'$ are independent.
		\item There exists $c'_{\bx, \by, n} > 0$ such that for all $a > 0$, we have 
		$$
		\P(\|(\bu, \bu', \bv, \bv')\|_\infty > a) \le c'_{\bx, \by, n}e^{ - a^{3/2}}.
		$$
	\end{itemize} 
	Moreover, by Theorem \ref{T:resampling-candidate}.1, the Radon-Nikodym derivative $\tfrac{d\mu_{p, q}}{d\nu_{p, q}}$ is bounded above by a constant $c_{\bx, \by, n} > 0$. Since 
	all points in $\fG_{n, \ep}, \ep > 0$ are contained in the compact set $K_{n+1} \X K_{n+1}'$ we can take this constant to be independent of $p, q, \ep$.
	Finally, recall that almost surely we have
	$$
	\cL^{p, s}|_{\II{1, k} \X [-x_1 - 1, x_k + 1]} \in \NI, \qquad \hat \cL^{q, t}|_{\II{1, k} \X [-y_1 - 1, y_\ell + 1]} \in \NI
	$$
	for any fixed $p, q$. Therefore for $(p; q) \in \fG_{n, \ep}$ and $(\bz, \bw, \ba) \in \fD_{n, \ep} \X \fE_{n, \ep}$ we have
	\begin{align*}
	\P(\cX_{n, \ep, \de}(\bz, \bw, \ba, \cL^{p, s}, \hat \cL^{\hat q, - t})) \le c_{\bx, \by, n} \P(\cX_{n, \ep, \de}(\bz, \bw, \ba, B, B') \cap \{B \in \NI, B' \in \NI\}).
	\end{align*}
	Since $B, B'$ are independent, we can bound the right-hand side above by Corollary \ref{C:Brownian-weighted-star}. With notation as in Corollary \ref{C:Brownian-weighted-star}, we have
	\begin{align*}
	\P(\cX_{n, \ep, \de}&(\bz, \bw, \ba, B, B') \cap \{B \in \NI, B' \in \NI\}) \\
	&= \E [\P(U_{\bu, \bu'}(\bz, \bx, (a_1, \dots, a_k), \ep^{1-\de})) \P(U_{\bv, \bv'}(\bw, \by, (a_{k+1}, \dots, a_{k+\ell}), \ep^{1-\de}))]\\
	&\le \ep^{(1- \de)[k(k+1)/2 + \ell(\ell+ 1)/2 + 2 m_1 + 2m_2]}\E e^{c_{\bx, \by, n} \|(\bu, \bu', \bv, \bv')\|_\infty} \\
	&\le c_{\bx, \by, n} \ep^{(1- \de)[k(k+1)/2 + \ell(\ell+ 1)/2 + 2 m_1 + 2m_2]}
	\end{align*}
	In the above calculation, in the expectation the only randomness involves the endpoints $\bu, \bu', \bv, \bv'$. The first inequality is by Corollary \ref{C:Brownian-weighted-star} and the second inequality is by the fourth bullet above.
	Therefore by a union bound over the $c_n \ep^{-10}$ points in $\fG_{n, \ep}$, the $c_{\bx, \by, n} \ep^{-2(m_1 + m_2)}$ points in $\fD_{n, \ep}$, and the $c_n \ep^{- \dim W}$ points in $\fE_{n, \ep}$, we have that
	\begin{align*}\E |S^*_{n, \ep, \de}| \le 
	&c''_{\bx, \by, n} \ep^{- 10 - \dim W + k(k+1)/2 + \ell(\ell+ 1)/2 - \de[k(k+1)/2 + \ell(\ell+ 1)/2 + 2 m_1 + 2m_2]} \\
	\le \; &c''_{\bx, \by, n} \ep^{- d(k, \ell, W) - 1000\de}.
	\end{align*}
	Taking $\de > 0$ small enough and applying Markov's inequality and \eqref{E:discretized-final} yields \eqref{E:q-G-ep} for any fixed $\eta > 0$, as desired.
\end{proof}

\subsection{Upper bound on set size for $3$-geodesic stars}
\label{S:upper-size}

We finish this section with a lemma that gives a converse to Lemma \ref{L:lower-bd-points'} by showing that certain sets of geodesic stars with weight in particular sets are finite.
This lemma can be used to refine Proposition \ref{P:weighted-stars-upper-bound} in the $d(k, \ell, W) = 0$ case when $k = \ell = 3$, and $W$ is a slide-invariant affine set of dimension $2$. For this lemma, define $L_3:\R^3 \to \R^2$ by $L_3(x_1, x_2, x_3) = (x_1 - x_2, x_3 - x_2)$.
\begin{lemma}
	\label{L:upper-bd-3-stars}
	Almost surely, for all $\bx \in \R^3_<,t \in \R,$ and $m \in \R^2$ we have
		$$
		\# \operatorname{Star}(\bx, t; L_3^{-1}(m)) \le 135.
		$$
\end{lemma}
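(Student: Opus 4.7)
The plan is to reduce the count to a finite combinatorial problem by combining a priori compactness of the star set with the $11$-position cap from Lemma~\ref{L:sparse-space-1}(ii). For fixed $(\bx, t, m)$ I would first argue that $\operatorname{Star}(\bx, t; L_3^{-1}(m))$ is contained in a random compact subset of $\R^2$. By the landscape shape theorem (Theorem~\ref{T:landscape-shape}), any star point $p = (y, s)$ satisfies $\cL(p; x_i, t) = -(y - x_i)^2/(t - s) + O(\log)$, so the two constraints $w_1 - w_2 = m_1$ and $w_3 - w_2 = m_2$ yield a consistency relation that pins both $s$ and $y$ to a bounded window up to subleading corrections. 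Given this compactness, for any finite subcollection $\{p_1, \ldots, p_N\}$ with forward $3$-stars $\pi_n = (\pi_{n, 1}, \pi_{n, 2}, \pi_{n, 3})$, and any rational $r$ sufficiently close to $t$, Lemma~\ref{L:sparse-space-1}(i) gives a finite random set containing all positions $\pi_{n, i}(r)$. By the internal geodesic uniqueness of Lemma~\ref{L:rightmost-geods}.3, each segment $\pi_{n, i}|_{[r, t]}$ is determined by its endpoints, so the map $\Phi_r(p_n) := (\pi_{n, 1}(r), \pi_{n, 2}(r), \pi_{n, 3}(r))$ captures the relevant geometry on $[r, t]$.

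Next I would analyze the fibers of $\Phi_r$. If $\Phi_r(p_n) = \Phi_r(p_{n'}) = (y_1, y_2, y_3)$, then $p_n$ and $p_{n'}$ are both $3$-star points to the common triple $(\by, r)$, and the weight constraint at $(\bx, t)$ translates to a $2$-dimensional slide-invariant constraint on the weights at $(\by, r)$ via $m'_i = m_i - \cL((y_i, r); (x_i, t)) + \cL((y_2, r); (x_2, t))$. The $2$-star monotonicity of Lemma~\ref{L:lower-bd-points'}.1 applied pairwise to the geodesics, together with the pairwise overlap structure at the common start (Lemma~\ref{L:rightmost-geods}.2 and the switching Fact~\ref{F:switching}), should force $p_n = p_{n'}$, so $\Phi_r$ is injective on the finite subcollection.

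The main obstacle, and the source of the constant $135$, is bounding $|\operatorname{Image}(\Phi_r)| \le 135$ uniformly. Here I would combine Lemma~\ref{L:sparse-space-1}(ii) (the $11$-position cap per $(p; q)$-pair) with a coalescent-tree argument for the geodesics ending at each $(x_i, t)$ and the non-existence of $4$-geodesic stars (Theorem~\ref{T:star-computation}, via the $k = 4$ case of Proposition~\ref{P:weighted-stars-upper-bound}). The precise constant $135$ would emerge from a case analysis of how three pairwise disjoint geodesic segments from a common origin at time $s_n < r$ can be joined to the admissible positions at time $r$, with many candidate configurations ruled out by planarity and by the requirement that an interior triple coalescence of the three geodesics would force a $4$-star. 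Finally, extending from a fixed $(\bx, t, m)$ to the almost sure simultaneous statement for all $(\bx, t, m)$ would use overlap approximation (Lemma~\ref{L:rightmost-geods}.1), upper semicontinuity of the star-count in the overlap metric (Proposition~\ref{P:overlap-metric}), and the invariances of $\cL$ (Lemma~\ref{L:invariance}) to handle noncompact parameter regimes.
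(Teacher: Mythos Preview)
Your proposal has genuine gaps and misses the key mechanism. The most serious issue is the derivation of the constant: you say ``the precise constant $135$ would emerge from a case analysis'' combining the $11$-position cap with a coalescent-tree argument, but Lemma~\ref{L:sparse-space-1}(ii) bounds positions of geodesics between a \emph{single} pair $(p;q)$, not positions of geodesics from many different star points to a common endpoint, so it does not feed into a bound on $|\operatorname{Image}(\Phi_r)|$ in the way you suggest. Moreover, your compactness step is fragile: for degenerate $m$ (e.g.\ $m_1=0$ or $m_2=0$) the two parabolic constraints do not pin $(y,s)$ to a bounded window, yet the lemma must hold for all $m$ simultaneously. Finally, and most importantly, you never use the defining property of $L_3^{-1}(m)$ in a structural way; in your outline the weight constraint only appears as a localisation device.

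The paper's proof is entirely different and purely topological, with no compactness and no appeal to Lemma~\ref{L:sparse-space-1}. The weight constraint is the engine: if $p_1,p_2\in X$ and two geodesics (one from each $p_i$, to possibly different $x_j$'s) cross, then because both points satisfy the \emph{same} linear relations among $\cL(p_i;x_j,t)$, the switched paths are again geodesics (this is Claim~1). Assuming $|X|\ge 136$, one pigeonholes over the $3^3=27$ possible near-$t$ coalescence patterns of the three geodesics (using that $\operatorname{Star}_4=\emptyset$ so each endpoint admits at most three germs), obtaining a class with at least $6$ points; then Ramsey $R(3,3)=6$ extracts three points whose middle geodesics all cross a common outer family in the same direction. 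A planarity and ordering analysis of these three $2$-stars, together with the switching from Claim~1, then produces two distinct geodesics between a common pair of \emph{interior} geodesic points---a geodesic bubble---contradicting Lemma~\ref{L:rightmost-geods}.3. The constant is thus $27\cdot 5=135$: one needs $136$ points to force a size-$6$ class. None of this structure is present in your outline, and in particular the bubble contradiction (not an image-counting bound) is what closes the argument.
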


The bounds here is not optimal; the key point is just that it is finite. The proof of Lemma \ref{L:upper-bd-3-stars} is topological.

\begin{proof}
	Throughout the proof, we write $X = \operatorname{Star}(\bx, t; L_3^{-1}(m))$.
	We start with two claims.
	
	\begin{claim1}
		Suppose that $p_1 = (y_1, s_1), p_2 = (y_2, s_2) \in X$ and that $\pi_1, \pi_2$ are two geodesics starting at points $p_{i(1)}, p_{i(2)}$ and ending at points $(x_{i'(1)}, t), (x_{i'(2)}, t)$ where $i(1), i(2) \in \{1, 2\}$ and $i'(1), i'(2)\in \{1, 2, 3\}$. Suppose additionally that there exists a time $r \in [s_1 \vee s_2, t)$ such that $\pi_1(r) = \pi_2(r)$.
		Then the paths
		\begin{equation}
		\label{E:pipi'}
		\pi_1|_{[s_{i(1)}, r]} \oplus \pi_2|_{[r, t]} \qquad \mathand \qquad \pi_2|_{[s_{i(2)}, r]} \oplus \pi_1|_{[r, t]}
		\end{equation}
		are geodesics.
	\end{claim1}
	\begin{claimproof}
		Observe that
		\begin{equation}
		\label{E:pi-i-1}
		\begin{split}
		\cL(p_{i(1)}; x_{i(1)}, t) &+ \cL(p_{i(2)}; x_{i(2)}, t) = \|\pi_1\|_\cL + \|\pi_2\|_\cL \\
		&= \|\pi_1|_{[s_{i(1)}, r]} \oplus \pi_2|_{[r, t]}\|_\cL + \|\pi_2|_{[s_{i(2)}, r]} \oplus \pi_1|_{[r, t]}\|_\cL.
		\end{split}
		\end{equation}
		Moreover, we have
		\begin{equation}
		\label{E:cLcL'}
		\cL(p_{i(1)}; x_{i'(1)}, t) + \cL(p_{i(2)}; x_{i'(2)}, t) = \cL(p_{i(2)}; x_{i'(1)}, t) + \cL(p_{i'(1)}; x_{i(2)}, t).
		\end{equation}
		Indeed, this is a tautology if either $i(1) = i(2)$ or $i'(1) = i'(2)$. If both $i(1) \ne i(2)$ and $i'(1) \ne i'(2)$ then this holds since $p_1, p_2 \in X$. Combining \eqref{E:pi-i-1} with \eqref{E:cLcL'} gives that the paths in \eqref{E:pipi'} are geodesics.
	\end{claimproof}

	\begin{claim2}
		Suppose that $|X| \ge 136$. Then we can find three points 
		$$
		\{p_i = (y_i, s_i), i = 1, 2, 3 \}\sset X
		$$
		with either $s_1 \le s_2 \le s_3$ or $s_1 \ge s_2 \ge s_3$, $\tilde \bx \in \{(x_1, x_2), (x_2, x_3)\}$, and geodesic $2$-stars $\tau^1, \tau^2, \tau^3$ from $p_i$ to $(\tilde \bx, t)$ such that
		\begin{itemize}[nosep]
			\item $\tau^1 = \tau^2 = \tau^3$ on some interval $[t-\de, t]$.
			\item $\tau^i_2$ and $\tau^j_1$ are not disjoint for all $i < j \in \II{1, 3}$. 
		\end{itemize}
	\end{claim2} 
	
	\begin{claimproof}
		Suppose that we can find points $p_1 = (y_1, s_1), \dots, p_{136} = (y_{136}, s_{136}) \in X$ and let $\pi^i, i \in \II{1, 136}$ be geodesic $3$-stars from $p_i$ to $(\bx, t)$. We can further assume that there exists $\de > 0$ such that for every $i \in \II{1, 136}$, on $[t-\de, t]$, each path $\pi^i_j$ follows a leftmost geodesic, since by the landscape shape theorem (Theorem \ref{T:landscape-shape}), the leftmost geodesics from $(\pi^i_j(t-\de), t- \de)$ to $(x_j, t)$, $j = 1, 2, 3$ are necessarily disjoint for small enough $\de$.

		We define an equivalence relation on the set $\II{1, 136}$ by saying that $i \sim_1 j$ if $\pi^i_1|_{[t-\ep, t]} = \pi^j_1|_{[t-\ep, t]}$ for some $\ep > 0$. This splits $\II{1, 136}$ into at most $3$ equivalence classes. Indeed, if $\pi^i, \pi^j$ are in different equivalence classes then $\pi^i_1, \pi^j_1$ cannot intersect on the interval $[t-\de, t)$, since they are both leftmost geodesics on this interval. Therefore if there were at least four equivalence classes, we could construct a $4$-geodesic star to $(x_1, t)$, contradicting Theorem \ref{T:star-computation}. Similarly defining $\sim_2, \sim_3$ and letting $i \sim j$ if $i \sim_1 j, i \sim_2 j, i \sim_3 j$, the relation $\sim$ splits $\II{1, 136}$ into at most $27$ equivalence classes. One of these must have at least $6$ elements. After relabeling, we may assume $1, \dots, 6$ belong to this class, and assume that $p_1, \dots, p_6$ are listed in order of increasing time coordinate (breaking ties arbitrarily).
		
		For every pair $i < j \in \II{1, 6}$, by planarity there must exist a time $s_{i, j}$ with $\pi^i_2(s_{i, j}) \in \{\pi^j_1(s_{i, j}), \pi^j_3(s_{i, j})\}$. Therefore (i.e. since the Ramsey number $R(3, 3) = 6$) we can find $m \in \{1, 3\}$ and a subset $S \sset \II{1, 6}$ with $|S| = 3$ such that if $i < j \in S$ then $\pi^i_2(s_{i, j}) = \pi^j_m(s_{i, j})$. Without loss of generality, $S= \II{1, 3}$, Then setting $\tau^i=(\pi^i_1, \pi^i_2)$ and $\tilde \bx = (x_1, x_2)$ if $m = 1$ or $\tau^i=(\pi^i_2, \pi^i_3)$ and $\tilde \bx = (x_2, x_3)$ if $m = 3$ gives the desired construction.
	\end{claimproof}
	
	We can use the two claims along with a planarity argument to prove the existence of geodesic bubbles if $|X| \ge 136$, contradicting Lemma \ref{L:rightmost-geods}. The details are as follows.
	
	\textbf{Proof of the lemma given the claims.} \qquad Let all notation be as in Claim 2. We assume $s_1 \le s_2 \le s_3$; the case when $s_1 \ge s_2 \ge s_3$ follows a symmetric argument. We first show that the ordering of the time coordinates $s_1 \le s_2 \le s_3$ implies that
	\begin{equation}
	\label{E:tau-2-s-t}
	\tau^1|_{[s_2, t]} \le \tau^2, \qquad \tau^2|_{[s_3, t]} \le \tau^3.
	\end{equation}
	We just show that $\tau^1|_{[s_2, t]} \le \tau^2$ as the second assertion in \eqref{E:tau-2-s-t} follows by an identical argument. Suppose first that $s' \in [s_2, t)$ is such that $\tau^1_1(s') = \tau^2_1(s')$. Note that $\bar \tau^1_1(s')$ is an interior point on a geodesic since $p_1 = \bar \tau^1_1(s_1)$ cannot lie on $\tau^2_1$ by construction. Now, by the first bullet point in Claim 2, we can find $s'' \in (s', t)$ with $\tau^1_1(s'') = \tau^2_1(s'')$. The point $\bar \tau^1_1(s'')$ is also in the interior of a geodesic and so by Lemma \ref{L:rightmost-geods}.3 (no geodesic bubbles) we have $\tau^1_1|_{[s', s'']} = \tau^2_1|_{[s', s'']}$. Therefore the set where $\tau^1_1 = \tau^2_1$ is a closed interval containing $t$. By a similar argument, the set where $\tau^1_2 = \tau^2_2$ is also a closed interval containing $t$.
	
Therefore by continuity, either $\tau^1_1|_{[s_2, t]} \le \tau^2_1$ or else $\tau^1_1|_{[s_2, t]} \ge \tau^2_1$ and $\tau^1_1(s_2) > \tau^2_1(s_2)$. Similarly, either $\tau^1_2|_{[s_2, t]} \le \tau^2_2$ or else $\tau^1_2|_{[s_2, t]} \ge \tau^2_2$ and $\tau^1_2(s_2) > \tau^2_2(s_2)$.
If either $\tau^1_1|_{[s_2, t]} \ge \tau^2_1$ and $\tau^1_1(s_2) > \tau^2_1(s_2)$, or $\tau^1_2|_{[s_2, t]} \ge \tau^2_2$ and $\tau^1_2(s_2) > \tau^2_2(s_2)$, then necessarily $\tau^1_2(r) > \tau^2_1(r)$ for all $r \in [s_2, t]$ by the ordering and disjointness between $\tau^1_1, \tau^1_2$ in the former case and between $\tau^2_1, \tau^2_2$ in the latter. This contradicts the second bullet point in Claim 2, completing the proof of \eqref{E:tau-2-s-t}.

	Now, for $i < j$ define 
	\begin{align*}
	X^{i, j} = \sup O(\tau^i_2, \tau^j_1), \qquad C_1^{i, j} = \inf O(\tau^i_1, \tau^j_1), \qquad C_2^{i, j} = \inf O(\tau^i_2, \tau^j_2).
	\end{align*}
	The `X' coordinates are crossing times for paths to different endpoints are defined for $i < j$ by the second bullet in Claim 2, whereas the `$C$' coordinates are coalescence times for paths to the same endpoint. Note that points of the form
	\begin{equation}
	\label{E:Xij}
	\bar \tau^i_2(X^{i, j}), \qquad  \bar \tau^i_k(C_k^{i, j})
	\end{equation}
	are always interior points on geodesics. Indeed, neither of these points equal $p_i$ because $p_i \ne p_j$ and $s_i \le s_j$, we have $X^{i, j} < t$ by continuity, and $C_k^{i, j} \le t - \de$ by the first bullet point in Claim $2$.
\begin{figure}
	\centering
	\begin{subfigure}[t]{4cm}
		\centering
		\includegraphics[height=5cm]{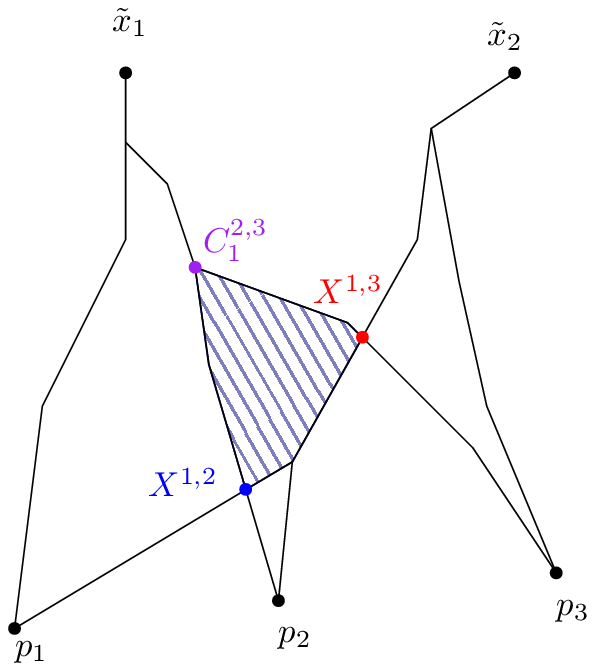}
		\caption{}
	\end{subfigure}
	\hspace{2cm}
	\begin{subfigure}[t]{4cm}
		\centering
		\includegraphics[height=5cm]{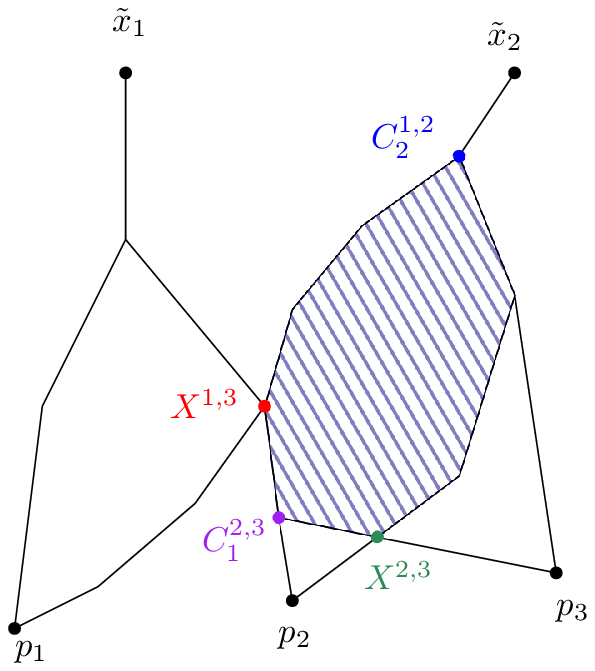}
		\caption{}
	\end{subfigure}
	\caption{Geodesic bubbles in the proof of Lemma \ref{L:upper-bd-3-stars}. Figure (A) covers the case when $X^{1, 3} < C^{2, 3}_1$ and figure (B) covers the case when $X^{1, 3} \ge C^{2, 3}_1$.}
	\label{fig:bubbles}
\end{figure}
	
	First suppose that $X^{1, 3} < C^{2, 3}_1$, see Figure \ref{fig:bubbles}(A). 
	Now, $\tau^2_1 \le \tau^3_1$ on $[s_3, t]$ by \eqref{E:tau-2-s-t} and $\tau^3_1(r) < \tau^1_2(r)$ for $r \in (X^{1, 3}, t]$. Therefore $\tau^2_1(r) < \tau^1_2(r)$ for $r \in (X^{1, 3}, t]$ and so $X^{1, 2} \le X^{1, 3}$. Moreover, we cannot have $X^{1, 2} = X^{1, 3}$ since this would imply $C^{2, 3}_1 \le X^{1, 3}$ and so $X^{1, 2} < X^{1, 3}$.
	Therefore by Claim 2,
	$$
	\tau^1_2|_{[X^{1, 2}, X^{1, 3}]} \oplus \tau^3_1|_{[X^{1, 3}, C^{2, 3}_1]} \qquad \mathand \qquad \tau^2_1|_{[X^{1, 2}, C^{2, 3}_1]}
	$$ 
	are both geodesics, which are distinct since $\tau^1_2(r) > \tau^2_1(r)$ for $r > X^{1, 2}$. These geodesic go between the same interior geodesic points of the form in \eqref{E:Xij}. This creates a geodesic bubble, contradicting Lemma \ref{L:rightmost-geods}.3.
	
	Next, suppose that $X^{1, 3} \ge C^{2, 3}_1$, see Figure \ref{fig:bubbles}(B). Note that $s_2 < C^{2, 3}_1$ since $p_2 \ne p_3$. In this case, at time $X^{1, 3} > s_2$ the paths $\tau^3_1, \tau^2_1$, and $\tau^1_2$ all overlap, and so $X^{1, 3} < C^{1, 2}_2$ since otherwise 
	$$
	\tau^2_2(X^{1, 3}) = \tau^2_1(X^{1, 3})
	$$
	which would contradict the disjointness of $\tau^2_1, \tau^2_2$ on $(s_2, t]$.
	Also, $X^{2, 3} < C^{2, 3}_1$ since otherwise $$
	\tau^2_2(X^{2, 3}) = \tau^3_1(X^{2, 3}) = \tau^2_1(X^{2, 3}),
	$$
	again contradicting disjointness of $\tau^2_1, \tau^2_2$ on $(s_2, t]$. Therefore
	$
	X^{2, 3} < C^{2, 3}_1 \le X^{1, 3} < C^{1, 2}_2
	$
	and so the paths
	$$
	\tau^2_2|_{[X^{2, 3}, C^{1, 2}_2]} \qquad \mathand \qquad \tau^3_1|_{[X^{2, 3}, X^{1, 3}]} \oplus \tau^1_2|_{[X^{1, 3}, C^{1, 2}_2]}
	$$
	form two geodesics between interior geodesic points. These geodesics are distinct by the definition of $C^{1, 2}_2$, contradicting Lemma \ref{L:rightmost-geods}.3.
\end{proof}

\section{There are at most $27$ networks}
\label{S:at-most-27}

In this section we prove the `only if' direction in Theorem \ref{T:geodesic-networks}, as the only remaining part of the proof is closely related to the proof of Proposition \ref{P:weighted-stars-upper-bound}. Indeed, in Section \ref{S:coalescent} we showed that the network graph $G(p; q)$ of any pair $(p; q) \in \Rd$ satisfies points 1-3 of Theorem \ref{T:geodesic-networks}. Theorem \ref{T:star-computation} shows that $G(p; q)$ satisfies point $5$, so it remains to show point $4$: namely, that all interior vertices of $G(p; q) = (V, E)$ have total degree equal to $3$. By construction, each of these vertices has total degree at least $3$, so to prove point $3$ we need to show that no such vertex can have degree $4$ or more. There are two possibilities we need to rule out:
\begin{enumerate}[label=\arabic*.]
	\item A vertex $v \in V \smin \{p, q\}$ has in-degree and out-degree at least $2$. This is ruled out by Lemma \ref{L:no-X-s}.
	\item A vertex $v \in V \smin \{p, q\}$ has out-degree at least $3$ (or in-degree at least $3$). This is ruled out by Lemma \ref{L:no-sceptres} and time-reversal symmetry of $\cL$.
\end{enumerate}

\begin{lemma}
	\label{L:no-X-s}
	Almost surely, the following holds. Consider any geodesics $\ga, \pi$. Then the overlap $O(\ga, \pi)$ is not a single point.
\end{lemma}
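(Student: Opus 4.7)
My plan is to argue by contradiction. Assume two geodesics $\ga, \pi$ satisfy $O(\ga, \pi) = \{r\}$; then by definition of $O$ the time $r$ lies in the interior of both time domains, and $z := \ga(r) = \pi(r)$. After restricting to a common sub-interval $[s, t]$ with $s < r < t$ interior to the original domains and $\ga(u) \ne \pi(u)$ for $u \in [s, t] \setminus \{r\}$, I split into case A (where $\ga - \pi$ changes sign at $r$) and case B (where it does not). In case B, the switched paths $\tau_1 = \ga|_{[s,r]} \oplus \pi|_{[r,t]}$ and $\tau_2 = \pi|_{[s,r]} \oplus \ga|_{[r,t]}$ are geodesics by Fact \ref{F:switching}, and a direct sign check shows $(\tau_1, \tau_2)$ satisfies case A with the same single-point overlap at $(z, r)$. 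So it suffices to rule out case A: $\ga < \pi$ on $[s, r)$ and $\ga > \pi$ on $(r, t]$.

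The heart of the argument is a monotone ``fan'' construction. For $y \in [\pi(t), \ga(t)]$, let $\sig_y$ denote the rightmost geodesic from $(\ga(s), s)$ to $(y, t)$. Using Fact \ref{F:switching} together with Lemma \ref{L:rightmost-geods}.3 (no geodesic bubbles) applied after extending the relevant endpoints slightly along $\ga$ and $\pi$ into the interior, I identify $\sig_{\pi(t)} = \ga|_{[s,r]} \oplus \pi|_{[r,t]}$ and $\sig_{\ga(t)} = \ga|_{[s,t]}$. Both of these satisfy $\sig_y(r) = z$. Since $y \mapsto \sig_y(r)$ is monotone non-decreasing by Lemma \ref{L:regularity}.3, it is forced to equal $z$ throughout $[\pi(t), \ga(t)]$. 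Applying no bubbles again gives $\sig_y|_{[s, r]} = \ga|_{[s, r]}$ for every such $y$, so that $\mu_y := \sig_y|_{[r, t]}$ is a monotone family of rightmost geodesics from $(z, r)$ to $(y, t)$ interpolating between $\pi|_{[r,t]}$ and $\ga|_{[r, t]}$.

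Because $\operatorname{Star}_4 = \emptyset$ by Theorem \ref{T:star-computation}, at most three distinct forward germs emanate from $(z, r)$, so the germs of $\{\mu_y\}$ at $(z, r)$ take finitely many values. Combining monotonicity with upper semicontinuity of path length (Fact \ref{F:usc-paths}) yields a threshold $y^* \in (\pi(t), \ga(t))$ at which the rightmost and leftmost geodesics from $(z, r)$ to $(y^*, t)$ carry different germs, hence are distinct paths $\mu^R \ne \mu^L$. Pre-pending $\ga|_{[s', r]}$ for some $s' < s$ interior to $\ga$ (with the concatenation being a geodesic by the same fan-plus-no-bubbles argument run from $(\ga(s'), s')$) produces two distinct geodesics $\ga|_{[s', r]} \oplus \mu^{R/L}$ from $(\ga(s'), s')$ to $(y^*, t)$. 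Finally, a symmetric forward fan argument produces a point $(y, t + \eta)$ and a geodesic $\tau^*$ from $(\ga(s'), s')$ to $(y, t+\eta)$ passing through $(y^*, t)$. Extending $\tau^*$ backwards along $\ga|_{[s'', s']}$ for some $s'' < s'$ exhibits a single common geodesic containing both $(\ga(s'), s')$ and $(y^*, t)$ in its interior, whose no-bubbles uniqueness contradicts the existence of the two distinct geodesics constructed above.

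I expect the main obstacle to be the final symmetric step: the forward fan must be run so that its rightmost geodesic from $(\ga(s'), s')$ to some future point $(y, t+\eta)$ passes exactly through $(y^*, t)$, thereby embedding $(y^*, t)$ into the interior of a single extending geodesic (rather than one specific to $\mu^R$ or $\mu^L$ separately). This will require combining the monotonicity of rightmost geodesics with overlap compactness (Proposition \ref{P:overlap-metric}) to extract the correct limiting parameters, and careful bookkeeping of interior-point status to apply the no-bubbles property only where it is valid.
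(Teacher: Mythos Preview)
Your fan construction and the reduction to case A are sound, and in fact closely parallel the paper's own reduction (Lemma \ref{L:maybe-figure-eights}): a sup/inf argument over rightmost and leftmost geodesics does locate a point $(y^*,t)$ with $\pi(t)<y^*<\ga(t)$ at which the leftmost and rightmost geodesics $\mu^L\ne\mu^R$ from $(z,r)$ to $(y^*,t)$ differ. The difficulty is the final step, which you yourself flag. To invoke Lemma \ref{L:rightmost-geods}.3 you need a \emph{single} geodesic having both $(z,r)$ and $(y^*,t)$ as interior points. Your ``symmetric forward fan'' produces, for $y'\in[\pi(t+\eta),\ga(t+\eta)]$, rightmost geodesics $\rho_{y'}$ from $(\ga(s'),s')$ to $(y',t+\eta)$, all passing through $(z,r)$; but the monotone map $y'\mapsto\rho_{y'}(t)$ is only right-continuous and can jump, and nothing you have established forces its range to contain $y^*$. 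Passing to leftmost geodesics at the jump point does not help: the left limit of the rightmost family need not equal the leftmost geodesic, so one can have $\rho^L_{y'_0}(t)<y^*<\rho^R_{y'_0}(t)$ with no geodesic from $(\ga(s'),s')$ to $(y'_0,t+\eta)$ hitting $(y^*,t)$. Equivalently, asking for such a $\rho$ amounts to asking that $y^*$ be a maximizer of $w\mapsto\cL(z,r;w,t)+\cL(w,t;y',t+\eta)$ for some $y'$, and there is no reason the landscape should grant this for the particular $y^*$ you constructed. The tools you cite (monotonicity, overlap compactness, $\operatorname{Star}_4=\emptyset$, no bubbles) do not close this gap.

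This is exactly why the paper does not finish topologically. After an analogous fan reduction producing a figure eight between rightmost and leftmost geodesics with rational outer times, the paper abandons topology and instead rules out such figure eights by a direct probability estimate: translating the event into a statement about two independent rescaled Airy line ensembles, bounding it via the Brownian Gibbs property by a nonintersecting Brownian bridge event (Lemma \ref{L:no-X-Brownian}) of probability $O(\de^{14})$, and union-bounding over a mesh of size $O(\de^{-9})$. That analytic step is essential and has no counterpart in your proposal.
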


We prove Lemma \ref{L:no-X-s} by contradiction. The first step is to show that the negation of Lemma \ref{L:no-X-s} implies the existence of rational `figure eights'.
\begin{lemma}
	\label{L:maybe-figure-eights}
	Suppose that Lemma \ref{L:no-X-s} fails. Then with positive probability, we can find $u = (x, s; y, t) \in \Rd$ with $s, t$ rational such that the graph $\Ga(\ga^R(u); \ga^L(u))$ contains a vertex of degree $4$.
\end{lemma}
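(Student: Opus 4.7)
Suppose Lemma~\ref{L:no-X-s} fails. With positive probability there exist geodesics $\ga$ and $\pi$ with $O(\ga,\pi)=\{r_0\}$ a single interior point; write $z_0 = \ga(r_0) = \pi(r_0)$.

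I first reduce to the case where $\ga \le \pi$ and they touch tangentially at $r_0$. If $\ga$ and $\pi$ cross transversally at $r_0$, Fact~\ref{F:switching} produces
\[
\tau_1 = \ga|_{[\cdot,r_0]} \oplus \pi|_{[r_0,\cdot]}, \qquad \tau_2 = \pi|_{[\cdot,r_0]} \oplus \ga|_{[r_0,\cdot]},
\]
which are geodesics; the crossing orientation of the endpoints together with Lemma~\ref{L:regularity}.4 gives $\tau_2 \le \tau_1$, and $O(\tau_1,\tau_2) = \{r_0\}$ since any further agreement point of $\tau_1,\tau_2$ would descend to one for $\ga$ and $\pi$. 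Replacing $(\ga,\pi)$ by $(\tau_2,\tau_1)$, I may assume $\ga \le \pi$ with the tangency only at $r_0$.

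Next I pick rational $s < r_0 < t$ strictly interior to the common domain, and set $x_1 = \ga(s) < x_2 = \pi(s)$ and $y_1 = \ga(t) < y_2 = \pi(t)$. Since $s,t$ are strict interior times, Lemma~\ref{L:rightmost-geods}.3 forces $\ga|_{[s,t]}$ and $\pi|_{[s,t]}$ to be the unique geodesics between their endpoints. Monotonicity of leftmost and rightmost geodesics (Lemma~\ref{L:regularity}.3) then yields, for any $u = (x,s;y,t)$ with $(x,y) \in [x_1,x_2] \times [y_1,y_2]$, the sandwich
\[
\ga|_{[s,t]} \le \ga^L(u) \le \ga^R(u) \le \pi|_{[s,t]},
\]
and evaluating at $r_0$ pins $\ga^L(u)(r_0) = \ga^R(u)(r_0) = z_0$. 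So for every such $u$, both the leftmost and rightmost geodesics from $u$ pass through the pinch point $(z_0,r_0)$.

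The figure eight is then extracted as follows. If some $u$ in the box has $\ga^L(u) \ne \ga^R(u)$, the overlap $O(\ga^L(u), \ga^R(u))$ is a closed interval containing $r_0$ by Lemma~\ref{L:rightmost-geods}.2; the strict separation $\ga < \pi$ on $[s,t] \setminus \{r_0\}$, combined with a short no-bubbles argument at the endpoints of any larger candidate overlap, forces the overlap to equal exactly $\{r_0\}$, producing a degree-$4$ vertex at $(z_0,r_0)$. The main obstacle is producing such a $u$. For strictly interior $(x,y)$ the sandwich forces every geodesic from $u$ to pass through the interior point $(z_0,r_0)$, and two applications of Lemma~\ref{L:rightmost-geods}.3 to the segments into and out of $(z_0,r_0)$ force uniqueness; so non-uniqueness must be sought at a boundary pair such as the corner $u^* = (x_1,s;y_2,t)$, where $\sig_1 = \ga|_{[s,r_0]} \oplus \pi|_{[r_0,t]}$ is already a geodesic (Fact~\ref{F:switching}). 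To construct a \emph{second} distinct geodesic from $u^*$, I would exploit that the original $\ga$ and $\pi$ extend strictly past $s$ and $t$ in their domains, invoke Lemma~\ref{L:overlap-at-endpoints} to obtain controlled endpoint approximations, and run a short planar switching argument near the endpoints that uses the strict order $\ga < \pi$ immediately off $r_0$ to produce a geodesic differing from $\sig_1$ on intervals adjacent to both $s$ and $t$. This endpoint construction is the delicate step; the reduction, sandwich, and pinning of the overlap are otherwise mechanical, and the positive-probability conclusion follows directly from the positive-probability event on which Lemma~\ref{L:no-X-s} fails.
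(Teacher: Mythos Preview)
Your setup is sound through the sandwich: the reduction to the tangent case, the choice of rational $s<r_0<t$, and the bounds $\ga|_{[s,t]}\le\ga^L(u)\le\ga^R(u)\le\pi|_{[s,t]}$ for $(x,y)\in[x_1,x_2]\times[y_1,y_2]$ are all correct and match the paper's framing. But the proof has two genuine gaps.

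First, and most importantly, you never actually produce a point $u$ with $\ga^L(u)\ne\ga^R(u)$. Your corner candidate $u^*=(x_1,s;y_2,t)$ gives one geodesic $\sigma_1$, but you explicitly say constructing a second one is ``the delicate step'' and offer only a vague plan involving Lemma~\ref{L:overlap-at-endpoints} and endpoint extensions. This plan does not work as stated: there is no reason a second geodesic from $(x_1,s)$ to $(y_2,t)$ should exist, and the fact that $\ga,\pi$ extend past $s,t$ gives you nothing concrete here. Also, your claim that interior $(x,y)$ force uniqueness via Lemma~\ref{L:rightmost-geods}.3 is not justified, since $(x,s)$ and $(y,t)$ are not known to be interior geodesic points.

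Second, even granting some $u$ with $\ga^L(u)\ne\ga^R(u)$, your argument that $O(\ga^L(u),\ga^R(u))=\{r_0\}$ is incomplete: the strict separation $\ga<\pi$ off $r_0$ does not prevent $\ga^L(u)$ and $\ga^R(u)$ from agreeing on a larger interval around $r_0$ while sitting strictly between $\ga$ and $\pi$.

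The paper's proof resolves both issues with a single idea you are missing: a one-sided infimum construction on the starting coordinate. Working only on $[s,r_0]$, it sets
\[
z^+=\inf\{z\in[x^-,x^+]:O(\ga^R(z,s;\bar\ga(r_0)),\ga|_{(s,r_0)})=\emptyset\},
\]
uses right-continuity of $z\mapsto\ga^R(z,s;\bar\ga(r_0))$ in overlap to show the infimum is attained, and then runs a second sup to show that at $z^-=z^+$ the \emph{leftmost} and \emph{rightmost} geodesics to the pinch are already disjoint. This directly manufactures two disjoint geodesics from a single starting point to $(z_0,r_0)$, and symmetrically on $[r_0,t]$; the overlap statement $O(\ga^L(u),\ga^R(u))=\{r_0\}$ then follows immediately because disjoint geodesics to the pinch have been exhibited on both sides.
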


In the statement above recall that $\ga^R(p; q), \ga^L(p; q)$ denote the rightmost and leftmost geodesics from $p$ to $q$.
\begin{proof}
	Let $\ga:[s_1, t_1] \to \R, \pi:[s_2, t_2] \to \R$. If $O(\ga, \pi)$ consists of a single point $r$, then $r \in (s, t) \sset (s_1, t_1) \cap (s_2, t_2)$ for a rational interval $(s, t)$. Without loss of generality, $x^- := \ga(s) < x^+ := \pi(s)$, and we can assume that $\ga|_{(s, r)}$ is the leftmost geodesic from $(x^-, s)$ to $\bar \ga(r)$ and $\pi|_{(s, r)}$ is the rightmost geodesic from $(x^+, s)$ to $\bar \ga(r)$.
	Next, define
	$$
z^+ = \inf \{z \in [x^-, x^+]: O(\ga^R(z, s; \bar \ga(r)), \ga|_{(s, r)}) = \emptyset\}.
	$$ 
	For every fixed $r' \in [r, s]$, the function $z \mapsto \ga^R(z, s; \bar \ga(r))(r')$ is right-continuous by the monotonicity in Lemma \ref{L:regularity}.3 and since limits of geodesics are geodesics. Therefore the function $z \mapsto \ga^R(z, s; \bar \ga(r))$ is continuous in the overlap topology by Proposition \ref{P:overlap-metric}.2, so $O(\ga^R(z^+, s; \bar \ga(r)), \ga|_{(s, r)}) = \emptyset$. Next, let 
	$$
	z^- = \sup \{x \in [x^-, z^+]: O(\ga^L(x, s; \bar \ga(r)), \ga^R(z^+, s; \bar \ga(r))) = \emptyset\}.
	$$ 
	Again by Proposition \ref{P:overlap-metric}.2, we have that $O(\ga^L(z^-, s; \bar \ga(r)), \ga^R(z^+, s; \bar \ga(r))) = \emptyset$. We claim that $z^-=z^+$. Indeed, if not then we can find $x \in (z^-, z^+)$. By the definition of $z^+$, the geodesic $\ga^L(x, s; \bar \ga(r))$ must have non-empty overlap with $\ga|_{(s, r)}$ and hence also with $\ga^L(z^-, s; \bar \ga(r))$ by monotonicity (Lemma \ref{L:regularity}.3). By the definition of $z^-$ the geodesic $\ga^L(x, s; \bar \ga(r))$ must also have non-empty overlap with $\ga^R(z^+, s; \bar \ga(r))$. Let 
	\begin{align*}
	r^- &= \inf O(\ga^L(x, s; \bar \ga(r)), \ga^L(z^-, s; \bar \ga(r))), \\
	\qquad r^+ &= \inf O(\ga^L(x, s; \bar \ga(r)), \ga^R(z^+, s; \bar \ga(r)))
	\end{align*}

	If $r^+ < r^-$, then the geodesic
	$$
	\ga^L(x, s; \bar \ga(r))|_{[s, r^+]} \oplus \ga^R(z^+, s; \bar \ga(r))|_{[r^+, r]}
	$$
	is disjoint from $\ga^L(z^-, s; \bar \ga(r))$, and hence also from $\ga|_{(s, r)}$, which contradicts the minimality of $z^+$. If $r^+ > r^-$, then the geodesic
	$$
	\ga^L(x, s; \bar \ga(r))|_{[s, r^-]} \oplus \ga^L(z^-, s; \bar \ga(r))|_{[r^-, r]}
	$$
	is to the left of $\ga^L(x, s; \bar \ga(r))$ contradicting the fact that $\ga^L(x, s; \bar \ga(r))$ is a leftmost geodesic. Hence $z^- = z^+$. 
	
	Similarly, we can find a point $y \in (\ga(t), \pi(t))$ such that the leftmost and rightmost geodesics from $\bar \ga(r)$ to $(y, t)$ are disjoint. Let $u = (z^-, s; y, t)$. To finish the proof, we claim that $O(\ga^R(u), \ga^L(u)) = \{r\}$. Indeed, by geodesic ordering (Lemma \ref{L:regularity}.3),
	$$
	\ga|_{(s, t)} \le \ga^L(u) \le \ga^R(u) \le \pi|_{(s, t)},
	$$
	so $\{r\} \sset O(\ga^R(u), \ga^L(u))$ and $\ga(r) = \ga^R(u) = \ga^L(u)$. On the other hand, we have exhibited pairs of disjoint geodesics from $(z^-, s)$ to $\bar \ga(r)$ and from $\bar \ga(r)$ to $(y, t)$, so $O(\ga^R(u), \ga^L(u)) = \{r\}$.
\end{proof}

\begin{lemma}
	\label{L:no-figure-eights}
	Almost surely, there are no points $u = (x, s; y, t) \in \Rd$ with $s, t$ rational such that $O(\ga^R(u); \ga^L(u))$ is a single point.
\end{lemma}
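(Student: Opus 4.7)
The plan is to derive a contradiction from the existence of a pinch by analyzing the induced two-geodesic star structure at two rational intermediate times and invoking Proposition~\ref{P:weighted-stars-upper-bound}.

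Suppose for contradiction that with positive probability there exist rational $s<t$ and $(x,y)\in\R^2$ such that $O(\ga^R(u),\ga^L(u))=\{r\}$ with $u=(x,s;y,t)$ and pinch point $v=(z,r)$. By countable additivity over rational pairs, I fix $s,t\in\Q$ and a rational subinterval $(s',t')\subset(s,t)$ with positive probability of the event and $r\in(s',t')$. Since the overlap is the single point $r\in(s',t')$, the values $a_1:=\ga^L(s')<\ga^R(s')=:a_2$ and $b_1:=\ga^L(t')<\ga^R(t')=:b_2$ are strictly distinct. Furthermore $\ga^L|_{[s,s']},\ga^R|_{[s,s']}$ are disjoint on $(s,s']$ (as $s'<r$), so $P:=(x,s)$ starts a forward $2$-geodesic star to $((a_1,a_2),s')$; symmetrically $Q:=(y,t)$ ends a reverse $2$-geodesic star from $((b_1,b_2),t')$. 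Geodesic switching (Fact~\ref{F:switching}) through $v$ also forces the quadrangle equality
\[
\cL(a_1,s';b_1,t')+\cL(a_2,s';b_2,t')=\cL(a_1,s';b_2,t')+\cL(a_2,s';b_1,t').
\]

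The pinch at $v$ constrains both branches of each star to have identical total length to $v$ (namely $\cL(P;v)$ and $\cL(v;Q)$), so $(P,Q)$ lies in $\operatorname{Star}((a_1,a_2),s';(b_1,b_2),t';W)$ for a slide-invariant affine set $W\subset\R^4$ of linear dimension at most $2$, parametrized (for fixed $v,a_i,b_j$) by the two scalars $\cL(P;v)$ and $\cL(v;Q)$. Proposition~\ref{P:weighted-stars-upper-bound} with $k=\ell=2$ then yields that the $d_{1:2:3}$-dimension of the pinch set in $\R^4_\uparrow$ is at most $d(2,2,W)\le 6$. Slicing at fixed rational $s,t$ removes $3+3=6$ dimensions in the $1\!:\!2\!:\!3$ metric, so the set of $(x,y)\in\R^2$ with a pinch has $d_{1:2:3}$-dimension $\le 0$. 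The extra codimension-$1$ imposed by the quadrangle equality, combined with the Brownian-Gibbs-based tail control of Theorem~\ref{T:resampling-candidate} and Corollary~\ref{C:Brownian-weighted-star}, would then be used to refine this to strict emptiness, in the spirit of Lemma~\ref{L:upper-bd-3-stars}.

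The main obstacle is upgrading the ``dimension $0$'' conclusion of Proposition~\ref{P:weighted-stars-upper-bound} to genuine emptiness, because the raw dimension bound leaves room for a countable family of pinch configurations at fixed rational $s,t$. The quadrangle equality on the four landscape values at times $s',t'$ is the extra codimension-$1$ input, but it is not captured by the slide-invariant weight set $W$; encoding it requires a discretization of the non-rational spatial parameters $(a_1,a_2,b_1,b_2,v)$ over a rational grid and a union bound analogous to Step~2 in the proof of Proposition~\ref{P:weighted-stars-upper-bound}, using the modulus of continuity (Proposition~\ref{P:mod-land-i}) together with the Airy line ensemble estimates. Orchestrating these ingredients so that the probability of even an approximate quadrangle equality at some admissible grid tuple decays faster than the grid size grows is the technical heart of the argument.
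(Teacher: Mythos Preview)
Your proposal has a genuine gap, and the intended route differs from the paper's in an essential way.

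First, the structural problems with your outline. Proposition~\ref{P:weighted-stars-upper-bound} applies only to \emph{fixed} star endpoints $(\bx,s';\by,t')$ and a \emph{fixed} affine $W$; in your setup $a_1,a_2,b_1,b_2$ and $W$ are random, determined by the landscape itself. Even granting a countable reduction, restricting the star set to the slice $\R\times\{s\}\times\R\times\{t\}$ does not come for free: a Hausdorff-dimension bound of $6$ in a space of $d_{1:2:3}$-dimension $10$ says nothing about a $4$-dimensional slice. If instead you rerun the union-bound proof of Proposition~\ref{P:weighted-stars-upper-bound} with $s,t$ frozen, you save the two time grids and the expected count of bad boxes becomes $O(\ep^{-\eta})$ for all $\eta>0$, which is exactly the ``dimension $0$'' conclusion you already identified---not emptiness. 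Your proposed extra codimension from the quadrangle equality is a red herring: that identity is a consequence of the pinch, but it is a constraint on \emph{interior} landscape values at $(a_i,s';b_j,t')$, not on the star weights $W^-_{\bx,s'}(P),W^+_{\by,t'}(Q)$, so it does not shrink $W$ and cannot be fed back into the star machinery.

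The paper's proof avoids the auxiliary times $s',t'$ entirely and works at the pinch time $r$. The pinch is rephrased as three simultaneous conditions on the line ensembles $\cL^{(x,0),r}$ and $\hat\cL^{(y,1),r}$: the two touching equalities $\cL_1=\cL_2$ and $\hat\cL_1=\hat\cL_2$ at $z$, together with the statement that $z$ maximizes $\cL_1+\hat\cL_1$. The key new input is Lemma~\ref{L:no-X-Brownian}: the argmax condition lets one interleave the two independent pairs of Brownian bridges into a single ordered $4$-tuple $W=(-B_2,-B_1,B_1'-c,B_2'-c)$, and the non-intersection estimate of Lemma~\ref{L:BB-nonint} on this $4$-tuple yields a probability bound of order $\de^{14}$. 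This exponent comfortably beats the $\ep^{-9-O(\de)}$ cost of the union over the four parameters $(x,y,z,r)$ with $s,t$ fixed, giving emptiness outright. The general star estimates in Corollary~\ref{C:Brownian-weighted-star} treat the two sides independently and cannot see this interleaving; that is precisely why a separate Brownian lemma is needed here.
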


Just as with the proof of Proposition \ref{P:weighted-stars-upper-bound}, the proof of Lemma \ref{L:no-figure-eights} is based on the analysis of a problem involving rare events for the Airy line ensemble. As in the proof of Proposition \ref{P:weighted-stars-upper-bound}, we require a lemma about the rarity of a certain Brownian event.

\begin{lemma}
	\label{L:no-X-Brownian}
	Fix $\ep > 0, m \ge 1$, and $a, b \in \R$ with $a + 2 \ep < b$.
	Let $\bar B = (B, B') = (B_1, B_2, B_1', B_2')$ be a $4$-tuple of Brownian bridges from $(a, \bx)$ to $(b, \by)$ for $\bx, \by \in [-m, m]^4$. Fix $z \in (a + \ep, b - \ep)$, and let $T_{a, b}(B, B'; z, \de)$ be the event where
	\begin{align*}
	B_1(z) - B_2(z) &< \de, \qquad \qquad B_1'(z) - B_2'(z) < \de, \text{ and } \\
	B_1(z) + B_1'(z) + \de &> B_1(w) + B_1'(w) \text{ for all } w \in [a, b]. 
	\end{align*}
	Then
	$$
	\P(T_{a, b}(B, B'; z, \de) , B \in \NI, B' \in \NI) \le e^{c_{\ep, a, b} m} \de^{14}.
	$$
\end{lemma}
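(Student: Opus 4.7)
The plan is to adapt the cut-and-paste technique from the proof of Lemma \ref{L:ep-0100}: decompose the probability at the special time $z$, handle non-intersection on each of $[a, z]$ and $[z, b]$ via the Karlin--McGregor estimate (Lemma \ref{L:BB-nonint}), handle the max condition on $B_1 + B_1'$ via the classical Brownian bridge maximum formula, and exploit the independence of $(B_1, B_2)$ and $(B_1', B_2')$.

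First I would decompose the probability by integrating over the intermediate values $\bu = (B_1(z), B_2(z))$ and $\bu' = (B_1'(z), B_2'(z))$. The constraints $u_1 - u_2 < \de$ and $u_1' - u_2' < \de$ define the integration region at $z$. Conditional on $\bu$ (resp.\ $\bu'$), the pair $(B_1, B_2)$ decouples into independent Brownian bridges on $[a, z]$ and $[z, b]$, and Lemma \ref{L:BB-nonint} yields a Vandermonde-type bound on each subinterval. Integrating these bounds against the Gaussian density at $z$ over the $\de$-small pinch region produces the baseline $\de^3$ per pair (exactly as in Lemma \ref{L:ep-0100} with $k = 2$ and a single pinch), and $\de^6$ in total from the two pinches.

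To handle the max condition, condition on the values of $B_1, B_1'$ at $z$ and at the endpoints; then $H = B_1 + B_1'$ is a Brownian bridge of variance $4$ on each of $[a, z]$ and $[z, b]$. The classical formula
\[
\P\lf(\max_{[\al, \be]} H \ge m \mid H(\al), H(\be)\rg) = \exp\lf(-\tfrac{2(m - H(\al))_+(m - H(\be))_+}{4(\be - \al)}\rg)
\]
controls the probability that $H$ stays within $\de$ of $H(z)$ on each subinterval, giving a factor of order $\de$ per side for a total of $\de^2$ from the max condition. To upgrade the resulting $\de^8$ to the required $\de^{14}$, I would use the fact that the max condition together with the pinches at $z$ imposes a quantitatively stronger non-intersection condition on $B_1 - B_2$ and $B_1' - B_2'$ in a neighborhood of $z$: the sum $B_1 + B_1'$ being near-maximal at $z$ forces both $B_1 - B_2$ and $B_1' - B_2'$ to be small and positive throughout an extended neighborhood, which sharpens the Karlin--McGregor bound on each subinterval and contributes additional $\de$ factors per (pair, side) combination.

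The main obstacle is coordinating the pinch and max contributions so that the exponents add up to $14$: the max condition couples $B_1$ and $B_1'$ through their sum, so the parallel treatment of the two pairs must be modified to account for this coupling. A cleaner route may be to use a Girsanov-style reweighting to replace the max condition by an equivalent one-sided barrier constraint on the sum process $H$, thereby recasting the problem as a joint non-intersection problem for a larger collection of linearly combined bridges to which a generalized Karlin--McGregor-type estimate can be directly applied. Geometrically the lemma encodes a transversal crossing of two geodesics at a single point, which should be a very high-codimension event, so a high power of $\de$ is expected.
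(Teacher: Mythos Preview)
Your decomposition at $z$ and use of Lemma \ref{L:BB-nonint} are the right ingredients, but treating the two pinches and the max condition separately only yields $\de^8$, and neither of your proposed fixes closes the gap to $\de^{14}$. The suggestion that the max condition ``forces $B_1 - B_2$ and $B_1' - B_2'$ to be small throughout an extended neighborhood'' does not produce additional $\de$ factors: each pair already contributes its full Karlin--McGregor exponent from the pinch at $z$, and there is no mechanism by which the max constraint on the \emph{sum} $B_1 + B_1'$ sharpens the non-intersection bound on either pair individually.

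The idea you float at the very end---recasting the problem as a joint non-intersection problem for a larger collection of linearly combined bridges---is exactly what the paper does, and it is the entire content of the proof. Set
\[
W = (W_1, W_2, W_3, W_4) = \bigl(-B_2 + \de,\; -B_1 + \de,\; B_1' - [B_1(z) + B_1'(z)],\; B_2' - [B_1(z) + B_1'(z)]\bigr).
\]
Then $B \in \NI \iff W_1 > W_2$, $B' \in \NI \iff W_3 > W_4$, and the max condition is equivalent to $W_2 > W_3$ on all of $[a,b]$. So the full event becomes $W \in \NI$ together with $W_1(z) - W_2(z) < \de$ and $W_3(z) - W_4(z) < \de$; moreover $W_2(z) - W_3(z) = \de$ identically. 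Thus at $z$ all four coordinates of $W$ are $\de$-clustered, so in the notation of Lemma \ref{L:BB-nonint} we have $E_\de(W(z)) = \binom{4}{2} = 6$. Applying that lemma on each of $[a,z]$ and $[z,b]$ gives $\de^{6}$ per side, and integrating the Gaussian density over the two-dimensional pinch region $\{0 < a_1 - a_2 < \de,\; 0 < a_3 - a_4 < \de\}$ contributes $\de^{2}$ more, for $\de^{14}$. The point you were missing is that the max condition is itself a non-intersection condition in disguise, and only by merging it with the other two into a single $4$-tuple do you access the full Vandermonde exponent $\binom{4}{2}$ rather than $\binom{2}{2} + \binom{2}{2}$.
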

\begin{proof}
	Consider the $4$-tuple of functions 
	$$
	W =(W_1, W_2, W_3, W_4) = (-B_2 + \de, - B_1 + \de, B_1' - [B_1(z) + B_1'(z)], B_2' - [B_1(z) + B_1'(z)]).
	$$
	Then $T_{a, b}(B, B'; z, \de) \cap \{B \in \NI\} \cap \{B' \in \NI\}$ is the same the event $T^*$ given by
	$$
	W \in \NI, \quad W_i(z) - W_{i+1}(z) < \de \quad \text{ for } i = 1, 3.
	$$
	To estimate $\P(T^*)$, observe that conditional on $\bar B(z)$, $W|_{[a, z]}$ and $W_{[z, b]}$ are two independent $4$-tuples of Brownian bridges from $(0, g(\bx))$ to $(z, g (\bar B(z)))$ and from $(z, g(\bar B(z)))$ to $(1, g(\by))$, where
	$$
	g(w_1, w_2, w_3, w_4) = (-w_2 + \de, - w_1 + \de, w_3 - [B_1(z) + B_1'(z)], w_4 - [B_1(z) + B_1'(z)]).
	$$
	Therefore
	\begin{align}
	\label{E:T*-bound}
	\P(T^*) = \int_\Delta \P_{0, z}( g(\bx), g(\ba); \NI) \P_{z, 1}(g(\ba), g(\by); \NI) \rho(\ba) d \ba,
	\end{align}
	where $\rho$ is the (Gaussian) density of $\bar B(z)$, and $\Delta$ is the set of all $\ba \in \R^4$ with $0 < a_1 - a_2 < \de$ and $0 < a_3 - a_4 < \de$. By Lemma \ref{L:BB-nonint},
	\begin{equation*}
	\P_{0, z}(g(\bx), g(\ba); \NI) \le e^{c_{a, b, \ep} (m + \|\ba\|_\infty)} \de^6, \qquad \P_{z, 1}(g(\ba), g(\by); \NI) \le e^{c_{a, b, \ep} (m + \|\ba\|_\infty)} \de^6,
	\end{equation*}
	for $\ba \in \Delta$, where we have used that $\bx, \by \in [-m, m]^4$ to remove the dependence on these parameters. Therefore by \eqref{E:T*-bound} we have the desired bound:
	\[
	\P(T^*) \le \int_\Delta e^{2c_{a, b, \ep} (m + \|\ba\|_\infty)} \de^{12} \rho(\ba) d \ba\le e^{c_{a, b, \ep} m} \de^{14}. \qedhere \]
\end{proof}

For the proof of Lemma \ref{L:no-figure-eights}, we use both the $\cL^{p, t}, \hat \cL^{q, t}$ notation used in the proof of Proposition \ref{P:weighted-stars-upper-bound} and the notation of Lemma \ref{L:no-X-Brownian}.
\begin{proof}[Proof of Lemma \ref{L:no-figure-eights}]
	By landscape rescaling and a countable additivity argument, it suffices to show that almost surely, for every fixed $m > 0$ there are no such points $u = (x, 0; y, 1)$ with $x, y \in [-m, m]$ such that the overlap $O(\ga^R(u); \ga^L(u))$ is a single point $p = (z, t)$ with $z \in [-m, m]$ and $t \in [1/m, 1-1/m]$. Suppose that there is a triple $(x, y, p)$ for which this is indeed the case. Then the following three equations hold:
	\begin{align*}
	\cL(x, 0; z, t) + \cL(z, t; y, 1) &= \max_{w \in [-m - 1, m + 1]} \cL(x, 0; w, t) + \cL(w, t; y, 1), \\
	\cL((x, x), 0; (z, z), t) &= 2 \cL(x, 0; z, t), \\
	\cL((z, z), t; (y, y), 1) &= 2 \cL(z, t; y, 1).
	\end{align*}
We can equivalently write these equations in terms of the line ensembles $\cL^{x, t} := \cL^{(x, 0), t}, \hat \cL^{y, t} := \hat \cL^{(y, 1), t}$:
	\begin{equation}
	\label{E:A-z}
	\begin{split}
	\cL^{x, t}_1(z) + \hat \cL^{y, t}_1(z) &= \max_{w \in [-m - 1, m + 1]} \cL^{x, t}_1(w) + \hat \cL^{y, t}_1(w), \\
	\quad \cL^{x, t}_1(z) &= \cL^{x, t}_2(z), \quad \hat \cL^{y, t}_1(z) = \hat \cL^{y, t}_2(z).
	\end{split}
	\end{equation}
	Now fix $\de > 0$. By the modulus of continuity for the extended landscape (Lemma \ref{L:extended-modulus}), if \eqref{E:A-z} holds for some $x, y, z \in [-m, m]$ and $t \in [1/m, 1-1/m]$ then
	 for all small enough $\ep > 0$ the event
	 \begin{equation}
	 \label{E:union-T}
	 \bigcup_{\substack{x, y, z \in \ep^{2 + \de} \Z \cap [-m - 1/2, m + 1/2],\\ t \in \ep^{3+\de} \Z \cap [1/(2m), 1 - 1/(2m)]}} T_{-m-1, m+1}(\cL^{x, t}, \hat \cL^{y, t}; z, \ep)
	 \end{equation}
	 holds. By Theorem \ref{T:resampling-candidate}, for all choices of $x, y, z, t$ in the union in \eqref{E:union-T} there are random vectors $\bw =\bw(x, y, t), \bw' = \bw'(x, y, t)$ and a constant $c_m > 0$ such that letting $B, B'$ be independent Brownian bridges from $(-m-1, \bw)$ to $(m + 1, \bw')$ we have
	 \begin{align*}
\P(T_{-m-1, m+1}(\cL^{x, t}, \hat \cL^{y, t}; z, \ep)) &= \P(T_{-m-1, m+1}(\cL^{x, t}, \hat \cL^{y, t}; z, \ep), \cL^{x, t} \in \NI, \hat \cL^{y, t} \in \NI) \\
&\le c_m \E [\P(T_{-m-1, m+1}(B, B'; z, \ep), B \in \NI, B' \in \NI \mid \bw, \bw')] \\
&\le c_m \ep^{14} \E e^{c_m \|(\bw, \bw')\|_\infty}.
	 \end{align*}
	Here the fact that the constant in the first inequality only depends on $m$ uses the fact that $x, y \in [-m - 1, m + 1], t \in [1/(2m), 1 - 1/(2m)]$ and the landscape symmetries in Lemma \ref{L:invariance}, and the second inequality is Lemma \ref{L:no-X-Brownian}. Finally, $\E e^{c_m \|(\bw, \bw')\|_\infty} \le c_m'$ by the tail bounds in Theorem \ref{T:resampling-candidate}.3, so the probability of \eqref{E:union-T} is $O(\ep^{5 - 4 \de})$ as $\ep \to 0$. Therefore the probability that there is some $x, y, z, t$ for which \eqref{E:A-z} holds is $0$.
\end{proof}

Lemma \ref{L:no-X-s} is an immediate consequence of Lemmas \ref{L:maybe-figure-eights} and \ref{L:no-figure-eights}.

\begin{lemma}
	\label{L:no-sceptres}
	Almost surely, we cannot find three geodesics $\pi_1, \pi_2, \pi_3$ that start and end at common points $p = (x, s)$ and $q = (y, t)$, coincide on an interval $[s, r]$ with $s < r < t$ and are mutually disjoint on the interval $(r, t)$.
\end{lemma}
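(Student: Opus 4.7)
The plan is to recast the forbidden sceptre as a rare event for the line ensembles $\cL^{v,r'}$ and $\hat\cL^{q,r'}$ and to bound it via a Karlin--McGregor estimate after comparison to Brownian bridges, following the template of Lemma \ref{L:no-figure-eights}.

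By landscape rescaling, time reversal, and a countable union over intermediate times $r'\in\Q\cap(r,t)$, it suffices to fix rational $r'$ and a compact region and rule out almost surely the existence of any $(v,q,\bv)$ with $v=(z,r)$, $q=(y,t)$, $r<r'<t$, and $\bv=(v_1,v_2,v_3)\in\R^3_<$ such that three disjoint geodesics from $v$ to $q$ meet the horizontal line at time $r'$ at $(v_1,r'),(v_2,r'),(v_3,r')$. I would first apply Lemma \ref{L:jump-lemma} at $v$ (forward) and its time-reversed form at $q$ (reverse) to produce two-level Gelfand--Tsetlin patterns $\bz,\bz'\in\operatorname{GT}_2(\bv)$ giving three pinches in $\cL^{v,r'}$ at $\bz$ and three pinches in $\hat\cL^{q,r'}$ at $\bz'$. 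The equality of the three concatenated path lengths $\cL(v;q)$ forces the function $w\mapsto\cL^{v,r'}_1(w)+\hat\cL^{q,r'}_1(w)$ to attain its global maximum $C=\cL(v;q)$ at each of $v_1,v_2,v_3$ simultaneously.

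I would then perform a change of variables in the spirit of Lemma \ref{L:no-X-Brownian}. Comparing $\cL^{v,r'}|_{\II{1,3}}$ and $\hat\cL^{q,r'}|_{\II{1,3}}$ to independent triples $B,B'$ of non-intersecting Brownian bridges via Theorem \ref{T:resampling-candidate} (paying the usual bounded Radon--Nikodym derivative and subgaussian tails for the linear shifts), form the six-tuple
\[
U=(-B_3,-B_2,-B_1,\,B'_1-C,\,B'_2-C,\,B'_3-C).
\]
A direct verification shows $U\in\NI$ iff $B\in\NI$, $B'\in\NI$, and $B_1+B'_1\le C$ pointwise. Under this encoding the forward-star pinches become pinches $U_1=U_2$ at $z_{2,1}$ and $U_2=U_3$ at $z_{1,1},z_{1,2}$; the reverse-star pinches become pinches $U_5=U_6$ at $z'_{2,1}$ and $U_4=U_5$ at $z'_{1,1},z'_{1,2}$; and the triple-maximizer condition at $v_1,v_2,v_3$ translates into three pinches $U_3=U_4$ at these three distinct positions. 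The two interlacing chains $v_1\le z_{1,1}\le v_2\le z_{1,2}\le v_3$, $z_{1,1}\le z_{2,1}\le z_{1,2}$ (and their primed analogues) embed these nine pinch positions into a partially populated five-level Gelfand--Tsetlin skeleton for $U$, and an extension of the Karlin--McGregor estimate in Lemma \ref{L:ep-0100} bounds the $\delta$-relaxed event at fixed $(v,q,\bv,\bz,\bz')$ by $\delta^K$ for a suitable exponent $K$.

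The proof closes via the usual discretization using the modulus of continuity for the extended landscape (Lemma \ref{L:extended-modulus}): discretize $(v,q)$ on a grid of scale $\ep^2\times\ep^3$ and $\bv,\bz,\bz'$ at spatial scale $\ep^2$, then union bound at scale $\delta=\ep^{1-o(1)}$ and apply Markov and Borel--Cantelli. The main obstacle is the Karlin--McGregor exponent $K$ for the six-tuple $U$: unlike the full $(k-1)$-level GT patterns treated in Lemma \ref{L:ep-0100}, our nine pinches only partially populate the five-level pattern for $U$, so the inequality $k(k-1)/2+2\#\bw\le 3\sum n_i$ used there does not apply off the shelf. Careful tracking of the non-intersection cost accumulated over sub-intervals between consecutive pinch locations together with the $\delta^2$ pinch-window cost at each of the nine sites--exploiting that the three middle-level pinches $U_3=U_4$ are geometrically forced to interlace with the two GT skeletons--must be shown to yield $K$ strictly larger than the discretization exponent, and this bookkeeping is the central technical step.
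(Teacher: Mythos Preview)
Your reduction drops the key constraint and lands on a target that is false. After discarding $p$, you are trying to rule out the existence of \emph{any} pair $(v;q)$ connected by three pairwise disjoint geodesics; but such pairs exist---the two-vertex triple-edge graph lies in $\sG_{3,3}$, and by Theorem~\ref{T:Hausdorff-dimension} it occurs on a set of $d_{1:2:3}$-dimension $2$. So the union bound cannot close, and your own bookkeeping confirms this: nine single-pair pinches in a six-tuple give at best $\de^{27}$ via the Karlin--McGregor count, while discretizing $(v,q,\bv,\bz,\bz')$ costs $\ep^{-28}$. The missing hypothesis is that the three disjoint geodesics from $v$ to $q$ are the continuation of a \emph{single} geodesic segment back to $p$; equivalently, that the branch point $v=(z,r)$ is an interior geodesic point, not merely a $3$-star source.

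The paper uses exactly this extra constraint, and does so far more efficiently by slicing at the branch time $r$ itself rather than at an intermediate $r'\in(r,t)$. At time $r$ all three paths sit at one spatial location $z$, so there are no intermediate points $\bv$ and no forward GT pattern $\bz$ to discretize. One first shortens the common segment to make the initial time $s$ rational. The sceptre then forces two conditions: (i) $z$ maximizes $w\mapsto\cL^{(x,s),r}_1(w)+\hat\cL^{(y,t),r}_1(w)$, which encodes that $(z,r)$ lies on a geodesic from $p$ to $q$ (precisely the constraint you dropped), and (ii) $\hat\cL^{(y,t),r}_1(z)=\hat\cL^{(y,t),r}_3(z)$, the three-way split. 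Under the transformation $W=(-B_3,-B_2,-B_1,\,B_1'-C)$ this becomes a four-tuple in $\NI$ with all four lines pinched at a single location $z$, yielding $\de^{14}$ (two factors of $\de^6$ from Lemma~\ref{L:BB-nonint} on either side of $z$, plus $\de^2$ from the integration region) against a discretization cost of only $\ep^{-12}$ over $(x,y,z,t,r)$ with $s$ fixed.
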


The strategy for proving Lemma \ref{L:no-sceptres} is similar to that of Lemma \ref{L:no-X-s}, except we do not require an initial rational reduction step (a version of Lemma \ref{L:maybe-figure-eights}). We start with a Brownian lemma, the analogue of Lemma \ref{L:no-X-Brownian} in the present setting.

\begin{lemma}
	\label{L:no-sceptres-Brownian}
	Fix $\ep > 0, m \ge 1$, and $a, b \in \R$ with $a + 2 \ep < b$. Let $\bar B = (B, B') = (B_1, B_2, B_3, B_1')$ be a $4$-tuple of Brownian bridges from $(a, \bx)$ to $(b, \by)$ for $\bx, \by \in [-m, m]^4$. Fix $z \in (a + \ep, b - \ep)$, and let $S_{a, b}(B, B', z, \de)$ be the event where
	\begin{align*}
	B_1(z) - B_3(z) < \de, \qquad
	&B_1(z) + B_1'(z) + \de > B_1(w) + B_1'(w)\quad \forall w \in [a, b].
	\end{align*}
	Then
	$$
	\P(S_{a, b}(B, B', z, \de), B \in \NI) \le e^{c_{\ep, a, b} m} \de^{14}.
	$$
\end{lemma}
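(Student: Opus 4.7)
The plan is to mimic closely the proof of Lemma \ref{L:no-X-Brownian}: introduce a four-component auxiliary process $W$ that encodes the max condition on $B_1 + B_1'$ as a nonintersection property, then condition on $\bar B(z)$ and invoke Lemma \ref{L:BB-nonint} separately on $[a,z]$ and $[z,b]$.

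Set
$$
W_1 = -B_3 + \de, \quad W_2 = -B_2 + \de, \quad W_3 = -B_1 + \de, \quad W_4 = B_1' - [B_1(z) + B_1'(z)].
$$
A direct check shows that $W \in \NI$ on $[a,b]$ is equivalent to $B \in \NI$ together with $B_1(w) + B_1'(w) < B_1(z) + B_1'(z) + \de$ for every $w \in [a,b]$, while the pinch $B_1(z) - B_3(z) < \de$ translates to $W_1(z) - W_3(z) < \de$. The crucial identity $W_3(z) - W_4(z) = \de$ holds by direct computation, so under $W \in \NI$ the four coordinates of $W(z)$ all lie in an interval of length at most $2\de$. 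Conditional on $\bar B(z)$, each $W_i$ depends on exactly one coordinate of $(B_1, B_2, B_3, B_1')$ (the additive constant $B_1(z) + B_1'(z)$ is fixed), so $W$ restricted to either $[a,z]$ or $[z,b]$ is a $4$-tuple of independent Brownian bridges.

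Write $\bu_L, \bu_R \in \R^4$ for the $W$-images of the endpoints $\bx, \by$, $g(\bv)$ for the $W$-image of $\bv$ at time $z$, and set $\Delta = \{\bv \in \R^4 : v_1 > v_2 > v_3 \text{ and } v_1 - v_3 < \de\}$. Conditioning on $\bar B(z) = \bv$ yields
$$
\P(S_{a,b}(B, B', z, \de),\ B \in \NI) = \int_\Delta \P_{a,z}(\bu_L, g(\bv); \NI)\, \P_{z,b}(g(\bv), \bu_R; \NI)\, \rho(\bv)\, d\bv,
$$
where $\rho$ is the Gaussian density of $\bar B(z)$. Since every pair of coordinates of $g(\bv)$ is at distance at most $2\de$, applying Lemma \ref{L:BB-nonint} with the enlarged parameter $3\de$ gives $E_{3\de}(g(\bv)) = 6$, so each nonintersection factor is bounded by $(3\de)^6 \exp(c_{\ep, a, b}(m + \|\bv\|_\infty))$.

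The region $\Delta$ has two-dimensional Lebesgue content $\tfrac{1}{2}\de^2$ in the $(v_1, v_2)$ directions for each fixed $v_3$, while $v_3$ and $v_4 = B_1'(z)$ are unconstrained. Multiplying the nonintersection bound $(3\de)^{12}\exp(c(m + \|\bv\|_\infty))$ by $\rho(\bv)$ and integrating over $\Delta$, with the Gaussian tails of $\rho$ absorbing $\exp(c\|\bv\|_\infty)$, produces the desired $e^{c_{\ep, a, b}m}\, \de^{14}$. The main obstacle is the construction of the correct auxiliary tuple $W$: both the encoding of the max condition as the inequality $W_3 > W_4$ and the deterministic equality $W_3(z) - W_4(z) = \de$ are essential, for without the ``free pinch'' between $W_3$ and $W_4$ at $z$, the count $E_{3\de}(W(z))$ would drop from $6$ to $3$ and the resulting bound would only be $\de^8$, far short of what is required.
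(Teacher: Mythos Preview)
Your proof is correct and follows essentially the same route as the paper's: the paper defines the identical auxiliary tuple $W = (-B_3 + \de, -B_2 + \de, -B_1 + \de, B_1' - [B_1(z) + B_1'(z)])$, reduces to the event $S^* = \{W \in \NI,\ W_1(z) - W_3(z) < \de\}$, conditions on $\bar B(z)$, and then simply writes ``From here the proof goes through as in the proof of Lemma~\ref{L:no-X-Brownian}.'' Your write-up makes explicit the step the paper leaves to the reader, namely that the deterministic gap $W_3(z) - W_4(z) = \de$ forces all four coordinates of $W(z)$ to lie within $2\de$, so that $E_{c\de}(W(z)) = 6$ and the two applications of Lemma~\ref{L:BB-nonint} yield $\de^{12}$, with the remaining $\de^2$ coming from the volume of the region $\{a_1 > a_2 > a_3,\ a_1 - a_3 < \de\}$.
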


\begin{proof}
	The proof is similar to the proof of Lemma \ref{L:no-X-Brownian}.
	Consider the $4$-tuple of functions 
	$$
	W =(W_1, W_2, W_3, W_4) = (-B_3 + \de, - B_2 + \de, -B_1 + \de, B_1' - [B_1(z) + B_1'(z)]).
	$$
	Then $S_{a, b}(B, B', z, \de) \cap \{B \in \NI\}$ is contained in the event $S^*$ where
	$$
	W \in \NI, \quad W_1(z) - W_3(z) < \de.
	$$
	To estimate $\P(S^*)$, observe that conditional on $\bar B(z)$, $W|_{[0, z]}$ and $W|_{[z, 1]}$ are two independent $4$-tuples of Brownian bridges from $(0, g(\bx))$ to $(z, g(\bar B(z)))$ and from $(z, g(\bar B(z)))$ to $(1, g(\by))$, where
	$$
	g(w_1, w_2, w_3, w_4) =(-w_3 + \de, - w_2 + \de, -w_1 + \de, w_4 - [B_1(z) + B_1'(z)]).
	$$
	Therefore
	\begin{align}
	\label{E:S*-bound}
	\P(S^*) = \int_\Delta \P_{0, z}(g(\bx), g(\ba); \NI) \P_{z, 1}(g(\ba), g(\by); \NI) \rho(\ba) d \ba,
	\end{align}
	where $\rho$ is the (Gaussian) density of $\bar B(z)$, and $\Delta$ is the set of all $\ba \in \R^4$ with $0 < a_1 - a_3 < \de$ and $a_1 > a_2 > a_3$. From here the proof goes through as in the proof of Lemma \ref{L:no-X-Brownian}.
\end{proof}

\begin{proof}[Proof of Lemma \ref{L:no-sceptres}]
	First note that if such geodesics $\pi_1, \pi_2, \pi_3$ exist, then by possibly restricting $\pi_1, \pi_2, \pi_3$ to an interval $[s', t]$ with $s' \in \Q \cap (s, r)$ we may assume that $s$ is rational. Let $\cX_{m, s}$ denote the event where three geodesics $\pi_1, \pi_2, \pi_3$ as in the statement of the lemma exist, with
	\begin{equation}
	\label{E:pq-conditions}
		x, y, t \in [-m, m], \qquad s \le r - 1/m < r + 1/m \le t, \qquad z := \pi_1(r) \in [-m, m].
	\end{equation}
	By countable additivity, it suffices to show that $\P(\cX_{m, s}) = 0$ for all fixed $m \in \N, s \in \Q$. Suppose that such a triple $\pi_1, \pi_2, \pi_3$  does exist. Then the following two equations hold:
	\begin{align*}
	\cL(x, s; z, r) + \cL(z, r; y, t) &= \max_{w \in [-m - 1, m + 1]} \cL(x, s; w, r) + \cL(w, r; y, t) \\
	\cL(z^3, r; y^3, t)&= 3 \cL(z, r; y, t).
	\end{align*}
	In other words, with notation as in the proof of Lemma \ref{L:no-figure-eights} we have
	\begin{equation}
	\label{E:A-z-three}
	\begin{split}
	\cL^{p, r}_1(z) + \hat \cL^{\hat q, r}_1(z) = \max_{w \in [-m - 1, m + 1]} \cL^{p, r}_1(w) + \hat \cL^{\hat q, r}_1(w), \qquad \hat \cL^{\hat q, r}_1(z) = \hat \cL^{\hat q, r}_3(z).
	\end{split}
	\end{equation}
	Now, fix $\de > 0$. By the modulus of continuity for the extended landscape (Lemma \ref{L:extended-modulus}), if \eqref{E:A-z-three} holds for some $p = (x, t), q = (y, s), r, z$ satisfying \eqref{E:pq-conditions}, then for all small enough $\ep > 0$ the following event holds:
	\begin{equation}
	\label{E:union-S}
		\bigcup_{\substack{x, y, z \in \ep^{2 + \de} \Z \cap [-m - 1/2, m + 1/2], \\
			t \in \ep^{3+\de} \Z \cap [-m - 1/2, m + 1/2], \\
			r \in \ep^{3+\de} \Z \cap (s + 1/(2m), t - 1/(2m))}} S_{-m - 1, m + 1}(\hat \cL^{(y, t), r}, \cL^{(x, s), r}, z, \ep).
	\end{equation}
	By Theorem \ref{T:resampling-candidate}, for all choices of $x, y, z, r, t$ in the union in \eqref{E:union-S} there are random vectors $\bw =\bw(x, y, r, t), \bw' = \bw'(x, y, r, t)$ and a constant $c_m > 0$ such that letting $B, B'$ be independent Brownian bridges from $(-m-1, \bw)$ to $(m + 1, \bw')$ we have
	\begin{align*}
	\P(S_{-m - 1, m + 1}(\hat \cL^{(y, t), r}, \cL^{(x, s), r}, z, \ep)) &= \P(S_{-m - 1, m + 1}(\hat \cL^{(y, t), r}, \cL^{(x, s), r}, z, \ep), \hat \cL^{(y, t), r} \in \NI) \\
	&\le c_m \E [\P(S_{-m-1, m+1}(B, B'; z, \ep), B \in \NI \mid \bw, \bw')] \\
	&\le c_m \ep^{14} \E e^{c_m \|(\bw, \bw')\|_\infty}.
	\end{align*}
		Here the fact that the constant in the first inequality only depends on $m$ uses that $x, y, z, r, t \in [-m - 1/2, m + 1/2], |r-t|, |r - s| \ge 1/(2m)$. The second inequality is Lemma \ref{L:no-sceptres-Brownian}. Finally, $\E e^{c_m \|(\bw, \bw')\|_\infty} \le c_m'$ by the tail bounds in Theorem \ref{T:resampling-candidate}.3, so the probability of \eqref{E:union-S} is $O(\ep^{2 - 5 \de})$ as $\ep \to 0$ and hence $\P(\cX_{m, s}) = 0$.
\end{proof}

\section{Networks and their dimensions}
\label{S:networks-dimensions}
In this section, we prove Theorem \ref{T:geodesic-networks} and parts $1$ and $2$ of Theorem \ref{T:Hausdorff-dimension}. The major part of this section is devoted to carefully understanding how to cut weighted geodesic stars away from geodesic networks, and how to paste weighted geodesic stars together via an intermediate forest. See Section \ref{S:geometric-ideas} for a rough outline. This section contains three preliminary parts before we proceed to the main proofs in Section \ref{S:cut-and-paste}: a part on graph theory basics, a short section on constructing forests in $\cL$, and a section containing two technical lemmas about the Airy sheet.

\subsection{Graph theory}
\label{S:graph-theory}
\textbf{Directed embeddings of directed graphs.} \qquad Suppose that $G = (V, E)$ is a finite, loop-free planar directed graph with no directed cycles and no vertices of both in-degree $1$ and out-degree $1$. We call $G$ a \textbf{candidate graph}. 
A map $\Phi: V \cup E \to \{\text{subsets of } \R^2\}$ is a \textbf{directed embedding} of $G$ if it satisfies the following conditions:
\begin{itemize}[nosep]
	\item $\Phi(v)$ is a point for all $v \in V$.
	\item For every edge $e = (u, v) \in E$, there is a continuous function $\ga_e:[s, t] \to \R$ such that $\Phi(e) = \fg \ga|_{(s, t)}$, and $\Phi(u) = \bar \ga(s), \Phi(v) = \bar \ga(t)$.
	\item $\Phi(a) \cap \Phi(b) = \emptyset$ for all $a \ne b \in V \cup E$.
\end{itemize}
We call $\Phi(G) := \bigcup_{a \in E \cup V} \Phi(a)$ the image of $G$ under $\Phi$. Now suppose that $\Ga \sset \R^2$ is the image of some candidate graph $G$; we claim that $G$ is unique up isomorphism. Indeed, the vertex set $V$ is given by all points $v \in \Ga$ such that there is no open set $O \sset \R^2$ containing $v$ for which the intersection $\Ga \cap O$ is given by $\{(\pi(s), s) : s \in (a, b)\}$ for some $a < b$ and a continuous function $\pi$. Edges correspond to the connected components of $\Ga \smin V$ with the direction inherited from the time coordinate, yielding a graph $G = (V, E)$. For all $(p; q) \in \Rd$, the network graph $G(p; q)$ can be constructed from the geodesic network $\Ga(p; q)$ in this way. We will also use this map to associate other subsets of $\R^2$ to directed graphs.

\textbf{Ordered forests and interior forests.} \qquad We say that a candidate graph $G = (V, E)$ is a $(k, \ell)$-\textbf{ordered forest} if:
\begin{itemize}[nosep]
	\item $G$ is a forest (as an undirected graph).
	\item $V$ has $k$ source vertices $p_1, \dots, p_k$, all of which have degree $1$.
	\item $V$ has $\ell$ sink vertices $q_1, \dots, q_\ell$, all of which have degree $1$.
	\item $G$ has a directed embedding into $\R \X [0, 1]$ with $p_i \mapsto (i, 0), q_j \mapsto (j, 1)$ for all $i \in \II{1, k}, j \in \II{1, \ell}$.
\end{itemize}
Setting some notation, let $\del V^- = \{p_1, \dots, p_k\}, \del V^+ = \{q_1, \dots, q_\ell\}, \del V = \del V^- \cup \del V^+$, and $V^\circ = V \smin \del V$. We call a $(i, j) \in \II{1, k} \X \II{1, \ell}$ an \textbf{admissible pair} if there is a path from $p_i$ to $q_j$ in $G$, and write $\operatorname{Adm}(G) \sset \II{1, k} \X \II{1, \ell}$ for the set of admissible pairs. 

If $\Ga(p; q)$ is a geodesic network from $p$ to $q$ with network graph $G$, then by virtue of rule $3$ in Theorem \ref{T:geodesic-networks}, if we remove a small ball around $p$ and $q$ we are left with an ordered forest, which we call an interior forest for $G$.  More precisely, for a graph $G = (V, E) \in \sG$ with source and sink $p, q$, we say that a graph $G'$ is an \textbf{interior forest} for $G$ if there is a directed embedding $\Phi$ of $G$ into $\R^2$ such that:
\begin{itemize}[nosep]
	\item $\Phi(p) \in \R \X (-\infty, 0), \Phi(q) \in \R\X(1, \infty)$, and $\Phi(v) \in \R \X (0, 1)$ for all $v \in V \smin \{p, q\}$.
	\item $\Phi(G) \cap (\R\X[0, 1])$ is a directed embedding of $G'$. 
\end{itemize}
While the graph structure of $G'$ is unique up isomorphism, the ordering on its source and sink vertices may not be.

\textbf{Vertex restrictions for $G \in \sG$.} \qquad Next, we prove a simple lemma restricting the size of the vertex set for a graph $G \in \sG$.

\begin{lemma}
	\label{L:graph-theory}
	Let $G = (V, E) \in \sG$ with source degree $\deg(p) = k \in \II{1, 3}$ and sink degree $\deg(q) = \ell \in \II{1, 3}$. Then 
	$
	2 + |k - \ell| \le |V| \le k + \ell
	$
	and
	\begin{equation}
	\label{E:dim-form}
	d(G) := 12 - \frac{|V| + k^2 + \ell^2}{2} = 11 - |F| - {k \choose 2} - {\ell \choose 2}  \ge 0.
	\end{equation}
	Here $|F|$ denotes the number of faces in any planar embedding of $G$.
\end{lemma}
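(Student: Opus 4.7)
The plan is to combine Euler's formula with a handshake analysis of the vertex degrees, exploiting the forest structure guaranteed by axiom $3$ of Theorem \ref{T:geodesic-networks}. Set $n = |V|$. The handshake lemma gives $2|E| = k + \ell + 3(n-2)$ since every vertex in $V^\circ := V \smin \{p, q\}$ has degree $3$. As a preliminary I would verify that $G$ is connected: a component disjoint from $\{p, q\}$ would be a tree all of whose vertices have degree $3$ in $G[V^\circ]$, contradicting the existence of leaves, while a component containing $p$ but not $q$ would force an interior vertex with no outgoing edges (by a short degree-count showing such a component has at most $k - 1 \le 2$ vertices), violating the uniqueness of the sink. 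Euler's formula $n - |E| + |F| = 2$ then yields $|F| = (n + k + \ell - 2)/2$, and substituting this into the asserted right-hand side of \eqref{E:dim-form} recovers the left-hand side by routine algebra.

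For the upper bound $|V| \le k + \ell$, let $e_{pq}$ be the number of edges from $p$ directly to $q$; the number of edges joining $V^\circ$ to $\{p, q\}$ is then $k + \ell - 2 e_{pq}$, so the sum of degrees of interior vertices within $G[V^\circ]$ equals $3(n-2) - (k + \ell - 2 e_{pq})$. Since axiom $3$ forces $G[V^\circ]$ to be a forest on $n - 2$ vertices, this sum is at most $2(n-3)$ when $n \ge 3$ (and the case $n = 2$ is trivial), giving $n \le k + \ell - 2 e_{pq} \le k + \ell$. For the lower bound $|V| \ge 2 + |k - \ell|$, I would classify each interior vertex $v$ as either a splitter (in-degree $1$, out-degree $2$) or a merger (in-degree $2$, out-degree $1$): these exhaust the possibilities, since $\deg(v) = 3$ and both in- and out-degree are at least $1$ (otherwise $v$ would be a second source or sink). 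Writing $n_+$ and $n_-$ for the respective counts, equating total out-edges with total in-edges gives $k + 2 n_+ + n_- = \ell + n_+ + 2 n_-$, i.e.\ $n_- - n_+ = k - \ell$, hence $|k - \ell| \le n_+ + n_- = n - 2$.

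Finally $d(G) \ge 0$ is immediate from the upper bound: since $k, \ell \in \{1, 2, 3\}$ we have $n + k^2 + \ell^2 \le k + k^2 + \ell + \ell^2 \le 24$, with equality iff $n = k + \ell = 6$. The main obstacle is the upper bound on $|V|$, which requires carefully separating edges internal to the forest $G[V^\circ]$ from edges linking $V^\circ$ to the two boundary vertices $p$ and $q$; once this is in hand, the rest of the argument is bookkeeping.
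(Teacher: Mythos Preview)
Your argument is correct. The handshake--Euler computation for the equality in \eqref{E:dim-form} and the forest bound for $|V| \le k + \ell$ match the paper's proof essentially line for line; you are if anything more careful, since you track the multiplicity $e_{pq}$ of direct $(p,q)$ edges and explicitly verify connectedness (which the paper uses implicitly when invoking Euler's formula).

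The one genuine divergence is in the lower bound $|V| \ge 2 + |k-\ell|$. The paper argues topologically: in a directed planar embedding the $k$ edges leaving $p$ separate $k$ distinct faces (since any two directed paths from $p$ must eventually merge), whence $|F| \ge k$, and then the relation $2|F| = k + \ell + |V| - 2$ gives $|V| \ge 2 + k - \ell$. Your splitter/merger count is a purely combinatorial alternative that does not use planarity at all, only the DAG structure and the degree constraint on interior vertices. Your route is arguably cleaner and more self-contained for this lemma; the paper's route has the mild advantage of making the role of the face count $|F|$ (which reappears in the dimension heuristic \eqref{E:123-Star-k}) more visible.
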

\begin{proof}
	First, since all vertices in $V \smin \{p, q\}$ have degree $3$ we have $2|E| = k + \ell + 3(|V| - 2)$. Therefore by Euler's formula $|F| + |V| = |E| + 2$ for planar graphs, we have
	\begin{equation}
	\label{E:Eulers}
	2|F| = k + \ell + |V| - 2.
	\end{equation}
	This implies the equality in \eqref{E:dim-form}. Next, if we remove $p$ and $q$ from $G$ and the $k + \ell$ edges incident to $p$ and $q$, then we are left with a forest. Therefore
	$
	|E| - k - \ell + 3 \le |V|,
	$
	which by the previous formula for $|E|$ implies that $|V| \le k + \ell$ and also gives the inequality in \eqref{E:dim-form}.
	
	Finally, in a directed embedding of $G$ in $\R^2$, if $e_1, \dots, e_k$ are the edges exiting from $p$, listed from left to right, then each of the pairs $(e_k, e_1), (e_1, e_2), \dots, (e_{k-1}, e_k)$ must bound a different face since any two directed paths exiting $p$ eventually meet. Therefore $|F| \ge k$, and so by \eqref{E:Eulers} we have $2 + k - \ell \le V$. By symmetry, $2 + \ell - k \le V$ as well.
\end{proof}

\textbf{Weighted forests and slide-invariant affine sets.} \qquad We finish this section with a lemma that will characterize the requirements on the weight of a pair of geodesic stars connected through a given interior forest.

\begin{lemma}
	\label{L:face-to-linear-relations}
	Let $k, \ell \in \N$, and let $G = (V, E)$ be a $(k, \ell)$-ordered forest. Let $Z:E \to \R$ and for $(i, j) \in \operatorname{Adm}(G)$, let $d_Z(p_i, q_j)$ be the $Z$-weight of the unique path in $G$ from $p_i$ to $q_j$. Now let $W(Z)$ be the set of all $(\bx, \by) \in \R^k\X \R^\ell$ such that
	\begin{equation}
	\label{E:xi-dZ}
	x_i + d_Z(p_i, q_j) + y_j = x_{i'} + d_Z(p_{i'}, q_{j'}) + y_{j'} 
	\end{equation}
	for all $(i, j), (i', j')\in \operatorname{Adm}(G)$. Then:
	\begin{itemize}[nosep]
		\item $W(Z)$ is a slide-invariant affine set.
		\item Let $e^i_p, e^j_q$ be the unique edges exiting $p_i$ and entering $q_j$ in $G$, and let $E^* = \{e^1_p, \dots, e^{k}_p, e^1_q, \dots, e^\ell_q\}$. Let $F:E(G) \to \R$ be a function which is $0$ outside of $E^*$. Then
		$$
		W(Z + F) = W(Z) - (F(e^1_p), \dots, F(e^{k}_p), F(e^1_q), \dots, F(e^{\ell}_q)).
		$$
		\item $W(Z)$ has linear dimension $c(G) + 1$, where $c(G)$ is the number of components in $G$.
	\end{itemize} 
\end{lemma}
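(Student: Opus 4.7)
For parts (1) and (2), I plan to unwind the definitions directly. For (1), translating $(\bx, \by)$ by $(a^k, b^\ell)$ shifts every expression $x_i + d_Z(p_i, q_j) + y_j$ by the same constant $a+b$, so the defining equalities are preserved; being cut out by linear equations, $W(Z)$ is also affine. For (2), the unique directed path in $G$ from $p_i$ to $q_j$ begins with $e^i_p$ and ends with $e^j_q$, and uses no other edge of $E^*$ (the remaining edges of $E^*$ are incident to degree-one vertices $p_{i'}, q_{j'}$ with $i' \ne i, j' \ne j$, which cannot appear on this path). Hence $d_{Z+F}(p_i, q_j) = d_Z(p_i, q_j) + F(e^i_p) + F(e^j_q)$, and substituting this into \eqref{E:xi-dZ} produces the claimed translate of $W(Z)$.

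For part (3), my plan is to reformulate $W(Z)$ via graph cohomology. I augment $G$ to a directed graph $G^+$ by adjoining two auxiliary vertices $P^\ast$ and $Q^\ast$ together with directed edges $P^\ast \to p_i$ (weighted $x_i$) and $q_j \to Q^\ast$ (weighted $y_j$), retaining the $Z$-weights on $E(G)$. Each admissible pair $(i, j)$ then corresponds to a directed $P^\ast$-to-$Q^\ast$ path of total weight $x_i + d_Z(p_i, q_j) + y_j$, so $(\bx, \by) \in W(Z)$ exactly when all such paths share a common weight. The key claim is that this condition is equivalent to the existence of a potential $\phi: V(G^+) \to \R$ satisfying $\phi(\mathrm{head}(e)) - \phi(\mathrm{tail}(e)) = \mathrm{weight}(e)$ for every directed edge $e$ of $G^+$: the $\Leftarrow$ direction is immediate by telescoping, while for $\Rightarrow$ I would define $\phi(v)$ as the weight of any directed path from $P^\ast$ to $v$, verifying well-definedness by extending two competing such paths forward along a common directed path from $v$ to $Q^\ast$, thereby reducing to the equality of two $P^\ast$-to-$Q^\ast$ path weights.

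Given this equivalence, the dimension count is straightforward. Since $G$ is a forest with $c(G)$ components, the $Z$-weight constraints determine $\phi|_{V(G)}$ up to one additive constant per tree component (propagating along the unique tree path from a chosen root), giving $c(G)$ free parameters; $\phi(P^\ast)$ and $\phi(Q^\ast)$ supply two more; and quotienting by the global additive constant---under which $x_i = \phi(p_i) - \phi(P^\ast)$ and $y_j = \phi(Q^\ast) - \phi(q_j)$ are invariant---yields $\dim W(Z) = c(G) + 2 - 1 = c(G) + 1$. The main care-point I anticipate is justifying that every vertex of $V(G^+) \setminus \{P^\ast, Q^\ast\}$ reaches $Q^\ast$ via a directed path (so that the extension argument for well-definedness applies); this holds because $G^+$ is a finite DAG with $Q^\ast$ its unique out-degree-zero vertex, but warrants explicit attention.
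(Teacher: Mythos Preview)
Your argument is correct, and parts (1) and (2) match the paper essentially verbatim. For part (3), however, your approach is genuinely different from the paper's and in some ways cleaner.

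The paper argues component-by-component: for a single tree component $J$ with $m$ sources and $n$ sinks, it shows the corresponding constraint set has dimension exactly $2$. The upper bound comes from exhibiting $m+n-2$ independent equations via a bipartite ``admissibility graph.'' The lower bound is where the paper works hardest: it augments $J$ to a planar graph $J'$ by adding $p_*, q_*$ (exactly as you do), identifies the bounded faces of a specific planar embedding, observes that the face-cycle equations involve only consecutive pairs $(x_{i-1}, x_i)$ or $(y_{j-1}, y_j)$, and then solves recursively from a free choice of $(x_1, y_1)$. Finally the paper assembles the components, checking that the $c(G)-1$ cross-component constraints drop the dimension from $2c(G)$ to $c(G)+1$.

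Your cohomological reformulation bypasses all of this. By recasting membership in $W(Z)$ as the existence of a potential $\phi$ on $V(G^+)$, you reduce the dimension count to linear algebra: the affine space of potentials satisfying the $Z$-constraints on $E(G)$ has dimension $|V(G^+)| - |E(G)| = c(G) + 2$ (since $G$ is a forest), and the map $\phi \mapsto (\bx, \by)$ has one-dimensional fibres (the global additive constant). Notably, your argument never uses planarity, whereas the paper leans on it for the lower bound. This is a real simplification, since planarity is part of the ambient setup but is not actually needed for this lemma. The trade-off is that the paper's face-by-face construction is more explicit, which may be conceptually helpful later when relating the constraints to bounded faces in the cut-and-paste heuristic, but for the lemma as stated your route is shorter and more transparent.

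One point worth stating slightly more carefully when you write it up: to conclude that the kernel of the map $\phi \mapsto (\bx,\by)$ is exactly the constants, you implicitly use that every component of $G$ contains at least one source $p_i$ (and at least one sink $q_j$). This is immediate since each component is a nonempty finite DAG, but it is the step that pins all component-wise constants to the single value $\phi(P^*)$.
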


\begin{proof}
	The first bullet is immediate from definition. For the second bullet, simply observe that 
	$$
	d_{Z+F}(p_i, q_j) = F(e^i_p) + d_Z(p_i, q_j) + F(e_q^j).
	$$
	We come to the final bullet point. For a component $J$ of $G$, let 
	$$
	I_p(J) = \{i \in \II{1, k}: p_i \in J\}, \qquad \qquad I_q(J) = \{j \in \II{1, \ell} : q_j \in J\}.
	$$ 
	\begin{claim}
For any component $J$ of $H(G)$, the set of all $(\bx, \by) \in \R^{I_p(J)} \X \R^{I_q(J)}$ such that \eqref{E:xi-dZ} holds for all admissible pairs $(i, j), (i', j') \in I_p(J) \X I_q(J)$ has linear dimension $2$.
	\end{claim}

\begin{claimproof} Without loss of generality, we may assume $I_p(J) = \II{1, m}, I_q(J) = \II{1, n}$ for some $m, n \in \N$; the other cases have the same proof after relabeling.
	First, let $A$ be the undirected graph whose vertex set is $\II{1, m} \cup \{1', \dots, n'\}$ and whose edges are pairs $(i, j')$ with $(i, j) \in \operatorname{Adm}(G)$. 
	Note that $A$ must be connected, and so $A$ has at least $m + n - 1$ edges $(i_1, j_1'), \dots, (i_{m + n - 1}, j_{m + n - 1}')$. Next, the $m + n - 2$ linear equations
	$$
	x_{i_u} + d_Z(p_{i_u}, q_{j_u'}) + y_{j_u'} = x_{i_{u+1}} + d_Z(p_{i_{u+1}}, q_{j_{u+1}'}) + y_{j_{u+1}'}, \qquad u \in \II{1, m + n -2},
	$$
	are linearly independent. Therefore the set of all $(\bx, \by) \in \R^{I_p(J)} \X \R^{I_q(J)}$ satisfying \eqref{E:xi-dZ} has dimension at most $(m + n) - (m + n - 2) = 2$.

	Next, create a graph $J'$ from $J$ by adding in a vertex $p_*$ and connecting it to all the vertices $p_i, i \in \II{1, m}$ and a vertex $q_*$ and connecting all the vertices $q_i$ to it. Put weights $x_i$ on the edges $(p_*, p_i)$ and weights $y_i$ on the edges $(q_i, q_*)$.
	The graph $J'$ is still planar. Indeed, from the definition of $(k, \ell)$-ordered forests, we can find a directed embedding of $J$ into $\R \X [0, 1]$ where $p_i \mapsto (i, 0), q_j \mapsto (j, 1)$. We can easily extend this to an embedding of $J'$ into $\R \X [-1, 2]$ which maps $p_* \mapsto (0, -1), q_* \mapsto (0, 2)$. In this embedding:
	\begin{itemize}[nosep]
		\item There is a unique face containing $p_*, q_*$, and this face is unbounded.
		\item For all $i \in \II{2, m}$, there is a unique bounded face $f_i$ such that the edges $(p_*, p_{i-1}), (p_*, p_i)$ are both on the boundary of $f_i$.
		\item For all $j \in \II{2, n}$ there is a unique bounded face $f_j'$ such that the edges $(q_{j-1}, q_*), (q_j, q_*)$ are both on the boundary of $f_j'$.
	\end{itemize}  
	Note that all the $f_i, f_j'$ are distinct since otherwise the original graph $J$ would not be connected. Moreover, $f_i, i \in \II{2, m}, f_j', j \in \II{2, n}$ constitute all bounded faces of $J'$ since $J$ is a tree.
	
	Now, equation \eqref{E:xi-dZ} holds for all admissible $(i, j), (i', j')$ if the sum of weights along any undirected cycle in $J'$ equals $0$, where we add an edge weight with a negative sign if that edge is traversed backwards. Since the cycles around the bounded faces  $f_i, i \in \II{2, m}, f_j', j \in \II{2, n}$  span the entire cycle space of $J'$, it suffices to check that the sum of weights around these cycles is $0$. Now, for any $i\in \II{2, m}$ the sum of edges around the face $f_i$ only involves the variables $x_{i-1}, x_i$ from the set $\{x_1, \dots, x_m, y_1, \dots, y_n\}$, and for any $j \in \II{2, n}$, the sum of edges around the face $f_j'$ only involves the variables $y_{j-1}, y_j$. Therefore given any choice of $(x_1, y_1) \in \R^2$ we can recursively define $x_i, y_j$ for $i \ge 2, j \ge 2$ to get a solution to the set of equations \eqref{E:xi-dZ} that holds for all admissible $(i, j), (i', j')$. Hence this solution set has dimension at least $2$.
\end{claimproof}

	Now let $J^1, \dots, J^{c(G)}$ be the components of $G$. By the above claim, the set $W^*(Z)$ of all $(\bx, \by)$ that satisfy \eqref{E:xi-dZ} whenever $(i, j), (i', j')$ are admissible and in the same block $I_p(J_a) \X I_q(J_a)$ has dimension 
	$
	2c(G).
	$
	Moreover, $W^*(Z)$ is invariant under the map
	\begin{equation}
	\label{E:xy}
	(\bx, \by) \mapsto (\bx + \bz, \by + \bw)
	\end{equation}
	for any vectors $\bz \in \R^k, \bw \in \R^\ell$ where the  indices of $\bz$ are constant on each of the blocks $I_p(J_a)$ and the indices of $\bw$ are constant on each of the blocks $I_q(J_a)$.
	In order for $(\bx, \by)$ to satisfy \eqref{E:xi-dZ} for all admissible $(i, j), (i', j')$, making an arbitrary choice of an admissible $(i_a, j_a)$ in $I_p(J_a) \X I_q(J_a)$ for all $a \in \II{1, c(G) - 1}$, it is enough to impose the following additional $c(G)-1$ constraints: 
	\begin{equation*}
	x_{i_{a-1}}+ d_Z(p_{i_{a-1}}, q_{j_{a-1}}) +y_{j_{a-1}}= x_{i_{a}}+ d_Z(p_{i_{a}}, q_{j_{a}}) +y_{i_{a}}, \quad a \in \II{2, c(G)}.
	\end{equation*}
	By using the invariance \eqref{E:xy}, we can see that this reduces the dimension of the solution set by exactly $c(G) -1$, yielding the desired result.
\end{proof}

\begin{corollary}
\label{C:dimension-interior-forests}
In the context of Lemma \ref{L:face-to-linear-relations}, suppose that $G = (V, E)$ is an interior forest for some $G' = (V', E') \in \sG$. Then $c(G) = (k + \ell - |V'|)/2+ 1$, so in the final bullet we have $\dim W(Z) = (k + \ell - |V'|)/2 + 2$.
\end{corollary}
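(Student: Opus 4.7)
The plan is to reduce the entire statement to an elementary count via the third bullet of Lemma \ref{L:face-to-linear-relations}: since that bullet gives $\dim W(Z) = c(G) + 1$, the claimed formula $\dim W(Z) = (k+\ell-|V'|)/2 + 2$ is equivalent to the identity $c(G) = (k+\ell-|V'|)/2 + 1$. So everything comes down to computing $c(G)$ in terms of $k, \ell,$ and $|V'|$.

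To count the vertices of $G$, recall that in the directed embedding $\Phi$ used to define an interior forest, $\Phi(p) \in \R \X (-\infty, 0), \Phi(q) \in \R \X (1, \infty)$, and all other vertices of $G'$ lie in $\R \X (0, 1)$; also each edge of $G'$ is the graph of a time-monotone continuous function, so it meets each horizontal line at most once. Consequently, cutting at $y = 0$ and $y = 1$ adds exactly $k$ source vertices (one per edge of $G'$ exiting $p$) and exactly $\ell$ sink vertices (one per edge entering $q$) to the interior vertex set $V' \smin \{p, q\}$, giving $|V(G)| = |V'| - 2 + k + \ell$. Since each edge of $G'$ becomes precisely one edge of $G$ (possibly truncated at the boundary, but never split), we have $|E(G)| = |E(G')|$. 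By the handshake lemma applied to $G'$, together with the fact from Theorem \ref{T:geodesic-networks}.4 that all interior vertices of $G'$ have degree $3$, we get $2|E(G')| = k + \ell + 3(|V'| - 2)$.

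Finally, $G$ is a forest by construction, so $c(G) = |V(G)| - |E(G)|$; substituting the two counts above and simplifying yields $c(G) = (k+\ell-|V'|)/2 + 1$, from which the conclusion $\dim W(Z) = (k+\ell-|V'|)/2 + 2$ follows by the third bullet of Lemma \ref{L:face-to-linear-relations}. There is no real obstacle here: the argument is pure bookkeeping once one observes that the time-monotone nature of embedded edges ensures the cutting operation at the horizontal boundary lines is as clean as possible — neither creating extra edges nor destroying them, and only contributing $k + \ell$ new degree-one boundary vertices.
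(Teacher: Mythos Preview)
Your proof is correct and follows essentially the same approach as the paper: both arguments use $c(G) = |V(G)| - |E(G)|$ for a forest together with the vertex count $|V(G)| = |V'| - 2 + k + \ell$, and then a handshake computation to determine $|E(G)|$. The only cosmetic difference is that you obtain $|E(G)|$ via the identity $|E(G)| = |E(G')|$ combined with the handshake lemma on $G'$, whereas the paper applies the handshake lemma directly to $G$ (using that the $k+\ell$ boundary vertices have degree $1$ and the remaining vertices have degree $3$); these give the same number and the overall structure of the argument is identical.
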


\begin{proof}
We have $|V'| = |V| - k - \ell + 2$ by the construction of $G$ from $G'$. Also, $c(G) = |V| - |E|$ since $G$ is a forest. Finally, since all vertices in $V^\circ$ have degree $3$, and the $k$ source and $\ell$ sink vertices have degree $1$, we have $2|E| = 3 |V| - 2(k + \ell)$. Combining these three formulas gives the result. 
\end{proof}

\subsection{Planting forests}
\label{S:planting}

Consider $\bx \in \R^k_<, \by \in \R^\ell_<$ and a set $S \sset \II{1, k} \X \II{1, \ell}$. Let $G^{0, 1}(\bx, \by, S)$ be the unique candidate graph that can be direct-embedded in $\R^2$ as the set
$$
\Ga^{0, 1}(\bx, \by, S) := \bigcup_{(i, j) \in S} \Ga(x_i, 0; y_j, 1).
$$
(Here recall that $\Ga(x_i, 0; y_i, 1)$ is the geodesic network from $(x_i, 0)$ to $(y_i, 1)$). We also write $G^{s, t}(\bx, \by, S), \Ga^{s, t}(\bx, \by, S)$ if the start and end times are $s, t$.
The goal of this short section is to show that we can realize certain ordered forests as networks of the form $G^{0, 1}(\bx, \by, S)$. This is a necessary step for proving existence of geodesic networks and lower bounds in Theorem \ref{T:Hausdorff-dimension}.

\begin{lemma}
	\label{L:existence-of-forests}
	Let $G = (V, E)$ be a $(k, \ell)$-ordered forest such that all vertices in $V^\circ$ have degree $3$ and let $\al > 0$. Then there exist (random) vectors $\bx \in \Q^k_<, \by \in \Q^\ell_<$ and $\ba \in \R^k, \bb \in \R^\ell$ such that:
	\begin{enumerate}[nosep, label=\arabic*.]
		\item (Forest Condition) $
		G^{0, 1}(\bx, \by, \operatorname{Adm}(G)) = G
		$.
		
		\item (Weight Condition) For all $(i, j) \in \operatorname{Adm}(G)$ we have
		$$
		a_i + \cL(x_i, 0; y_j, 1) + b_j \in (0, 1),
		$$
		and for all $(i, j) \in \operatorname{Adm}(G)^c := (\II{1, k} \X \II{1, \ell}) \smin \operatorname{Adm}(G)$ we have
		$$
		a_i + \cL(x_i, 0; y_j, 1) + b_j < - \al.
		$$
	\end{enumerate}
\end{lemma}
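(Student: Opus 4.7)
The plan is to prove the lemma in three main steps: first, realize each connected tree component of $G$ individually with rational endpoints; second, combine the realizations by translating the trees far apart spatially; third, choose the weights $\ba, \bb$. The hard step will be the first.

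For the single-tree realization, let $T$ be a connected component of $G$ with $a$ sources and $b$ sinks; since every interior vertex of $T$ has degree $3$, $T$ has exactly $a + b - 2$ interior vertices. I would argue by induction on $a + b$. The base case $a + b = 2$ is immediate: any rational $(x_1, y_1)$ realizes the single-edge tree. For the inductive step, a degree count shows that $T$ must contain an interior vertex $v$ with at least two leaf neighbors (otherwise the interior-to-leaf incidence count falls short of $a + b$). Remove these two leaves along with $v$, smoothing out any resulting degree-$2$ vertex, to obtain a strictly smaller valid ordered tree $T'$; the inductive hypothesis provides rational $(\bx', \by')$ realizing $T'$. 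Then re-introduce $v$ by a small rational perturbation: when, say, the two leaves are adjacent sources corresponding to a single source at $(x', 0)$ in $T'$, replace $(x', 0)$ by $(x' \pm \epsilon, 0)$ for a small rational $\epsilon > 0$. By the no-bubbles property (Lemma \ref{L:rightmost-geods}.3), the two new geodesics to any common downstream sink coalesce at a single time $t_\epsilon \in (0, 1)$ with $t_\epsilon \to 0$ as $\epsilon \to 0$, producing exactly one new interior vertex near time $0$; overlap continuity (Proposition \ref{P:overlap-metric}) guarantees that for small enough rational $\epsilon$ the combinatorial structure of $T'$ above this split is preserved, yielding a realization of $T$. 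The symmetric case of two sink leaves follows by the time-reversal symmetry of $\cL$, and mixed leaf configurations (one source and one sink at $v$, or three leaves at $v$ in the tiny base trees) require only minor variations of this perturbation.

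For combining trees, I would use spatial stationarity (Lemma \ref{L:invariance}.2) and ergodicity of spatial shifts (Remark \ref{R:ergodicity}): the event that each tree $T_c$ is realized by rationals in a fixed window $[0, M_c]$ has positive probability, so almost surely there is a positive density of shifts $C \in \R$ realizing $T_c$ in $[C, C + M_c]$. Picking shifts $C_1 \ll C_2 \ll \cdots$ from these density-$1$ sets places the realizations in pairwise disjoint spatial intervals without altering the local combinatorics of each piece. By the landscape shape theorem (Theorem \ref{T:landscape-shape}), for pairs $(i, j)$ in distinct far-apart components, $\cL(x_i, 0; y_j, 1) \approx -(x_i - y_j)^2$ is arbitrarily negative. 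For the weights, Lemma \ref{L:face-to-linear-relations} guarantees that the slide-invariant affine set $W(Z_c)$ associated to each tree $T_c$ has positive dimension, so we may solve $a_i + \cL(x_i, 0; y_j, 1) + b_j = 1/2$ for all $(i, j)$ admissible within $T_c$; the non-admissible cross-component condition $a_i + \cL(x_i, 0; y_j, 1) + b_j < -\alpha$ is then automatic provided the spatial separations are large enough.

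The main obstacle will be making the perturbation argument in the single-tree step rigorous: I must verify that splitting a rational endpoint of a realized tree into two nearby rational endpoints creates exactly one additional coalescence event at the right place and does not disturb the combinatorial structure elsewhere. The needed stability will follow from uniqueness of rational geodesics (Lemma \ref{L:regularity}.1), absence of geodesic bubbles (Lemma \ref{L:rightmost-geods}.3), and overlap continuity (Proposition \ref{P:overlap-metric}), but carefully checking it against the various combinatorial arrangements of $v$'s leaf neighbors (pure source split, pure sink split, source-sink mixed, and the triple-leaf base case) is where most of the bookkeeping lies.
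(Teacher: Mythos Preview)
Your overall strategy---realize each tree by induction, then combine trees by spatial translation using ergodicity and the shape theorem---matches the paper's. The genuine gap is in the Weight Condition.

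You propose to choose $\ba,\bb$ at the very end by solving $a_i+\cL(x_i,0;y_j,1)+b_j=1/2$ on admissible pairs, and you argue that the non-admissible inequality follows from large spatial separation between components. But non-admissible pairs can occur \emph{within} a single tree. Take the $(2,2)$-ordered tree with edges $(p_1,v_1),(v_1,q_1),(v_1,v_2),(p_2,v_2),(v_2,q_2)$: here $(2,1)$ is non-admissible. Solving the three admissible equations to $1/2$ forces
\[
a_2+\cL(x_2,0;y_1,1)+b_1=\tfrac12+\bigl[\cL(x_2;y_1)+\cL(x_1;y_2)-\cL(x_1;y_1)-\cL(x_2;y_2)\bigr],
\]
and the bracket is $\le 0$ by the quadrangle inequality but is close to $0$ whenever the relevant geodesics nearly share a common point---which is exactly what your small-$\epsilon$ perturbations produce. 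So the non-admissible value sits near $1/2$, not below $-\alpha$.

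This is why the paper builds the weights $\ba,\bb$ into the tree induction rather than solving for them afterwards, and why its mixed-leaf case (the paper's Case~1b) does \emph{not} use a small perturbation: it sends the new sink $y$ to $\pm\infty$ and uses the shape theorem to make $\cL(x_j,0;y,1)-\cL(x_k,0;y,1)\to -\infty$ for $j\ne k$, forcing the new non-admissible values far below $-\alpha$. Your claim that the mixed case ``require[s] only minor variations of this perturbation'' is the step that fails; you need a genuinely different move there, and you need to carry the weight bookkeeping through the induction.
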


\begin{figure}
	\centering
	\includegraphics[scale=1.0]{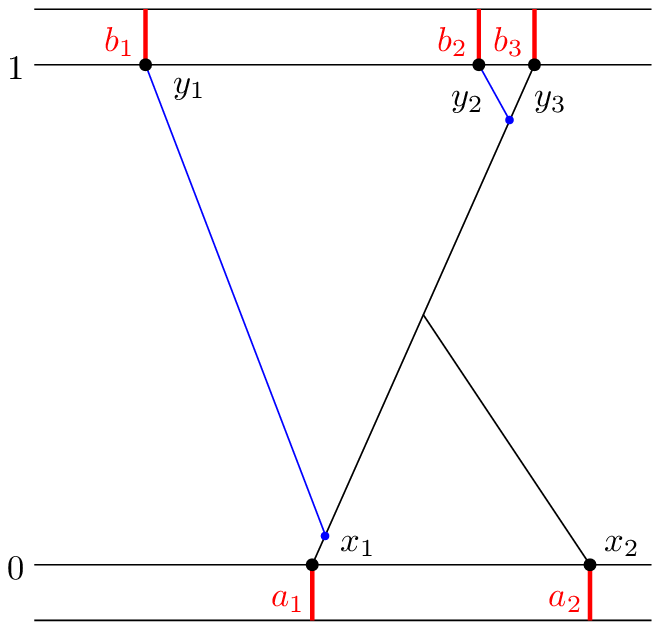}
	\caption{The construction in Lemma \ref{L:existence-of-forests}. The idea of the lemma is that we can find vectors $\bx, \by$ such that if we add in only geodesics from $(x_i, 0)$ to $(y_j, 1)$ for $(i, j) \in \operatorname{Adm}(G)$, then the resulting forest graph is given by $G$. The weight condition implies that if we pad the landscape on the strips $(-\ep, 0)$ and $(1, 1 + \ep)$ with paths of weights $\ba, \bb$ then the paths between $(x_i, -\ep)$ and $(y_j, 1 + \ep)$ will be of comparable weight when $(i, j) \in \operatorname{Adm}(G)$, but highly inferior $(i, j) \notin \operatorname{Adm}(G)$. The forthcoming Lemma \ref{L:sheet-shenanigans} will allows us to create this padding.}
	\label{fig:lemma64}
\end{figure}

See Figure \ref{fig:lemma64} for an illustration of the lemma.
\begin{proof}
	Throughout the proof, when $x, y \in \Q$ we write $\pi^{x, y}$ for the unique geodesic from $(x, 0)$ to $(y, 1)$. We also assume that none of the points $(x, 0), (y, 1)$ with $x, y \in \Q$ are $2$-star points; this is an almost sure event since $\operatorname{Star}_2$ is a lower-dimensional set (Theorem \ref{T:star-computation}). Finally, for $\bx \in \Q^k_<, \by \in \Q^\ell_<$ and $S \sset \II{1, k} \X \II{1, \ell}$ we write $S_{\bx, \by} = \{(x_i, y_j) : (i, j) \in S\}$. Condition $2$ in the lemma can be equivalently phrased in terms of $\operatorname{Adm}(G)_{\bx, \by}$ and $\operatorname{Adm}(G)^c_{\bx, \by}$.

	\textbf{Case 1: $G$ is a tree.} \qquad We will prove the result in this case by induction on $k + \ell$. The base case when $k=\ell = 1$ holds almost surely with $x_1 = 0, y_1 = 0, a_1 = - \cL(0,0;0,1)$ and $b_1 = 1/2$.
	Now fix $m \ge 3$, suppose the result holds when $k + \ell = m - 1$, and consider a $(k, \ell)$-ordered tree $G$ where $k + \ell = m$.
	
	Since $|V| = |E| + 1 = (\sum_{v \in V} \deg(v))/2 + 1$ for a tree, source and sink vertices in $G$ have degree $1$, and all interior vertices have degree $3$, we must have that $|V^\circ|= m - 2 = k + \ell - 2$. Note that $m-2 \ge 1$. Since all source and sink vertices connect to an interior vertex, the pigeonhole principle implies that at there is a vertex $v \in V^\circ$ that connects to two vertices $v_1, v_2$ in $\del V$. Without loss of generality, we assume $v$ has two outgoing edges and one incoming edge, and that one of the outgoing edges is to $v_1$, so necessarily $v_1 \in \del V^+$. 
	We let $v_3$ be the third vertex connected to $v$ by an edge. We now define a $(k, \ell -1)$-ordered tree $G^-$ as follows. There are two cases:
	\begin{enumerate}[nosep, label=\alph*.]
		\item First suppose $v_1, v_2 \in \del V^+$.  Define $G^-$ by removing the vertices $v, v_1$ and the edges $(v, v_1), (v, v_2), (v_3, v)$, and adding in a new edge $(v_3, v_2)$.
		\item Suppose $v_1 \in \del V^+, v_2 \in \del V^-$. In this case we may assume that $v_3 \notin \del V^+$ since otherwise we are in case $1$ after switching the labels on $v_2, v_3$. Then the three edges incident to $v$ are $(v_2, v), (v, v_1), (v, v_3)$.
		Define $G^-$ by removing the vertices $v, v_1$ and the edges $(v_2, v), (v, v_1), (v, v_3)$, and adding in a new edge $(v_2, v_3)$.
	\end{enumerate}
In the example in Figure \ref{fig:lemma64}, we can create $G'$ from $G$ by removing either of the blue edges: the short edge corresponds to Case 1a and the long edge to Case 1b.
	In both cases we have $\del V^+(G^-) = \del V^+ \smin \{v_1\}$ and $\del V^-(G^-) = \del V^-$. 
	Now, in both of the above cases, by the inductive hypothesis we can find vectors $\bx \in \Q^k_<, \by \in \Q^{\ell - 1}_<, \ba \in \R^k, \bb \in \R^{\ell-1}$ such that conditions $1$ and $2$ in the lemma hold for $\bx, \by, \ba, \bb, G^-$.
	Let $\ep > 0$ be small enough so that there are no interior vertices in $\Ga^{0, 1}(\bx, \by, \operatorname{Adm}(G^-)) \cap (\R \X (\ep, 1 - \ep))$.
	
	\textbf{Case 1a:  $v_1, v_2 \in \del V^+$.} \qquad Let $y_i$ be such that $(y_i, 1)$ is identified with $v_2$ in $\Ga^{0, 1}(\bx, \by, \operatorname{Adm}(G^-))$. Let 
	\begin{align*}
\hat \bb &= (b_1, \dots, b_i, b_i, b_{i+1}, \dots, b_{\ell - 1}) \in \R^\ell, \quad \mathand \\ 
\by[y] &= (y_1, \dots, y_i, y, y_{i+1}, \dots, y_{\ell - 1}) \in \Q^\ell_<
	\end{align*}
	for $y \in (y_i, y_{i+1})$. Then 
	$$
	\operatorname{Adm}(G)_{\bx, \by[y]} = \operatorname{Adm}(G^-)_{\bx, \by} \cup \{(x_i, y) : (x_i, y_i) \in \operatorname{Adm}(G^-)_{\bx, \by} \}.
	$$
	By continuity of $\cL$ and condition $2$ for $G^-, \bx, \by$, there exists $\ep_1 \in (0, 1)$ such that if $0 < y - y_i < \ep_1$, then condition $2$ in the lemma holds for $G, \bx, \by[y], \ba, \hat \bb$. Moreover, Proposition \ref{P:overlap-metric}.2 implies that there exists $\ep_2 > 0$ such that for all rational points $y$ with $0 < y - y_i < \ep_2$, we have $\pi^{x_j, y}|_{[0, 1- \ep]} = \pi^{x_j, y_i}|_{[0, 1- \ep]}$ for all $j \in \II{1, k}$. Therefore by condition $1$ for $G^-, \bx, \by$ we have
	$
	G^{0, 1}(\bx, \by[y], \operatorname{Adm}(G)) = G
	$
	whenever $0 < y - y_i < \ep_2$, so for $y \in (y_i, y_i + \ep_1 \wedge \ep_2) \cap \Q$, the quadruple $\bx, \by[y], \ba, \hat \bb$ satisfies the lemma.
	
	\textbf{Case 1b:  $v_1\in \del V^+, v_2\in \del V^-$.} \qquad Let $x_i$ be such that $(x_i, 0)$ is identified with $v_2$. We claim $i \in \{1, k\}$. Indeed, because of how $v_2$ was formed, we know that in a directed embedding of $G$ into $\R \X [0, 1]$, that there is a face touching both $(x_i, 0)$ and the line $\R \X \{1\}$. This can only happen if $i \in \{1, k\}$, since otherwise $G$ would be disconnected. 
	Without loss of generality, assume $i = k$, and for $y > y_{\ell - 1}$ let $\by[y] = (y_1, \dots, y_{\ell-1}, y)$ and $$
	\bb[y] = (b_1, \dots, b_{\ell-1}, -a_k -\cL(x_k, 0; 0, y) + 1/2).
	$$
	In this case
		$$
	\operatorname{Adm}(G)_{\bx, \by[y]} = \operatorname{Adm}(G^-)_{\bx, \by} \cup \{(x_k, y) \}.
	$$
	Then for all large enough $y$, the landscape shape theorem (Theorem \ref{T:landscape-shape}) and condition $2$ for $G^-, \bx, \by, \ba, \bb$ guarantees that condition $2$ holds for $G, \bx, \by[y], \ba, \bb[y]$.
	For condition $1$, Theorem \ref{T:landscape-shape} again guarantees that for large enough rational $y$ we have  $\pi^{x_k, y}(r) > \pi^{x_i, y_j}(r)$ for all $r \in [\ep, 1]$ and $(i, j) \in \II{1, k} \X \II{1, \ell - 1}$. On the other hand, since $x_k$ is not a $2$-star point, $\pi^{x_k, y}(r) = \pi^{x_k, y_{\ell -1}}(r)$ for some $r > 0$, and hence $O(\pi^{x_k, y}, \pi^{x_k, y_{\ell -1}}) = [0, \de]$ for some $\de \in (0, \ep)$. Therefore  for large enough rational $y$ we have
	$
	G^{0, 1}(\bx, \by[y], \operatorname{Adm}(G)) = G,
	$
	and so for large enough rational $y$, the quadruple $\bx, \by[y], \ba, \bb[y]$ satisfies the lemma.
	
	\textbf{Case 2: $G$ is a forest.} \qquad Let $G_1, \dots, G_m$ be the components of $G$, listed so that if $p_{j_1} \in G_i, p_{j_2} \in G_{i+1}$, then $j_1 < j_2$. Let $k_i, \ell_i$ be such that each $G_i$ is a $(k_i, \ell_i)$-tree. Then for each $G_i$ there exist $\bx^i \in \Q^{k_i}_<, \by_i \in \Q^{\ell_i}_<, \ba^i \in \R^{\ell_i}, \bb \in \R^{\ell_i}$ such that with positive probability both conditions in the lemma hold for $\bx^i, \by^i, \ba^i, \bb^i$. Then by the landscape shape theorem (Theorem \ref{T:landscape-shape}) and since $\cL$ is strong mixing with respect to the spatial shift (Remark \ref{R:ergodicity}), there exists a random vector $\bz \in \Z^m_<$ such that the conditions of the lemma hold for 
	\begin{align*}
		(\bx^1 + z_1 {\bf 1}^{k_1}, \dots, \bx^m + z_m {\bf 1}^{k_m}),& \quad (\by^1 + z_1 {\bf 1}^{\ell_1}, \dots, \by^m + z_m {\bf 1}^{\ell_m}), \\
		 \quad (\ba^1, \dots, \ba^m),& \quad (\bb^1, \dots, \bb^m).
	\end{align*}
	Here ${\bf 1}^n = (1, \dots, 1) \in \R^n$.
\end{proof}

\subsection{Tools for the Airy sheet}
\label{S:sheet-tools}
	
In order to cut and paste together geodesic stars and interior forests, we need two technical lemmas about the Airy sheet $\cS$. Both of these are proven using the Airy line ensemble representation. The first lemma plants good regions in the Airy sheet and will allow us to locate geodesics.
\begin{lemma}
	\label{L:sheet-shenanigans}
	Let $\cS_\ga$ be an Airy sheet of scale $\ga > 0$.
	For $\bx \in \R^k_<, \ep, \beta > 0, \bw \in \R^k$ let $T(\cS; \bx, \beta, \bw, \ep, \ga)$ be the event where:
	\begin{enumerate}[label=\arabic*.]
		\item For all $x, y \in \R$ we have
		$$
		\cS_\ga(x, y) + \frac{(x-y)^2}{\ga^3} \le \beta \log^2(3 + |x|/\ga^2 + |y|/\ga^2) + \sum_{i=1}^k w_i \indic(|y-x_i| < \ep).
		$$
		\item $\cS_\ga(x, x_i) \in [w_i, w_i + \beta]$ for all $i \in \II{1, k}$ and $|x - x_i| \le \ga^2$. 
	\end{enumerate}
	 Then there exists $c_k' > 0$ such that for all $\bx \in \R^k_<, \bw \in (c_k' \ga, \infty)^k,$ $\be > c_k \ga, \ep > 0,$ we have $\P(T(\cS; \bx, \beta, \bw, \ep, \ga)) > 0$. 
\end{lemma}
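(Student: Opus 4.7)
By the $1{:}2{:}3$ rescaling (Lemma \ref{L:invariance}.5 applied to $\cL$), the substitution $\tilde x_i = x_i/\ga^2$, $\tilde w_i = w_i/\ga$, $\tilde\beta = \beta/\ga$, $\tilde \ep = \ep/\ga^2$ carries the event $T(\cS_\ga; \bx, \beta, \bw, \ep, \ga)$ to $T(\cS; \tilde\bx, \tilde\beta, \tilde\bw, \tilde\ep, 1)$, and the constraints $\bw \in (c_k'\ga, \infty)^k$, $\beta > c_k\ga$ become the $\ga=1$ analogues with the same universal constants. From here on I take $\ga = 1$, write $\cS = \cS_1$, and couple $\cS$ to a parabolic Airy line ensemble $\fA$ via Definition \ref{D:Airy-sheet} and Theorem \ref{T:iso-multiples}, so that $\cS(x,y)=\fA[(x,\infty)\to(y,1)]$ when $x\ge 0$ (and symmetrically for $x<0$ by the flip symmetry). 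The event $T$ is thus measurable with respect to $\fA$, and my goal reduces to exhibiting a positive-probability configuration of $\fA$ that forces $T$.

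The target configuration has three features: (i) a global shape bound $\cS(x,y)+(x-y)^2\le (\beta/2)\log^2(3+|x|+|y|)$ for all $x,y$, which has positive probability as soon as $\beta$ exceeds a universal constant, by Theorem \ref{T:landscape-shape}; (ii) on a small interval $J_i\sset(x_i-\de,x_i+\de)$ with $\de<\ep$ and $\de<\tfrac{1}{4}\min_{i\ne j}|x_i-x_j|$, the centered top line $\cA_1(y):=\fA_1(y)+y^2$ lies in a narrow window around $w_i$ (the ``$i$-th spike''); and (iii) the deeper lines $\fA_2,\ldots,\fA_{k_0}$ stay at least $2\max_i|w_i|$ below $\fA_1$ on a compact window that contains all the intervals $J_i$, for a fixed large integer $k_0$. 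Features (ii) and (iii) are produced via the Brownian Gibbs property (Lemma \ref{L:BG-property}): conditioning on $\fA$ outside the strip $\II{1,k_0}\X[-M,M]$ makes the interior a nonintersecting $k_0$-tuple of Brownian bridges above $\fA_{k_0+1}$, and the target configuration corresponds to a positive-probability open set of such bridge tuples (force $B_1$ to track a piecewise-affine spiked profile and the remaining bridges to lie in a narrow band below). Intersecting with (i), which depends on the large-scale structure of $\fA$, yields positive joint probability by a standard independence-of-scales argument.

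On this configuration I verify the two conditions defining $T$. For condition 2: when $|x - x_i| \le 1$, Lemma \ref{L:basics-para-path}(iii) writes $\cS(x, x_i)$ as a maximum over the line $n$ at which the parabolic geodesic first reaches time $x_i - \de$; feature (iii) makes dropping to $n \ge 2$ prohibitively expensive, and the spacing built into (ii) makes landing at a foreign spike $J_j$ for $j \ne i$ costly (the parabola penalty for diverting from $x_i$ to $x_j$ is of order $(x_i - x_j)^2$, which dominates any spike profit $|w_j|$ once $\de$ is chosen small). Hence $\cS(x, x_i) = \fA_1(y^*) + O(1)$ for some $y^* \in J_i$, which together with feature (ii) gives $\cS(x, x_i) = w_i + O(1)$; taking $c_k$ larger than this $O(1)$ error places this value in $[w_i, w_i + \beta]$. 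For condition 1: outside $\bigcup_i (x_i - \ep, x_i + \ep)$ the global bound (i) gives the claim directly; inside $(x_i - \ep, x_i + \ep)$ the extra $w_i$ term on the right-hand side absorbs the $i$-th spike's contribution.

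The main obstacle will be the geometric verification of condition 2---rigorously translating narrow spikes on $\fA_1$ into uniform sheet values on the diagonal strip $|x - x_i| \le 1$ while ruling out interference from the other spikes or from the deeper lines. This is handled by the monotonicity of parabolic geodesics (Lemma \ref{L:basics-para-path}(ii)), the asymptotic prescription \eqref{E:parabolic-path} for parabolic paths, and a calibrated choice of $c_k, c_k'$ ensuring that the spikes dominate the background fluctuations of $\cA_1$ yet remain well-separated in horizontal cost.
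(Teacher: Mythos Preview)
Your reduction to $\ga=1$ and the overall idea---couple to the Airy line ensemble and plant spikes on $\fA_1$ near each $x_i$ via Brownian Gibbs---matches the paper's strategy. But the paper executes this with a device you are missing, and without it your verification of Condition~2 does not go through.

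The paper does \emph{not} try to directly exhibit a configuration of $\fA$ with all of (i)--(iii). Instead, it fixes a high-probability event $A(\bx,\de)$ on which the \emph{unmodified} sheet $\cS$ satisfies a global shape bound, a diagonal bound $\cS(x,x_i)\in[-c_k/4,c_k/4]$ for $|x-x_i|\le 1$, and a local modulus of continuity. It then defines a deterministic piecewise-linear tent function $f^\de$ supported on $\bigcup_i(x_i-\de,x_i+\de)$ with $f^\de(x_i)=w_i+c_k/2$, sets $\fA^\de=(\fA_1+f^\de,\fA_2,\ldots)$, and uses the Brownian Gibbs property to deduce that the law of $\fA^\de$ is absolutely continuous with respect to the law of $\fA$. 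Letting $\cS^\de$ be the sheet built from $\fA^\de$, it then computes, via the path-length formula \eqref{E:piSW},
\[
\cS^\de(x,y)\le \cS(x,y)+\sum_{i=1}^k(w_i+c_k/2)\indic(|y-x_i|<\de),
\]
which together with the shape bound for $\cS$ immediately gives Condition~1. For Condition~2, the key is the quadrangle inequality: for $|x-x_i|\le 1$ (after translating so $x_1>1$),
\[
\cS^\de(x,x_i)\ge \cS^\de(x,x_i-\de)+\bigl[\cS^\de(0,x_i)-\cS^\de(0,x_i-\de)\bigr]
=\cS(x,x_i-\de)+\bigl[\fA_1^\de(x_i)-\fA_1^\de(x_i-\de)\bigr],
\]
and the bracket equals $w_i+c_k/2$ plus a small modulus-of-continuity error. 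Combined with the diagonal bound on $\cS$, this pins $\cS^\de(x,x_i)$ in $[w_i,w_i+\beta]$.

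Your argument for Condition~2 has a genuine gap. The sentence ``the parabola penalty for diverting from $x_i$ to $x_j$ is of order $(x_i-x_j)^2$, which dominates any spike profit $|w_j|$ once $\de$ is chosen small'' is incorrect as written: neither quantity depends on $\de$, and the $w_j$ are given data that can be arbitrarily large relative to the spacing of $\bx$. More fundamentally, your leap from the metric decomposition in Lemma~\ref{L:basics-para-path}(iii) to ``$\cS(x,x_i)=\fA_1(y^*)+O(1)$'' is not justified: the term $\fA[(x,\infty)\to(x_i-\de,n)]$ for $n=1$ is still a full sheet value $\cS(x,x_i-\de)$, not a single line value, and controlling it uniformly over $|x-x_i|\le 1$ is exactly the content of Condition~2. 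Your feature~(ii) (forcing $\cA_1\approx w_i$ on $J_i$) controls only $\cS(0,\cdot)=\fA_1(\cdot)$, not $\cS(x,\cdot)$ for $x$ near $x_i$; bridging this gap is precisely what the quadrangle-inequality trick accomplishes. Finally, your ``independence-of-scales'' intersection of (i) with the resampled (ii)--(iii) is not a standard argument here: feature~(i) is a statement about the sheet, which changes globally when you resample $\fA_1$ on $[-M,M]$. The absolute-continuity device avoids this issue entirely, because it produces the perturbed sheet $\cS^\de$ as an explicit deterministic functional of the original $\cS$.
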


\begin{proof}
	First, by Airy sheet rescaling (Lemma \ref{L:invariance}) it suffices to prove the lemma when $\ga = 1$.
By translation invariance of $\cS$, we may assume $x_1 > 1$.
First, as long as $c_k > 0$ is large enough and $\de \in (0, 1)$ is small enough given $\bx$, the following events hold simultaneously with probability at least $1/2$:
	\begin{enumerate}[nosep, label=(\Roman*)]
		\item $|\cS(x, y) + (x-y)^2| \le (c_k/2) \log^2(3 + |x| + |y|)$ for any $(x, y) \in \R^2$. This uses the landscape shape theorem (Theorem \ref{T:landscape-shape}).
		\item $\cS(x, x_i) \in [-c_k/4, c_k/4]$ for all $i \in \II{1, k}, |x - x_i| \le 1$. This uses spatial stationarity of $\cS$ (Lemma \ref{L:invariance}.2).
		\item Let $x_0 = 0$. Then for all $i, j \in \II{0, k}$ and all $(x, y), (x', y') \in \R^2$ with $|x - x_i| \le 1, |y-x_j| \le 1$ and $\|(x, y) - (x', y')\|_2 \le 2\de$ we have
		$$
		|\cS(x, y) - \cS(x', y')| \le \|(x, y) - (x', y')\|_2^{1/3}.
		$$
		This uses the modulus of continuity, Proposition \ref{P:mod-land-i}.
	\end{enumerate}
	Let $A(\bx, \de)$ denote the event where these conditions hold, and let $\fA$ be coupled to $\cS$ as in Definition \ref{D:Airy-sheet}. Next, let $\de \in (0, \ep)$ be small enough so that the intervals $(x_i - \de, x_i + \de)$ are all disjoint and assume $w_i \ge c_k/2$ for all $i$. Define $f^\de:\R\to \R$ by the following rules:
	\begin{itemize}[nosep]
		\item $f^\de(x) = 0$ for $x \notin U(\bx, \de) := \bigcup_{i=1}^k (x_i - \de, x_i + \de)$.
		\item $f^\de(x_i) = w_i + c_k/2$ for all $i$.
		\item $f^\de$ is linear on each of the intervals $[x_i - \de, x_i], [x_i,-x_i + \de]$.
	\end{itemize}
	Define $\fA^\de = (\fA_1 + f^\de, \fA_2, \fA_3, \dots)$. 
	Since $f^\de$ is a non-negative Lipschitz function,and equals $0$ outside of the compact set $[x_1 - \de, x_k + \de]$, by the Brownian Gibbs property for $\fA$ (Lemma \ref{L:BG-property}) applied to the set $\{1\} \X [x_1 - \de, x_k + \de]$, the law of $\fA^\de$ is absolutely continuous with respect to the law of $\fA$ for every fixed $\de$. 
	Therefore letting $\cS^\de$ be defined from $\fA^\de$ as in Definition \ref{D:Airy-sheet}, the law of $\cS^\de$ is absolutely continuous with respect to the law of $\cS$. We claim that for $\de \in (0, \ep)$ small enough and $\beta > c_k$, on the event $A(\bx, \de)$, the event $T(\cS^\de; \bx, \ep, \beta, \bw, 1)$ holds.
	
\textbf{Verifying Condition $1$.} \qquad Let $x \ge 0, y \in \R$. For any parabolic path $\pi$ from $(x, \infty)$ to $(y, 1)$, we have 
	\begin{align}
	\nonumber
		\|\pi\|_{\fA^\de} &= \cS^\de(x,x_1 - \de) + \lim_{z \to -\infty} \lf( \|\pi|_{[z,y]}\|_{\fA^\de} - {\fA^\de}[(z, \pi(z)) \to (x_1 - \de, 1)]\rg) \\
		\nonumber
		&= \cS(x,x_1 - \de) + \lim_{z \to -\infty} \lf( \|\pi|_{[z,y]}\|_{\fA^\de} - \fA[(z, \pi(z)) \to (x_1 - \de, 1)]\rg) \\
		\label{E:pi-fA-difference}
		&= \|\pi\|_{\fA} + \|\pi|_{[x_1 - \de,y]}\|_{\fA^\de} - \|\pi|_{[x_1 - \de,y]}\|_{\fA}.
	\end{align}
	Here the first equality uses the comment after \eqref{E:piSW}, and the remaining equalities use that $\fA = \fA^\de$ outside of of $\{1\} \X U(\bx, \de)$. Now, if  $y \notin U(\bx, \de)$ then $\|\pi|_{[x_1 - \de,y]}\|_{\fA^\de} \le \|\pi|_{[x_1 - \de,y]}\|_{\fA}$, and if $y \in [x_i - \de, x_i + \de]$ then $\|\pi|_{[x_1 - \de,y]}\|_{\fA^\de} \le \|\pi|_{[x_1 - \de,y]}\|_{\fA} + w_i + c_k/2$. Therefore for $x \ge 0, y \in \R$, by Lemma \ref{L:basics-para-path}(i) we have
	\begin{equation}
	\label{E:cSal-de}
	\cS^\de(x, y) \le \cS(x, y) +\sum_{i=1}^k (w_i + c_k/2)\indic(|x-x_i| < \de).
	\end{equation}
	Similar reasoning for $x \le 0$ using the Airy line ensemble $\tilde \fA(y) := \fA(-y)$ yields the same bound for all $x, y \in \R$. Combined with the bound in condition (I) for $\cS$, this yields condition $1$ of the lemma.
	
	\textbf{Verifying Condition $2$.} \qquad By the quadrangle inequality \eqref{E:quadrangle} for the Airy sheet, using that $x_1 > 1$, for any $i \in \II{1, k}$ and $|x-x_i| \le 1$ we have
	\begin{align}
	\nonumber
\cS^\de(x, x_i) &\ge \cS^\de(x, x_i - \de) + \cS^\de(0, x_i) - \cS^\de(0, x_i-\de) \\
\nonumber
&= \cS(x, x_i - \de) + \cS(0, x_i) + w_i + c_k/2 - \cS(0, x_i-\de) \\
\label{E:S-de-bound}
&\ge \cS(x, x_i) + w_i + c_k/2 - \de^{1/3} \ge w_i.
	\end{align}
Here the equality uses that $\cS(0, \cdot) = \fA_1(\cdot)$, $\cS^\de(0, \cdot) = \fA^\de_1(\cdot)$ and that $\cS(x, w) = \cS^\de(x, w)$ for $x \ge 0, w \le x_1 - \de$ by Definition \ref{D:Airy-sheet}. The second inequality uses condition (III) for the event $A(\bx, \de)$, and the final inequality uses condition (II). This yields the lower bound in condition $2$ in the lemma. On the other hand, \eqref{E:cSal-de} gives that $\cS^\de(x_i, x_i) \le \cS(x_i, x_i) + w_i + c_k/2$, which is bounded above by $w_i + c_k$ by (II), giving the upper bound in condition $2$.
\end{proof}

The next lemma gives a weak anti-concentration estimate for certain functions of the Airy sheet.

\begin{lemma}
	\label{L:resampling-part-a}
	Let $I_1 = [a_1, b_1], \dots, I_k = [a_k, b_k]$ be compact intervals with $b_i < a_{i+1}$ for all $i \in \II{1, k-1}$, and let $f_i:I_i \to \R, i \in \II{1, k}$ be continuous functions. Similarly let $I_1' = [a_1', b_1'], \dots, I_k' = [a_k', b_k']$ be compact intervals with $b_i' < a_{i+1}'$ for all $i \in \II{1, k-1}$, and let $g_i:I_i' \to \R, i \in \II{1, k}$ be continuous functions. Then the vector $X \in \R^n$ with
	$$
	X_i = \max_{x \in I_i, y \in I_i'} f_i(x) + \cS(x, y) + g_i(y)
	$$
	has a Lebesgue density.
\end{lemma}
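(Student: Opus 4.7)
The plan is to locally resample the top line of the Airy line ensemble via the Brownian Gibbs property and then use Cameron-Martin perturbations to show that each coordinate of $X$ can be independently translated; density then follows from a standard Fubini argument. Using the isometry $\cS(x,y) = \fA[(x,\infty) \to (y,1)]$ for $x \ge 0$ from Lemma \ref{L:basics-para-path}(i) (and the analogue for $x \le 0$ via the reversed ensemble $\tilde\fA$, after a routine splitting of the indices based on the sign of $I_i$), I would represent every value of $\cS$ appearing in $X$ as a parabolic last passage value in a single Airy line ensemble. Fix a compact interval $[A,B]$ containing all the $I_i'$ comfortably in its interior and large enough that by the parabolic asymptotics of Theorem \ref{T:landscape-shape}, any optimal parabolic geodesic from $(x,\infty)$ with $x \in \bigcup_j I_j$ to $(y,1)$ with $y \in I_i'$ drops onto line $1$ at some time $z_1^*(x,y)$ lying well inside $(A, B)$. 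Condition on the $\sigma$-algebra $\cF$ generated by $\fA_2, \fA_3, \dots$ and $\fA_1|_{[A,B]^c}$; by Lemma \ref{L:BG-property} the conditional law of $\fA_1|_{[A,B]}$ is a Brownian bridge conditioned on the positive-probability event of staying above $\fA_2$.

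Choose smooth non-negative bumps $\phi_1, \dots, \phi_k$ with pairwise disjoint supports, each $\phi_i$ supported in a small neighborhood of $I_i'$ inside $[A,B]$ and equal to $1$ on $I_i'$, also arranged so that no $z_1^*(x,y)$ lands in any $\operatorname{supp}(\phi_i)$. For $\|t\|$ small the perturbation $\fA_1 + \sum_i t_i \phi_i$ still strictly dominates $\fA_2$ and has the prescribed endpoints, so by Cameron-Martin for Brownian bridge (applied after the stay-above conditioning), its conditional law is mutually absolutely continuous with respect to that of $\fA_1$, with Radon-Nikodym derivative uniformly bounded for $t$ in a compact neighborhood of $0$. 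The key technical claim is that on an $\cF$-measurable event of full conditional probability, $X_i(\fA_1 + \sum_j t_j \phi_j) = X_i(\fA_1) + t_i$ for every $i$ and all sufficiently small $t$: adding $\sum_j t_j \phi_j$ to line $1$ changes the length of any parabolic path ending at $(y,1)$ with $y \in I_i'$ and landing on line $1$ at $z_1^*$ by exactly $t_i \phi_i(y) - t_i \phi_i(z_1^*) = t_i$ (using $\phi_i \equiv 1$ on $I_i'$ and disjoint supports). An upper-semicontinuity and monotonicity argument based on Lemma \ref{L:basics-para-path}(ii) shows that the optimizer in the perturbed ensemble is close to that in the original, so this identity lifts from individual paths to the maximum $X_i$.

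Combining these two facts, the conditional law of $X$ is quasi-invariant under translation by any sufficiently small $t \in \R^k$. A standard Fubini argument then concludes the proof: if $A \subset \R^k$ were Lebesgue-null with $\P(X \in A \mid \cF) > 0$ on a positive-probability event, then mutual absolute continuity under the shift would give $\P(X \in A - t \mid \cF) > 0$ for all $t$ in a neighborhood of $0$, contradicting $\int_{\|t\| < \epsilon} \P(X \in A - t \mid \cF) \, dt = 0$, which follows from $A$ being Lebesgue-null. Hence $X$ has a density. The main obstacle will be uniform control of the landing times $z_1^*(x,y)$ over the compact rectangles $I_j \X I_i'$ and uniform control of the optimizers in the perturbed ensemble; both rely on combining the parabolic asymptotics of Theorem \ref{T:landscape-shape} with the monotonicity and continuity of rightmost parabolic geodesics recorded in Lemma \ref{L:basics-para-path}(ii).
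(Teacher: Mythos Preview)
Your overall strategy --- exhibit a Cameron--Martin perturbation that translates the law of $X$ by any small $t\in\R^k$, then deduce density by Fubini --- is sound and is also the mechanism behind the paper's argument. The gap is in your key technical claim that perturbing $\fA_1$ alone by $h=\sum_j t_j\phi_j$ shifts $X_i$ by exactly $t_i$. For a parabolic path ending at $(y,1)$, the change in length under $\fA_1\mapsto\fA_1+h$ is $h(y)-h(z_1^*)$, where $z_1^*$ is the jump time onto line $1$. For this to equal $t_i$ you need $z_1^*\notin\bigcup_j\operatorname{supp}(\phi_j)$, and in particular $z_1^*\notin\operatorname{supp}(\phi_i)\supset I_i'$. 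But nothing forces the optimal path to drop onto line $1$ to the left of $I_i'$: $z_1^*$ can lie anywhere in $(-\infty,y]$, and Theorem~\ref{T:landscape-shape} (a shape bound on $\cL$) says nothing about jump locations in $\fA$. Moreover $z_1^*$ depends on the very piece $\fA_1|_{[A,B]}$ you are resampling, so you cannot choose the supports of the $\phi_i$ to avoid it without circularity. Once $z_1^*$ falls in some $\operatorname{supp}(\phi_j)$ the shift is no longer $t_i$, and this can happen both for the original optimizer and for competing near-optimizers after perturbation, so the claimed quasi-invariance does not follow.

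The paper circumvents this with two changes. First, it proceeds inductively: $X_1,\dots,X_{j-1}$ are $\sig(\fA|_{(-\infty,b_{j-1}']})$-measurable, and metric composition (Lemma~\ref{L:basics-para-path}(iii)) gives $X_j=\max_{n\in\N} W_n+Z_n$ with $W_n$ also measurable with respect to that past and $Z_n=\max_{y\in I_j'}\fA[(b_{j-1}',n)\to(y,1)]+g_j(y)$ a \emph{finite} last passage value. Second, it resamples \emph{all} lines $1,\dots,n$ on $[b_{j-1}',b_j'+1]$ via the Brownian Gibbs property and adds the \emph{same} piecewise-linear tent $f^U$ to each. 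Because the shift is common across lines, the increments in \eqref{E:pi-length} telescope and every admissible path picks up exactly $f^U(y)-f^U(b_{j-1}')=U-0$, with both evaluation points deterministic. Hence $Z_n\ll Z_n+U$ with no control on intermediate jump times required. To repair your argument, replace the single-line Cameron--Martin shift by this common shift across a finite box of lines, coupled with a decomposition (inductive, or via metric composition at a fixed spatial location) that pins the left endpoint of the relevant last passage problem.
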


\begin{proof}
	By translation invariance of $\cS$, we may assume $a_1, a_1' > 0$. First, by induction it is enough to show that for all $j \in \II{1, k}$, conditional on the random variables $X_1, \dots, X_{j-1}$, the distribution of $X_j$ almost surely has a Lebesgue density.
	Let $\fA, \cS$ be coupled as in Definition \ref{D:Airy-sheet}. By Lemma \ref{L:basics-para-path}(iii) we have
	\begin{align*}
	X_j &= \max_{x \in I_j, y \in I_j', n \in \N} 
	f_j(x) + \fA[(x, \infty) \to (b_{j-1}', n)] + \fA[(b_{j-1}', n) \to (y, 1)] + g_j(y).
	\end{align*}
	It is important that this is a maximum, rather than a supremum. We can then write $X_j = \max_{n \in \N} W_n + Z_n$, where
	where
	$$
	W_n = \max_{x \in I_j} f_j(x) + \fA[(x, \infty) \to (b_{j-1}', n)], \qquad Z_n = \max_{y \in I_j'} \fA[(b_{j-1}', n) \to (y, 1)] + g_j(y).
	$$
	Therefore it is enough to show that conditional on $X_1, \dots, X_{j-1}$, $W_n + Z_n$ has a Lebesgue density for all $n$. From Definition \ref{D:Airy-sheet}, we see that $\sig(X_1, \dots, X_{j-1}, W_n) \sset \sig(\fA|_{(-\infty, b_{j-1}']})$, so it is enough to show that conditional on $\sig(\fA|_{(-\infty, b_{j-1}']})$, $Z_n$ has a Lebesgue density. By the Brownian Gibbs property applied to the set $K := \II{1, n} \X [b_{j-1}', b_j' + 1]$, 
	given $\fA|_{(-\infty, b_{j-1}']}$ we have 
	$$
	\fA|_K \ll \fA|_K + f^U
	$$
	where $U$ is a uniform random variable on $[0, 1]$ independent of $\fA$, and $f^U:K \to \R$ is the function whose components $f^U_1, \dots, f^U_n$ are all identical, piecewise linear functions with pieces defined on $[b_{j-1}', a_j'], [a_j', b_j'], [b_j', b_j' + 1]$, and satisfy 
	$$
	f^N_\ell(b_{j-1}') = f^N_\ell(b_j' + 1) = 0, \quad  f^N_\ell(a_j') = f^N_\ell(b_j') = U, \qquad \ell \in \II{1, n}.
	$$	
	Since last passage commutes with shifts by a common function, for $y \in I_j'$ we have
	\begin{align*}
	(\fA + f^U)[(b_{j-1}', n) \to (y, 1)] = \fA[(b_{j-1}', n) \to (z, 1)] + U
	\end{align*}
	and so $Z_n \ll Z_n + U$ conditional on $\sig(\fA|_{(-\infty, b_{j-1}']})$, which yields the result.
\end{proof}

\subsection{Cut and paste: the proofs of Theorem \ref{T:geodesic-networks} and parts $1$ and $2$ of Theorem \ref{T:Hausdorff-dimension}}
\label{S:cut-and-paste}

The proofs of Theorem \ref{T:geodesic-networks} and parts $1$ and $2$ of Theorem \ref{T:Hausdorff-dimension} are divided into two parts: an upper bound (Proposition \ref{P:upper-bd}) based on cutting networks into components stars connected by an interior forest, and a lower bound (Proposition \ref{P:lower-bd}) based on pasting these pieces back together.

\begin{prop}
	\label{P:upper-bd}
	For all $G \in \sG$, a.s. we have
	$
	\dim_{1:2:3}(N_\cL(G)) \le d(G),
	$
	and if $d(G) = 0$, then $N_\cL(G)$ is at most countable.
\end{prop}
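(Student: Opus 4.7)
The strategy is the ``cut'' half of the cut-and-paste sketch in Section \ref{S:geometric-ideas}. Fix $G \in \sG$ with $\deg(p) = k$, $\deg(q) = \ell$. For any $(p; q) = (x, s; y, t) \in N_\cL(G)$, Proposition \ref{P:sparse-space-2}(ii) ensures $\Ga(p; q)$ is finite, so I can choose rational cut times $s' \in (s, t), t' \in (s', t)$ so close to $s, t$ that no branching of $\Ga(p; q)$ occurs in $\R \X [s, s']$ or $\R \X [t', t]$. Let $\bx^* \in \R^k_<, \by^* \in \R^\ell_<$ be the crossing positions of $\Ga(p; q)$ at times $s', t'$; the cut presents $(p; q)$ as a forward $k$-geodesic star from $p$ to $(\bx^*, s')$, an interior forest $I$ with a combinatorial type among the finitely many interior forests of $G$ in the sense of Section \ref{S:graph-theory}, and a reverse $\ell$-geodesic star from $(\by^*, t')$ to $q$. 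Since all paths in $\Ga(p; q)$ share the common length $\cL(p; q)$, Lemma \ref{L:face-to-linear-relations} applied with edge weights $Z_{ij} = \cL(x_i^*, s'; y_j^*, t')$, together with Corollary \ref{C:dimension-interior-forests}, forces the weight vector $(\bw_p, \bw_q) := (W^-_{\bx^*, s'}(p), W^+_{\by^*, t'}(q))$ to lie in a slide-invariant affine subset $W^* \sset \R^{k+\ell}$ of linear dimension $c(I) + 1 = (k + \ell - |V|)/2 + 2$.

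To upgrade this structural description into a dimension bound, I would enumerate a countable family of tuples $\cD = (s', t', \bx, \by, I, \br)$ with $s', t' \in \Q$, $\bx \in \Q^k_<$, $\by \in \Q^\ell_<$, $I$ a combinatorial interior forest of $G$, and $\br$ a rational parameter for translations of the affine set from Lemma \ref{L:face-to-linear-relations} (built from $Z_{ij} = \cL(x_i, s'; y_j, t')$) transverse to the slide subspace. Each resulting $W_\cD$ has $\dim W_\cD = c(I) + 1$, and Proposition \ref{P:weighted-stars-upper-bound} gives almost surely
\[\dim_{1:2:3} \operatorname{Star}(\bx, s'; \by, t'; W_\cD) \le d(k, \ell, W_\cD) = d(G),\]
with the equality by substituting $\dim W_\cD = c(I) + 1$ into the formula for $d(k, \ell, W)$ and comparing with $d(G) = 12 - (|V| + k^2 + \ell^2)/2$ via Lemma \ref{L:graph-theory}. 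To verify that $N_\cL(G)$ is covered by the countable union of these star sets, I would adapt Step 2 in the proof of Proposition \ref{P:weighted-stars-upper-bound}: at scale $\ep > 0$, use Proposition \ref{P:mod-land-i} to show that for any $(p; q) \in N_\cL(G)$ there is a rational $\cD$ whose cut data and weight shift agree with the true cut of $(p; q)$ up to a H\"older error of $O(\ep^{1/2 - o(1)})$, then absorb the error by summing the Brownian estimate of Corollary \ref{C:Brownian-weighted-star} over the rational grid at scale $\ep$; taking $\ep \to 0$ and applying countable stability of Hausdorff dimension yields $\dim_{1:2:3} N_\cL(G) \le d(G)$.

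For the countability statement when $d(G) = 0$, Lemma \ref{L:graph-theory} combined with $d(G) = 0$ and $k, \ell \in \II{1, 3}$ forces $k = \ell = 3$ and $|V| = 6$; hence $c(I) = 1$, $\dim W_\cD = 2$ coincides with the slide subspace dimension, and each $W_\cD$ is a single slide orbit fully determined by a pair $(m_p, m_q) \in \R^2 \X \R^2$ under $L_3 \X L_3$. For each rational $\cD$, Lemma \ref{L:upper-bd-3-stars} applied at both ends (with time-reversal symmetry of $\cL$ for the reverse end) gives
\[|\operatorname{Star}(\bx, s'; \by, t'; W_\cD)| \le |\operatorname{Star}(\bx, s'; L_3^{-1}(m_p))| \cdot |\operatorname{Star}_R(\by, t'; L_3^{-1}(m_q))| \le 135^2,\]
and a countable union over $\cD$ (combined with the rational-approximation step of the previous paragraph to ensure every $(p; q) \in N_\cL(G)$ is captured) shows $N_\cL(G)$ is countable. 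The principal obstacle throughout is the simultaneous $\ep$-discretization of cut times, spatial positions, and transverse weight-vector shifts in a way that couples cleanly with Proposition \ref{P:weighted-stars-upper-bound}'s a.s.\ bound for fixed data; the critical structural input is that the interior-forest constraint reduces the effective weight dimension from $k + \ell$ to $c(I) + 1$, providing precisely the codimension needed to match the desired bound $d(G)$.
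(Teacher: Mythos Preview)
Your high-level decomposition is correct and matches the paper's: cut into a forward $k$-star, an interior forest, and a reverse $\ell$-star, then observe via Lemma~\ref{L:face-to-linear-relations} and Corollary~\ref{C:dimension-interior-forests} that the weight vector lies in a slide-invariant affine set $W$ of dimension $(k+\ell-|V|)/2+2$, and invoke Proposition~\ref{P:weighted-stars-upper-bound}. The countability argument for $d(G)=0$ via Lemma~\ref{L:upper-bd-3-stars} is also essentially the paper's Step~4.

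However, there is a genuine gap in how you handle the randomness of $W$. Proposition~\ref{P:weighted-stars-upper-bound} is an almost-sure statement for a \emph{fixed} affine set, but your $W$ depends on the interior weights $Z_{ij}=\cL(x_i,s';y_j,t')$, which are random. Your proposed fix --- enumerate a rational transverse parameter $\br$ at scale $\ep$ --- destroys exactly the dimension gain you need. At scale $\ep$ there are $\sim\ep^{-(k+\ell-\dim W)}$ choices of $\br$, and for each you must still cover $W_\cD\cap[-n,n]^{k+\ell}$ by $\sim\ep^{-\dim W}$ balls; the product is $\ep^{-(k+\ell)}$, i.e.\ you are simply covering all of $\R^{k+\ell}$ and the affine constraint contributes nothing. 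The resulting bound is $10-\binom{k}{2}-\binom{\ell}{2}$, which by \eqref{E:dim-form} exceeds $d(G)$ by precisely $|F|-1$, the number of bounded faces --- exactly the information you have thrown away.

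The paper resolves this with two ingredients you do not mention. First, instead of discretizing the crossing positions $\bx^*,\by^*$ rationally (which could destroy the star structure), it inserts buffer times $r_i\pm j/n$ and uses Proposition~\ref{P:sparse-space-2}(i) to pin the crossings at times $r_i\pm 2/n$ to a \emph{finite} random set $\{z^\pm_{i,j}\}$; combined with overlap approximation this yields a genuinely countable decomposition with fixed (random) star endpoints and rational $Q_1,Q_2$. Second, and crucially, the paper does \emph{not} discretize the transverse direction of $W$: instead it isolates the randomness of $W$ into weights $X_i,Y_j$ that are functionals of independent Airy sheets $\cS^1,\cS^2$ living on the thin strips $[r_i-1/n,r_i+1/n]$, and uses Lemma~\ref{L:resampling-part-a} to show that $(X_1,\dots,X_k,Y_1,\dots,Y_\ell)$ has a Lebesgue density conditional on everything else. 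This absolute-continuity step replaces the random $W(Z+f_{X,Y})$ by $W(Z+f_{N,N'})$ with $(N,N')$ independent Gaussian, after which Proposition~\ref{P:weighted-stars-upper-bound} applies by Fubini. Without an analogue of this decoupling your discretization cannot recover the lost $|F|-1$ codimension.
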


To prove Proposition \ref{P:upper-bd}, the basic idea is to cut up a potential geodesic network $\Ga(p; q)$ into a pair of geodesic stars at $p$ and $q$ and an interior forest for $G$. We can then appeal to Lemma \ref{L:face-to-linear-relations} and Corollary \ref{C:dimension-interior-forests} to show that the weight vector of the geodesic stars at $p$ and $q$ must lie in some $(k+\ell - |V|)/2 + 2$ dimensional linear subspace $W$. The set of pairs $(p; q)$ that support such stars has dimension at most $d(G)$ by Proposition \ref{P:weighted-stars-upper-bound}.
 Making this argument rigorous is a fairly delicate process. Indeed, we must ensure that we only need a bound for countably many distinct linear subspaces $W$; this relies on coalescence properties. Moreover, each of these subspaces cannot have a strong dependence on the weight of the geodesic stars; for this we will use Lemma \ref{L:resampling-part-a}. These difficulties are the reason for the rather involved setup that comprises the first half of the proof.

\begin{proof}
	Fix $G = (V, E) \in \sG$, with source vertex $p$ of degree $k$ and a sink $q$ of degree $\ell$. Without loss of generality assume $k \le \ell$.
	We can write $N_\cL(G) = N_\cL^1(G) \cup N_\cL^2(G)$, where $(p; q) \in N_\cL^1(G)$ if $G(p; q) = G$ and $(p; q) \in N_\cL^1(G)$ if $G(p; q) = G^T$. By time-reversal symmetry of $\cL$, it suffices to prove the proposition with $N_\cL^1(G)$ in place of $N_\cL(G)$.
	
	\begin{figure}
		\centering
		\includegraphics[scale=0.9]{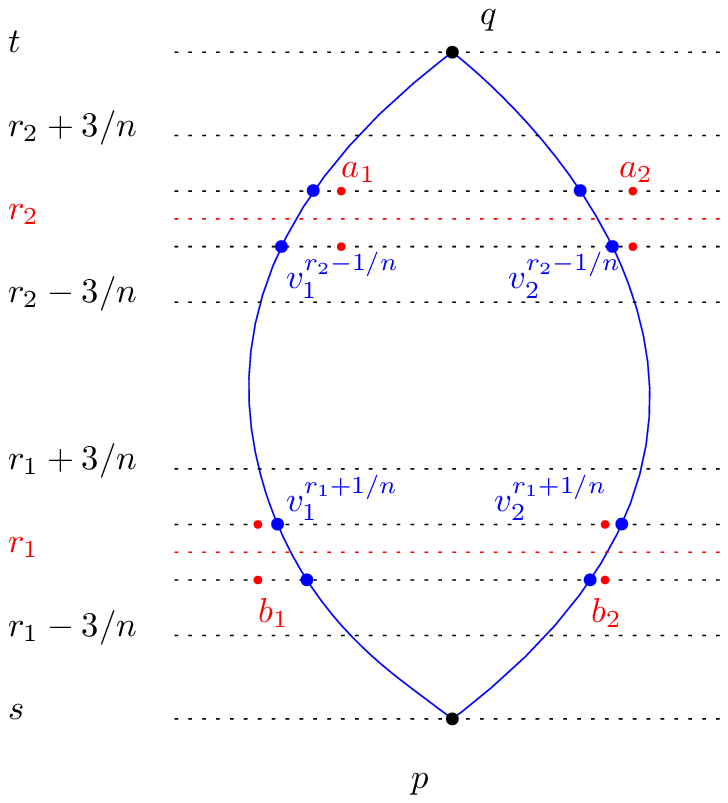}
		\caption{An example of a pair $(p; q) \in N_\cL(G; \br, n, \ba, \bb, S)$. Here $G$ is the two-vertex graph $\{p, q\}$ with a double-edge from $p$ to $q$, and $S = \{(1, 1), (2, 2)\}$.}
		\label{fig:prop681}
	\end{figure}

	\textbf{Step 1: Division into countably many parts.} \qquad We do this in two steps. First, for a rational vector $\ba \in \R^k_<$ we write $\de(\ba) = \min \{a_i - a_{i-1} : i  \in \II{2, k}\}$. We can write
	$$
	N_\cL^1(G) = \bigcup_{\substack{\br \in \Q^2_<, n \in \N \text{ s.t. } r_1 + 3/n < r_2 - 3/n
	\\
	\ba \in \Q^k_<, \bb \in \Q^\ell_<, S \sset \II{1, k} \X\II{1, \ell}}} N_\cL(G; \br, n, \ba, \bb, S),
	$$
	where $ N_\cL(G; \br, n, \ba, \bb, S)$ is the set of all $(p; q) = (x, s; y, t) \in N_\cL^1(G)$ satisfying the following conditions:
	\begin{enumerate}[nosep, label=\arabic*.]
		\item $p \in (-n, n) \X (-\infty, r_1 - 3/n], q \in (-n, n) \X [r_2 + 3/n, \infty)$.
		\item $\Ga(p; q) \sset (-n, n)^2$ and no interior vertices of $\Ga(p; q)$ are contained in the time intervals $(s, r_1 + 3/n)$ and $(r_2 - 3/n, t)$. 
		\item For $r \in (s, r_1 + 3/n)$, let $\bv^r \in \R^{k}_<$ be the vector whose coordinates are the spatial coordinates of elements of $\Ga(p; q) \cap (\R \X \{r\})$. For $r \in (r_2 - 3/n, t)$ similarly define $\bv^r \in \R^\ell_<$.
		 Then
		$$
		\max_{r \in \{r_1 - 1/n, r_1 + 1/n\}} \|\bv^r - \ba\|_\infty < \de(\ba)/4, \qquad \max_{r \in \{r_1 - 1/n, r_1 + 1/n\}} \|\bv^r - \bb\|_\infty < \de(\bb)/4.
		$$
		\item There is a geodesic from $p$ to $q$ through the points $(v^{r_1}_i, r_1)$ an $(v^{r_2}_j, r_2)$ if and only if $(i, j) \in S$. Note that by point $2$, we could equivalently state this with $r_1, r_2$ replaced by any $r_1' \in (s, r_1 + 3/n), r_2' \in (r_2 - 3/n, t)$.
	\end{enumerate}
See Figure \ref{fig:prop681} for an illustration of much of the definition.
By countable stability, it suffices to prove the Hausdorff dimension and countability bounds in the proposition for  $N_\cL(G; \sig_0)$ for each fixed $\sig_0 := (\br, n, \ba, \bb, S)$. We will break up $N_\cL(G; \sig_0)$ once more before proceeding to the bound.

By Proposition \ref{P:sparse-space-2}(i), for $i \in \{1, 2\}$, there are finitely many points
$z^-_{i, 1} <  \dots < z^-_{i, \ell^-(i)}$ such that $(z^-_{i, j}, r_i - 2/n)$ lies on a geodesic between points $p' \in (-n, n) \X \{r_i - 3/n\}$ and $q' \in (-n, n) \X \{r_i - 1/n\}$. Similarly, there are finitely many points
$z^+_{i, 1} <  \dots < z^+_{i, \ell^+(i)}$ such that $(z^+_{i, j}, r_i + 2/n)$ lies on a geodesic between points in $(-n, n) \X \{r_i + 1/n\}$ and $(-n, n) \X \{r_i + 3/n\}$. For $(p; q) \in N_\cL(G; \sig_0)$, for $i = 1, 2$, we can write
$$
\bv^{r_i \pm 2/n} = (z^\pm_{i, j} : j \in I_i^\pm[p, q])
$$ 
where $I_i^\pm[p, q]$ is a subset of $\N$ consisting of $k$ elements if $i=1$ and $\ell$ elements if $i = 2$. 
Next, for all $i \in \II{1, k}$, the point $(v^{r_1 - 2/n}_i, r_1 - 2/n)$ is an interior point on the unique geodesic from $p$ to $(v^{r_1 - 1/n}_i, r_1 - 1/n)$. Therefore by Proposition \ref{P:overlap-metric}, there is a \textit{rational} point $Q_1[p, q](i) \in (-n, n)$ such that $(v^{r_1 - 2/n}_i, r_1 - 2/n)$ is an interior point on a geodesic from $p$ to $(Q_1[p, q](i), r_1 - 1/n)$.
We can similarly define rational points $Q_2[p, q](j) \in (-n, n)$ such that $(v^{r_1 + 2/n}_j, r_2 + 2/n)$ is an interior point on a geodesic from $(Q_2[p, q](j), r_2 + 1/n)$ to $q$ for all $j \in \II{1, \ell}$. See Figure \ref{fig:prop682} for an illustration of these definitions.

Therefore we can divide $N_\cL(G; \sig_0)$ into countably many pieces $N_\cL(G; \sig_0, I, Q)$ where:
\begin{itemize}[nosep]
	\item $I = (I_1^-, I_1^+, I_2^-, I_2^+)$ is any vector consisting of four subsets of $\N$ with $|I_1^\pm| = k, |I_2^\pm| = \ell$. 
	\item $Q = (Q_1, Q_2) \in [\Q \cap (-n, n)]^k_< \X [\Q \cap (-n, n)]^\ell_<$.
	\item If $(p; q) \in N_\cL(G; \sig_0, I, Q)$ then $I_i^\pm = I_i^\pm[p, q]$ and $Q_i=Q_i[p, q]$ for $i = 1, 2$.
\end{itemize} 
It suffices to prove the proposition for $N_\cL(G; \sig)$ for each fixed $\sig = (\sig_0, I, Q)$. From this point forward, we assume that $I^\pm_1 = \II{1, k}, I^\pm_2 = \II{1, \ell}$; the proof in the remaining cases is the same up to relabeling. We may also assume that $N_\cL(G; \sig)$ is non-empty, since otherwise the result holds trivially.

\begin{figure}
	\centering
	\includegraphics[scale=0.9]{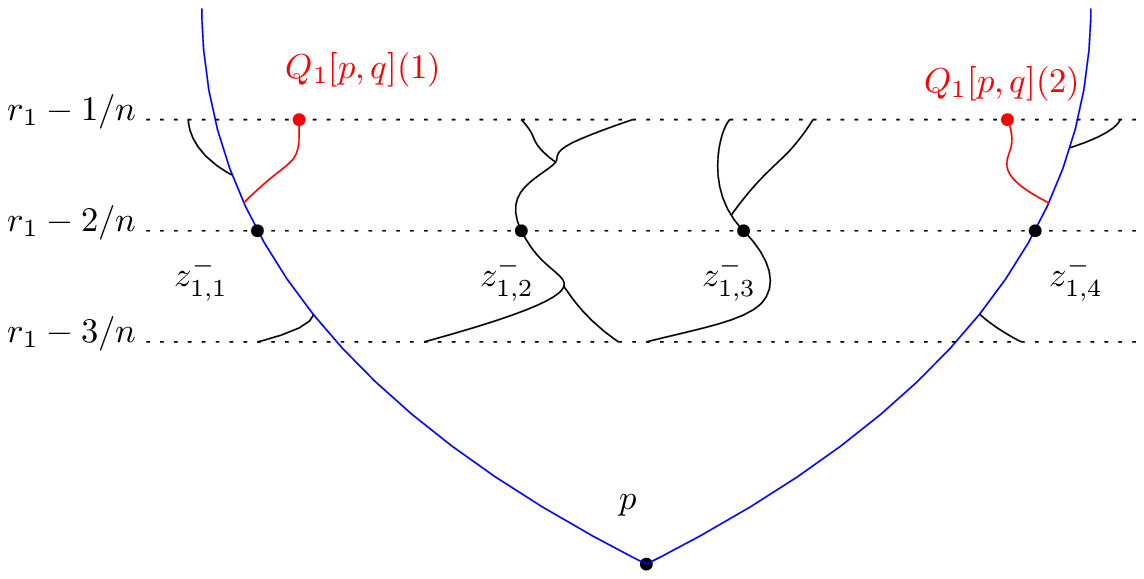}
	\caption{Some of the objects relevant to the definition of $N_\cL(G, \sig)$. In this example, $I^-_1[p, q] = \{1, 4\}$. The blue paths are the start of the geodesic network $\Gamma(p; q)$, the black paths are geodesics from time $r_1 - 3/n$ to $r_1 - 1/n$, and the red paths are parts of the geodesics from $p$ to $Q_1[p, q](1), Q_1[p, q](2)$.}
	\label{fig:prop682}
\end{figure}

	\textbf{Step 2: From $N_\cL(G; \sig)$ to weighted stars.} \qquad To relate $N_\cL(G; \sig)$ to the existence of weighted stars at $p, q$, we use Lemma \ref{L:face-to-linear-relations}. First, for $(i, j) \in S$, if $(p; q) \in N_\cL(G; \sig)$ then
\begin{equation}
\label{E:xi-dZ-applied}
\begin{split}
\cL(p; q) = \cL(p; z^-_{1, i}, r_1 - 2/n) + \cL(z^-_{1, i}, r_1 - 2/n ;z^+_{2, j}, r_2 + 2/n) + \cL(z^+_{2, j}, r_2 + 2/n; q).
\end{split}
\end{equation} 
Using the conditions in $N_\cL(G; \sig)$ on $\ba, \bb, Q$ we can rewrite the right-hand side above. First, for $i = 1, 2$ let 
$$
\cS^i(x, y) = \cL(x, r_i - 1/n; y, r_i + 1/n)
$$
Then, for $i \in \II{1, k}, j \in \II{1, \ell}$ define functions
\begin{align*}
F_{1, i}(x) &=  \cL(z^-_{1, i}, r_1 - 2/n; x, r_1 - 1/n) - \cL(z^-_{1, i}, r_1 - 2/n; Q_1(i), r_1 - 1/n), \\
G_{1, i}(y) &= \cL(y, r_1 + 1/n; z^+_{1, i}, r_1 + 2/n) \\
F_{2, j}(y) &=  \cL(y, r_2 + 1/n; z^+_{2, j}, r_2 + 2/n) - \cL(Q_2(j), r_2 + 1/n;z^+_{2, j}, r_2 + 2/n) \\
G_{2, j}(x) &= \cL(z^-_{2, j}, r_2 - 2/n; x, r_2 - 1/n)
\end{align*}
and let
\begin{align*}
X_i &= \max_{x, y \in [a_i - \de(\ba)/4, a_i + \de(\ba)/4]} F_{1, i}(x) + \cS^1(x, y) + G_{1, i}(y), \\
Y_j &= \max_{x, y\in [b_j - \de(\bb)/4, b_j + \de(\bb)/4]} G_{2, j}(x) + \cS^2(x, y) + F_{2, j}(y).
\end{align*}
Then by metric composition, if $(p, q) \in N_\cL(G; \sig)$, then \eqref{E:xi-dZ-applied} equals
\begin{align*}
\cL(p; Q_1(i), r_1 - 1/n) + X_i + \cL(z^+_{1, i}, r_1 + 2/n ;z^-_{2, j}, r_2 - 2/n) + Y_j + \cL(Q_2(j), r_2 + 1/n; q).
\end{align*}
Now, set $\bz^+_1 = (z^+_{1, 1}, \dots, z^+_{1, k}), \bz^-_2 = (z^-_{2, 1}, \dots, z^-_{2, \ell})$ and 
$
G' = (V', E') = G^{r_1 + 2/n, r_2 - 2/n}(\bz^+_1, \bz^-_2, S);
$
here notation is as in Section \ref{S:planting}. Note that $S = \operatorname{Adm}(G')$, since otherwise $N_\cL(G; \sig)$ would necessarily be empty.
Denote by $p_1, \dots, p_k, q_1, \dots, q_\ell$ the source and sink vertices of $G'$. There is a weight function 
$
Z:E' \to \R
$
such that 
$$
\cL(z^+_{1, i}, r_1 + 2/n ;z^-_{2, j}, r_2 - 2/n) = d_Z(p_i, q_j)
$$
for all $(i, j) \in S$. 
Moreover, let $e^i_p$ be the unique edge in $E'$ emanating from $p_i$, and let $e^j_q$ be the unique edge connecting to $q_j$, and let $f_{X, Y}:E' \to \R$ be the function with $f_{X, Y}(e^i_p) = X_i$ for $i \in \II{1, k}$,  $f_{X, Y}(e^j_q) = Y_j$ for $j \in \II{1, \ell}$, and $f_{X, Y}(e) = 0$ for all other $e \in E'$. Then
$$
d_{Z+ f_{X, Y}}(p_i, q_j) = X_i + \cL(z^+_{1, i}, r_1 + 2/n ;z^-_{2, j}, r_2 - 2/n) + Y_j.
$$
Hence by Lemma \ref{L:face-to-linear-relations}, the weight vector
$$
(\cL(p; Q_1(1), r_1 - 1/n), \dots, \cL(p; Q_1(k), r_1 - 1/n)), \cL(Q_2(1), r_2 + 1/n; q), \dots, \cL(Q_2(\ell), r_2 + 1/n; q))
$$
lies in some $(c(G') + 1)$-dimensional affine space $W = W(Z+ f_{X, Y})$. By the definition of $N_\cL(G; \sig)$, $G'$ is an interior forest for $G$ so $c(G') + 1 = (k + \ell - |V|)/2 + 2$ by Corollary \ref{C:dimension-interior-forests}.
Moreover, there must also exist geodesic stars from $p$ to $(Q_1, r_1 - 1/n)$ and from $(Q_2, r_2 + 1/n)$ to $q$. In summary, using the notation of introduced at the beginning of Section \ref{S:weighted-stars} we have shown that 
$$
N_\cL(G; \sig) \sset \operatorname{Star}(Q_1, r_1 - 1/n ; Q_2, r_2 + 1/n; W). 
$$
From here we will complete the proof using either Proposition \ref{P:weighted-stars-upper-bound} (for the dimension bound) or Lemma \ref{L:upper-bd-3-stars} (for the countability claim when $d(G) = 0$).

\textbf{Step 3: The dimension bound.} \qquad
First, observe that if $Z+ f_{X, Y}$ were independent of the function
\begin{equation}
\label{E:star-function}
\Theta(\cdot) := \operatorname{Star}(Q_1, r_1 - 1/n ; Q_2, r_2 + 1/n; \cdot),
\end{equation}
then by Proposition \ref{P:weighted-stars-upper-bound} and the fact that $\dim (W(Z+ f_{X, Y})) = 2 + (k + \ell - |V|)/2$ we would have that almost surely,
\begin{align*}
\dim_{1:2:3}(N_\cL(G)) &\le \dim_{1:2:3}(\Theta(W(Z+ f_{X, Y})) \\
&\le 12 + \frac{k + \ell - |V|}{2}- \frac{k(k+1)}{2} - \frac{\ell(\ell+ 1)}{2} = d(G).
\end{align*}
  However, there is a subtlety because $Z+ f_{X, Y}$ is not independent of $\Theta$. To deal with this, we use Lemma \ref{L:resampling-part-a} to show that the law of $(\Theta, Z+ f_{X, Y})$ is absolutely continuous with respect to the law of a pair of independent random variables.
  First, observe that $\Theta$ 
is measurable given the $\sig$-algebra $\cF_1$ generated by $\cL$ restricted to the following strips:
\begin{equation}
\label{E:R-na1}
(\R \X (-\infty, r_1 - 1/n])^2 \cap \Rd, \qquad (\R \X [r_2 + 1/n, \infty))^2 \cap \Rd.
\end{equation}
Next, the weight function $Z$ (which depends on the identities of the random points $z^+_{1, i}, z^-_{1, j}$) is measurable given the $\sig$-algebra $\cF_2$ generated by $\cL$ restricted to the set
$
(\R \X [r_1 + 1/n, r_2 - 1/n])^2,
$
and hence is independent of $\Theta$. Next, the functions $F_{1, i}, F_{2, j}, G_{1, i}, G_{2, j}$ are measurable given $\sig(\cF_1, \cF_2)$. Therefore since $\cS^1, \cS^2$ are independent of each other and of $\sig(\cF_1, \cF_2)$, by Lemma \ref{L:resampling-part-a}, given $\sig(\cF_1, \cF_2)$, the vector $(X_1, \dots, X_k, Y_1, \dots, Y_\ell)$ has a Lebesgue density. Therefore if $(N, N') = (N_1, \dots, N_k, N_1', \dots, N_\ell')$ is a standard normal vector, independent of all else, the law of $(\Theta, Z+ f_{X, Y})$ is absolutely continuous with respect to the law of $(\Theta, Z+ f_{N, N'})$ which is the desired result.
	
	\textbf{Step 4: Countability when $d(G) = 0$.} \qquad First, by noting that $k, \ell \le 3$ and using Lemma \ref{L:graph-theory}, we see that the only way to have $d(G) = 0$ is if $k = \ell = 3, |V| = 6$. Therefore $\dim W = 2$. Since $W$ is slide-invariant, recalling the map $L_3$ from Lemma \ref{L:upper-bd-3-stars}, we have that
	$
	W = L_3^{-1}(\bb) \X L_3^{-1}(\bc)
	$ 
	for some $\bb, \bc \in \R^2$. Hence 
	\begin{align*}
	|N_\cL(G; \sig)| &\le |\operatorname{Star}(Q_1, r_1 - 1/n ; Q_2, r_2 + 1/n; L_3^{-1}(\bb) \X L_3^{-1}(\bc))| \\
	&\le |\operatorname{Star}(Q_1, r_1 - 1/n; L_3^{-1}(\bb)) \X \operatorname{Star}_R(Q_2, r_2 + 1/n; L_3^{-1}(\bc))|.
	\end{align*}
Lemma \ref{L:upper-bd-3-stars} then gives that $|N_\cL(G; \sig)| \le 135^2$, as desired. 
\end{proof}

\begin{prop}
	\label{P:lower-bd}
	For all $G \in \sG$, a.s. we have
	$
	\dim_{1:2:3}(N_\cL(G)) \ge d(G),
	$
	and if $d(G) = 0$, then $N_\cL(G)$ is infinite.
\end{prop}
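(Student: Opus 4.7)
The plan is to invert the cut-and-paste decomposition from the proof of Proposition \ref{P:upper-bd}: we build an interior forest realising $G$ inside a fixed rational time strip, plant attractor regions on either side of it via the Airy sheet, and then invoke Proposition \ref{P:star-lower-bd} to produce an appropriately large set of endpoint pairs $(p; q)$ whose networks have graph exactly $G$.

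Let $G = (V, E) \in \sG$ have source degree $k$ and sink degree $\ell$, and fix an interior forest $G' = (V', E')$ for $G$. For a large parameter $\al > 0$ to be chosen later, Lemma \ref{L:existence-of-forests} yields almost surely rational $\bx \in \Q^k_<, \by \in \Q^\ell_<$ and deterministic $\ba \in \R^k, \bb \in \R^\ell$ satisfying $G^{0,1}(\bx, \by, \operatorname{Adm}(G')) = G'$, with admissible sums $a_i + \cL(x_i, 0; y_j, 1) + b_j$ in $(0, 1)$ and inadmissible sums $< -\al$. Let $Z:E(G')\to \R$ be the edge-length function on $G'$ shifted by $-\ba, -\bb$ on the boundary edges $e_p^i, e_q^j$; by Lemma \ref{L:face-to-linear-relations} and Corollary \ref{C:dimension-interior-forests}, the slide-invariant affine set $W := W(Z)$ of compatible star-weight pairs has $\dim W = (k + \ell - |V|)/2 + 2$.

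Using the independent-increments property of $\cL$ (Definition \ref{D:L-unique}.2) together with Lemma \ref{L:sheet-shenanigans}, plant with positive probability favourable Airy sheets of small scale $\de$ in the strips $\R \X [-\de^3, 0]$ and $\R \X [1, 1+\de^3]$, arranged so that paths landing near $(x_i, 0)$ receive a landscape bonus of $a_i + O(\be)$, paths exiting near $(y_j, 1)$ receive a bonus of $b_j + O(\be)$, and any path avoiding these attractor positions loses at least $\al$ in weight. Proposition \ref{P:star-lower-bd} then gives, for each $n \in \N$, compact sets $K_1 \sset \R \X (-\infty, -\de^3)$ and $K_2 \sset \R \X (1+\de^3, \infty)$ with
\[
\dim_{1:2:3}\bigl[\operatorname{Star}(\bx, -\de^3; \by, 1+\de^3; g(W)) \cap (K_1 \X K_2)\bigr] \ge \dim W + 10 - \tfrac{k(k+1)}{2} - \tfrac{\ell(\ell+1)}{2} = d(G),
\]
with the final identity via Lemma \ref{L:graph-theory}. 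For each such $(p; q)$, concatenating the forward $k$-star at $p$, the planting-strip geodesics into $(\bx, 0)$, the interior forest $G'$, the planting-strip geodesics out of $(\by, 1)$, and the reverse $\ell$-star at $q$ produces a geodesic network with graph exactly $G$: the $g(W)$ compatibility makes all admissible-pair concatenations geodesics of equal total length; the $\al$-penalty forbids any geodesic using an inadmissible pair; and Lemma \ref{L:sheet-shenanigans}(1) combined with the no-bubble statement Lemma \ref{L:rightmost-geods}.3 rules out spurious branching in the planting strips.

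Since the construction succeeds with positive probability, $\dim_{1:2:3}(N_\cL(G)) \ge d(G)$ with positive probability, and hence almost surely by the ergodicity of the $1:2:3$-rescaling from Lemma \ref{L:invariance}.5 (Remark \ref{R:ergodicity}). In the critical case $d(G) = 0$, Lemma \ref{L:graph-theory} forces $k = \ell = 3$ and $|V| = 6$, so $\dim W = 2$; the construction then produces at least one point of $N_\cL(G)$ in any compact space-time region with positive probability, and applying independent time-shifts of the construction (via Definition \ref{D:L-unique}.2) together with Borel--Cantelli gives infinitely many distinct points in $N_\cL(G)$ almost surely. The main obstacle is the identification $G(p; q) = G$ in the pasting step: it requires coordinating the parameters $\al, \de, \be, \ep$ across Lemmas \ref{L:existence-of-forests} and \ref{L:sheet-shenanigans} consistently, and excluding a measure-zero bad set where star-weight pairs produced by Proposition \ref{P:star-lower-bd} land on the boundary of $g(W)$ or induce unintended coalescences beyond the designated vertices in $V \smin \{p, q\}$.
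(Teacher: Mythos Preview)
Your overall architecture matches the paper's: plant an interior forest on $[0,1]$, flank it with attractor strips via Lemma \ref{L:sheet-shenanigans}, and invoke Proposition \ref{P:star-lower-bd} on the outer regions. However, the step you yourself flag as ``the main obstacle'' is a genuine gap, and the paper resolves it in a way you have not anticipated.

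You assert that the pasted network has graph \emph{exactly} $G$, and propose to handle deviations by excising a ``measure-zero bad set'' of unintended coalescences. This does not work as stated: the construction gives no control over whether the $k$-star at $p$ is actually a $(k{+}1)$-star, or whether extra branching occurs in the forest region, and there is no a priori reason such exceptions form a set of lower $1{:}2{:}3$-dimension. The paper does \emph{not} attempt to prove $G(p;q)=G$. Instead it shows only that $G$ is a \emph{subgraph} of $G(p;q)$, obtaining
\[
\dim_{1:2:3}\Bigl(\bigcup_{H\in\sG:\,G\subset H} N_\cL(H)\Bigr)\ \ge\ d(G),
\]
and then feeds this back through the already-proven upper bound (Proposition \ref{P:upper-bd}): for any $H\in\sG$ with $G\subsetneq H$ one has $d(H)<d(G)$, so those strata have strictly smaller dimension and cannot account for the full $d(G)$. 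In the boundary case $d(G)=0$ one has $k=\ell=3$, $|V|=6$, which is maximal in $\sG$, so no $H\supsetneq G$ exists and $G(p;q)=G$ is automatic. This interplay between the lower and upper bounds is the missing idea.

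A secondary omission: to apply Proposition \ref{P:star-lower-bd} you must verify the intersection condition \eqref{E:intersection-condition-star}, in particular that $W$ meets $S^-(\bx,-\de^3)\times S^+(\by,1+\de^3)$. The paper arranges this explicitly (Event 2, condition 1) by forcing $(-100,100)^3\subset S^\pm$ via a separate positive-probability event on the outer strips; your sketch does not address it.
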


Together with Proposition \ref{P:upper-bd} and the results of Section \ref{S:at-most-27}, Proposition \ref{P:lower-bd} yields Theorem \ref{T:geodesic-networks} and parts $1$ and $2$ of Theorem \ref{T:Hausdorff-dimension}. See Figure \ref{fig:lower-bd} for a sketch of the proof.

%To prove Proposition \ref{P:lower-bd}, the basic idea is to paste together the geodesic network $\Ga(p; q)$ with network graph $G$ from pairs of geodesic stars at $p$ and $q$ and a planted interior forest for $G$. We can plant the interior forest by an application of Lemma \ref{L:existence-of-forests}, and we can show any geodesics will be forced to use this interior forest by planting good regions in $\cL$ using Lemma \ref{L:sheet-shenanigans}. The weight condition in Lemma \ref{L:existence-of-forests}.2 will help balance out difference paths through the forest. An application of Proposition \ref{P:star-lower-bd} will allow us to glue weighted geodesic stars to either end of this forest to construct geodesic networks. The actual proof is somewhat delicate because we need to carefully construct events that will facilitate this gluing and leave us with geodesic networks.

	\begin{figure}
	\centering
	\includegraphics[scale=0.9]{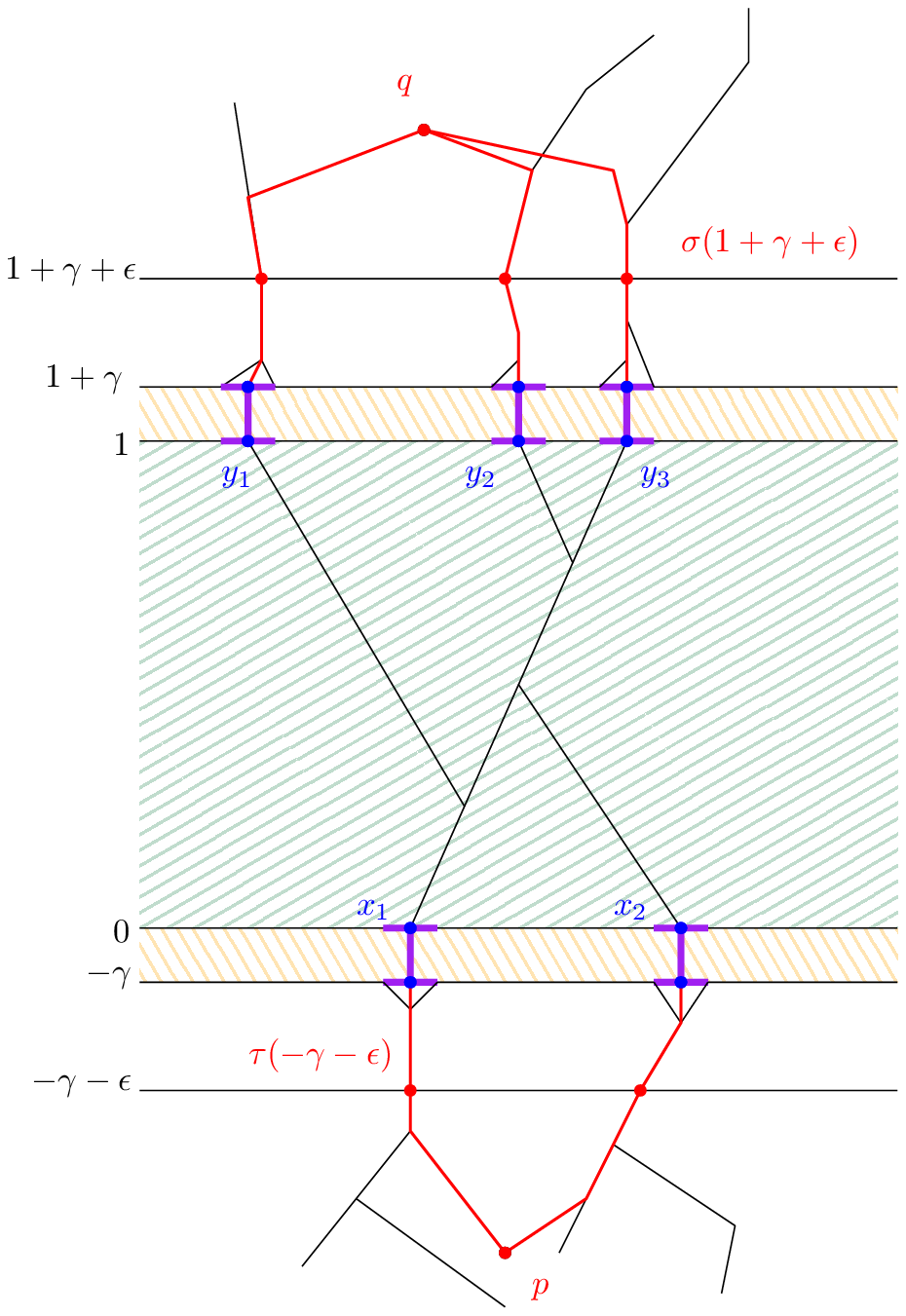}
	\caption{Elements of the construction in the proof of Proposition \ref{P:lower-bd}. The basic idea is that on the set $S$ (shaded in green) we plant an interior forest from $(\bx, 0)$ to $(\by, 1)$ by an application of Lemma \ref{L:existence-of-forests}. On the yellow sets $T_\ga^1, T_\ga^2$, we use Lemma \ref{L:sheet-shenanigans} to plant small favourable regions around the points $x_i, y_j$, indicated in purple. Geodesics moving through the strip $\R \X (-\ga, 1 + \ga)$ will use these regions, and by a coalescence argument will also use essentially follow paths in the planted interior forest. Given this construction, we will look for pairs $(p; q) \in (-\infty, -\ga) \X \R \X (1 + \ga, \infty) \X \R$ which support geodesic stars whose weights are chosen so that every path through the planted forest has equal weight. Proposition \ref{P:star-lower-bd} will yield a lower bound on the dimension of such pairs.}
	\label{fig:lower-bd}
\end{figure}

\begin{proof}
	Fix $G = (V, E) \in \sG$.
	By ergodic properties of $\cL$ (Remark \ref{R:ergodicity}), it is enough to show that with positive probability, \begin{equation}
	\label{E:lower-want}
	\dim_{1:2:3}(N_\cL(G)) \ge d(G) \qquad \text{ and } \qquad N_\cL(G) \ne \emptyset.
	\end{equation} 
	We will construct an event where \eqref{E:lower-want} holds. We first construct events for $\cL$ restricted to each of the five strips
	\begin{align*}
	S := (\R \X [0, 1])^2& \cap \Rd, \qquad T^\ga_1 := (\R \X [-\ga, 0])^2 \cap \Rd, \quad T^\ga_2 := (\R \X [1, 1 +\ga])^2 \cap \Rd, \\
	R^\ga_1 &:=(\R \X (-\infty, -\ga])^2 \cap \Rd, \quad R^\ga_2 := (\R \X [1 +\ga, \infty))^2 \cap \Rd,
	\end{align*}
	where $\ga > 0$ is a small parameter whose value we set later.
	Note that $\cL$ is independent on each of these five strips, so random elements defined on one strip do not affect the distribution on any other. To set these events, let $G'$ be any interior forest for $G$, and let  $\bx \in \Q^k_<, \by \in \Q^\ell_<$ and $\ba \in \R^k, \bb \in \R^\ell$ be defined as in Lemma \ref{L:existence-of-forests} for the forest $G'$ with $\al = 1000$. Note that these random variables only depend on $\cL|_S$. 
	
	Our events will be constructed in terms of small positive parameters $\de, \ep, \ep', \ga$ and large positive parameters $\be, \be', \be''$ that may depend on $\bx, \by$. The relationships between these parameters are given throughout the constructions in \eqref{E:conditions-1}, \eqref{E:conditions-2}, and \eqref{E:conditions-3}.
	
	\textbf{Event $1$: $\cL|_S$.} \qquad Let $A(\de, \ep')$ denote the event where the following conditions hold:
	\begin{enumerate}[nosep, label=(\Roman*)]
		\item We have $x_i - x_{i-1} > \de$ and $y_j - y_{j-1} > \de$ for $i \in \II{2, k}, j \in \II{2, \ell}$. Moreover, $|x_i|, |a_i|, |b_i|, |y_i|$ are bounded above by $\de^{-1}$ for any $i$.
		\item In the embedding of $G'$ as the set
		$$
		\Ga(\bx, \by, G') := \bigcup_{(i, j) \in \operatorname{Adm}(G')} \Ga(x_i, 0; y_j, 1),
		$$
		no vertices are sent to the strips $\R \X (0,\de]$ and $[1-\de, 1) \X \R$.
		\item For every $(i, j) \in \II{1, k} \X \II{1, \ell}$, let $\pi^{i, j}$ denote the unique geodesic from $(x_i, 0)$ to $(y_j, 1)$. Then if $|x-x_i| < \ep'$ and $|y - y_j| < \ep'$, then 
		any geodesic $\pi$ from $(x, 0)$  to $(y, 1)$ satisfies $[\de, 1- \de] \sset O(\pi, \pi^{i, j})$.
	\end{enumerate}
	Conditions (I) and (II) hold with positive probability for all $\de < \de_0$. Condition (III) holds for all $\ep' < \ep'_0(\de)$ by Proposition \ref{P:overlap-metric}(ii) and the continuity of $\cL$. 
	Therefore $\P A(\de, \ep') > 0$ for all
	\begin{equation}
	\label{E:conditions-1}
	\de < \de_0, \qquad  \ep' < \ep'_0(\de).
	\end{equation}
	
	\textbf{Event $2$: $\cL|_{R^\ga_1}, \cL|_{R^\ga_2}$, and a shape constraint.} \qquad
	Define the event $B(\ga, \de, \be, \be', \ep, \ep')$ as follows:
	\begin{enumerate}[nosep, label=\arabic*.]
		\item If $k = 3$, then
		$$
		\cL(0, -\ga - 1; x_i, -\ga)- \cL(0, -\ga - 1; x_2, -\ga) > 200,
		$$
		for $i = 1, 3$. Similarly, if $\ell = 3$, then 
		$$
		\cL(y_i, 1 + \ga; 0, 2 + \ga)- \cL(y_2, 1 + \ga; 0, 2 + \ga) > 200,
		$$
		for $i = 1, 3$. These conditions guarantee that $(-100, 100)^3 \sset S^-(\bx, -\ga)$ if $k = 3$ and $(-100, 100)^3 \sset S^+(\by, 1+ \ga)$ if $\ell = 3$, where $S^-(\bx, -\ga), S^+(\by, 1+ \ga)$ are as in Proposition \ref{P:star-lower-bd}.
		\item Define the compact sets
		$$
		K^1_{\be} = [-\be, \be] \X [-1-\ga, - \be^{-1} - \ga], \qquad K^2_{\be} =[-\be, \be] \X [1 + \be^{-1} + \ga, 2 + \ga]
		$$
		and let $K_{\be} = K^1_{\be} \X K^2_{\be}$.
		For every slide-invariant affine set $W \sset \R^{k + \ell}$ with dimension $(k + \ell - |V|)/2 + 2$ and satisfying
		$
		W \cap (-100, 100)^{k + \ell} \ne \emptyset,
		$
		we have 
		\begin{align*}
		&\dim_{1:2:3}[\operatorname{Star}(\bx, -\ga; \by, 1 + \ga; g(W)) \cap K_{\be}
		\ge d(G),
		\end{align*}
		where $g(W)$ is the smallest slide-invariant set containing $W \cap (-100, 100)^{k + \ell}$.
		\item 
		For all points $p \in K^1_{\be}$, $i \in \II{1, k}$, and $x$ with $|x-x_i| \le \ep'$, any geodesic from $p$ to $(x, -\ga)$ overlaps with the unique geodesic $\tau^i$ from $(0, -2)$ to $(x_i, -\ga)$ at time $-\ga - \ep$. Similarly, for all points $q \in K^2_{\be}$, $i \in \II{1, \ell}$ and $y$ with $|y-y_i| \le \ep'$, any geodesic from $(y, 1+\ga)$ to $q$ overlaps with the unique geodesic $\sig^i$ from $(y_i, 1+\ga)$ to $(0, 2)$ at time $1 + \ga + \ep$.  The vectors $\tau(-\ga-\ep), \sig(1 + \ga + \ep)$ are included in Figure \ref{fig:lower-bd}.
		
		\item For any point $p \in K^1_{\be}$ (or $q \in K^2_{\be}$) the function $\cL_p(y) := \cL(p; y, -\ga)$ (or $\cL_q(y) := \cL(y, 1 + \ga; q)$) is H\"older-$1/3$ with constant $\be'$ on the interval $[-\de^{-1}, \de^{-1}]$. Moreover, for $u = (x, t; y, t + s) \in R^\ga_1 \cup R^\ga_2 \cup S$ we have the general shape estimate
		\begin{equation*}
		\left|\cL(u) + \frac{(x - y)^2}{s} \right|\le \be' s^{1/3} \log^{4/3}\lf(\frac{2(\|u\|_\infty + 2)^{3/2}}{s}\rg)\log^{2/3}(\|u\|_\infty + 2).
		\end{equation*}
	\end{enumerate}
	Given arbitrary $\de, \ga$, we claim that $A(\de, \ep') \cap B(\ga, \de, \be, \be', \ep, \ep')$ holds with positive probability whenever $\P A(\de, \ep') > 0$ and 
	\begin{equation}
	\label{E:conditions-2}
	\be > \be_0, \qquad \be' > \be'_0(\be, \de), \qquad \ep < \ep_0(\be), \qquad \ep' < \ep_0'(\ep, \be).
	\end{equation}
	Condition on the values of $\bx, \by$. Under this condition, point $1$ holds with positive probability by the Brownian Gibbs property (Lemma \ref{L:BG-property}). Point $2$ holds with positive probability given $\bx, \by$ and point $1$ for large enough $\beta$ by Proposition \ref{P:star-lower-bd}. The condition \eqref{E:intersection-condition-star} is simplified because $(-100, 100)^3 \sset S^-(\bx, -\ga)$ if $k = 3$ and $(-100, 100)^3 \sset S^+(\by, 1+ \ga)$ if $\ell = 3$. Point $3$ holds almost surely for all small enough $\ep > 0$ and $\ep'$ small enough given $\ep$ since almost surely, there are no rational $2$-stars points (Theorem \ref{T:star-computation}) and the local compactness of the overlap topology (Proposition \ref{P:overlap-metric}). Given the event $A(\de, \ep')$ and Conditions $1, 2, 3,$
	condition $4$ holds almost surely for large enough $\be' >0$ by Theorem \ref{T:landscape-shape}. Note that by independence and stationarity of time increments that there is no dependence on $\ga$ in \eqref{E:conditions-2}.
	
	\textbf{Event $3$: $\cL|_{T^\ga_1}$ and $\cL|_{T^\ga_2}$.} \qquad Define
	$
	\cS^1_\ga(x, y) = \cL(x, -\ga; y, 0), \cS^2_\ga(x, y) = \cL(y, 1; x, 1 + \ga),
	$
	and with notation as in Lemma \ref{L:sheet-shenanigans} let 
	$$
	C(\ga, \ep', \be'') = T(\cS^1_\ga; \bx, 1, \ba + \be'' \mathbf{1}, \ep', \ga^{1/3}) \cap T(\cS^2_\ga; \by, 1, \bb + \be'' \mathbf{1}, \ep', \ga^{1/3}).
	$$
Lemma \ref{L:sheet-shenanigans} and landscape rescaling guarantees that given $\bx, \by$, the event $C(\ga, \ep, \be'')$ holds with positive probability for all 
	\begin{equation}
	\label{E:conditions-3}
	\ga < \ga_0, \qquad \be'' > 1.
	\end{equation}
	\textbf{Final Step: Constructing the networks.} \qquad We work on the event
	$$
	A(\de, \ep') \cap B(\ga, \de, \be, \be', \ep, \ep') \cap C(\ga, \ep', \be'')
	$$
	which holds with positive probability under conditions \eqref{E:conditions-1}, \eqref{E:conditions-2}, and \eqref{E:conditions-3}. 
	We may also assume that the parameters have the relative sizes
	\begin{equation}
	\label{E:conditions-4}
	(\be'')^{-1} \ll \ga \ll \ep' \ll \ep \ll \de \wedge \be^{-1} \wedge (\be')^{-1}.
	\end{equation}
	First, for every $(p; q) \in \Rd$, let $\cL^i_j(p; q)$ be the supremum of $\|\pi\|_\cL$ over paths $\pi$ from $p$ to $q$ satisfying
	\begin{equation}
	\label{E:basic-element}
	\max(|\pi(-\ga) - x_i| , |\pi(0) - x_i|) \le \ep', \qquad \max(|\pi(1) - y_j| , |\pi(1 + \ga) - y_j|)\le \ep'.
	\end{equation} 
	We write $\cL^i(p; q)$ if we only impose the first inequality and $\cL_j(p; q)$ if we only impose the second. We will only use these definitions when the relevant times $-\ga, 0, 1, 1 + \ga$ are in the domain of $\pi$. First, by the assumptions \eqref{E:conditions-4}, the shape conditions for $\cS^1_\ga, \cS^2_\ga$ imposed by $C(\ga, \ep', \be'')$, and the shape condition in $4$, for any $(p; q) \in K_\be$ we have
\begin{equation}
\label{E:max-over-ij}
\cL(p; q) = \max_{i \in \II{1, k}, j \in \II{1, \ell}} \cL^i_j(p; q).
\end{equation}
Next, by upper semi-continuity of path length, the supremum $\cL^i_j(p; q)$ is always attained by at least one path $\pi^{i, j}_{p, q}$. Moreover, $\pi^{i, j}_{p, q}$ is a geodesic on each of the five intervals 
	$$
	[s, -\ga], \quad [-\ga, 0],\quad  [0, 1],\quad  [1, 1 + \ga], \quad  [1 + \ga, t].
	$$
	In particular, if $(p; q) \in K_\be$, then by points (III), $3$ we have
	\begin{align}
	\label{E:piij-conditions}
	&\pi^{i, j}_{p, q}(- \ga - \ep) = \tau^i(-\ga - \ep), \qquad &&\pi^{i, j}_{p, q}(1 + \ga + \ep) = \sig^j(1 + \ga + \ep) \\
	&\pi^{i, j}_{p, q}(\de) = \pi^{i, j}(\de), \qquad &&\pi^{i, j}_{p, q}(1 - \de) =  \pi^{i, j}(1-\de),
	\end{align}
	and so $\cL^{i, j}(p; q)$ is equal to
	\begin{equation}
	\label{E:split-L-1}
	\begin{split}
	 \cL(p; \bar \tau^i(-\ga - \ep)) &+ \cL^i(\bar \tau^i(-\ga - \ep); \bar \pi^{i, j}(\de)) + \cL(\bar \pi^{i, j}(\de), \bar \pi^{i, j}(1- \de)) \\
	 &+ \cL_j(\bar \pi^{i, j}(1- \de); \bar \sig^j(1 + \ga + \ep)) + \cL(\bar \sig^j(1 + \ga + \ep); q).
	\end{split}
	\end{equation}
	Now, define
\begin{align*}
X_i &= \cL^i(\bar \tau^i(-\ga - \ep); \bar \pi^{i, j}(\de)) - \cL(\bar \tau^i(-\ga - \ep); x_i, -\ga) - \cL(x_i, 0; \bar \pi^{i, j}(\de)), \\
\qquad Y_j &= \cL_j(\bar \pi^{i, j}(1- \de); \bar \sig^j(1 + \ga + \ep)) - \cL(y_j, 1 + \ga; \bar \sig^j(1 + \ga + \ep)) -  \cL(\bar \pi^{i, j}(1-\de), y_j, 1).
\end{align*}
Then by points $3$ and (III) again we can rewrite \eqref{E:split-L-1} to get
\begin{equation}
\label{E:split-L-2}
\cL^{i, j}(p; q) = \cL(p; x_i, -\ga) + X_i + \cL(x_i, 0; y_j, 1) + Y_j + \cL(y_j, 1+ \ga; q).
\end{equation}
Now, observe that by the H\"older continuity condition in $4$ and the shape conditions for $\cS^1_\ga, \cS^2_\ga$ imposed by $C(\ga, \ep', \be'')$, as long as $\ep'$ is set small enough given  $\be'$ we have 
\begin{equation}
X_i \in [a_i + \be'' - 1, a_i + \be'' + 2], \qquad Y_j \in [b_j + \be'' - 1, b_j + \be'' + 2],
\end{equation}
and so from the definition of $\ba, \bb$ we have
\begin{equation}
\label{E:XiYj-deff}
X_i + \cL(x_i, 0; y_j, 1) + Y_j \in (-2 + 2 \be'', 4 + 2 \be'')
\end{equation}
whenever $(i, j) \in \operatorname{Adm}(G')$ and 
\begin{equation}
\label{E:XiYj-bad}
X_i + \cL(x_i, 0; y_j, 1) + Y_j < -900 + 2 \be''
\end{equation}
otherwise. Our next aim is to appeal to Lemma \ref{L:face-to-linear-relations}. Let $p_1, \dots, p_k, q_1, \dots, q_\ell$ denote the source and sink vertices in $G'$. We can define a weight map $Z:E(G') \to \R$  so that
$$
d_Z(p_i, q_j) = X_i + \cL(x_i, 0; y_j, 1) + Y_j 
$$
for all $(i, j) \in \operatorname{Adm}(G')$. Indeed, we can first weight all edges according their lengths in the embedding of $G'$ as the set $\Ga(\bx, \by, G')$, and then add $X_i$ to the weight of the edge emanating from $p_i$ and $Y_j$ to the weight of the edge coming into $q_j$ for all $i, j$. By Lemma \ref{L:face-to-linear-relations} and Corollary \ref{C:dimension-interior-forests}, the set $W(Z)$ defined in that lemma has dimension $(k + \ell - |V|)/2 + 2$. Moreover, by \eqref{E:XiYj-deff}, $W(Z) \cap (-100,100)^{k+\ell} \ne \emptyset$ so by point $2$ we have
\begin{align*}
&\dim_{1:2:3}[T(Z)] \ge d(G), \quad \text{ where } \quad T(Z) := \operatorname{Star}(\bx, -\ga; \by, 1 + \ga; g(W(Z))) \cap K_\be.
\end{align*}
For $(p; q) \in T(Z)$, by \eqref{E:split-L-2} all of the values $\cL^i_j(p; q)$ are equal to a common value $L(p; q)$ for $(p; q) \in \operatorname{Adm}(G')$. Moreover, by \eqref{E:XiYj-bad} we have $\cL^i_j(p; q) < L(p; q)$ for $(i, j) \notin \operatorname{Adm}(G')$. Hence by \eqref{E:max-over-ij}, for all $(p; q) \in T(Z)$ we have
$$
\cL(p; q) = L[p, q] = \cL^i_j(p; q)
$$
for all $(i, j) \in \operatorname{Adm}(G')$. Now, by \eqref{E:piij-conditions}, the geodesic network $\Ga(p; q)$ contains as a subset
$$
\bigcup_{(i, j) \in \operatorname{Adm}(G')} \Ga(\bar \pi^{i, j}(\de), \bar \pi^{i, j}(1- \de)).
$$
By point (II), this set is a directed embedding of $G'$. Next, by \eqref{E:piij-conditions} $\Ga(p; q)$ contains all of the points
$$
\bar\tau^i(-\ga -\ep), i \in \II{1, k}, \qquad \bar \sig^j(1 + \ga + \ep), j \in \II{1, \ell}.
$$ 
Since there are geodesic $k$- and $\ell$-star from $p$ to $(\bx, -\ga)$ and from $(\by, 1 + \ga)$ to $q$, respectively, point $3$ implies that there are geodesic $k$- and $\ell$-star from $p$ to $\bar\tau(-\ga -\ep)$ and from $\bar \sig(1 + \ga + \ep)$ to $q$. These geodesic stars are also contained in $\Ga(p; q)$. Hence $\Ga(p; q)$ contains a directed embedding of $G$, so $G$ is a subgraph of $G(p; q)$. In summary, we have shown that with positive probability:
\begin{equation}
\label{E:dim123-almost!}
\dim_{1:2:3} \bigcup_{H \in \sG: G \sset H} (N_\cL(H)) \ge d(G), \qquad \bigcup_{H \in \sG: G \sset H} (N_\cL(H)) \ne \emptyset.
\end{equation}
Now, if $G$ is such that $d(G) = 0$, then the only graph $H \in \sG$ containing $G$ as a subgraph is $G$ itself, so \eqref{E:dim123-almost!} implies \eqref{E:lower-want}. If $d(G) > 0$, then for any graph $H \ne G$ in $\sG$ with $G \sset H$ we have $d(G) < d(H)$. Therefore combining \eqref{E:dim123-almost!} with the upper bound in Proposition \ref{P:upper-bd} yields \eqref{E:lower-want}.
\end{proof}

\section{Dense geodesic networks}
\label{S:dense}

In this section, we prove Theorem \ref{T:Hausdorff-dimension}.3, which classifies the dense and nowhere dense networks in $\cL$. It is easier to show that if a graph $G \in \sG$ cannot be disconnected by removing a single edge, then $N_\cL(G)$ is nowhere dense. This follows from the next lemma.

\begin{lemma}
\label{L:neck-lemma}
Suppose that $G = (V, E) \in \sG$ and that there is no edge $e \in E$ such that the graph $G' = (V, E \smin \{e\})$ is disconnected (as an undirected graph). Then almost surely, the closure $\close{N_\cL(G)}$ of $N_\cL(G)$ satisfies $\dim_{1:2:3}(\close{N_\cL(G)}) \le 7$.
\end{lemma}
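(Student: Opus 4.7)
The plan is to adapt the cut-and-paste decomposition from the proof of Proposition \ref{P:upper-bd}, but to cut at rational interior times rather than at the endpoints of $(p; q)$. The starting point is a graph-theoretic observation: if $G = (V, E) \in \sG$ has no disconnecting edge then $G$ is $2$-edge-connected, so every edge lies on a cycle. Since all cycles in $G$ must pass through both the source $p$ and the sink $q$ by point $3$ of Theorem \ref{T:geodesic-networks}, $G$ contains two edge-disjoint directed paths from $p$ to $q$, and so any $(p;q) \in N_\cL(G)$ supports two disjoint geodesics from $p$ to $q$.

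To handle the closure, fix $(p; q) = (x, s; y, t) \in \close{N_\cL(G)}$ and a sequence $(p_n; q_n) \to (p; q)$ in $N_\cL(G)$ with accompanying disjoint geodesics $\pi_n, \tau_n$. For any rational $r_1, r_2$ with $s < r_1 < r_2 < t$, Lemma \ref{L:sparse-space-1}(i) ensures that $\pi_n(r_i), \tau_n(r_i)$ take values in a finite random set, so after passing to a subsequence they stabilize to values $z_1 \ne z_2$ at $r_1$ and $w_1 \ne w_2$ at $r_2$. By Lemma \ref{L:regularity}.1 the geodesics on $[r_1, r_2]$ between these rational endpoints are unique, so extracting overlap-subsequential limits via Proposition \ref{P:overlap-metric} produces two distinct geodesics $\pi, \tau$ from $p$ to $q$ whose restrictions to $[r_1, r_2]$ form a $2$-$2$ disjoint configuration through the fixed rational intermediate points.

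This yields a countable decomposition of $\close{N_\cL(G)}$ indexed by rational data $(r_1, r_2, z_1, z_2, w_1, w_2)$ together with the localization parameters used in the proof of Proposition \ref{P:upper-bd}. For each piece, the cut-and-paste analysis at the interior times $r_1$ and $r_2$ yields a weight vector lying in a slide-invariant affine set $W$ determined by the $2$-$2$ disjoint interior forest $G'$. By Lemma \ref{L:face-to-linear-relations} and Corollary \ref{C:dimension-interior-forests}, $W$ has linear dimension $c(G') + 1 = 3$. Proposition \ref{P:weighted-stars-upper-bound} applied with $k = \ell = 2$ and $\dim W = 3$ then gives
\[
d(2, 2, W) = \dim W + 10 - 3 - 3 = 7,
\]
so each piece has $d_{1:2:3}$-dimension at most $7$, and countable stability of Hausdorff dimension yields $\dim_{1:2:3}(\close{N_\cL(G)}) \le 7$.

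The main obstacle is that star structure at the endpoints $p$ and $q$ need not be preserved under closure: the limiting geodesics $\pi, \tau$ may share a common initial or terminal segment rather than diverging at $p$ and $q$, so $p$ and $q$ themselves need not be $2$-star points. Cutting at interior rational times $r_1, r_2$ sidesteps this issue because disjointness at $r_1$ and $r_2$ is enforced by the fixed rational labels $z_1 \ne z_2, w_1 \ne w_2$. A secondary technical point, handled as in Step 3 of the proof of Proposition \ref{P:upper-bd} via Lemma \ref{L:resampling-part-a}, is establishing sufficient independence between the weight vector defining $W$ (which is determined by $\cL$ on the strip $[r_1, r_2]$) and the star data at $p$ and $q$, so that Proposition \ref{P:weighted-stars-upper-bound} applies to each piece.
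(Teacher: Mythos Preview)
Your proposal contains a genuine gap rooted in a misconception. The ``main obstacle'' you identify---that the limiting geodesics $\pi,\tau$ might share an initial or terminal segment---does not occur: overlap convergence preserves disjointness. Indeed, if $\pi_n\to_o\pi$ and $\tau_n\to_o\tau$ via Proposition~\ref{P:overlap-metric} and $\pi(r)=\tau(r)$ for some interior $r$, then for all large $n$ we have $r\in O(\pi_n,\pi)\cap O(\tau_n,\tau)$, forcing $\pi_n(r)=\pi(r)=\tau(r)=\tau_n(r)$ and contradicting the disjointness of $\pi_n,\tau_n$. Moreover, your proposed workaround would not fix the problem even if it existed: applying Proposition~\ref{P:weighted-stars-upper-bound} with $k=\ell=2$ requires a forward $2$-star from $p$ to $(\bz,r_1)$ and a reverse $2$-star from $(\bw,r_2)$ to $q$, i.e.\ disjointness of $\pi,\tau$ on all of $(s,r_1]$ and $[r_2,t)$, not merely $z_1\ne z_2$ and $w_1\ne w_2$. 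So cutting at interior times does not sidestep the star requirement at $p,q$.

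Once disjointness of the limit is established, the paper's argument is immediate and avoids the entire cut-and-paste apparatus: the limit $(p;q)$ lies in $N_\cL(G')$ for some $G'\in\sG$ with source and sink degree at least $2$, and for any such $G'$ the already-proven part~1 of Theorem~\ref{T:Hausdorff-dimension} gives
\[
d(G')=12-\tfrac{|V'|+\deg^2(p)+\deg^2(q)}{2}\le 12-\tfrac{2+4+4}{2}=7.
\]
Since $\sG$ is finite, $\close{N_\cL(G)}\subset\bigcup_{G'}N_\cL(G')$ is a finite union of sets of $d_{1:2:3}$-dimension at most $7$. This bypasses the rationality issues for $z_i,w_j$, the re-derivation of $\dim W$, and the independence step via Lemma~\ref{L:resampling-part-a}.
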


\begin{proof}
Suppose that $G$ cannot be disconnected along a single edge. Then we claim there are two edge-disjoint directed paths in $G$ connecting the source and sink vertices $p$ and $q$. Indeed, consider any directed embedding $\Phi(G)$ of $G$, and consider the leftmost and rightmost paths $\pi, \tau$ from $p$ to $q$ in this embedding. If these paths meet at a point away from the vertices $p, q$ then they must meet on an edge since all interior vertices in $G$ have degree $3$. Removing this edge would necessarily disconnect $G$, so the paths $\pi, \tau$ must be disjoint away from $p, q$, as desired.

Therefore if $(p_n, q_n) \in N_\cL(G)$ is a sequence converging to some $(p; q)$ then there are at least two disjoint geodesics $\pi^n, \tau^n$ from $p_n$ to $q_n$ for all $n$. Each of the sequences $\pi^n, \tau^n$ is precompact in the overlap topology on geodesics by Proposition \ref{P:overlap-metric}, so there is a subsequential overlap limit $(\pi, \tau)$ of $(\pi^n, \tau^n)$ and both $\pi, \tau$ are geodesics from $p$ to $q$. Since overlap convergence preserves disjointness, $\pi$ and $\tau$ are disjoint. Therefore $(p; q) \in N_\cL(G')$ for some $G' \in \sG$ with source and sink degree at least $2$. By Theorem \ref{T:Hausdorff-dimension}, $\dim(N_\cL(G')) \le 7$, yielding the lemma.
\end{proof}

Now, for every pair $k, \ell \in \II{1, 3}$, it is not hard to check that there is exactly one graph $G \in \sG$ that can be disconnected by a single edge. This follows from the full enumeration in Section \ref{S:botany}, though it is also an easy exercise for the reader. We call this graph $G_{k, \ell}$. All six choices of $G_{k, \ell}$ with $k \le \ell$ are enumerated in Figure \ref{fig:normal-networks}. The goal of the lemmas in the remainder of this section is to show that each of these networks is dense.  For these lemmas, we write $B_\ep(u)$ for the Euclidean ball of radius $\ep > 0$ centered at $u \in \R^n$.

\begin{lemma}
	\label{L:one-sided}
	Let $(p; q) \in \Rd$ be any fixed point, and let $\ep > 0$. Then almost surely, there are points $p_1, p_2, p_3 \in B_\ep(p)$ such that $G(p_k, q) = G^{k, 1}$ for $k = 1, 2, 3$.
\end{lemma}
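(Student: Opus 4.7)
The network $G^{k,1}$, being the unique graph in $\sG$ with source degree $k$, sink degree $1$, and an edge whose removal disconnects it, consists of a forward $k$-geodesic star at the source that coalesces into a single edge leading into the sink. For $k=1$ the graph $G^{1,1}$ is the two-vertex graph with one edge, and I would take $p_1 = p$ itself: the fixed pair $(p;q)$ has a unique geodesic almost surely (Theorem 12.1 of \cite{DOV}), so $G(p;q) = G^{1,1}$.

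For $k \in \{2,3\}$ the plan is a two-step \emph{locate + coalesce} strategy. First, locate a forward $k$-star point $p_k \in B_\ep(p)$ whose star $(\pi_1, \ldots, \pi_k)$ ends at $(\bx, s+\de)$ with $\bx \in \R^k_<$ clustered within distance $\eta$ of $\bar\pi^{pq}(s+\de)$, where $\pi^{pq}$ is the unique geodesic from $p$ to $q$ and $\de, \eta > 0$ are small parameters. Second, since $\bar\pi^{pq}(s+\de)$ is an interior point of a geodesic, Lemma \ref{L:rightmost-geods}.3 (no geodesic bubbles) combined with overlap continuity (Proposition \ref{P:overlap-metric}) forces every geodesic from $(\bx_i, s+\de)$ to $q$ to coincide with $\pi^{pq}|_{[s+\de+\eta',\,t]}$ for some $\eta' > 0$, provided $\eta$ was chosen small enough. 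Thus all $k$ branches of the star at $p_k$ merge completely into a single path following $\pi^{pq}$ to $q$, giving the skeleton of $G^{k,1}$. A final small perturbation of $p_k$ within the local star set, using Proposition \ref{P:sparse-space-2}(i) to avoid the finitely many intermediate branching loci of the landscape in the strip $\R \X [s+\de+\eta',t]$, ensures the resulting network is \emph{exactly} $G^{k,1}$ rather than an enlargement of it.

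The core probabilistic content is the existence of such a $p_k$ inside $B_\ep(p)$. Using the $1{:}2{:}3$-rescaling (Lemma \ref{L:invariance}.5) centered at $p$ together with Theorem \ref{T:star-computation} (which gives $\dim_{1:2:3}(\operatorname{Star}_k) = 5-k > 0$, hence $\operatorname{Star}_k \ne \emptyset$ a.s.), the quantity $\rho_k := \P(\operatorname{Star}_k \cap B_\ep(p) \ne \emptyset)$ is independent of $\ep$ by scale invariance, and positive by the non-emptiness combined with spatial ergodicity (Remark \ref{R:ergodicity}). To upgrade $\rho_k$ to $1$, I would use the fact that being a forward $k$-star point is determined by $\cL$ on an arbitrarily short forward time-strip, so by the independence of $\cL$ across disjoint time strips (Definition \ref{D:L-unique}.2) and a repeated rescaling argument, the absence of a $k$-star point in $B_\ep(p)$ forces the same absence in a sequence of conditionally independent rescaled forward strips, each of probability $\rho_k$. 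A Borel--Cantelli argument then yields $\rho_k = 1$.

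The main obstacle, in my view, is this last step of promoting positive probability to probability one: the event $\{\operatorname{Star}_k \cap B_\ep(p) \ne \emptyset\}$ is not literally a tail event in any natural filtration, and careful use of the scaling-plus-independence structure is required. A secondary technical burden is carrying out the perturbation in step (ii) that guarantees the produced network is \emph{exactly} $G^{k,1}$ and not some larger graph containing it; this is where the local finiteness afforded by Proposition \ref{P:sparse-space-2} and the no-bubbles property of Lemma \ref{L:rightmost-geods}.3 do the essential work.
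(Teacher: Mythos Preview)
Your proposal has a genuine gap in the ``locate + coalesce'' step. Knowing that $p_k$ is a forward $k$-star point to some $(\bx, s+\de)$ does \emph{not} imply that the star branches $\pi_1, \dots, \pi_k$ extend to geodesics from $p_k$ to $q$. For that you would need the weight condition
\[
\cL(p_k; x_i, s+\de) + \cL(x_i, s+\de; q) = \cL(p_k; q) \qquad \text{for every } i \in \II{1,k},
\]
which is a nontrivial linear constraint on the star weights that you have not arranged. In general the geodesics from $p_k$ to $q$ may all coincide near $p_k$ even when $p_k \in \operatorname{Star}_k$: the star can point in directions irrelevant to $q$. Your $\rho_k$ argument concerns $\operatorname{Star}_k \cap B_\ep(p)$ with no control on star direction or weights, so it cannot close this gap; and the subsequent ``perturbation to avoid branching loci'' is not well-posed until you know the candidate network already contains $G^{k,1}$.

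The paper sidesteps the weight issue entirely. Rather than seeking a raw $k$-star point, it uses ergodicity under $1{:}2{:}3$ rescaling to find scales $\de_n \to 0$ at which some $p_{\de_n}$ near the origin already satisfies $G_{\cL^{\de_n}}(p_{\de_n}; 0,0) = G^{k,1}$, exploiting the non-emptiness of $N_\cL(G^{k,1})$ established earlier. The remaining task is to transport this network from the intermediate point $(0,0)$ out to $q=(0,1)$, and this is done by a \emph{unique argmax} argument: if $y \mapsto \cL^{\de}(p_\de; y, 0) + \fA^\de(y)$ has a unique maximizer $M$ lying in a small window, then every $\cL$-geodesic from the rescaled $p_\de$ to $q$ passes through $(\de^2 M, 0)$, whence $G(p_\de;q) = G(p_\de; \de^2 M, 0) = G^{k,1}$ (the last segment is unique since $q$ is a.s.\ not a $2$-star point and $(\de^2 M,0)$ is an interior geodesic point). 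Forcing $M$ into a small window is the role of the Brownian lemma (Lemma~\ref{L:Brownian-lemma}), applied to $\fA^\de$ via its absolute continuity with respect to Brownian motion and its independence from $\cL$ on negative times. The promotion to probability one that you flagged as delicate is thus achieved by combining ergodicity of rescaling (for the $G^{k,1}$-near-origin event) with a positive-density Ces\`aro statement for the Brownian-localization event, not by a tail or Borel--Cantelli argument.
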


To prove Lemma \ref{L:one-sided}, we need a simple fact about Brownian motions.

\begin{lemma}
\label{L:Brownian-lemma}
Let $W:[-1, 1] \to \R$ be any function whose law is absolutely continuous with respect to that of a $2$-sided Brownian motion on $[-1, 1]$. For every $\de > 0$, define $W^\de:[-\de^{-2}, \de^{-2}] \to \R$ by $W^\de(y) = \de^{-1}B(\de^2 y)$, and for $\al > 0$ let $T_\de(W^\de, \al)$ be the event where
\begin{equation}
\label{E:W-de}
|x|^{2/3} > W_\de(x) + \al, \qquad x \in [-\de^{-2},- \al^{-1}]\cup [\al^{-1}, \de^{-2}].
\end{equation}
Then for any fixed sequence $\de_n \to 0$ and any fixed $\al > 0$, almost surely we have
$$
\liminf_{n \to \infty} \frac{1}{n} \sum_{i=1}^n \indic(T_{\de_n}(W^{\de_n}, \al)) > 0.
$$
\end{lemma}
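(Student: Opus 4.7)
The strategy is to reduce to standard Brownian motion and exploit ergodicity of the Brownian scaling flow. First, by the absolute-continuity hypothesis, it suffices to prove the lemma when $W = B$ is a two-sided standard Brownian motion on $[-1,1]$, since the conclusion is an a.s.\ event and such events transfer under $\ll$. Since $W^\de(y) = \de^{-1} W(\de^2 y)$ only requires $W|_{[-1, 1]}$, I may freely extend $B$ to a two-sided BM on all of $\R$. Substituting $x = \de_n^2 y$, the event rewrites as
\[
E_n = \big\{ B(x) < |x|^{2/3} \de_n^{-1/3} - \al \de_n \text{ for all } x \in [-1, -\al^{-1} \de_n^2] \cup [\al^{-1} \de_n^2, 1] \big\}.
\]

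Next I would interpret the family $\{E_n\}$ using the Brownian scaling flow $\theta_t B(\cdot) := e^{-t/2} B(e^t \cdot)$, which preserves the two-sided BM measure and is mixing (and hence ergodic). The scaling identity $\{B \in E_\de\} = \{\theta_t B \in E_{\de e^{-t/2}}\}$ lets me write $\indic(E_n) = f(\theta_{s_n} B)$, where $f = \indic(E_{\de_0})$ for a small reference scale $\de_0$ and $s_n = 2\log(\de_0/\de_n) \to \infty$. Positivity of $p := \P(E_{\de_0})$ for small $\de_0$ is a direct Brownian computation: BM has positive probability of lying below a prescribed positive continuous function on a compact interval, which dominates the right-hand side in the defining inequality of $E_{\de_0}$.

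By Birkhoff's ergodic theorem applied to the flow $\{\theta_t\}$, the continuous-time average $T^{-1} \int_0^T f(\theta_t B)\, dt$ converges to $p$ a.s.\ as $T \to \infty$. The remaining task is to transfer this to the discrete Cesàro sum along the (arbitrary, possibly irregular) sequence $s_n$. I would proceed via a second-moment argument: using the Markov property of $B$ at different scales together with the scaling covariance, one shows that $\operatorname{Cov}(\indic(E_i), \indic(E_j))$ decays geometrically in $|s_i - s_j|$. Since $\de_n \to 0$, for each $\eta > 0$ the set $\{n : \de_n \le \eta\}$ is cofinite, so arbitrarily long sub-stretches of $s_n$ are well-separated in the flow variable; a Chebyshev/Borel--Cantelli argument on such sub-stretches, combined with a block-averaging step, then shows that $\bar Y_n := n^{-1} \sum_{i=1}^n \indic(E_i)$ concentrates around $p$, yielding $\liminf_n \bar Y_n \ge p > 0$ almost surely.

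The main obstacle is the last step: linking continuous-time ergodicity of the scaling flow to a discrete Cesàro average at possibly irregular sample times $s_n$. The key technical input is a quantitative mixing estimate for the scaling flow, which should come from exponential decorrelation of BM at very different scales via the Markov property; the rest is bookkeeping to handle arbitrary $\de_n \to 0$ without any monotonicity or rate assumption.
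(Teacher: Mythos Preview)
Your overall strategy coincides with the paper's: reduce to two-sided Brownian motion by absolute continuity, extend $B$ to all of $\R$, exploit invariance and mixing of the Brownian scaling flow, and use that the base event has positive probability. Two points deserve comment.

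First, your scaling identity $\{B \in E_\de\} = \{\theta_t B \in E_{\de e^{-t/2}}\}$ is not correct as written: under $\theta_t$ the range $|x|\le 1$ in the definition of $E_\de$ becomes $|x|\le e^t$, so the bounded events $E_\de$ are \emph{not} a single orbit of the scaling flow. The paper handles this by replacing $T_\de(W^\de,\al)$ with the unbounded-domain event $\tilde T(W^\de,\al)$ (the same inequality for all $|x|\ge \al^{-1}$). Then $\tilde T\subset T_\de$, and now $\indic(\tilde T(W^{\de},\al))=\indic_A(W^\de)$ for a \emph{fixed} set $A$ in path space, so the scaling covariance is exact. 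It then suffices to prove the liminf statement for $\tilde T$ and check $\P(\tilde T(W,\al))>0$, which is the easy Brownian computation you describe.

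Second, on the step you correctly isolate as the main obstacle: the paper's proof is terse here and simply asserts that strong mixing of the scaling flow yields
\[
\P(\tilde T(W,\al))=\liminf_{n\to\infty}\frac{1}{n}\sum_{i=1}^n \indic(\tilde T(W^{\de_n},\al))
\]
almost surely, without further argument. You are right to be cautious, since Birkhoff only gives this along iterates of a fixed transformation. However, your proposed fix also needs more care: exponential decorrelation under the scaling flow (which does hold, via the Ornstein--Uhlenbeck representation $t\mapsto e^{-t/2}B(e^t\cdot)$) is \emph{not} by itself enough for arbitrary $\de_n\to 0$. For instance if $\de_n=1/n$ then the flow times $s_n\sim 2\log n$ are only logarithmically spaced, and one computes $\sum_{i,j\le n} e^{-c|s_i-s_j|}\asymp n^2$, so the variance of the Ces\`aro average does not vanish and the Chebyshev/Borel--Cantelli route stalls. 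Some additional structure (e.g.\ restricting to sequences with at least geometric spacing, which is all that is needed in the application, or a more refined argument exploiting the $\{0,1\}$-valued nature of the process) is required to close this gap.
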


\begin{proof}
First, since the claim in the lemma is an almost sure statement, it suffices to prove it when $W$ is a Brownian motion. Extend $W$ to a $2$-sided Brownian motion on all of $\R$, and let $\tilde T(W^\de, \al)$ be the event where \eqref{E:W-de} holds for all $x \in \R, |x| \ge \al^{-1}$. Now, Brownian motion is strong mixing with respect to the  Brownian scaling $W \mapsto W^\de$, so
$$
\P(\tilde T(W, \al)) = \liminf_{n \to \infty} \frac{1}{n} \sum_{i=1}^n \indic(\tilde T(W^{\de_n}, \al)) \le \liminf_{n \to \infty} \frac{1}{n} \sum_{i=1}^n \indic(T_{\de_n}(W^{\de_n}, \al)).
$$
Finally, it is easy to check that $\P(\tilde T(W, \al)) > 0$.
\end{proof}

\begin{proof}[Proof of Lemma \ref{L:one-sided}]
	Fix $k \in \II{1, 3}$. By landscape rescaling, we may assume $(p; q) = (0,0; 0,1)$. Also, almost surely, no rational points are $2$-stars by Theorem \ref{T:star-computation}. We assume this during the proof. Define
$$
\cL^\de(x, s; y, t) = \de^{-1} \cL(\de^2 x, \de^3 s; \de^2 y, \de^3 t), \qquad \fA^\de(y) = \de^{-1} [\cL(\de^2 y, 0; 0, 1) - \cL(0, 0; 0, 1)].
$$
For $s < 0$ and $x \in \R$, suppose there is a unique argmax $M_{x, s}$ of the function
$$
f_{x, s}(y) := \cL^\de(x, s; y,0) + \fA^\de(y).
$$
Then all $\cL$-geodesics from $(\de^2 x, \de^3 s)$ to $(0, 1)$ go through the point $(\de^2 M_{x, s}, 0)$. Since $(\de^2 M_{x, s}, 0)$ is an interior geodesic point and $(0, 1)$ is not a $2$-star, there is a unique geodesic from $(\de^2 M_{x, s}, 0)$ to $(0, 1)$ since otherwise we could find a geodesic bubble, contradicting Lemma \ref{L:rightmost-geods}.3. Therefore if 
$$
G_{\cL^\de}(x, s; M_{x, s}, 0) = G_\cL(\de^2 x, \de^3 s; \de^2 M_{x, s}, 0) = G^{k, 1}
$$
then $G_\cL(\de^2 x, \de^3 s; 0, 1) = G^{k, 1}$ as well. Here $G_{\cL^\de}, G_\cL$ indicate network graphs in $\cL^\de, \cL$.
Now, fix $\de > 0$. Since each $\cL^\de$ is a directed landscape, with positive probability the following two events occur simultaneously. Here $c, c', \ga, \ga' > 0$ are absolute constants.
\begin{enumerate}[label=\arabic*.]
	\item There exists a (random) $p_\de = (x_\de, s_\de) \in [-c, c] \X [-c, -\ga]$ such that $(p_\de; 0,0) \in N_{\cL^\de}(G^{k, 1})$. Moreover, there exists $y_\de \in \R$ such that if $\pi$ is any geodesic from $p_\de$ to a point in $(-\ga', \ga') \X \{0\}$, then $\pi(-\ga) = y_\de$.
	
	\item For all $y \in \R$, we have
$$
	\left|\cL^\de(p_\de; y, 0) + \frac{(x - y)^2}{|s_\de|} \right|\le c' \log^2(2 + |y|).
	$$
\end{enumerate}
To see why point $1$ above holds with positive probability for large $c$ and small $\ga$, observe that $N_{\cL^\de}(G^{k, 1})$ is almost surely non-empty (Theorem \ref{T:geodesic-networks}) and that if $u \in N_{\cL^\de}(G^{k, 1})$ then necessarily $v \in N_{\cL^\de}(G^{k, 1})$ for all rational $v$ in a small ball around $u$ (this follows from Proposition \ref{P:overlap-metric} and the fact that no rational points are $2$-stars). Together with the symmetries of $\cL$, these points imply that $N_{\cL^\de}(G^{k, 1})$ contains a point of the form $(p; 0,0)$ with $p \in [-c, c] \X [-c, -\ga]$ with positive probability, and that all geodesics from $p$ to $(0,0)$ coincide at time $-\ga$ at some location $y_0$. 
A second application of Proposition \ref{P:overlap-metric}.2 implies that for small enough $\ga' > 0$, any geodesic $\pi$ from $p$ to a point in $(-\ga', \ga') \X \{0\}$ also satisfies $\pi(-\ga) = y_0$. The second point holds with positive probability given the first point by the landscape shape theorem (Theorem \ref{T:landscape-shape}) as long as $c'$ is sufficiently large.

Now, let $S_\de$ be the event where the two points above hold. Since $\cL$ is ergodic with respect to $1:2:3$ rescaling (Remark \ref{R:ergodicity}) we can find a random sequence $\de_n \to 0$ such that $S_{\de_n}$ holds for all $n \in \N$. Moreover, by Theorem \ref{T:landscape-shape}, we can find $C > 0$ such that 
\begin{equation*}
\label{E:A-basic}
|\fA^1(y) + y^2| \le C \log^2(2 + |y|)
\end{equation*}
for all $y \in \R$. Therefore by point $2$ above, for all large enough $n$, any argmax of the function $f_{p_{\de_n}}$ is attained on the set $[-\de^{-2}, \de^{-2}]$. Moreover, by Lemma \ref{L:BG-property} the law of the function $\fA^1|_{[-1, 1]} - \fA^1(0)$ is absolutely continuous with respect to a $2$-sided Brownian motion. By independence of landscape time increments, $\fA^1$ is also independent of the choice of the sequence $\de_n$, and all the functions $y \mapsto \cL^{\de_n}(p_{\de_n}; y, 0)$. Therefore by Lemma \ref{L:Brownian-lemma}, almost surely for all $\al > 0$, there are infinitely many values of $n \in \N$ for which $T_{\de_n}(\fA^{\de_n}, \al)$ holds. Combining this with point $2$ above, we find that there is a subsequence $Y \sset \N$ such that the argmax of the function $f_{p_{\de_n}}$ is attained on the set $(-\ga, \ga)$ for all $n \in Y$. 

Next, since $\fA^{\de_n}$ is absolutely continuous with respect to a $2$-sided Brownian motion on $[-\ga, \ga]$, and is independent of the function $y \mapsto \cL^{\de_n}(p_{\de_n}; y, 0)$, almost surely the function $f_{p_{\de_n}}$ has a unique argmax on $[-\ga, \ga]$ for all $n$. Therefore for all $n \in Y$, $f_{p_{\de_n}}$ has a unique global argmax $M_{p_{\de_n}}$, attained in $(-\ga, \ga)$. Now, by the second part of point $1$ for the event $S_{\de_n}$, a path is a geodesic from $p_{\de_n}$ to $(M_{p_{\de_n}}, 0)$ if and only if it can be written as a concatenation $\pi|_{[s_\de, -\ga]} \oplus \tau$, where $\pi$ is a geodesic from $p_{\de_n}$ to $(y_{\de_n}, -\ga)$ and $\tau$ is a geodesic from $(y_{\de_n}, -\ga)$ to $(M_{p_{\de_n}}, 0)$. The choice of $\tau$ is unique by Lemma \ref{L:rightmost-geods}.3, and so
$$
G_{\cL^{\de_n}}(p_{\de_n}; M_{p_{\de_n}}, 0) = G_{\cL^{\de_n}}(p_{\de_n}; y_{\de_n}, -\ga) = G_{\cL^{\de_n}}(p_{\de_n}; 0,0) = G^{k, 1}.
$$
Hence $G_\cL(\de^2 x_{\de_n}, \de^3 s_{\de_n}; 0, 1) = G^{k, 1}$ for all $n \in Y$, yielding the result.
\end{proof}

\begin{lemma}
	\label{L:basic-gluing}
Let $u = (p; q) \in \Rd$ be any fixed point, and let $\ep> 0$. Then almost surely, there are points $u_{k, \ell} \in B_\ep(u)$ for $k \le \ell \in \II{1, 3}$ such that $G(u_{k, \ell}) = G^{k, \ell}$ for $i = 1, 2, 3$.
\end{lemma}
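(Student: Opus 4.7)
Plan. The six cases with $1 \le k \le \ell \le 3$ split as follows: the three cases with $k = 1$, namely $(1, 1), (1, 2), (1, 3)$, follow directly from Lemma \ref{L:one-sided} combined with the time-reversal symmetry in Lemma \ref{L:invariance}.3 (which turns a statement about $G^{k, 1}$-networks at perturbed sources into one about $G^{1, \ell}$-networks at perturbed sinks). The remaining cases $(k, \ell) \in \{(2, 2), (2, 3), (3, 3)\}$ are obtained by combining two applications of Lemma \ref{L:one-sided}, one at each endpoint, via an overlap-stability argument.

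First apply Lemma \ref{L:one-sided} at $(p; q)$ to obtain $p_k \in B_{\ep/4}(p)$ with $G(p_k; q) = G^{k, 1}$. A minor strengthening of the mesoscopic event in the proof of Lemma \ref{L:one-sided} (using Theorem \ref{T:star-computation} to exploit the fact that the set of $j$-star points is strictly lower-dimensional for $j > k$) ensures that $p_k$ is in fact an exact $k$-star and not a higher-order star. Let $v_0$ denote the unique coalescence vertex of $\Ga(p_k; q)$. By overlap continuity of networks (Proposition \ref{P:overlap-metric}) together with the local finiteness of geodesic count (Proposition \ref{P:sparse-space-2}), there is $\eta = \eta(p_k) > 0$ such that for every $q' \in B_\eta(q)$ the source of $G(p_k; q')$ has degree exactly $k$, with coalescence at a vertex near $v_0$.

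Simultaneously apply the time-reversed Lemma \ref{L:one-sided} to every fixed rational pair $(p'; q)$ with $p' \in \Q^2$; the countable intersection of these almost-sure statements is still of full measure and yields, for every such $p'$ and every $\eta' > 0$, a point $q_\ell(p') \in B_{\eta'}(q)$ which is an exact $\ell$-star with $G(p'; q_\ell(p')) = G^{1, \ell}$. Choose a rational $p'$ close enough to $p_k$, set $q_\ell := q_\ell(p') \in B_{\eta \wedge \ep/4}(q)$, and let $w_0$ be its unique decoalescence vertex. For $p'$ sufficiently close to $p_k$, overlap continuity transfers each of the $\ell$ distinct geodesic tails ending at $q_\ell$ from $\Ga(p'; q_\ell)$ to $\Ga(p_k; q_\ell)$, so $G(p_k; q_\ell)$ has sink degree exactly $\ell$ with decoalescence at $w_0$. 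Combined with source degree $k$ and the uniqueness of the single geodesic segment connecting the two interior vertices (by Lemma \ref{L:rightmost-geods}.3, no geodesic bubbles through interior points), this forces $G(p_k; q_\ell) = G^{k, \ell}$, the unique $(k, \ell)$-graph in $\sG$ that can be disconnected by removing a single edge.

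The main obstacle is the two-sided coordination: one must both (i) sharpen the mesoscopic construction inside Lemma \ref{L:one-sided} to produce exact $k$- and $\ell$-star endpoints (since a higher-order star at either endpoint could produce $G^{k', \ell'}$ with $k' > k$ or $\ell' > \ell$ instead of the target $G^{k, \ell}$), and (ii) interleave the choices of $p_k$, the stability radius $\eta$, the rational approximation $p'$, and $q_\ell$ in the right order so that all overlap-stability arguments apply without circular dependencies on the smallness parameters.
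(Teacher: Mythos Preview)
Your plan has a real gap at the ``transfer'' step. Knowing $G(p'; q_\ell) = G^{1,\ell}$ does not imply that $\Gamma(p_k; q_\ell)$ contains $\ell$ distinct tails near $q_\ell$: overlap compactness (Proposition~\ref{P:overlap-metric}) only says that every geodesic from $p_k$ to $q_\ell$ is overlap-close to \emph{some} geodesic from $p'$ to $q_\ell$, not that each of the $\ell$ branches of the latter network is realised by a geodesic of the former. To get that direction you would need a stability radius for the pair $(p';q_\ell)$ that exceeds $|p_k-p'|$, but $q_\ell = q_\ell(p')$ already depends on $p'$; this is precisely the circularity you name at the end, and nothing in the proposal breaks it. The same objection applies to your claim that all geodesics from $p_k$ to $q_\ell$ pass through the decoalescence vertex $w_0$.

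The paper's argument sidesteps this entirely. Rather than applying Lemma~\ref{L:one-sided} to $(p;q)$ and then to $(p';q)$ for rational $p'$ near a random $p_k$, it applies the lemma and its time-reversal to the two \emph{halves} $(p;w)$ and $(w;q)$ for a single fixed rational midpoint $w$ near $\bar\pi(1/2)$, where $\pi$ is the a.s.\ unique geodesic from $p$ to $q$. One first chooses $\delta$ so small that every geodesic between a point of $B_\delta(p)$ and a point of $B_\delta(q)$ passes through the fixed interior points $\bar\pi(1/4)$ and $\bar\pi(3/4)$; then $p_k$ and $q_\ell$ are produced \emph{independently} from the two separate applications of Lemma~\ref{L:one-sided}. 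Every geodesic from $p_k$ to $q_\ell$ is then forced to factor as a geodesic from $p_k$ to $\bar\pi(1/4)$, followed by $\pi|_{[1/4,3/4]}$, followed by a geodesic from $\bar\pi(3/4)$ to $q_\ell$, which immediately yields $G(p_k;q_\ell)=G^{k,\ell}$. No stability radius for the pair $(p_k;q_\ell)$ is ever invoked, and the ``exact $k$-star'' strengthening you propose becomes unnecessary.
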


Theorem \ref{T:Hausdorff-dimension}.3 follows from combining Lemma \ref{L:neck-lemma} and Lemma \ref{L:basic-gluing}.
\begin{proof}
By landscape rescaling, it suffices to prove the lemma with $p, q = (0,0; 0, 1)$. Almost surely, there is a unique geodesic $\pi$ from $p$ to $q$. Now, by Proposition \ref{P:overlap-metric}.2, for all small enough $\de > 0$, the following three statements hold:
\begin{enumerate}[nosep, label=\arabic*.]
	\item If $(p', q') \in B_\de(u)$ and $\tau$ is a geodesic from $p'$ to $q'$, then $[1/4, 3/4] \sset O(\pi, \tau)$.
	\item If $(p', q') \in B_\de((p, \bar \pi(1/2)))$ and $\tau$ is a geodesic from $p'$ to $q'$, then $1/4 \in O(\pi, \tau)$.
		\item If $(p', q') \in B_\de((\bar \pi(1/2), q))$ and $\tau$ is a geodesic from $p'$ to $q'$, then $3/4 \in O(\pi, \tau)$.
\end{enumerate} 
Now, by Lemma \ref{L:one-sided} almost surely for every rational point $w \in B_{\de/2}(\bar \pi(1/2))$ we can find points $(p_k, q_\ell) \in B_{\de/2}(u) \cap B_\ep(u)$ such that $G(p_k, w) = G^{k, 1}, G(w, q_\ell) = G^{1, \ell}$. By point $2$ above, all geodesics from $p_k$ to $w$ must merge in the time interval $(-\infty, 1/4]$ and must equal $\pi(1/4)$ at time $1/4$. Therefore $G(p_k, \bar \pi(1/4)) = G^{k, 1}$. Similarly, point $3$ yields that $G(\bar \pi(3/4), q_\ell) = G^{1, \ell}$. By point $1$, geodesics from $p_k$ to $q_\ell$ are exactly paths of the form $\tau_p \oplus \pi|_{[1/4, 3/4]} \oplus \tau_q$ where $\tau_p$ is a geodesic from $p_k$ to $\bar \pi(1/4)$ and $\tau_q$ is a geodesic $\bar \pi(3/4)$ to $q_\ell$. Therefore $G(p_k, q_\ell) = G^{k, \ell}$, as desired.
\end{proof}

\section{Geodesic Botany}
\label{S:botany}

In this final section, we enumerate the networks in $\sG$. The proof of the enumeration is straightforward but somewhat tedious. The reader may find it more useful and interesting to simply peruse the figures of each of the $27$ networks contained at the end of the section.

\begin{prop}
	\label{P:enumeration}
We have $|\sG| = 27$. Moreover, letting $\sG_{k, \ell} \sset \sG$ 
	be the set of networks with $\{\deg(p), \deg(q)\} = \{k, \ell\}$, we have
	\begin{equation*}
	|\sG_{1, 1}| = |\sG_{1, 2}| = |\sG_{1, 3}| = 1, \qquad |\sG_{2, 2}| = 3, \qquad |\sG_{2, 3}| = 6, \qquad |\sG_{3, 3}| = 15.
	\end{equation*}
\end{prop}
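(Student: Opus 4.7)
The proof is a case-by-case enumeration organized by the pair $(k,\ell) := (\deg p, \deg q)$ with $1 \le k \le \ell \le 3$; transpose isomorphism reduces the case $k > \ell$. For each such pair Lemma~\ref{L:graph-theory} restricts $|V|$ to the integer interval $[2 + (\ell - k), k + \ell]$, and the requirement that the degree sum $k + \ell + 3(|V|-2)$ be even forces $|V| \equiv k + \ell \pmod{2}$, eliminating half of the remaining values (for instance ruling out $|V|=3$ when $(k,\ell)=(2,2)$ and $|V|=4$ when $(k,\ell)=(2,3)$).

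For each feasible $(k, \ell, |V|)$ I parametrize a candidate graph by the multi-edge counts $C$ (between $p$ and $q$), $A_i, B_i$ (from $p, q$ to each interior vertex $v_i$), and $D_{ij}$ (between pairs of interior vertices $v_i, v_j$). The degree equations $C + \sum_i A_i = k$, $C + \sum_i B_i = \ell$, $A_i + B_i + \sum_{j} D_{ij} = 3$ together with the total edge count $|E| = (k + \ell + 3(|V|-2))/2$ collapse to the single linear relation
\[
\sum_{i < j} D_{ij} \,-\, C \;=\; \frac{3(|V|-2) - k - \ell}{2}.
\]
Condition~3 of Theorem~\ref{T:geodesic-networks} restricts the induced interior subgraph to a forest, the planarity hypothesis constrains edge arrangements, and the implicit connectedness of a geodesic network (every interior vertex lies on some geodesic from $p$ to $q$) eliminates disconnected candidates. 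Together these reduce each case to a finite search over distributions of the multiplicities $A_i, B_i, D_{ij}$. For each configuration I enumerate consistent distributions, quotient by the automorphism group of the internal forest composed with the transpose action, and verify planarity.

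The small cases $(1,1), (1,2), (1,3)$ each yield a single graph by inspection. For $(2, 2)$ one obtains $1$ graph at $|V|=2$ and $2$ at $|V|=4$, giving $3$. For $(2, 3)$ one obtains $1$ graph at $|V|=3$ and $5$ at $|V|=5$, giving $6$. For $(3, 3)$ the cases $|V|=2, 4, 6$ contribute $1, 3, 11$ graphs respectively, giving $15$. Summing yields $1+1+1+3+6+15 = 27$.

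The principal bookkeeping obstacle is the case $(3, 3)$ with $|V|=6$: the four interior vertices admit several distinct internal (multi)forest structures --- the simple underlying graph may be the path $P_4$, the star $K_{1,3}$, a three-path plus an isolated vertex, or two disjoint edges --- each potentially decorated with parallel multi-edges compatible with the degree constraints. For every such structure one must enumerate all consistent external edge distributions and then quotient by the relevant automorphism group (the dihedral group for a path, $S_3$ for the leaves of the star, $\Z_2 \X \Z_2$ for two disjoint edges, and so on) composed with transpose. Section~\ref{S:botany} carries out this enumeration in detail and displays the resulting $27$ networks, organized as in the figures at the opening of the paper.
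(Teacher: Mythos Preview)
Your outline follows essentially the same strategy as the paper: organize by $(k,\ell)$ and $|V|$, use the degree and parity constraints, then enumerate. Your intermediate counts all match the paper's.

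There is, however, a concrete error in your treatment of the $(3,3)$, $|V|=6$ case. Your own relation gives $\sum_{i<j} D_{ij} - C = 3$; any direct $(p,q)$ edge would force $\sum D_{ij} \ge 4$ on four interior vertices, creating a cycle, so $C=0$ and the interior subgraph has exactly three edges. Since a pair of parallel edges is an undirected $2$-cycle and violates condition~3, the interior is a \emph{simple} tree on four vertices---either the path $P_4$ or the star $K_{1,3}$. The alternatives you list (``three-path plus an isolated vertex'' and ``two disjoint edges'') carry only two edges and would require a doubled interior edge, which is forbidden; they cannot occur. The paper instead classifies the five \emph{directed} versions of these two trees (three orientations of $P_4$, two of $K_{1,3}$) and counts extensions of each, obtaining $5+3+2+1+0=11$. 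Separately, deferring the detailed check to ``Section~\ref{S:botany}'' is circular, since that section is the proof you are writing.
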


\begin{proof}
	Throughout the proof, we may assume $\deg(p) = k \le  \deg(q) = \ell$ since any $G \in \sG$ is transpose-isomorphic to a graph with these properties.
	We will also repeatedly appeal to the Lemma \ref{L:graph-theory} bound:
	\begin{equation}
	\label{E:pq-formula}
	2 + |k - \ell| \le |V| \le k + \ell.
	\end{equation} 
	\textbf{Case 1. $\sG_{1, \ell}$ for $\ell = 1, 2, 3$: one graph for each $\ell$.} \qquad By \eqref{E:pq-formula}, any $G \in \sG_{1, \ell}$ has $1 + \ell$ vertices. Therefore:
	\begin{itemize}[nosep]
		\item Any $G \in \sG_{1, 1}$ only has the two vertices $p$ and $q$, and so the only element of $\sG_{1,1}$ is the path from $p$ to $q$, see Figure \ref{fig:G1x}(a).
		\item Any $G \in \sG_{1, 2}$ has one interior vertex $v$ in addition to $p, q$. The only way to obtain a graph in $\sG$ on these vertices with $k = 1, \ell = 2$ is to connect $v$ once to $p$ and twice to $q$, see Figure \ref{fig:G1x}(b).
		\item Any $G \in \sG_{1, 3}$ has $4$ vertices $\{p, q, v_1, v_2\}$. Either $(p, v_1)$ or $(p, v_2)$ must lie in $E$ since otherwise either $v_1$ or $v_2$ is a source. Up to isomorphism we may assume that $(p, v_1) \in E$. Note that no other edges connect to $p$. Then $(v_1, v_2) \in E$ since otherwise $v_2$ would be a source. The edge $(v_1, v_2)$ appears only once and $(v_2, v_1)$ never appears, since otherwise $G \smin\{p, q\}$ would have a cycle. Therefore the third edge containing $v_1$ is $(v_1, q)$. We have examined all possible edges except $(v_2, q)$; this edge must appear twice resulting in a unique graph $G$, see Figure \ref{fig:G1x}(c).
	\end{itemize}  
	\textbf{Case 2. $\sG_{2, 2}$: $3$ total graphs.} \qquad
	By \eqref{E:pq-formula}, every graph in $\sG_{2, 2}$ has $2, 3,$ or $4$ vertices. There is only \textbf{one} graph when $|V| = 2$, where the edge $(p, q)$ appears twice, see Figure \ref{fig:G22}(a). There are no graphs on $3$ vertices since then the total degree of $G$ would be odd ($2 + 2 + 3 = 7$). Now suppose that $G = \{p, q, v_1, v_2\} \in \sG_{2, 2}$. Since the set $\{v_1, v_2\}$ has at most four edges connecting it to $\{p, q\}$ but $\deg(v_1) + \deg(v_2) = 6$, the vertices $v_1, v_2$ must be connected by an edge. The remaining four edges must go between the set $\{v_1, v_2\}$ and the set $\{p, q\}$, and these remaining four edges must hit each of these vertices twice. Up to isomorphism, there are \textbf{two} possibilities: each of the edges $(p, v_1), (v_1, q), (p, v_2), (v_2, q)$ are included once or the edges $(p, v_1)$ and $(v_2, q)$ are each included twice, see Figure \ref{fig:G22}(b, c). This gives a total of $1+2 = 3$ graphs in $\sG_{2, 2}$.
	
	\textbf{Case 3: $\sG_{2, 3}$.} \qquad By \eqref{E:pq-formula}, every graph in $\sG_{2, 3}$ has $3, 4,$ or $5$ vertices. There is only \textbf{one} graph on three vertices: where we take the unique graph $G \in \sG_{1, 2}$ and add an extra edge $(p, q)$, see Figure \ref{fig:G23}(a). There are no graphs on four vertices since in this case the sum of degrees would be odd. Now suppose that $G = \{p, q, v_1, v_2, v_3\} \in \sG_{2, 3}$. Since there are at most $2 + 3 = 5$ edges between the sets $\{v_1, v_2, v_3\}$ and $\{p, q\}$ but $\deg(v_1) + \deg(v_2) + \deg(v_3) = 9$, there must be at least two edges in the set $\{v_1, v_2, v_3\}^2$. Note also that there cannot be more than two edges, as this would create an undirected cycle. Therefore there are exactly two edges contained in $\{v_1, v_2, v_3\}^2$, exactly five edges between the sets $\{p, q\}$ and $\{v_1, v_2, v_3\}$, and $(p, q)$ is not an edge.
	Up to relabeling the vertices, there are three possibilities for the pair of edges in $\{v_1, v_2, v_3\}^2$: 
	$$
	\{(v_1, v_2), (v_1, v_3)\}, \{(v_1, v_2), (v_2, v_3)\}, \quad \mathand \quad \{(v_1, v_2), (v_3, v_2)\}.
	$$
	In the $\{(v_1, v_2), (v_1, v_3)\}$ case, we must include edges $(p, v_1), (v_2, q), (v_3, q)$ since otherwise either $v_1$ is a source or $v_2$ or $v_3$ is a sink. The remaining two edges must have one connection to $v_2$ and one connection to $v_3$. Since $\deg(p) = 2$, we can either include the edges $(p, v_2)$ and $(v_3, q)$ again, or include $(p, v_3)$ and $(v_2, q)$ again. These choices are isomorphic and hence yield \textbf{one} graph, see Figure \ref{fig:G23}(b).
	
	In the $\{(v_1, v_2), (v_2, v_3)\}$ case, we must include edges $(p, v_1)$ and $(v_3, q)$. There are three remaining edges to add to the graph, connecting between the sets $\{v_1, v_2, v_3\}$ and $\{p, q\}$. Since exactly one of these leaves from $p$, and the other two edges go to $q$ this leaves \textbf{three} possible choices, see Figure \ref{fig:G23}(c)-(e).
	
	In the $\{(v_1, v_2), (v_3, v_2)\}$ case, we must include edges $(p, v_1), (p, v_3)$ and $(v_2, q)$. Since $\deg(p) = 2$ and $\deg(q) = 3$, there is only \textbf{one} way to complete the graph: by including the edges $(v_3, q)$ and $(v_1, q)$, see Figure \ref{fig:G23}(f).
	Summarizing the whole analysis of $\sG_{2, 3}$ gives a total of $1 + 1 + 3 + 1 = 6$ graphs.
	
	\textbf{Case 4a: $\sG_{3, 3}$ with $|V| < 6$: $4$ total graphs.} \qquad By \eqref{E:pq-formula}, $|V| \in \II{2, 6}$ if $G \in \sG_{3, 3}$, and $|V|$  is even since otherwise $G$ would have odd total degree. There is \textbf{one} graph when $|V| = 2$ given by including $(p, q)$ $3$ times, see Figure \ref{fig:G33<6}(a). Now consider $|V| = 4$ and let $V = \{p, q, v_1, v_2\}$. Suppose first that $(p, q)$ is an edge. In this case, we can remove this edge to get a graph $G \in \sG_{2, 2}$ with $|V| = 4$: by our argument enumerating $\sG_{2, 2}$ there are \textbf{two} such graphs, see Figure \ref{fig:G33<6}(b, c).
	
	Next suppose that $(p,q)$ is not an edge. In this case, there are six edges between $\{p, q\}$ and $\{v_1, v_2\}$ so $(v_1, v_2)$ is not an edge. Therefore $(p, v_1), (v_1, q), (p, v_2), (q, v_2)$ must all be included at least once. The remaining two edges must hit all vertices exactly once, so the only option is to double up either $(p, v_1)$ and $(v_2, q)$, or  $(p, v_2)$ and $(v_1, q)$. These are isomorphic options, so we only get\textbf{ one} graph, see Figure \ref{fig:G33<6}(d).
	
	\textbf{Case 4b: $\sG_{3, 3}$ with $|V| = 6$: $11$ total graphs.}  \qquad Let $G \in \sG_{3, 3}$ have vertices $\{p, q\} \cup V^\circ$ where $V^\circ = \{v_1, v_2, v_3, v_4\}$. Since $\sum_{i=1}^4 \deg(v_i) = 12$ and there are at most $6$ edges connecting $\{p, q\}$ to $V^\circ$, there are at least $3$ edges in the set $(V^\circ)^2$. Since including any more than three edges in $(V^\circ)^2$ would create a cycle, there must be exactly three edges in this set, three edges in the set $\{p\} \X V^\circ$, and three edges in the set $V^\circ \X \{q\}$. The pair $(p, q)$ is not an edge. 
	
	We classify graphs in $\sG_{3, 3}$ with $|V| = 6$ by focusing first on the induced graph on $V^\circ$. Up to transpose-isomorphism, there are five possibilities:
	\begin{enumerate}[nosep, label=\arabic*.]
		\item Path 1. $(v_1, v_2), (v_2, v_3), (v_3, v_4)$.
		\item Path 2. $(v_1, v_2), (v_2, v_3), (v_4, v_3)$.
		\item Path 3. $(v_1, v_2), (v_3, v_2), (v_3, v_4)$.
		\item Star 1. $(v_1, v_2), (v_1, v_3), (v_4, v_1)$. 
		\item Star 2. $(v_1, v_2), (v_1, v_3), (v_1, v_4)$. 	
	\end{enumerate}
	There are no graphs $G \in \sG$ that extends Star 2, since in this case $v_1$ has three outgoing edges and so cannot accommodate an incoming edge. 
	
	\textbf{Path 1: $5$ Extensions.} In this case, $(p, v_1)$ and $(v_4, q)$ must be edges, since otherwise either $v_1$ would be a source or $v_4$ would be a sink. There are four remaining edges to place, each of which involves exactly one of $\{v_1, v_2, v_3, v_4\}$. Two of these edges go to $q$, and the other $2$ come from $p$, leaving ${4 \choose 2} = 6$ possibilities. Among this set of six possibilities, all are planar and two choices lead to transpose-isomorphic graphs: 
	\begin{itemize}[nosep]
		\item The graph obtained by adding $(p, v_1), (p, v_4), (v_2, q), (v_3, q)$ is isomorphic to the transpose of the graph obtained by adding $(p, v_2), (p, v_3), (v_1, q), (v_4, q)$.
	\end{itemize}
	There are no other transpose-isomorphisms, see Figure \ref{fig:G33-path1} (e.g. each of the $5$ graphs in that figure has a different number of paths from $p$ to $q$).
	
	\textbf{Path 2: $3$ Extensions.} In this case, $(p, v_1), (p, v_4)$ and $(v_3, q)$ must be edges. We have three edges left to add between $\{p, q\}$ and $\{v_1, v_2, v_3, v_4\}$, which must use $v_1, v_2, v_4$ once each, $p$ once and $q$ twice. We have three choices, depending on where we connect $p$, see Figure \ref{fig:G33-path2}. These options are not transpose-isomorphic. To see this, note that the graphs where we connect $p$ to $v_1$ or $v_4$ have a double edge, whereas the graph where we connect $p$ to $v_2$ does not. Also, connecting $p$ to $v_1$ results $6$ paths going from $p$ to $q$, whereas connecting $p$ to $v_4$ results in $5$ paths.
	
	\textbf{Path 3: $2$ Extensions.} In this case, $(p, v_1), (p, v_3), (v_2, q)$ and $(v_4, q)$ are necessary edges. We then need two edges between $\{p, q\}$ and $\{v_1, v_4\}$ that use all vertices exactly once. The two possibilities are $(p, v_1), (v_4, q)$ and $(p, v_4), (v_1, q)$. These are non-isomorphic options: in the second graph there is a path from $p$ to $q$ of length $2$, whereas no such path exists in the first graph.  
	
	\textbf{Star 2: $1$ Extension.} \qquad In this case, $(v_2, q), (v_3, q),$ and $(p, v_4)$ must be edges so that $v_2, v_3$ are not sinks and $v_4$ is not a source. We need to complete the graph with two more edges going out of $p$, one edge entering $q$, and one edge incident to each of $v_2, v_3, v_4$. Without loss of generality, we can therefore include $(p, v_2)$ as an edge since up to this point, the roles of $v_2$ and $v_3$ are interchangeable.
	
	There are then two ways to finish the graph: either with $(p, v_4)$ and $(v_3, q)$, or with $(p, v_3)$ and $(v_4, q)$. The first of these options results in a graph in $\sG_{3, 3}$. The second of these options breaks the planarity constraint: in this case, the underlying undirected graph is the complete bipartite graph $K_{3, 3}$, with $\{p, q, v_1\}$ and $\{v_2, v_3, v_4\}$ being the two parts.
	
	In summary, there are $5 + 3 + 2 + 1 = 11$ total graphs in $\sG_{3, 3}$ with $|V| = 6$.
\end{proof}

Corollary \ref{C:Tpq} follows from the main theorem and examining the figures below.

\begin{figure}[htb]
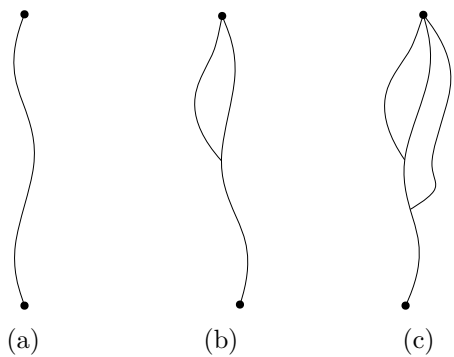

	\centering
	\begin{subfigure}[t]{2.5cm}
		\centering
		\includegraphics[height=4cm]{G11}
		\caption{}
	\end{subfigure}
	\begin{subfigure}[t]{2.5cm}
		\centering
		\includegraphics[height=4cm]{G21}
		\caption{}
	\end{subfigure}
	\begin{subfigure}[t]{2.5cm}
		\centering
		\includegraphics[height=4cm]{G31}
		\caption{}
	\end{subfigure}
	\caption{The three geodesic networks in $\sG_{1, 1}, \sG_{1, 2}$ and $\sG_{1, 3}$. All three of these networks have $N_\cL(G)$ dense. Respectively, networks (a), (b), and (c) have $1,2$ and $3$ geodesics, have 1:2:3-Hausdorff dimensions $10, 8$ and $5$ on $\Rd$.}
	\label{fig:G1x}
\end{figure}

\begin{figure}[htb]
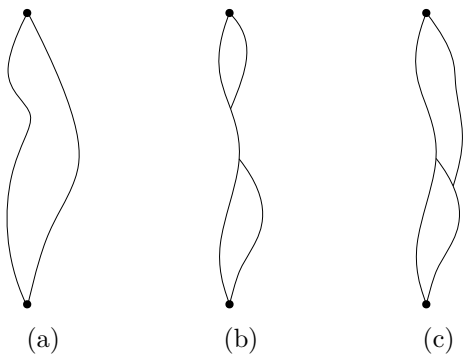

	\centering
	\begin{subfigure}[t]{2.5cm}
		\centering
		\includegraphics[height=4cm]{G22.2}
		\caption{}
	\end{subfigure}
	\begin{subfigure}[t]{2.5cm}
		\centering
		\includegraphics[height=4cm]{G22.4.1}
		\caption{}
	\end{subfigure}
	\begin{subfigure}[t]{2.5cm}
		\centering
		\includegraphics[height=4cm]{G22.4.2}
		\caption{}
	\end{subfigure}
	\caption{The three geodesic networks in $\sG_{2, 2}$. Network (b) is dense and networks (a) and (c) are nowhere dense. Respectively, networks (a), (b), and (c) have $2, 4,$ and $3$ geodesics, have 1:2:3-Hausdorff dimensions $7, 6,$ and $6$ on $\Rd$.}
	\label{fig:G22}
\end{figure}

\begin{figure}[htb]
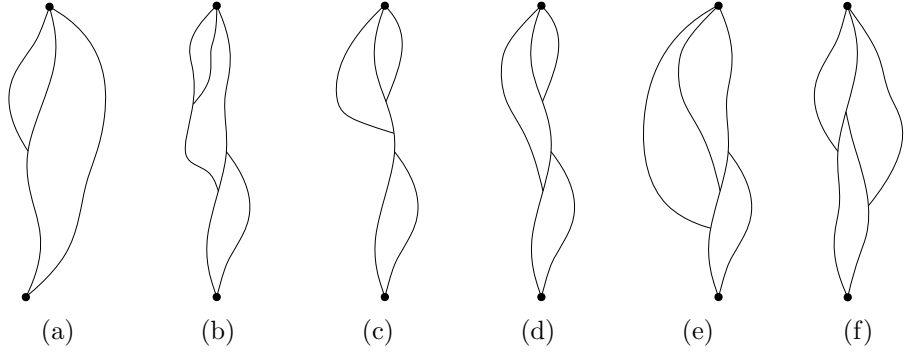

	\centering
	\begin{subfigure}[t]{2cm}
		\centering
		\includegraphics[height=4cm]{G32.3}
		\caption{}
	\end{subfigure}
	\begin{subfigure}[t]{2cm}
		\centering
		\includegraphics[height=4cm]{G32.5.a2}
		\caption{}
	\end{subfigure}
	\begin{subfigure}[t]{2cm}
		\centering
		\includegraphics[height=4cm]{G32.5.b1}
		\caption{}
	\end{subfigure}
	\begin{subfigure}[t]{2cm}
		\centering
		\includegraphics[height=4cm]{G32.5.b2}
		\caption{}
	\end{subfigure}
	\begin{subfigure}[t]{2cm}
		\centering
		\includegraphics[height=4cm]{G32.5.b3}
		\caption{}
	\end{subfigure}
	\begin{subfigure}[t]{2cm}
		\centering
		\includegraphics[height=4cm]{G32.5.a1}
		\caption{}
	\end{subfigure}
	\caption{The six geodesic networks in $\sG_{2, 3}$. All networks are nowhere dense except for (c), which is dense. Networks (a), (b), (c), (d), (e), and (f) appear with 1:2:3-Hausdorff dimensions $4, 3, 3, 3, 3,$ and $3$, and have $3, 4, 6, 5, 4,$ and $4$ geodesics.}
	\label{fig:G23}
\end{figure}

\begin{figure}
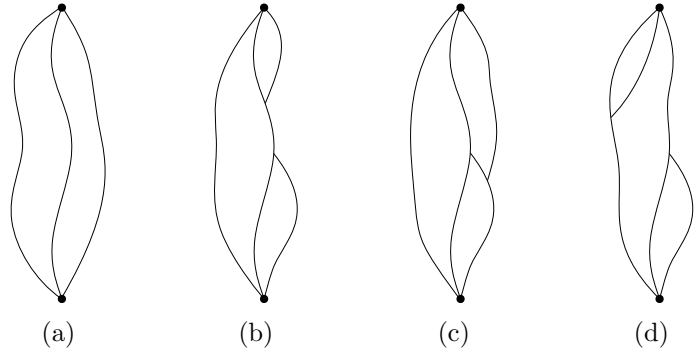

	\centering
	\begin{subfigure}[t]{2.5cm}
		\centering
		\includegraphics[height=4cm]{G33.2}
		\caption{}
	\end{subfigure}
	\begin{subfigure}[t]{2.5cm}
		\centering
		\includegraphics[height=4cm]{G33.4.1}
		\caption{}
	\end{subfigure}
	\begin{subfigure}[t]{2.5cm}
		\centering
		\includegraphics[height=4cm]{G33.4.2}
		\caption{}
	\end{subfigure}
	\begin{subfigure}[t]{2.5cm}
		\centering
		\includegraphics[height=4cm]{G33.4.3}
		\caption{}
	\end{subfigure}
	\caption{The four geodesic networks in $\sG_{3, 3}$ with $|V| < 6$. All networks are nowhere. Networks (a), (b), (c), and (d) appear with 1:2:3-Hausdorff dimensions $2, 1, 1,$ and $1$ and have $3, 5, 4,$ and $4$ geodesics.}
	\label{fig:G33<6}
\end{figure}

\begin{figure}[htb]
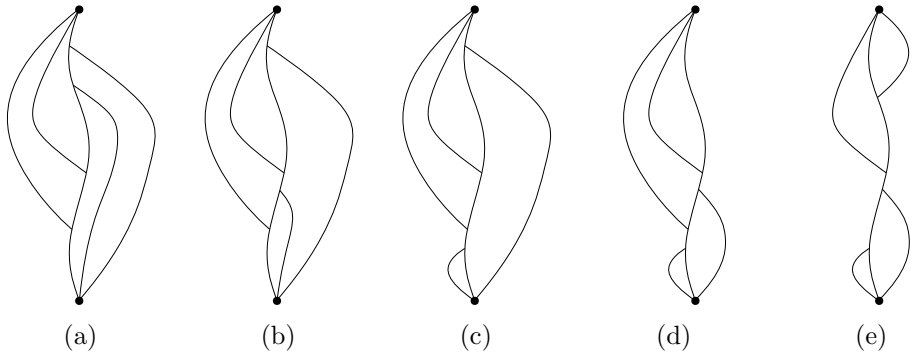

	\centering
	\begin{subfigure}[t]{2.5cm}
		\centering
		\includegraphics[height=4cm]{G33.6.p1.1}
		\caption{}
	\end{subfigure}
	\begin{subfigure}[t]{2.5cm}
		\centering
		\includegraphics[height=4cm]{G33.6.p1.2}
		\caption{}
	\end{subfigure}
	\begin{subfigure}[t]{2.5cm}
		\centering
		\includegraphics[height=4cm]{G33.6.p1.3}
		\caption{}
	\end{subfigure}
	\begin{subfigure}[t]{2.5cm}
		\centering
		\includegraphics[height=4cm]{G33.6.p1.4}
		\caption{}
	\end{subfigure}
	\begin{subfigure}[t]{2.5cm}
		\centering
		\includegraphics[height=4cm]{G33.6.p1.5}
		\caption{}
	\end{subfigure}
	\caption{The five geodesic networks $G \in \sG_{3, 3}$ that extend Path $1$. For (a), (b), (c), and (d), the networks are countable and nowhere dense. Network (e) is countably dense. The networks (a), (b), (c), (d), and (e) have $5, 6, 7, 8,$ and $9$ geodesics.}
	\label{fig:G33-path1}
\end{figure}

\FloatBarrier

\begin{figure}[htb]
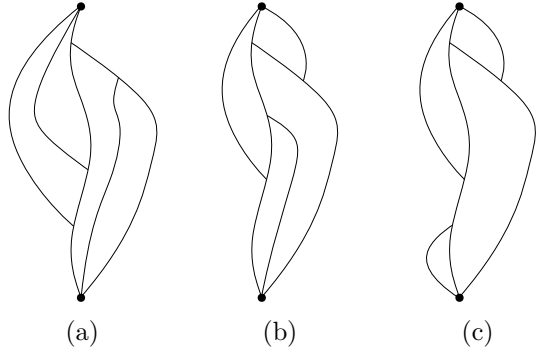

	\centering
	\begin{subfigure}[t]{2.5cm}
		\centering
		\includegraphics[height=4cm]{G33.6.p2.3}
		\caption{}
	\end{subfigure}
	\begin{subfigure}[t]{2.5cm}
		\centering
		\includegraphics[height=4cm]{G33.6.p2.2}
		\caption{}
	\end{subfigure}
	\begin{subfigure}[t]{2.5cm}
		\centering
		\includegraphics[height=4cm]{G33.6.p2.1}
		\caption{}
	\end{subfigure}
	\caption{The three geodesic networks $G \in \sG_{3, 3}$ that extend Path $2$. All networks are countable and nowhere dense. The networks (a), (b), and (c) have $5, 5,$ and $6$ geodesics respectively.}
	\label{fig:G33-path2}
\end{figure}

\begin{figure}[htb]
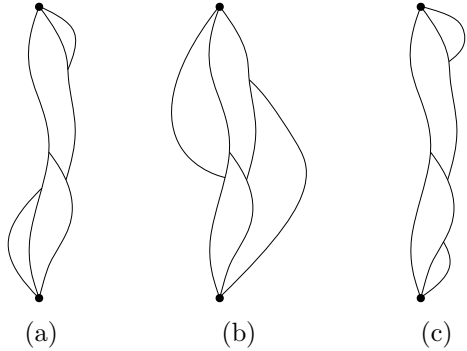

	\centering
	\begin{subfigure}[t]{2.5cm}
		\centering
		\includegraphics[height=4cm]{G33.6.p3.1}
		\caption{}
	\end{subfigure}
	\begin{subfigure}[t]{2.5cm}
		\centering
		\includegraphics[height=4cm]{G33.6.p3.2}
		\caption{}
	\end{subfigure}
	\begin{subfigure}[t]{2.5cm}
		\centering
		\includegraphics[height=4cm]{G33.6.s1}
		\caption{}
	\end{subfigure}
	\caption{The two networks $G \in \sG_{3, 3}$ that extend Path $3$ (networks (a), (b)) and the network that extends Star $2$ (network (c)). All networks are countable and nowhere dense. The networks (a), (b), and (c) have $5, 5,$ and $7$ geodesics respectively.}
	\label{fig:G33-star-path}
\end{figure}

\bibliographystyle{alpha}
\bibliography{MasterBib}

\end{document}